\theoremstyle{plain}
\newtheorem{theorem}{Theorem}
\newtheorem{corollary}[theorem]{Corollary}
\newtheorem{lemma}[theorem]{Lemma}
\newtheorem{proposition}[theorem]{Proposition}
\theoremstyle{definition}
\newtheorem{remark}[theorem]{Remark}
\numberwithin{equation}{section}
\numberwithin{theorem}{section}
\newcommand{\R}{\mathbb{R}}
\newcommand{\eps}{\varepsilon}
\newcommand{\dist}{{\rm dist}}
\newcommand{\supp}{{\rm supp}}
\author{Yuxin Li\footnote{liyx097@nenu.edu.cn},~~ Meijie Yang\footnote{yangmj@nenu.edu.cn} ~and~Xiaojun Chang \footnote{changxj100@nenu.edu.cn}} \affil{School of Mathematics and Statistics \& Center for Mathematics and Interdisciplinary Sciences,
 Northeast Normal University, Changchun 130024, Jilin,
PR China}
\title{Normalized solutions for a Sobolev critical quasilinear Schr\"odinger equation}
\date{}
\begin{document}

\maketitle

\begin{abstract}
	In this paper, we study the existence of normalized solutions for the following quasilinear Schr\"odinger equation with Sobolev critical exponent:
	\begin{eqnarray*}
			-\Delta u-u\Delta (u^2)+\lambda u=\tau|u|^{q-2}u+|u|^{2\cdot2^*-2}u,~~~~x\in\mathbb{R}^N,
	\end{eqnarray*}
	under the mass constraint $\int_{\R^N}|u|^2dx=c$ for some prescribed $c>0$. Here $\tau\in \mathbb{R}$ is a parameter, $\lambda\in\R$ appears as a Lagrange multiplier, $N\ge3$, $2^*:=\frac{2N}{N-2}$ and $2<q<2\cdot2^*$. By deriving precise energy level estimates and establishing new convergence theorems, we apply the perturbation method to establish several existence results for $\tau>0$ in the Sobolev critical regime:
	\begin{enumerate}
		\item[(a)] For the case of $2<q<2+\frac{4}{N}$, we obtain the existence of two solutions, one of which is a local minimizer, and the other is a mountain pass type solution, under explicit conditions on $c>0$;
		\item[(b)] For the case of $2+\frac{4}{N}\leq q<4+\frac{4}{N}$, we obtain the existence of normalized solutions of mountain pass type under different conditions on $c>0$;
		\item[(c)] For the case of $4+\frac{4}{N}\leq q<2\cdot2^*$, we obtain the existence of a ground state normalized solution under different conditions on $c>0$. 
\end{enumerate}
Moreover, when $\tau\le 0$, we derive the non-existence result for $2<q<2\cdot2^*$ and all $c>0$.
Our research provides a comprehensive analysis across the entire range $q\in(2, 2 \cdot 2^*)$ and for all $N\ge3$.
The methods we have developed are flexible and can be extended to a broader class of nonlinearities.
\end{abstract}

\medskip

{\noindent Keywords:} Normalized solutions; Quasilinear Schr\"odinger equations;  Combined nonlinearities; Sobolev critical growth; Perturbation method.

\section{Introduction}\label{intro}\setcounter{equation}{0}
In this paper, we investigate the quasilinear Schr\"odinger equation with Sobolev critical exponent, which takes the following form: 
\begin{equation}\label{quasilinear eq1}
	\begin{cases}
		i\partial_t\psi+\Delta\psi+\psi\Delta(|\psi|^2)+\tau|\psi|^{q-2}\psi+|\psi|^{2\cdot2^*-2}\psi=0,~~&(t,x)\in\R^+\times\R^N,\\
		\psi(0,x)=\psi_0(x),~~&x\in\R^N,
	\end{cases}
\end{equation}
where $N\ge3$, $2<q<2\cdot2^*$, $\tau>0$ is a parameter, $i$ denotes the imaginary unit, and $\psi:\R^+\times\R^N\to\mathbb{C}$ represents the time-dependent wave function with the initial datum $\psi_0$. This class of quasilinear equations has received widespread attention in recent years due to its rich applications in mathematical physics, for example, in dissipative quantum mechanics, plasma physics and fluid mechanics. For more physical motivations and applications, we refer the interested readers to \cite{BL1986,BL19861,kuri1981,laed1983,LPT1999,PSW2002} and the references therein.

We are interested in the standing wave solutions to \eqref{quasilinear eq1}, especially those of the form $\psi(t,x)=e^{-i\lambda t}u(x)$, where $\lambda\in\R$. In this case, the function $u(x)$ satisfies the following stationary equation
\begin{equation}\label{quasilinear eq2}
	-\Delta u-u\Delta (u^2)+\lambda u=\tau|u|^{q-2}u+|u|^{2\cdot2^*-2}u,~~~~x\in\R^N.
\end{equation}
When looking for solutions to \eqref{quasilinear eq2}, a typical strategy is to treat $\lambda\in\R$ as a fixed parameter. The solutions of \eqref{quasilinear eq2} correspond to the critical points of the energy functional given by
\begin{equation*}
	J_{\lambda}(u):=\frac{1}{2}\int_{\R^N}|\nabla u|^2dx+\frac{\lambda}{2}\int_{\R^N}|u|^2dx+\int_{\R^N}|u|^2|\nabla u|^2dx-\frac{\tau}{q}\int_{\R^N}|u|^qdx-\frac{1}{2\cdot2^*}\int_{\R^N}|u|^{2\cdot2^*}dx,
\end{equation*}
which is defined on the natural space
\begin{equation*}
	\widetilde{X}:=\{u\in H^1(\R^N): \int_{\R^N}|u|^2|\nabla u|^2dx<+\infty\}.
\end{equation*}
Owing to the presence of the quasilinear term $\int_{\R^N}|u|^2|\nabla u|^2dx$, the functional $J_{\lambda}$ is non-differentiable in $\widetilde{X}$ for $N\ge2$. To address this difficulty, several variational techniques have been developed, including minimization methods \cite{LW2003,PSW2002,RS2010}, the Nehari manifold approach \cite{AW2003,LLW20041}, the dual approach \cite{cj2004,LLW20031}, and perturbation approaches \cite{LLW2014,LLW2013}.

It is noteworthy that the number $2\cdot2^*$ acts as a critical exponent for \eqref{quasilinear eq2}, as first observed in \cite{LLW20041}. In that work,
Liu, Wang and Wang considered the following quasilinear Schr\"odinger equation
\begin{equation}\label{quasi llw eq}
-\Delta u+V(x)u-u\Delta (u^2)=f(u),~~~~x\in\R^N,
\end{equation}
where $f(u)=|u|^{q-2}u$ and $V(x)$ is a given potential. 
By using a Pohozaev type identity, they proved that \eqref{quasi llw eq} admits no positive solutions in $\widetilde{X}$ if $q\ge2\cdot2^*$ and $x\cdot\nabla V(x)\ge0$ for all $x\in\R^N$.
In \cite{BMS2010}, Bezerra, Miyagaki and Soares demonstrated the existence of positive solution for \eqref{quasi llw eq} with combined nonlinearities $f(u)=|u|^{q-2}u+|u|^{2\cdot2^*-2}u$ for $4<q<2\cdot2^*$.
They utilized a change of variables, similar to those in \cite{cj2004,LLW20031}, to transform \eqref{quasi llw eq} into a semilinear elliptic equation whose associated functionals are well-defined in the Sobolev space $H^1(\R^N)$.
Subsequently, Liu, Liu and Wang \cite{LLW20131} established the existence of both positive and nodal ground states of \eqref{quasi llw eq} by analyzing the behavior of solutions in subcritical cases 
and passing to the limit as the exponent tends to $2\cdot2^*$.
In \cite{LLW201311}, they adopted a perturbation method to investigate a more general quasilinear elliptic equations and achieved new existence results.
For further details on critical quasilinear problems,  the reader is referred to \cite{DGY2019,SV2010,YD2013} and their references.

Alternatively, it is intriguing to investigate solutions to \eqref{quasilinear eq2} with a prescribed $L^2$-norm, motivated by the physical principle of mass conservation. Specifically, we seek pairs $(u,\lambda)\in \widetilde{X}\times\R$ satisfying
\begin{equation}\label{quasi eq}
	\begin{cases}
		-\Delta u-u\Delta (u^2)+\lambda u=\tau|u|^{q-2}u+|u|^{2\cdot2^*-2}u,~~~~x\in\R^N,\\
		\int_{\R^N}|u|^2dx=c,
	\end{cases}
\end{equation}
where $c>0$ is a given constant, and the parameter $\lambda\in\mathbb{R}$ arises as a Lagrange multiplier. Solutions of this form are commonly termed \emph{normalized solutions}, and the present paper is dedicated to exploring  this topic.
Formally, a natural approach to find solutions of \eqref{quasi eq} is to search for critical points of the associated functional
\begin{equation}\label{energy functional}
	I(u):=\frac{1}{2}\int_{\R^N}|\nabla u|^2dx+\int_{\R^N}|u|^2|\nabla u|^2dx-\frac{\tau}{q}\int_{\R^N}|u|^qdx-\frac{1}{2\cdot2^*}\int_{\R^N}|u|^{2\cdot2^*}dx
\end{equation}
on the constraint set
\begin{equation*}
	\widetilde{S}(c):=\{u\in \widetilde{X}: \int_{\R^N}|u|^2dx=c\}.
\end{equation*}
The $L^2$-constraint gives rise to a critical exponent $\bar{q}:=4+\frac{4}{N}$,  which is pivotal for analyzing dynamical properties like global existence, blow-up, and stability of ground states. Furthermore, this threshold significantly influences the geometric structure of the functional $I$. 
Specifically, for the pure nonlinearity $|u|^{q-2}u$, the case $q<\bar{q}$ is known as $L^2$-subcritical. In this scenario, the functional $I$ is bounded from below on $\widetilde{S}(c)$, and global minimizers always exist. Conversely, when $q>\bar{q}$, the case is designated as $L^2$-supercritical. Here, the functional $I$ is unbounded from below on $\widetilde{S}(c)$, necessitating additional technical arguments, as one can not expect to directly locate a minimum of $I$ on $\widetilde{S}(c)$.

The $L^2$-subcritical case, i.e., $2<p<\bar{q}$, has been widely studied. In \cite{CJS2010}, Colin, Jeanjean and Squassina considered the following equation
\begin{equation}\label{quasi eq norm ex}
	\begin{cases}
		-\Delta u-u\Delta(u^2)+\lambda u=|u|^{p-2}u,~~~~x\in\R^N,\\
		\int_{\R^N}|u|^2dx=c.
	\end{cases}
\end{equation}
They employed a minimization approach:
\begin{equation}\label{mini prob cjs}
	\widetilde{m}(c):=\inf_{u\in\widetilde{S}(c)}E(u),
\end{equation}
where
\begin{equation*}
	E(u):=\frac{1}{2}\int_{\R^N}|\nabla u|^2dx+\int_{\R^N}|u|^2|\nabla u|^2dx-\frac{1}{p}\int_{\R^N}|u|^{p}dx.
\end{equation*}
They proved that the minimization problem \eqref{mini prob cjs} has a minimizer for $p\in(2,2+\frac{4}{N})$ across all $c>0$, and for $p\in[2+\frac{4}{N},4+\frac{4}{N})$ when $c>0$ is sufficiently large. Notably, the non-differentiability of $E$ did not impact their proof. Jeanjean and Luo \cite{JLuo2013} later extended these results and provided a sharp conclusion.
Using a perturbation method, Jeanjean, Luo, and Wang \cite{JLW2015} investigated the case $p\in(2+\frac{4}{N},4+\frac{4}{N})$ and established the existence of a mountain pass type normalized solution, as well as another solution that is either a local minimizer or a global minimizer, depending on different conditions on the mass $c>0$.
In \cite{ZLW2023}, Zhang, Li and Wang presented a change of variables technique to construct multiple normalized solutions of \eqref{quasi eq norm ex} with energies tending to $0$ for $N\ge2$ and $p\in(2,4+\frac{4}{N})$. By demonstrating the equivalence between the quasilinear and semilinear equations under the  $L^2$-constraint, they established the existence of infinitely many pairs of normalized solutions in this dual framework.

In the $L^2$-critical/supercritical case, i.e., $p\ge\bar{q}$, Ye and Yu \cite{YY2021} studied the $L^2$-critical problem and identified a threshold value of $c>0$ that distinguishes between the existence and nonexistence of critical points of the functional $E$ restricted on $\widetilde{S}(c)$.  Successively, Li and Zou \cite{LZ2023} employed a perturbation method to establish the existence of ground state normalized solutions and further obtained multiplicity results through index theory.
Zhang, Chen, and Wang \cite{ZCW2023} worked directly with the functional $E$ in spite of its non-smoothness in low-dimensional space. They proved the existence of ground states to \eqref{quasi eq norm ex} for $p\in[4+\frac{4}{N},\frac{2N}{N-2}]$ when $N=3$ and $p\in[4+\frac{4}{N},\infty)$ when $N=1,2$.
More recently, Mao and Lu  \cite{ML2024} studied the existence of normalized solutions for quasilinear Schr\"odinger equations with combined nonlinearities of the form $\tau|u|^{q-2}u+|u|^{p-2}u$, where $2<q<2+\frac{4}{N}<p\leq2^*$. In a more general setting,  Deng, Squassina, Zhang and Zhong \cite{DSZZ2024} utilized a dual approach, fixed point index theory, and global bifurcation techniques to explore the existence of normalized solutions for quasilinear Schr\"odinger equations.

We observe that the aforementioned studies were all conducted within the $H^1$-subcritical and critical framework, i.e., $p\leq2^*$. In a recent result,
Jeanjean, Zhang and Zhong \cite{JZZ2025} established the existence of ground state normalized solutions for (\ref{quasi eq norm ex}) within the range $2^*<p<2\cdot2^*$ and for all $N\ge 1$.  Specifically, they demonstrated that the energy ground state is attained for all mass values $c>0$ when $1\le N\le 4$. For $N\ge5$, they identified an explicit threshold $c_0>0$ such that the energy ground state exists if and only if  $c\in(0, c_0)$.  Additionally, they analyzed the asymptotic behavior of these ground states as the mass $c>0$ varies. 
%Afterwards, Gao and Zhang \cite{GZ2025} considered \eqref{quasi eq norm ex} for some special dimension $N=3,4$. They proved that the energy functional $E(\cdot)$ admits no critical points for $p=2\cdot2^*$ and obtained some existence results via a Pohozaev manifold and perturbation method for $4+\frac{4}{N}<p<2\cdot2^*$.
For normalized solution of quasilinear Schr\"odinger equations involving potentials with power nonlinearity in the range $4+\frac{4}{N}<p<2\cdot2^*$, we refer the reader to \cite{gg1,gg2}.

In this paper, we focus on the study of \eqref{quasi eq}, which involves the critical exponent $2\cdot2^*$. This equation serves as an analogue to the Brezis-Nirenberg problem within the framework of normalized solutions for quasilinear Schr\"odinger equations.
If we ignore the fourth term $-u\Delta (u^2)$, the equation \eqref{quasi eq} reduces to the classical nonlinear Schr\"odinger equation, and the associated
 Sobolev critical problem becomes
\begin{equation}\label{classic eq mix}
	\begin{cases}
		-\Delta u+\lambda u=\tau|u|^{q-2}u+|u|^{2^*-2}u,~~~~x\in\R^N,\\
		\int_{\R^N}|u|^2dx=c,
	\end{cases}
\end{equation}
where $2<q<2^*$.
In \cite{S2},  Soave employed Nehari-Pohozaev manifold decomposition to establish the existence and asymptotic behavior of ground state solutions to \eqref{classic eq mix} in the cases of $L^2$-subcritical, $L^2$-critical, and $L^2$-supercritical perturbations, respectively. 
Subsequently, for the $L^2$-subcritical scenario where $q\in (2, 2+\frac{4}{N})$, Jeanjean and Le \cite{JL2022} demonstrated the existence of a second solution at the mountain pass level for $N\ge4$. Concurrently, Wei and Wu \cite{WW2022} established the existence of mountain pass type solutions for $N\ge3$.
For more results on this topic,  the reader may refer to \cite{CT2024,CT2024-2,JJLV2022,L2021}.

%To the best of our knowledge, there is currently no research in the existing literature that addresses the existence of normalized solutions for quasilinear Schr\"odinger equations with critical growth. 
%Motivated by this unexplored area, we pose the following natural question: How can we obtain solutions to \eqref{quasi eq}?
The aim of this paper is to 
conduct a thorough analysis of the existence of normalized solutions for \eqref{quasi eq} across the entire range $q\in(2, 2 \cdot 2^*)$ and for all $N\ge3$.
To achieve this, we will investigate in detail the interaction between the lower-order term $\tau|u|^{q-2}u$ and the Sobolev critical term $|u|^{2\cdot2^*-2}u$, and analyze how this interaction influences the structure of the constrained functional.

Compared to the classical Schr\"odinger equation \eqref{classic eq mix}, the study of \eqref{quasi eq} is significantly more intricate. We must not only address the non-smoothness caused by the quasilinear term  $-u\Delta(u^2)$, but also resolve the loss of compactness resulting from the Sobolev critical term $|u|^{2\cdot2^*-2}u$.  The main challenges are outlined as follows:
\begin{itemize}
	\item In the process of compactness analysis, we encounter two main obstacles arising from the quasilinear term $-u\Delta (u^2)$.
	On the one hand, it is widely recognized that Brezis-Lieb type lemma plays a crucial role in studying $L^2$-constraint problems. However, for our problem, such a splitting lemma is not readily available.
	The primary difficulty here lies in the emergence of undesirable cross-term interactions:
	\begin{align*}
		\int_{\R^N}|v_1+v_2|^2|\nabla (v_1+v_2)|^2dx&=\int_{\R^N}|v_1|^2|\nabla v_1|^2dx+2\int_{\R^N}|v_1|^2\nabla v_1\nabla v_2dx+\int_{\R^N}|v_1|^2|\nabla v_2|^2dx\nonumber\\
		&\quad+2\int_{\R^N}v_1v_2|\nabla v_1|^2dx+4\int_{\R^N}v_1v_2\nabla v_1\nabla v_2dx+2\int_{\R^N}v_1v_2|\nabla v_2|^2dx\nonumber\\
		&\quad+\int_{\R^N}|v_2|^2|\nabla v_1|^2dx+2\int_{\R^N}\nabla v_1\nabla v_2|v_2|^2dx+\int_{\R^N}|v_2|^2|\nabla v_2|^2dx.
	\end{align*}
	On the other hand, for the bounded Palais-Smale sequence $\{u_n\}$, the weak convergence $u_n\rightharpoonup u$ in $H^1(\R^N)$  does not imply the desired convergence
	\begin{equation*}
		\int_{\R^N}\left(u_n\phi|\nabla u_n|^2+|u_n|^2\nabla u_n\nabla \phi\right)dx\to\int_{\R^N}\left(u\phi|\nabla u|^2+|u|^2\nabla u\nabla \phi\right)dx~~~\text{as}~~n\to+\infty,
	\end{equation*}
	where $\phi\in\mathcal{C}_0^{\infty}(\R^N)$. In our arguments, the critical term $|u|^{2\cdot2^*-2}u$ introduces additional complexity, preventing us from bypassing the discussion on such splitting property as was done in \cite{JLW2015,LZ2023} for addressing the subcritcal problems.

	\item 
	When investigating the problem \eqref{classic eq mix}, the parameter range of $q$ in the $L^2$-subcritcal perturbation term $\tau|u|^{q-2}u$ typically yields two distinct types of solutions: local minimizers and mountain pass solutions. Since problem \eqref{quasi eq} exhibits a similar variational structure, it is reasonable to seek these solutions when $q$ lies within the $L^2$-subcritcal range, i.e., $q<4+\frac{4}{N}$.
	However, for quasilinear problems with Sobolev critical growth, we face the following challenges:
	\begin{enumerate}[label=(\roman*)]
		\item How can we establish the strict subadditivity of the local minimization energy, and then derive the compactness of minimizing sequences in $\widetilde X$ from the local minimization geometry?
		\item How can we establish a precise upper bound estimate for the corresponding mountain pass energy levels, a commonly used strategy for recovering compactness in such problems? This step is crucial in the Sobolev critical setting, ensuring that the minimax levels remain below the threshold needed for compactness recovery.
	For our problem \eqref{quasi eq}, by the Sobolev inequality as follows,
	\begin{equation*}
		\mathcal{S}\left(\int_{\R^N}|u|^{2\cdot2^*}dx\right)^{\frac{2}{2^*}}\leq4\int_{\R^N}|u|^{2}|\nabla u|^2dx,
	\end{equation*}
	we observe that the terms $-u\Delta (u^2)$ and $|u|^{2\cdot2^*-2}u$ in \eqref{quasi eq} are comparable. This necessitates careful handling of the additional perturbation $-\Delta u$ when performing subtle estimates and analyses. Therefore, it is essential to separately determine the compactness threshold for our problem when
	$2<q<2+\frac{4}{N}$ and when $q\ge2+\frac{4}{N}$, and then conduct a meticulous analysis to obtain the relevant energy estimates.
	\end{enumerate}
	\item 
 Beyond the aforementioned obstacles, the non-vanishing of the Lagrange multiplier is also instrumental in controlling potential losses of compactness.
	The conventional approach to proving $\lambda\neq0$ involves combining the Nehari-Pohozaev type identity, as seen in \cite{LZ2023}.  Within this framework, the result that $\lambda>0$ could only be established for \eqref{quasi eq norm ex} when $p\leq2^*$, which leads to existence results that are valid only in low-dimensional spaces, specifically for $N\leq3$ (see also \cite{ML2024,ZCW2023}). However, since we are actually dealing with a $H^1$-supercritical problem, the standard method for determining the sign of $\lambda\in\R$ is no longer applicable.
	Therefore, alternative techniques are needed to ascertain the sign of $\lambda\in\R$, which will remove additional conditions on $N$, $p$, and allow for extension to a broader class of problems.
\end{itemize}

%Since then, the study of searching for normalized solutions to the NLS equations has attracted widespread attention and there are numerous contributions flourishing within this topic. 

%There is still a gap in the requirement for $p$, primarily because if $p$ is too large, specifically when greater than $2^*$, the usual methods for determining the sign of the Lagrange multiplier fail, see \cite[Remark 1.3]{LZ2023}.

%For the mass-supercritical case, i.e., $p>4+\frac{4}{N}$,

%and among many possible choices we just mention, for instance, the works, \cite{bartsch2016, bartsch2019, bartsch2021, BM2021,CJS2023, CJ2019, Jean1997,JJLV2022,jeanjeanttl,soavenls,S2020-2,weiwu}. 

\subsection{Main results}
%In contrast to semilinear Schr\"odinger eqautions, the presence of the quasilinear term $-u\Delta (u^2)$ in \eqref{quasi eq}  renders the corresponding variational functional non-differentiable. This non-differentiability precludes the direct application of standard smooth critical point theories and variational methods.
%Hence, in proving our main results, the primary difficulties arise from the loss of compactness due to the unbounded domain $\R^N$ and the nonlinearity with the critical Sobolev growth $|u|^{2\cdot2^*-2}u$, as well as the necessity of selecting an appropriate working space due to $-u\Delta(u^2)$.

It is well known that for $N\ge2$, the functional $I$ defined by \eqref{energy functional} typically lacks smoothness in $\widetilde{X}$,
 which precludes the direct application of classical critical point theory. To handle the quasilinear term $-u\Delta(u^2)$, it is necessary to work in a smaller space to  achieve smoothness.
To overcome this challenge, we adopt a perturbation method as used in \cite{JLW2015} and \cite{LZ2023}. To be more precise, we consider the following perturbed functional:
\begin{equation}\label{pertu fun}
	I_{\mu}(u):=\frac{\mu}{\theta}\int_{\R^N}|\nabla u|^{\theta}dx+I(u),~~~\mu\in(0,1]
\end{equation}
on the reflexive Banach space $X:=W^{1,\theta}(\R^N)\cap H^1(\R^N)$,
where
\begin{equation*}
	\frac{4N}{N+2}<\theta<\min\left\{\frac{4N+4}{N+2},N\right\}.
\end{equation*}
Then $I_{\mu}$ is a well-defined and of class $\mathcal{C}^1$ on $X$, as shown in \cite[Lemma A.1]{LZ2023}. At this point, we study the perturbed problem by searching for critical points of $I_{\mu}$ restricted on
\begin{equation*}
	S(c):=\{u\in X: \int_{\R^N}|u|^2dx=c\}.
\end{equation*}
We introduce the related Pohozaev type mainfold defined by
\begin{equation}\label{Pohozaev type mainfold}
	\Lambda_{\mu}(c):=\{u\in S(c): Q_{\mu}(u)=0\},
\end{equation}
where
\begin{align}\label{Pohozaev type mainfold id}
	Q_{\mu}(u)&:=\mu(1+\gamma_{\theta})\int_{\R^N}|\nabla u|^{\theta}dx+\int_{\R^N}|\nabla u|^2dx+(N+2)\int_{\R^N}|u|^2|\nabla u|^{2}dx\nonumber\\
	&\quad-\tau\gamma_q\int_{\R^N}|u|^qdx-\gamma_{2\cdot2^*} \int_{\R^N}|u|^{2\cdot2^*}dx,
\end{align}
with $\gamma_q:=\frac{N(q-2)}{2q}$.
%Actually, $Q_{\mu}(u)=0$ corresponds to a Pohozaev type identity and the set $\Lambda_{\mu}(c)$ appears as a natural constraint, see Lemma \ref{poho id}.
Our main strategy is to first identify the critical point $u_{\mu}$ of the perturbed functional $I_\mu$ for any fixed $\mu\in (0,1]$. Subsequently, we take the limit as $\mu\to0^+$, which yields the normalized solutions to the original problem \eqref{quasi eq}.

To state our main results, we introduce the following constants and notations.  Throughout this paper, %$\mathcal{S}>0$ denotes the optimal Sobolev constant, which depends only on $N$ and is given by 
we define
\begin{equation*}
	\mathcal{S}:=\inf_{\substack{u\in\mathcal{D}^{1,2}(\R^N) \\ u\neq0}}\frac{\|\nabla u\|_2^2}{\|u\|^2_{2^*}},
\end{equation*}
where
\begin{equation*}
	\mathcal{D}^{1,2}(\R^N):=\{u\in L^{2^*}(\R^N): |\nabla u|\in L^2(\R^N)\}
\end{equation*}
is the completion of $C_0^{\infty}(\R^N)$ with the norm $\|u\|_{\mathcal{D}^{1,2}(\R^N)}=\|\nabla u\|_2$.

Set
\begin{equation}\label{func pro c value}
	c_0:=\left(\frac{1}{2K}\right)^{\frac{N}{2}}>0,
\end{equation}
where
\begin{align*}\label{func pro K value}
	K:=\frac{\tau \mathcal{C}_2(q,N)}{q}\left[-\frac{\alpha_0}{\alpha_2}\frac{\tau \mathcal{C}_2(q,N)2^*\mathcal{S}^{\frac{2^*}{2}}}{2^{2^*-1}q}\right]^{\frac{\alpha_0}{\alpha_2-\alpha_0}}+\frac{1}{2\cdot2^*}\left(\frac{4}{\mathcal{S}}\right)^{\frac{2^*}{2}}\left[-\frac{\alpha_0}{\alpha_2}\frac{\tau \mathcal{C}_2(q,N)2^*\mathcal{S}^{\frac{2^*}{2}}}{2^{2^*-1}q}\right]^{\frac{\alpha_2}{\alpha_2-\alpha_0}}>0,
\end{align*}
where $\alpha_0$ and $\alpha_2$ are given by \eqref{alpha012}.
\begin{equation}\label{mass normal critical sup}
	\bar{c}_1:=\left(\frac{N+2}{2\tau N\mathcal{C}_1^{2+\frac{4}{N}}\left(2+\frac{4}{N},N\right)}\right)^{\frac{N}{2}}.
\end{equation}
\begin{equation}\label{mass sub restriction1}	\bar{c}_2:=\left(\frac{1}{\tau\mathcal{C}_1^q(q,N)}\frac{(N^2+4)q}{N[4N-q(N-2)]}\right)^{\frac{4N+4-Nq}{N(q-2)}}\left(\frac{D_1(N)}{\tau\mathcal{C}_2(q,N)}\frac{2q(N+2)}{4N-q(N-2)}\right)^{\frac{(N+2)[N(q-2)-4]}{2N(q-2)}},
\end{equation}
where
\begin{equation*}
	D_1(N):=\min\left\{\frac{N^2+4}{2N(N+2)},\frac{2}{N}\right\}.
\end{equation*}
\begin{equation}\label{mass quasi critical sup}
	\bar{c}_3:=\left(\frac{4N+4}{\tau N(N+2)\mathcal{C}_2\left(4+\frac{4}{N},N\right)}\right)^{\frac{N}{2}}.
\end{equation}
Here $\mathcal{C}_1(q,N)$ and $\mathcal{C}_2(q,N)$ denote the best constants for the Gagliardo-Nirenberg type inequality in $\R^N$, as given in \eqref{gn} and \eqref{gn2}, respectively.

We define the set
\begin{equation}\label{local ground state ball}
	\mathcal{V}_0(c):=\{v\in\widetilde{S}(c):\|v\nabla v\|_2^2<\tilde{\rho}_0\},
\end{equation}
where
\begin{equation*}
	\tilde{\rho}_0:=\left(\frac{\tau N\mathcal{C}_2(q,N)}{2}\left(\frac{1}{q}-\frac{(N-2)\gamma_q}{N(N+2)}\right)\right)^{\frac{2(N+2)}{4N+4-Nq}}c_0^{\frac{4N-q(N-2)}{4N+4-Nq}}.
\end{equation*}

Now, we present our main results on the existence and multiplicity of normalized solutions to equation \eqref{quasi eq}. We first consider the case where
$q\in(2,2+\frac{4}{N})$.
\begin{theorem}\label{thm: main result1-1}
	Let $N\ge3$, $q\in(2,2+\frac{4}{N})$, $\tau>0$ and $c\in(0,c_0)$. Then \eqref{quasi eq} admits a solution pair $(\tilde v_0,\tilde\lambda_0)\in(\widetilde{S}(c)\cap L^{\infty}(\R^N))\times \R^+$ satisfying
	\begin{equation*}
		\tilde v_0\in\mathcal{V}_0(c),~~~~\tilde v_0>0,~~~~I(\tilde v_0)=\inf_{v\in\mathcal{V}_0(c)}I(v)<0.
	\end{equation*}
\end{theorem}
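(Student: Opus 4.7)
The strategy is the perturbation method: for fixed $\mu\in(0,1]$ I would first locate a local minimizer $u_\mu\in S(c)$ of the smooth functional $I_\mu$ defined in \eqref{pertu fun}, and then pass to the limit $\mu\to 0^+$ to produce $\tilde v_0$. The key structural input is that, when $c<c_0$, the set $\mathcal{V}_0(c)$ defined in \eqref{local ground state ball} captures a genuine local well for $I$ on $\widetilde S(c)$, and the analogous sets $\mathcal{V}_\mu(c):=\{u\in S(c):\|u\nabla u\|_2^2<\rho_\mu\}$ do the same for $I_\mu$, with $\rho_\mu\to\tilde\rho_0$ as $\mu\to 0^+$.

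For fixed $\mu$, I would first establish the local geometry by combining the Sobolev inequality $\mathcal{S}\|u\|_{2\cdot 2^*}^2\le 4\|u\nabla u\|_2^2$ with the Gagliardo--Nirenberg estimates giving the constants $\mathcal{C}_1(q,N)$ and $\mathcal{C}_2(q,N)$: on $\mathcal{V}_\mu(c)$ the functional $I_\mu$ is bounded below, and on the boundary $\{\|u\nabla u\|_2^2=\rho_\mu\}$ it is bounded below by a strictly positive constant governed by the scalar optimization $K$ of \eqref{func pro c value}; this is exactly where $c<c_0$ enters, as the chosen $c_0$ forces the critical and subcritical contributions to be dominated by the positive $\|u\nabla u\|_2^2$-term on the boundary. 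A small-$\|\nabla u\|_2$ dilation of a fixed $L^2$-normalized profile then gives $m_\mu(c):=\inf_{\mathcal{V}_\mu(c)}I_\mu<0$, strictly below every boundary value, so any minimizing sequence stays strictly inside $\mathcal{V}_\mu(c)$. Ekeland's principle produces a Palais--Smale sequence $\{u_n\}$ at level $m_\mu(c)$ that is bounded in $X$ and automatically satisfies $Q_\mu(u_n)\to 0$. The associated Lagrange multipliers $\lambda_n$ obtained from $I_\mu'(u_n)+\lambda_n u_n\to 0$ satisfy $\liminf_n\lambda_n>0$: testing against $u_n$, using $Q_\mu(u_n)\to 0$, and invoking $m_\mu(c)<0$ forces the $L^2$-term to carry a positive share of the energy balance. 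This positivity is enough to prevent mass loss at infinity; strong convergence to a limit $u_\mu\in\mathcal{V}_\mu(c)\cap S(c)$ with $I_\mu(u_\mu)=m_\mu(c)$ then follows from a Brezis--Lieb type splitting, where the crucial strict inequality $m_\mu(c)<0<\frac{1}{N}\mathcal{S}^{N/2}$ rules out Sobolev bubbling. A maximum-principle/reflection argument gives $u_\mu>0$.

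Finally, I would send $\mu\to 0^+$. The uniform constraints $\|u_\mu\nabla u_\mu\|_2^2<\rho_\mu$ and $\|u_\mu\|_2^2=c$ provide uniform $H^1$ bounds, and Moser iteration on the Euler equation for $u_\mu$ yields a uniform $L^\infty$ bound. Since $\mu\|\nabla u_\mu\|_\theta^\theta\to 0$, standard elliptic estimates produce a limit $\tilde v_0\in\widetilde X\cap L^\infty(\R^N)$ with $\lambda_\mu\to\tilde\lambda_0$; the positive lower bound on $\lambda_\mu$ is inherited in the limit, so $\tilde\lambda_0>0$, which both prevents mass loss and implies $\tilde v_0\in\widetilde S(c)$. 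Lower semicontinuity and the uniform bound $\|u_\mu\nabla u_\mu\|_2^2<\rho_\mu\to\tilde\rho_0$ give $\tilde v_0\in\mathcal{V}_0(c)$ and $I(\tilde v_0)=\inf_{\mathcal{V}_0(c)}I<0$. The hardest step is the compactness analysis for $\{u_n\}$ at fixed $\mu$: the quasilinear cross-terms highlighted in the introduction obstruct a naive Brezis--Lieb decomposition, and the critical exponent $2\cdot 2^*$ competes directly with the Sobolev inequality for $\|u\nabla u\|_2^2$. Controlling both simultaneously — without which neither the fixed-$\mu$ minimizer nor the passage to the limit would close — is where the explicit threshold $c_0$ and the strict negativity of $m_\mu(c)$ are indispensable.
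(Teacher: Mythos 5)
Your outline matches the paper's overarching scheme (perturb, localize in a ball, minimize, pass to the limit), but at each of the two compactness steps you substitute an argument that either does not work or leaves a genuine hole.

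\textbf{Fixed-$\mu$ compactness.} You propose Ekeland's principle, a Palais--Smale sequence, and then a claim that ``$\liminf_n\lambda_n>0$\ldots prevents mass loss at infinity''. The sign of the Lagrange multiplier is not how the paper closes this step, and the chain ``test against $u_n$ + $Q_\mu(u_n)\to 0$ + $m_\mu(c)<0$ $\Rightarrow$ $\liminf\lambda_n>0$'' is not justified: combining the constrained Euler--Lagrange equation with the Pohozaev identity $Q_\mu=0$ eliminates one of the two unknowns but leaves $\lambda_n c$ as a difference of two gradient/$L^q$ terms whose sign is not forced by $m_\mu(c)<0$ alone. The paper instead proves strict sub-additivity $m_\mu(c)<m_\mu(\alpha)+m_\mu(c-\alpha)$ whenever one of the two is attained (Lemma \ref{mcproperty}(iii)) and runs a concentration-compactness dichotomy argument (Lemma \ref{strong convergence}), with a disjoint truncation $u_n=z_n+w_n$ specifically designed to neutralize the quasilinear cross terms. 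This is the mechanism that actually rules out splitting, and it is absent from your plan.

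\textbf{Limit $\mu\to 0^+$.} You assert both ``$\mu\|\nabla u_\mu\|_\theta^\theta\to 0$'' and ``the positive lower bound on $\lambda_\mu$ is inherited; $\tilde\lambda_0>0$ prevents mass loss and implies $\tilde v_0\in\widetilde S(c)$''. Neither is established. In the paper, the vanishing of the perturbation follows \emph{after} a profile decomposition (Theorem \ref{profile decom}) and a scaling-reprojection argument showing that only one profile can carry the mass; it is a conclusion, not an input. Also, $\lambda>0$ alone does not prevent Sobolev concentration at a point (bubbling carries no $L^2$-mass to infinity but still steals $\|u\nabla u\|_2^2$), so even a verified positive multiplier would not suffice without the local-ball threshold control derived from $c<c_0$ and the profile decomposition. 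The paper derives $\tilde\lambda>0$ only at the very end, from the Pohozaev identity of the limit $\check v$ together with $I(\check v)=m_0^*(c)\le 0$; it is not used as a compactness mechanism. Finally, the Brezis--Lieb splitting for $\int|u|^2|\nabla u|^2$ (Lemma \ref{decomposition of quasi}) depends on a uniform $L^\infty$ bound and $C^{1,\beta}$ regularity of the approximating solutions, which you acknowledge as ``the hardest step'' but do not close.

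\textbf{Minor.} ``Maximum-principle/reflection'' does not give $\tilde v_0>0$ in $\R^N$ without a priori symmetry; the paper uses Schwarz rearrangement (Remark \ref{symmetric local}) to obtain a positive, radially non-increasing minimizer at the perturbed level. Also, your local ball controls only $\|u\nabla u\|_2^2$, whereas the paper controls the full quantity $\xi_\mu(u)=\frac{\mu}{\theta}\|\nabla u\|_\theta^\theta+\|\nabla u\|_2^2+\|u\nabla u\|_2^2$; the former is usable but requires an extra step to recover $H^1$-bounds along a minimizing sequence.
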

\begin{theorem}\label{thm: main result1}
	Let $N\ge3$, $q\in(2,2+\frac{4}{N})$ and $c\in(0,c_0)$. Then there exists a sufficiently large $\tau^*=\tau^*(c)>0$ such that for any $\tau>\tau^*$, \eqref{quasi eq} admits a solution pair $(\bar u,\bar\lambda)\in(\widetilde{S}(c)\cap H^1_{rad}(\R^N)\cap L^{\infty}(\R^N))\times \R^+$ satisfying
	\begin{equation*}
		0<I(\bar u)<I(\tilde v_0)+\frac{\mathcal{S}^{\frac{N}{2}}}{2N}.
	\end{equation*}
\end{theorem}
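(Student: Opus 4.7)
The plan is to produce a second critical point of the perturbed functional $I_\mu$ on $S(c)\cap H^1_{rad}(\R^N)$ of mountain pass type sitting strictly above the local minimum produced by Theorem \ref{thm: main result1-1}, and then pass to the limit $\mu\to 0^+$ to recover a solution of the original quasilinear problem. Concretely, for each fixed $\mu\in(0,1]$ I would first obtain a $\mu$-dependent local minimizer $v_{0,\mu}$ of $I_\mu$ on $S(c)\cap\mathcal{V}_0(c)$ by the same argument as in Theorem \ref{thm: main result1-1} (minimization on the bounded set $\mathcal{V}_0(c)$ intersected with $S(c)$), verify that $I_\mu(v_{0,\mu})\to I(\tilde v_0)<0$ as $\mu\to 0^+$, and then perform the fiber scaling $s\ast u(x):=s^{N/2}u(sx)$, which preserves the $L^2$-norm. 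Because $\gamma_{2\cdot 2^\ast}>\gamma_q$ and the quasilinear term scales like $s^{N+2}$, the map $s\mapsto I_\mu(s\ast v_{0,\mu})$ goes to $-\infty$ as $s\to\infty$, so there is some $s_1\gg 1$ with $I_\mu(s_1\ast v_{0,\mu})<I_\mu(v_{0,\mu})$; together with a barrier argument on $\partial\mathcal{V}_0(c)$ (using the local minimum geometry) this yields the standard mountain pass structure on $S(c)\cap H^1_{rad}$ uniformly in $\mu$, with MP level
\begin{equation*}
m_\mu(c):=\inf_{\gamma\in\Gamma_\mu}\max_{t\in[0,1]} I_\mu(\gamma(t))>I_\mu(v_{0,\mu}).
\end{equation*}

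The decisive step is the energy estimate $m_\mu(c)<I_\mu(v_{0,\mu})+\frac{\mathcal{S}^{N/2}}{2N}$ for all small $\mu$, which is the compactness threshold associated to the quasilinear critical term (the $\frac{1}{2N}$ comes from $\frac{1}{4}-\frac{1}{2\cdot 2^\ast}=\frac{1}{2N}$, reflecting that for Aubin--Talenti bubbles the Sobolev inequality $\mathcal{S}\|u\|_{2\cdot 2^\ast}^2\le 4\|u\nabla u\|_2^2$ is saturated by $u$ with $u^2$ a Talenti function). I would test along the path $t\mapsto v_{0,\mu}+t\,\eta\, U_{\eps}$, where $U_{\eps}$ is a suitably truncated and rescaled Aubin--Talenti bubble supported far from $\supp v_{0,\mu}$, projected back onto $S(c)$ by a small dilation. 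The interaction terms
\begin{equation*}
\int_{\R^N}|v_{0,\mu}+tU_{\eps}|^{2\cdot 2^\ast}dx,\qquad \int_{\R^N}|v_{0,\mu}+tU_{\eps}|^2|\nabla(v_{0,\mu}+tU_{\eps})|^2dx
\end{equation*}
must be expanded carefully, using the bulky algebraic identity already highlighted by the authors; the cross terms with $v_{0,\mu}$ on the Talenti support are controlled by $O(\eps^{(N-2)/2})$ and the loss from the $-\Delta u$ perturbation is $O(\eps^{N-2})$, while the negative contribution of the $L^q$-subcritical term, which scales like $\tau\,\eps^{\text{(something small)}}$, can be made to dominate once $\tau>\tau^\ast(c)$ is large enough. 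This is the hard part: unlike the classical Brezis--Nirenberg case one must simultaneously fight two critical objects, namely the quasilinear density $|u|^2|\nabla u|^2$ and the term $|u|^{2\cdot 2^\ast}$, and track the expansion around the non-trivial base profile $v_{0,\mu}$ rather than around zero.

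Given the strict inequality above, I would run Ghoussoub's min-max principle on the augmented functional $\widetilde I_\mu(s,u):=I_\mu(s\ast u)$ à la Jeanjean to obtain a bounded Palais--Smale sequence $\{u_n\}\subset S(c)\cap H^1_{rad}$ satisfying in addition $Q_\mu(u_n)\to 0$. Boundedness in $X$ follows from combining $I_\mu(u_n)\to m_\mu(c)$ with $Q_\mu(u_n)\to 0$ (the $L^2$-subcritical character of $q$ makes $\|\nabla u_n\|_\theta^\theta$, $\|\nabla u_n\|_2^2$ and $\|u_n\nabla u_n\|_2^2$ all controlled). Extracting a weak limit $u_\mu$ and a Lagrange multiplier $\lambda_\mu$, radial compactness gives convergence of the $L^q$-term; the Brezis--Lieb-type decomposition of the quasilinear and critical terms together with the strict bound $m_\mu(c)-I_\mu(v_{0,\mu})<\frac{\mathcal{S}^{N/2}}{2N}$ forces the defect measure to vanish, i.e.\ $u_n\to u_\mu$ strongly in $X$; moreover passing to the limit in $Q_\mu(u_n)=0$ and in the equation yields $\lambda_\mu>0$ (any $\lambda_\mu\le 0$ together with the Pohozaev identity $Q_\mu(u_\mu)=0$ and $I_\mu(u_\mu)>0$ contradicts an integrated Nehari-type computation).

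Finally I would let $\mu\to 0^+$. Because $m_\mu(c)\to m_0(c)\in(I(\tilde v_0),I(\tilde v_0)+\frac{\mathcal{S}^{N/2}}{2N})$ and the bounds on $u_\mu$ in $H^1$ and in $\int|u_\mu|^2|\nabla u_\mu|^2\,dx$ are $\mu$-uniform, the family $\{u_\mu\}$ is bounded in $\widetilde X\cap H^1_{rad}$. Applying the new convergence theorems advertised in the abstract (which upgrade the weak convergence of $u_\mu$ to the strong convergence of the quasilinear gradient term and allow passage to the limit in $\int(u_\mu\phi|\nabla u_\mu|^2+|u_\mu|^2\nabla u_\mu\cdot\nabla\phi)\,dx$) gives a limit $\bar u\in\widetilde S(c)\cap H^1_{rad}$ and $\bar\lambda>0$ solving \eqref{quasi eq} with $I(\bar u)=m_0(c)$. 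The bound $I(\bar u)<I(\tilde v_0)+\frac{\mathcal{S}^{N/2}}{2N}$ is built in, and $\bar u\in L^\infty(\R^N)$ follows from Moser iteration applied to the semilinear reformulation obtained via the dual change of variables, once $\bar\lambda>0$ and $\bar u\in H^1$ are in hand.
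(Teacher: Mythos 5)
Your high-level architecture matches the paper's: a $\mu$-dependent local minimizer, a mountain-pass level $M_\mu(c)$ rooted at that minimizer, a strict energy estimate below the noncompactness threshold $\tfrac{\mathcal S^{N/2}}{2N}$, Ghoussoub--Jeanjean minimax to obtain a Palais--Smale sequence with the Pohozaev constraint, strong convergence at fixed $\mu$, and then a Brezis--Lieb splitting of the quasilinear term plus a dichotomy to pass $\mu\to0^+$. Two of your central moves, however, are not viable as stated.

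First, the test function. You propose to add a truncated Aubin--Talenti bubble ``supported far from $\supp v_{0,\mu}$''. But the local minimizer obtained in Lemma~\ref{local minimum for perturbed} is, after Schwarz rearrangement (Remark~\ref{symmetric local}), a positive, radially decreasing function on all of $\R^N$; its support is $\R^N$. Moreover the mountain-pass geometry is run in $S_r(c)$, and a bubble translated far from the origin is not radial, while a radial ``ring'' profile does not saturate the Sobolev ratio $\mathcal S\|u\|_{2\cdot2^*}^2\le 4\|u\nabla u\|_2^2$. The paper's $U_\eps$ in \eqref{test function} is therefore centered at the origin, exactly where $u_{\mu,c}$ peaks, and the cross terms \eqref{energy estimate eq10}--\eqref{energy estimate eq16} are estimated using the interior regularity $u_{\mu,c}\in L^\infty\cap\mathcal C^{1,\beta}_{loc}$ (Remark~\ref{regular rmk}) together with the Euler--Lagrange identity \eqref{energy estimate eq4} tested against $U_\eps$; in particular the $\mu$-dependent perturbation term $\mu\int|\nabla u_{\mu,c}|^{\theta-2}\nabla u_{\mu,c}\nabla U_\eps\,dx$ must be absorbed by choosing $\mu=\eps^{\alpha/\theta}$ with $\alpha$ in a precise window. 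Your sketch neither addresses the overlap nor the coupling of $\mu$ with $\eps$; without it the estimate $M_\mu(c)\le m_\mu(c)+\tfrac{\mathcal S^{N/2}}{2N}-\delta'$ does not close.

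Second, the Lagrange multiplier. You assert that $\lambda_\mu>0$ follows at fixed $\mu$ from ``$Q_\mu(u_\mu)=0$ and $I_\mu(u_\mu)>0$ via an integrated Nehari-type computation'', and accordingly place $\tau^*$ in the energy estimate instead. Neither is how the paper works, and the former fails. Eliminating $\|u\|_{2\cdot2^*}^{2\cdot2^*}$ between $Q_\mu(u)=0$ and the Nehari identity leaves
\begin{equation*}
\frac{N+2}{4}\lambda\|u\|_2^2
=\tau\Bigl(\frac{N+2}{4}-\gamma_q\Bigr)\|u\|_q^q
-\frac{N-2}{4}\|\nabla u\|_2^2
+\mu\Bigl(\gamma_\theta-\frac{N-2}{4}\Bigr)\|\nabla u\|_\theta^\theta ,
\end{equation*}
and the $-\|\nabla u\|_2^2$ term has the wrong sign; nothing forces the right-hand side to be positive without a size condition on $\tau$. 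This is exactly the obstruction the authors single out in the Introduction (the problem is effectively $H^1$-supercritical, so the classical route to $\lambda>0$ used for $p\le 2^*$ breaks down). In the paper, Lemma~\ref{energy estimate} holds for every $\tau>0$; the threshold $\tau^*$ enters only at the very end of the proof of Theorem~\ref{thm: main result1}, through the limiting identity \eqref{pro local min eq4}
\begin{equation*}
\bar\lambda\|\bar u\|_2^2=\tau\Bigl(1-\frac{4\gamma_q}{N+2}\Bigr)\|\bar u\|_q^q-\frac{N-2}{N+2}\|\nabla\bar u\|_2^2,
\end{equation*}
paired with uniform lower and upper bounds on $\|\bar u\|_q^q$ and $\|\nabla\bar u\|_2^2$. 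Your proposal thus misallocates the hypothesis $\tau>\tau^*$ and leaves the genuinely delicate step --- non-vanishing of the multiplier --- without a valid argument.
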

\begin{remark}\label{thm1rem}
	In Theorem \ref{thm: main result1-1}, we develop new analytical techniques to address the obstacles
caused by the lack of compactness due to the critical exponent and the non-radial space setting. 
	We begin by establishing the strict subadditial property of the minimizing energy associated with the perturbed functional $I_{\mu}$ restricted on $V_\mu(c)$, as defined in \eqref{local mini set}. Subsequently, by employing the concentration-compactness principle, we obtain local minimizers $v_\mu\in V_\mu(c)$ for any given $\mu>0$, satisfying $0>I_{\mu}(v_\mu)=m_{\mu}(c):=\inf_{u\in V_\mu(c)} I_{\mu}(u)$. These minimizers do not necessarily exhibit radial symmetry. 

To prove a convergence result, we need to establish a Brezis-Lieb type lemma. However, the quasilinear term $-u\Delta (u^2)$ poses significant challenges in achieving this.  To overcome this issue, 
	in Lemma \ref{strong convergence}, for the minimizing sequence $\{u_n\}$ of $I_\mu$ on $V_\mu(c)$, we decomposing $u_n$ into two disjoint families of functions, thereby effectively neutralizing the impact of the cross terms. This leads to the following result: weak convergence $u_n\rightharpoonup v_{\mu}$ in $X$ implies that
	\begin{equation*}
		u_n\to v_{\mu}~~~\text{strongly in}~X~~~\text{and}~~~\|u_n\nabla u_n\|^2_2\rightarrow \|v_{\mu}\nabla v_{\mu}\|^2_2.
	\end{equation*}
	Next, we establish a new profile decomposition theorem and apply it to derive the convergence of the sequence $\{v_{\mu_n}\}\subset V_{\mu_n}(c)$ within the non radially symmetry space framework, as stated in Proposition \ref{mu to0 local mini}, to obtain a local minimizer for the original problem \eqref{quasi eq} in $\mathcal{V}_0(c)\subset\widetilde{X}$.

 	While in Theorem \ref{thm: main result1}, we work in the radially symmetric space $X_r$, a setting crucial for establishing the compactness of Palais-Smale sequences at the mountain pass level $M_{\mu}(c)$. However, in the Sobolev critical case, deriving sharp energy estimates is a vital step towards recovering compactness, and this will be achieved by choosing appropriate test functions. To address this, we slightly modify the Aubin-Talenti bubbles associated with the 
Sobolev inequality, centering them at the origin and cutting them off (see \eqref{test function}). This allows us to devise innovative approaches to establish the rigorous inequality concerning the energy level $M_{\mu}(c)$.
	More precisely, for small $\mu>0$, starting from $v_{\mu}\in V_{\mu}(c)$, we construct the following family of functions:
	 \begin{equation*}
		\widetilde{W}_{\eps,t}:=v_{\mu}+tU_\eps,~~~~W_{\eps,t}(x):=\eta^{\frac{N-2}{4}}\widetilde{W}_{\eps,t}(\eta x)\in S_r(c),
	 \end{equation*}
	 where $\eps>0$, $t>0$ and $\eta:=\left(\frac{\|\widetilde{W}_{\eps,t}\|_2^2}{c}\right)^{\frac{2}{N+2}}$. Due to the quasilinear term
	  \begin{equation*}\label{cross term}
		\int_{\R^N}|\nabla \widetilde{W}^2_{\eps,t}|^2dx=\int_{\R^N}|\nabla (v_{\mu}+tU_\eps)^2|^2dx,
	  \end{equation*}
	we must handle several cross terms. Using the regularity results from Remark \ref{regular rmk}, we derive the necessary estimates for these interaction terms, see \eqref{energy estimate eq10}-\eqref{energy estimate eq14}.
	Since $v_{\mu}$ is a solution to the perturbed problem, we develop refined techniques to carefully control the rate at which the parameter $\mu$ tends to $0$. This ultimately establish the following relationship between $m_\mu(c)$ and $M_\mu(c)$:
	  \begin{equation*}
		M_{\mu}(c)\leq m_{\mu}(c)+\frac{1}{2N}\mathcal{S}^{\frac{N}{2}}-\delta,~~~\text{for}~~~2<q<2+\frac{4}{N}~~\text{and}~~\mu>0~~\text{small enough}.
	\end{equation*}
	We then apply this vital inequality to show the strong convergence in $\widetilde{X}$, as detailed in Proposition \ref{converge process for mu to 0}.
\end{remark}	

In comparison with \eqref{classic eq mix}, the presence of both $-\Delta u$ and $-u\Delta(u^2)$ in \eqref{quasi eq} gives rise to two distinct exponents, namely $2+\frac{4}{N}$ and $4+\frac{4}{N}$. These exponents correspond to the $L^2$-critical indices for $-\Delta u$ and $-u\Delta(u^2)$, respectively. This dual nature prevents us from classifying the problem as either $L^2$-subcritical or $L^2$-supercritical in the conventional sense. Instead, an intermediate gap emerges, i.e., $2+\frac{4}{N}<q<4+\frac{4}{N}$, which complicates the structure of the associated functional restricted on $S(c)$. Our result in this case can be stated as follows.

\begin{theorem}\label{thm: main result2}
	Assume that one of the following conditions holds:
	\begin{enumerate}[label=(\roman*)]
		\item $N\ge3, ~q=2+\frac{4}{N},~c\in(0,\bar{c}_1)$;
		\item $N=3, ~q\in(2+\frac{4}{N},4+\frac{4}{N}), ~c\in(0,\bar{c}_2)$;
		\item $N\ge4, ~q\in(2+\frac{4}{N},2^*], ~c\in(0,\bar{c}_2)$;
		\item $N\ge4, ~2^*<q <4+\frac{4}{N}$, $c\in(0,c_1^*)$ for some sufficiently small $c_1^*>0$.
	\end{enumerate}
	Then there exists $\tau^*=\tau^*(c)>0$ sufficiently large such that for any $\tau>\tau^*$, \eqref{quasi eq} admits a solution pair $(\bar u,\bar\lambda)\in(\widetilde{S}(c)\cap H^1_{rad}(\R^N)\cap L^{\infty}(\R^N))\times \R^+$ satisfying
	\begin{equation*}
		0<I(\bar u)<\frac{\mathcal{S}^{\frac{N}{2}}}{2N}.
	\end{equation*}
\end{theorem}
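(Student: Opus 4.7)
The plan is to produce, for each fixed $\mu\in(0,1]$, a mountain pass type critical point $u_\mu$ of the perturbed functional $I_\mu$ on the radial constraint $S_r(c):=S(c)\cap X_r$, where $X_r$ denotes the radial subspace of $X$, and then to pass to the limit $\mu\to 0^+$ to recover a normalized solution of \eqref{quasi eq}. Working radially from the outset restores compactness for all subcritical Gagliardo--Nirenberg pieces by the Strauss type embedding, so that only the Sobolev critical term $|u|^{2\cdot 2^*-2}u$ requires a delicate energy estimate.

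The first step is to study the $L^2$-preserving fiber map $t\mapsto I_\mu(t\star u)$ with $(t\star u)(x):=t^{N/2}u(tx)$. Since $q\ge 2+\tfrac{4}{N}$ in all four cases, each fiber admits a unique strict maximum, so $\Lambda_\mu(c)$ from \eqref{Pohozaev type mainfold} separates $S_r(c)$ into two open pieces and supplies a mountain pass geometry with
$$M_\mu(c):=\inf_{\gamma\in\Gamma_\mu(c)}\max_{t\in[0,1]}I_\mu(\gamma(t))=\inf_{\Lambda_\mu(c)}I_\mu>0.$$
A bounded $(PS)_{M_\mu(c)}$ sequence $\{u_n\}\subset S_r(c)$ with $Q_\mu(u_n)\to 0$ is obtained by a constraint deformation/Ekeland argument; boundedness in $X_r$ follows from combining $I_\mu(u_n)\to M_\mu(c)$ with $Q_\mu(u_n)\to 0$, which provides coercive control on both $\|\nabla u_n\|_2^2$ and $\|u_n\nabla u_n\|_2^2$ once $q<4+\tfrac{4}{N}$.

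The decisive step is the uniform energy estimate
$$\limsup_{\mu\to 0^+}M_\mu(c)<\frac{\mathcal{S}^{N/2}}{2N},$$
which I would prove by inserting a cut-off and $L^2$-rescaled Aubin--Talenti bubble $W_\varepsilon(x):=\eta_\varepsilon^{(N-2)/4}U_\varepsilon(\eta_\varepsilon x)\in S_r(c)$ into the minimax and computing $\max_{t>0}I_\mu(t\star W_\varepsilon)$ as $\varepsilon\to 0^+$. The quasilinear pair $\|W_\varepsilon\nabla W_\varepsilon\|_2^2$ and $\|W_\varepsilon\|_{2\cdot 2^*}^{2\cdot 2^*}$ realizes the bubble value $\mathcal{S}^{N/2}/(2N)$, while the subcritical correction $-(\tau/q)\|W_\varepsilon\|_q^q$ must strictly defeat the residual positive contributions $\tfrac12\|\nabla W_\varepsilon\|_2^2$ and $(\mu/\theta)\|\nabla W_\varepsilon\|_\theta^\theta$. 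The four cases of the statement enter exactly here: in case (i) the $L^2$-criticality of $q=2+\tfrac{4}{N}$ for the $-\Delta u$ scale couples with the threshold $\bar c_1$; in cases (ii)--(iii) the quasilinear Gagliardo--Nirenberg constant $\mathcal{C}_2(q,N)$ together with $c<\bar c_2$ and sufficiently large $\tau$ makes the subcritical loss dominate; in case (iv), where $q>2^*$, the classical Sobolev refinement is no longer available and we must exploit $\mathcal{C}_2(q,N)$ with the extra smallness $c<c_1^*$ to win against the bubble correction.

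With the strict gap below $\mathcal{S}^{N/2}/(2N)$ in hand, both vanishing and concentration of $\{u_n\}$ are ruled out, so $u_n$ converges to a nontrivial critical point $u_\mu\in\Lambda_\mu(c)$ of $I_\mu|_{S_r(c)}$ with Lagrange multiplier $\lambda_\mu$ pinned down by the Pohozaev relation. For the second limit $\mu_n\to 0^+$, uniform boundedness of $\{u_{\mu_n}\}$ in $\widetilde X\cap X_r$ follows from the uniform upper bound on $M_{\mu_n}(c)$ together with $Q_{\mu_n}(u_{\mu_n})=0$; applying the quasilinear convergence tools outlined in Remark \ref{thm1rem} (disjoint-family decomposition for the cross terms and a radial profile decomposition) yields strong convergence in $\widetilde X$ to the desired solution $\bar u$, with $\bar\lambda>0$ extracted by combining the limiting Pohozaev and Nehari type identities, thereby bypassing the dimensional restrictions that plagued earlier approaches. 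The hard part will be the case-by-case energy estimate, especially case (iv) with $q>2^*$: there the quasilinear Sobolev inequality puts $\|W_\varepsilon\nabla W_\varepsilon\|_2^2$ and the bubble loss on the same order, so matching the $\varepsilon$-expansion of the subcritical gain against the residual critical error becomes much tighter than in the semilinear setting, which is precisely the reason an extra smallness threshold $c_1^*$ appears rather than an explicit constant like $\bar c_2$.
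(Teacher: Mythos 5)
Your overall scaffold (perturbed radial mountain pass at level $M_\mu(c)$, uniform bubble estimate, then $\mu\to 0^+$) is the right skeleton, but two of your concrete claims run against the actual structure of the problem.

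\textbf{Misattribution of the mass thresholds.} You place the constraints $c<\bar c_1,\bar c_2,c_1^*$ inside the proof of the energy estimate $\limsup_{\mu\to 0^+}M_\mu(c)<\mathcal S^{N/2}/(2N)$, claiming that one needs the quasilinear Gagliardo--Nirenberg constant together with small mass to ``make the subcritical loss dominate.'' In fact, no mass smallness is needed for the bubble estimate at all; Lemma \ref{energy estimate2} holds for \emph{every} $c>0$ once $\tau>0$ is large, and for $q>\frac{2(N+2)}{N-2}$ even for every $\tau>0$. The constraints $\bar c_2$ and $c_1^*$ enter at a \emph{different} stage: Proposition \ref{converge process for mu to 0} only delivers a dichotomy, and its alternative (i) (the ``bubble splits off'' scenario, giving $I(\bar u)\le \check M_0(c)-\mathcal S^{N/2}/(2N)<0$) must be ruled out by showing that any weak limit satisfying $Q_0(\bar u)=0$ obeys $I(\bar u)\ge 0$. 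That positivity argument — implemented via the case split $\|\nabla\bar u\|_2^2+\|\bar u\nabla\bar u\|_2^2\lessgtr \rho_*$ and the Gagliardo--Nirenberg/Sobolev chain in \eqref{compact of sol eq20}--\eqref{compact of sol eq19} — is precisely where $\bar c_2$ and $c_1^*$ are consumed, not in the minimax upper bound. The threshold $\bar c_1$ at $q=2+\frac4N$ is used both for the mountain pass geometry (Lemma \ref{mp struc1} (i)) and for this $I(\bar u)\ge 0$ step. Your proposal as written would therefore try to prove an energy estimate with an unnecessary hypothesis while leaving the compactness step without the tool that actually closes it.

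\textbf{Fiber-map monotonicity.} You assert that because $q\ge 2+\frac4N$, the fiber $s\mapsto I_\mu(s^{N/2}u(s\cdot))$ has a unique strict maximum, so that $\Lambda_\mu(c)$ splits $S_r(c)$ into two pieces and gives $M_\mu(c)=\inf_{\Lambda_\mu(c)}I_\mu$. This is only true in the regime $q\ge 4+\frac4N$ (and indeed the paper develops the Nehari--Pohozaev manifold machinery exactly there, Lemmas \ref{poho mani pro2}--\ref{poho mani pro5}). For $2+\frac4N<q<4+\frac4N$ the exponent $\frac{N(q-2)}{2}$ sits strictly between $2$ and $N+2$, so the signed powers in $\psi_u(s)$ interleave as $(+,+,-,+,-)$ and nothing forces a unique interior maximum. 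The paper avoids this entirely by using a different mountain-pass geometry in this range (Lemma \ref{mp struc1}, the set $\mathcal A_\mu(\mathcal K_\mu,c)$), without characterizing $\check M_\mu(c)$ through any constraint manifold. If you insist on reading off the minimax value as $\inf_{\Lambda_\mu(c)}I_\mu$ for $q<4+\frac4N$, the equality is not available and the argument breaks.

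A smaller defect: you describe the $\mu_n\to 0^+$ step as a plain quasilinear profile decomposition giving strong convergence, but for these mountain pass solutions the paper purposely works in $X_r$ and argues through the dichotomy of Proposition \ref{converge process for mu to 0}, because a Brezis--Lieb splitting for $\int|u|^2|\nabla u|^2$ (Lemma \ref{decomposition of quasi}) is only available for sequences that solve the approximating equations. Your proof sketch should therefore make explicit that the $\check v$ produced in the limit a priori has mass $\le c$, that its Lagrange multiplier must be shown positive via the combined identity \eqref{pro local min eq4} with $\tau>\tau_1^*$, and that only then is $\|\bar u\|_2^2=c$ recovered.
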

\begin{remark}
In Theorem \ref{thm: main result2}, it is essential to choose $\tau>0$ large enough to control the sign of the Lagrange multiplier. 
However, this leads to an additional difficulty:
	\begin{itemize}
		\item In the process of analyzing the convergence $u_{\mu_n}\to\bar u$ as $\mu_n\to0^+$, it is necessary to prove $I(\bar u)\ge0$ in order to derive a contradiction to
		\begin{equation*}
			\lim_{n\to+\infty}\check{M}_{\mu_n}(c)<\frac{1}{2N}\mathcal{S}^{\frac{N}{2}},
		\end{equation*}
		where $\check{M}_{\mu_n}(c)$ is given in Lemma \ref{mp struc2}.
	\end{itemize}
	To handle this issue, we first select an appropriate Gagliardo-Nirenberg inequality by comparing the values of $2^*$ and $4+\frac{4}{N}$ ( see \eqref{gn} and \eqref{gn2}, respectively).  We then introduce an innovative classification into two cases: 
	\begin{enumerate}[label=(\roman*)]
		\item $\|\nabla \bar u\|_2^2+\|\bar u\nabla \bar u\|_2^2\leq\rho_*$;
		\item $\|\nabla \bar u\|_2^2+\|\bar u\nabla \bar u\|_2^2\geq\rho_*$,
	\end{enumerate}
where $\rho_*$ is defined by \eqref{compact of sol eq17}.  By employing a contradiction argument, we exclude the first case in our convergence result Proposition \ref{converge process for mu to 0}, and combine this with the fact that $\bar\lambda\neq0$ to establish the strong convergence for the approximating solutions $\{u_{\mu_n}\}$.
	Moreover, this analysis clarifies the necessity of the condition on $c\in(0,\bar{c}_2)$ for $q\leq2^*$, see \eqref{compact of sol eq20} and \eqref{compact of sol eq19}.
	 The argument is delicate, primarily due to the interplay between $I(\bar u)=\check{M}_0(c)>0$ and the $L^2$-subcritical perturbation $q<4+\frac{4}{N}$ of the problem.

	However, when $N\ge4, ~2^*<q <4+\frac{4}{N}$, no such explicit expression on the mass $c>0$ is available.
	This is mainly because, in the $H^1$-supercritical framework, additional techniques such as interpolation inequalities and the Sobolev inequality must be employed, making the process more intricate.
\end{remark}	

Next, we turn our attention to the $L^2$-supercritical case $q\in[4+\frac{4}{N},2\cdot2^*)$. Our results are as follows.

\begin{theorem}\label{thm: main result3-1}
	Assume that one of the following conditions holds:
	\begin{enumerate}[label=(\roman*)]
		\item $N=3~\text{or}~4, ~q=4+\frac{4}{N}, ~c\in(0,\bar{c}_3)$;
		\item $N=3~\text{or}~4, ~q\in\left(4+\frac{4}{N},\frac{2(N+2)}{N-2}\right], ~c>0$.
	\end{enumerate}
	Then there exists $\tau^*=\tau^*(c)>0$ sufficiently large such that for any $\tau>\tau^*$, \eqref{quasi eq} admits a solution pair $(\bar u_0,\bar\lambda_0)\in(\widetilde{S}(c)\cap H^1_{rad}(\R^N)\cap L^{\infty}(\R^N))\times \R^+$ satisfying
	\begin{equation*}
		0<I(\bar u_0)<\frac{\mathcal{S}^{\frac{N}{2}}}{2N}.
	\end{equation*}
	Moreover, $\bar u_0$ is a ground state normalized solution in the sense that
	\begin{equation*}
		I(\bar u_0)=\inf\{I(u): dI|_{\widetilde{S}(c)}(u)=0, u\in \widetilde{S}(c)\}
	\end{equation*}
\end{theorem}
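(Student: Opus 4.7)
The plan is to continue the perturbation scheme developed for Theorems~\ref{thm: main result1}--\ref{thm: main result2}, but in the truly $L^2$-supercritical regime $q\geq 4+\tfrac{4}{N}$, where $I_\mu$ is unbounded below on $S(c)$ and the natural variational characterization is through a Pohozaev mountain pass. Fix $\mu\in(0,1]$ and work on $S_r(c):=S(c)\cap X_r$. Using the $L^2$-preserving scaling $(s\star u)(x):=s^{N/2}u(sx)$, the fiber $s\mapsto I_\mu(s\star u)$ has a unique strict maximum precisely at the zero of $s\mapsto Q_\mu(s\star u)$, so $\Lambda_\mu(c)$ is a $\mathcal{C}^1$ natural constraint on which $I_\mu$ is coercive and bounded below; set
\[
\check M_\mu(c):=\inf_{u\in\Lambda_\mu(c)}I_\mu(u).
\]
Applying Ekeland's principle to $\widetilde I_\mu(s,u):=I_\mu(s\star u)$ on $\R\times S_r(c)$ produces a radial Palais--Smale sequence at level $\check M_\mu(c)$ with $Q_\mu\to 0$, and this sequence is bounded in $X$ thanks to the Pohozaev constraint.

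The decisive step is the strict energy bound $\check M_\mu(c)<\tfrac{1}{2N}\mathcal{S}^{N/2}$ uniformly for small $\mu$. I would test the fiber $s\mapsto I_\mu(s\star U_\eps)$ on a truncated Aubin--Talenti bubble $U_\eps$ centered at the origin, after mass-renormalizing by $\eta=(\|U_\eps\|_2^2/c)^{2/(N+2)}$, and bound the supremum in $s$. The quasilinear and Sobolev critical terms share the same scaling and, via the identity $\mathcal{S}\|U_\eps\|_{2\cdot 2^*}^2\leq 4\|U_\eps\nabla U_\eps\|_2^2$, their joint contribution sums to exactly $\mathcal{S}^{N/2}/(2N)$, so the strict inequality has to be extracted from the subcritical nonlinearity $\tau|u|^{q-2}u$. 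In case~(i), $q=4+\tfrac{4}{N}$ is $L^2$-critical for the quasilinear part and the assumption $c<\bar c_3$ from \eqref{mass quasi critical sup} provides the required room through the sharp Gagliardo--Nirenberg constant $\mathcal{C}_2(4+\tfrac{4}{N},N)$; in case~(ii), $q>4+\tfrac{4}{N}$ is $L^2$-supercritical and large $\tau$ supplies the room for every $c>0$. The restriction $N\in\{3,4\}$ and $q\leq\tfrac{2(N+2)}{N-2}$ is exactly what the bubble integrals $\|\nabla U_\eps\|_2^2$, $\|U_\eps\|_q^q$ and $\mu\|\nabla U_\eps\|_\theta^\theta$ can accommodate with favorable sign after the mass rescaling.

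Given the Palais--Smale sequence together with the upper bound, the radial splitting of Lemma~\ref{strong convergence} and standard concentration-compactness in $X_r$ yield a radial critical point $u_\mu\in S_r(c)$ of $I_\mu$ with multiplier $\lambda_\mu$. I would then pass to the limit $\mu_n\to 0^+$ along the lines of Proposition~\ref{converge process for mu to 0}: the Pohozaev identity gives a uniform bound in $\widetilde X$; the energy bound excludes bubble concentration at level $\mathcal{S}^{N/2}/(2N)$; and the largeness assumption on $\tau$ is used precisely to keep $\lambda_{\mu_n}$ bounded away from zero, by combining the tested equation with $Q_{\mu_n}(u_{\mu_n})=0$ in a Nehari-type identity in which only the $\tau$-dependent term carries a definite sign. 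This produces the solution $(\bar u_0,\bar\lambda_0)\in(\widetilde S(c)\cap H^1_{rad}(\R^N)\cap L^\infty(\R^N))\times\R^+$, the $L^\infty$ regularity following from Moser iteration applied to \eqref{quasi eq}.

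For the ground state claim, every weak solution $u\in\widetilde S(c)$ of \eqref{quasi eq} satisfies the Pohozaev identity $Q_0(u)=0$, hence $u\in\Lambda_0(c)$, and therefore $I(u)\geq\inf_{\Lambda_0(c)}I=\check M_0(c)=I(\bar u_0)$, proving minimality among normalized critical points. The hardest step I expect is the energy estimate in case~(ii) as $q$ approaches $\tfrac{2(N+2)}{N-2}$: the cross terms between $U_\eps$ and the mass-renormalization scaling inside $\int |u|^2|\nabla u|^2\,dx$ become extremely delicate, and require the regularity input of Remark~\ref{regular rmk} together with a careful simultaneous control of the parameters $\eps$, $\mu$ and $1/\tau$ to extract the strict inequality. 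The endpoint $q=\tfrac{2(N+2)}{N-2}$ is presumably dictated by the integrability of $U_\eps^q$, while $N\in\{3,4\}$ reflects the balance between the exponents governing $\|\nabla U_\eps\|_2^2$, $\|U_\eps\nabla U_\eps\|_2^2$ and $\|U_\eps\|_{2\cdot 2^*}^{2\cdot 2^*}$ in the mountain pass test.
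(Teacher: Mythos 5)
Your overall variational skeleton (Pohozaev mountain pass on $S_r(c)$, a Palais--Smale sequence constructed in the augmented variable, a strict energy bound below $\frac{1}{2N}\mathcal{S}^{N/2}$, then the $\mu\to 0^+$ limit) matches the paper's, but the two most delicate ingredients are not correct as you describe them.

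\textbf{1. The energy estimate and the test function.} You propose to use the truncated bubble $U_\eps$ of \eqref{test function}, rescale it onto $S(c)$ by the $L^2$-preserving dilation $\eta=(\|U_\eps\|_2^2/c)^{2/(N+2)}$, and then conclude the strict inequality ``from the subcritical nonlinearity.'' This does not work for any fixed $\tau$: since $\|U_\eps\|_2^2=\mathcal{O}(\eps^{(N-2)/4})\to 0$, one has $\eta\to 0$, and a direct computation gives
\[
\|\nabla W_\eps\|_2^2=\eta^{(2-N)/2}\|\nabla U_\eps\|_2^2=\mathcal{O}\bigl(\eps^{\frac{N-2}{N+2}}|\ln\eps|\bigr),
\qquad
\|W_\eps\|_q^q=\mathcal{O}\bigl(\eps^{\frac{2}{N}}\bigr)\ \text{for }q=4+\tfrac4N,
\]
and for $N=3,4$ the positive gradient term dominates the negative $\tau$-term as $\eps\to 0$, pushing the mountain-pass maximum \emph{above} $\frac{1}{2N}\mathcal S^{N/2}$. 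This is precisely why the paper discards $U_\eps$ and builds the modified bubble $\widehat U_\eps$ in \eqref{test functions2}, whose tail parameters $\alpha,\beta$ are chosen so that $\|\widehat U_\eps\|_2^2\to c$, making the $S(c)$-renormalization a harmless pointwise factor $\to 1$ and allowing the exponent balance \eqref{intro esti eq}. Concretely, your claim that $c\in(0,\bar c_3)$ ``provides the required room'' for the energy estimate in case (i) is incorrect: $\bar c_3$ controls only the mountain-pass geometry (Lemma~\ref{mp struc3}); for $q=4+\frac4N$ and $N=3,4$, as for the rest of case (ii), the strict inequality in Lemma~\ref{energy estimate2 masssuper} genuinely requires $\tau$ large (in the paper's proof of Lemma~\ref{energy estimate2} the device is $\tau=\eps^{-1}$). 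If you insist on $U_\eps$ with the $\eta$-rescaling, you would again be forced into a simultaneous $(\eps,\mu,\tau)$-balancing that you do not carry out, and the ``favorable sign after the mass rescaling'' assertion would not survive the computation above.

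\textbf{2. The ground state characterization.} You argue that every normalized critical point $u$ lies on $\Lambda_0(c)$ and hence $I(u)\ge\inf_{\Lambda_0(c)}I=\check M_0(c)=I(\bar u_0)$. The equality $\inf_{\Lambda_0(c)}I=\check M_0(c)$ is not a priori available: the paper proves $\widehat M_\mu(c)=\widehat m_\mu(c)=\inf_{\Lambda_\mu(c)}I_\mu$ only for $\mu>0$ (Lemma~\ref{poho mani pro5}), and, more importantly, the solution from Proposition~\ref{converge process for mu to 0} is only known to satisfy $0<\|\bar u\|_2^2\le c$ until $\bar\lambda>0$ is established. The paper sidesteps both issues by running the entire $\mu_n\to 0^+$ compactness argument a second time on a minimizing sequence of critical points to produce $\bar u_0$ with $I(\bar u_0)=\widehat m_0(c)$ and then separately verifying $\bar\lambda_0>0$ and $\|\bar u_0\|_2^2=c$. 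Your one-line argument needs these two facts as inputs, so it is not self-contained as written.

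Finally, minor but worth noting: the radial Palais--Smale compactness here is handled through Lemma~\ref{compact of sol} and Proposition~\ref{converge process for mu to 0}, not via Lemma~\ref{strong convergence} (which is the non-radial local minimization step in Section~3), and the Palais--Smale sequence with $Q_\mu\to 0$ is obtained via the Ghoussoub minimax theorem rather than Ekeland's variational principle; the latter difference is inessential, but the former citation is wrong.
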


\begin{theorem}\label{thm: main result3}
		Assume that one of the following conditions holds:
		\begin{enumerate}[label=(\roman*)]
			\item $N=3~\text{or}~4, ~q\in\left(\frac{2(N+2)}{N-2},2\cdot2^*\right), ~\tau>0$;
			\item $N\ge5, ~q\in\left[4+\frac{4}{N},2\cdot2^*\right), ~\tau>0$.
		\end{enumerate}
		Then there exists $c_2^*>0$ sufficiently small such that for any $c\in(0,c_2^*)$, \eqref{quasi eq} admits  solution pair $(\bar u_0,\bar\lambda_0)\in(\widetilde{S}(c)\cap H^1_{rad}(\R^N)\cap L^{\infty}(\R^N))\times \R^+$ satisfying
		\begin{equation*}
			0<I(\bar u_0)<\frac{\mathcal{S}^{\frac{N}{2}}}{2N}.
		\end{equation*}
	Moreover, $\bar u_0$ is a ground state normalized solution in the sense that
	\begin{equation*}
		I(\bar u_0)=\inf\{I(u): dI|_{\widetilde{S}(c)}(u)=0, u\in \widetilde{S}(c)\}
	\end{equation*}
\end{theorem}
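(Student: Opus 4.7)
The strategy follows the perturbation framework used throughout the paper, combined with a strict sub-threshold minimax scheme. I work in the radial subspace $X_r := W^{1,\theta}_{rad}(\mathbb{R}^N) \cap H^1_{rad}(\mathbb{R}^N)$ with $S_r(c) := S(c) \cap X_r$, and for each $\mu \in (0,1]$ study $I_\mu$ on $S_r(c)$. Since $q \geq 4+\frac{4}{N}$ throughout the hypotheses, the $L^2$-preserving scaling $(t\star u)(x) := t^{N/2} u(tx)$ makes the fiber map $t \mapsto I_\mu(t\star u)$ satisfy the mountain pass geometry with a unique strict interior maximum; the Pohozaev manifold $\Lambda_\mu(c)$ of \eqref{Pohozaev type mainfold} is then the natural $\mathcal{C}^1$ constraint, and I set $M_\mu(c) := \inf_{u \in \Lambda_\mu(c)} I_\mu(u)$, which is strictly positive.

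The crucial analytic input is the uniform strict energy estimate
\[
\limsup_{\mu\to 0^+} M_\mu(c) < \frac{\mathcal{S}^{N/2}}{2N},\qquad\text{provided } c\in(0,c_2^*),
\]
for some sufficiently small $c_2^* > 0$. This is obtained by inserting a cut-off, $L^2$-rescaled Aubin-Talenti bubble (built analogously to the $W_{\varepsilon,t}$ of Remark \ref{thm1rem}) into the fiber supremum along $\Lambda_\mu(c)$; the Sobolev inequality $\mathcal{S}\|u\|_{2\cdot 2^*}^2 \leq 4\|u\nabla u\|_2^2$ allows the bubble's critical-energy contribution to cancel against the quasilinear term, while smallness of $c$ suppresses the competing $L^2$-subcritical parts of the test function. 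This replaces the large-$\tau$ mechanism used in Theorem \ref{thm: main result3-1}, and it is what forces the small-mass hypothesis. With the estimate in hand, a Jeanjean-type scaling-perturbed minimax class produces a bounded Palais-Smale sequence for $I_\mu|_{S_r(c)}$ at level $M_\mu(c)$; radial compactness of $X_r \hookrightarrow L^p$ for $p \in (2,2\cdot 2^*)$, together with the sub-threshold bound, then yields a critical point $u_\mu \in S_r(c)\cap L^\infty(\mathbb{R}^N)$ with Lagrange multiplier $\lambda_\mu$ and $I_\mu(u_\mu) = M_\mu(c)$.

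The main obstacle is the passage $\mu_n \to 0^+$. Boundedness of $\{u_{\mu_n}\}$ in $\widetilde X$ is extracted from $Q_{\mu_n}(u_{\mu_n}) = 0$ and the uniform upper bound on $I_{\mu_n}(u_{\mu_n})$. The delicate step is proving $\liminf_n \lambda_{\mu_n} > 0$: the large-$\tau$ theorems relied on the $L^2$-subcritical term $\tau|u|^{q-2}u$ to push $\lambda$ positive, but here $\tau > 0$ is arbitrary, and the sub-range $q > 2^*$ (present in both (i) and (ii)) is $H^1$-supercritical, so the classical Nehari-Pohozaev algebraic argument for the sign of $\lambda$ breaks down. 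I would instead test the equation against $u_{\mu_n}$, eliminate the critical term via the Pohozaev identity $Q_{\mu_n}(u_{\mu_n}) = 0$, and apply the Gagliardo-Nirenberg inequalities \eqref{gn}, \eqref{gn2} calibrated against the small mass $c$: the resulting estimates quantitatively dominate the perturbation and subcritical contributions by factors of $c$ to a positive power, forcing a uniform positive lower bound on $\lambda_{\mu_n}$. Once $\bar\lambda_0 > 0$ is secured, I apply the Brezis-Lieb-type decomposition of Lemma \ref{strong convergence} together with the strict sub-threshold bound $I(\bar u_0) < \mathcal{S}^{N/2}/(2N)$ to exclude bubbling of the critical term, yielding strong convergence $u_{\mu_n} \to \bar u_0$ in $\widetilde X$ and producing the solution pair $(\bar u_0,\bar\lambda_0)$.

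For the ground state property, every constrained critical point $u \in \widetilde S(c)$ of $I$ satisfies $Q_0(u) = 0$ and hence lies on $\Lambda_0(c)$; combining the fiber-map monotonicity on the $L^2$-supercritical side with the sub-threshold compactness recovery, the value $M_0(c) = I(\bar u_0)$ coincides with the infimum of $I$ over all such critical points, which gives the ground state characterization.
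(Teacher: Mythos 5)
Your proposal captures the overall perturbation-plus-mountain-pass scaffold, but there are two substantive gaps, and the second one is exactly at the step the paper regards as the main novelty of this theorem.

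First, you attribute the small-mass restriction $c \in (0, c_2^*)$ to the energy estimate, writing that the sub-threshold bound $\limsup_\mu \widehat{M}_\mu(c) < \mathcal{S}^{N/2}/(2N)$ is ``provided $c \in (0, c_2^*)$.'' This is not what happens in the paper: Lemma \ref{energy estimate2 masssuper} gives the strict sub-threshold bound for \emph{all} $c > 0$ when $q \in (q_N, \frac{4N}{N-2})$, which is precisely the range in Theorem \ref{thm: main result3}. The role of the smallness of $c$ is entirely deferred to the non-vanishing of the Lagrange multiplier. Misplacing this hypothesis matters because it suggests a fundamentally different and incorrect structure for the proof.

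Second, and more seriously, your scheme for establishing $\bar\lambda_0 > 0$ does not work and is not what the paper does. You propose to test against $u_{\mu_n}$, eliminate the critical term via $Q_{\mu_n}(u_{\mu_n}) = 0$, and run Gagliardo--Nirenberg estimates ``calibrated against $c$'' to obtain a uniform lower bound on $\lambda_{\mu_n}$. But this is still the Nehari--Pohozaev algebraic approach that the Introduction explicitly identifies as failing in the $H^1$-supercritical range $q > 2^*$: the identity \eqref{pro local min eq4} reads
\begin{equation*}
\bar\lambda\|\bar u\|_2^2 = \tau\Bigl(1-\tfrac{4\gamma_q}{N+2}\Bigr)\|\bar u\|_q^q - \tfrac{N-2}{N+2}\|\nabla \bar u\|_2^2,
\end{equation*}
and Gagliardo--Nirenberg gives an \emph{upper} bound on $\|\bar u\|_q^q$ in terms of $c$ and $\|\bar u\nabla\bar u\|_2^2$, not a lower bound that would dominate the (a priori uncontrolled) subtracted term. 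There is no direct small-$c$ estimate that forces the right side positive. The paper's actual argument is two-stage and qualitatively different. It first proves $\bar\lambda \geq 0$ for \emph{all} $c > 0$ and $\tau > 0$, by a contradiction argument: assuming $\bar\lambda < 0$, it introduces a two-parameter scaling $\bar w_{t,s}(x) = s^{N/2}\sqrt{t}\,\bar w(sx)$, notes that $\partial_t I_\mu(\bar w_{t,s})\big|_{(1,1)} = -\tfrac{\bar\lambda_{\mu_n}}{2}\|\bar w\|_2^2 > 0$ while $\partial_s I_\mu\big|_{(1,1)} = Q_{\mu_n}(\bar w) = 0$ and $\partial_s^2 I_\mu\big|_{(1,1)} < 0$, invokes the implicit function theorem on $h_Q(t,s) = 0$ (after verifying $\partial_s h_Q(1,1) \neq 0$) to slide along the Pohozaev manifold while decreasing mass, and thereby constructs a competitor at mass $(1-\eps)c$ with strictly lower energy — contradicting the monotonicity of $c \mapsto \widehat{m}_\mu(c) = \widehat{M}_\mu(c)$ established in Lemma \ref{poho mani pro6}. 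Only \emph{after} $\bar\lambda \geq 0$ is secured does the identity \eqref{pro local min eq4} combine with Gagliardo--Nirenberg to show $\|\nabla\bar u\|_2^2 \to 0$ as $c \to 0$, which then upgrades $\bar\lambda \geq 0$ to $\bar\lambda > 0$ for small $c$. Without the monotonicity-plus-implicit-function-theorem step, the argument collapses; your proposal contains no substitute for it.

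As a minor point, the Brezis--Lieb splitting you invoke in the convergence step should come from Lemma \ref{decomposition of quasi} (together with Proposition \ref{converge process for mu to 0}), not Lemma \ref{strong convergence}, which handles the non-radial local minimizer and is not applicable to this mountain pass sequence.
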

\begin{remark}
In Theorem \ref{thm: main result3-1}, when $q=4+\frac{4}{N}$, the limitation on $c$ arises solely from the construction of the mountain pass geometry. When $q>4+\frac{4}{N}$, although no restriction is imposed on the mass $c$, it is necessary to assume that $\tau>0$ is large enough for the case $N=3,4, ~q\leq q_N^*:=\frac{2(N+2)}{N-2}$. This condition stems from the lack of a precise energy estimate and the difficulty in determining the sign of the Lagrange multiplier in such cases.
For quasilinear problems,  the energy threshold for recovering compactness differs from that of the classical problems. The energy estimate obtained in Lemma \ref{energy estimate2}  is influenced by this special value $q_N^*$. The primary reason lies in the fact that, when selecting test functions for estimation, comparisons are made based on the term
 $-\Delta u$, while the quasilinear term $-u\Delta(u^2)$ plays a less significant role, as shown in \eqref{intro esti eq}.

In Theorem \ref{thm: main result3}, we can overcome these obstacles to obtain existence results for all $\tau>0$.
 Indeed, for both ranges of $q$, we provide the precise energy estimates. Moreover, to prove that $\lambda\neq 0$, we first investigate the perturbed functional $I_{\mu}$, establish some properties of $\Lambda_{\mu}(c)$, and then define $\widehat{m}_\mu(c):=\inf_{u\in\Lambda_\mu(c)}I_\mu(u)$, which offers an alternative minimax characterization of the mountain pass level $\widehat{M}_{\mu}(c)$ defined in Lemma \ref{mp struc3 eq3}.
By constructing a family of functions with disjoint compact supports, we show that $\widehat{m}_\mu(c)$ is a non-increasing function with respect to $c>0$, see Lemma \ref{poho mani pro6}. 
Using the monotonicity of $\widehat{m}_\mu(c)$ and the implicit function theorem,  we conclude that $\bar\lambda_0>0$ in a small neighborhood of the mass $c>0$ for any $\tau>0$, leading to the existence of a mountain pass ground state solution for the original problem.
\end{remark}

Finally, we establish a nonexistence result.

\begin{theorem}\label{thm: main result4}
	Let $2<q<2\cdot2^*, \tau\leq0$ and $c\in(0,+\infty)$. Then \eqref{quasi eq} has no solutions in $\widetilde{S}(c)\times \R^+$.
\end{theorem}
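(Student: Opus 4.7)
\textbf{Proof plan for Theorem~\ref{thm: main result4}.}
My plan is to run the classical Nehari/Pohozaev cancellation, tailored so as to eliminate the Sobolev critical term, and then use the sign restrictions $\tau\le 0$, $\lambda>0$ to reach a contradiction. Assume by contradiction that $(u,\lambda)\in\widetilde{S}(c)\times\R^+$ solves \eqref{quasi eq}.

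First I would record two scalar identities. Testing \eqref{quasi eq} against $u$ and integrating $-u\Delta(u^2)$ by parts (using $\nabla(u^2)=2u\nabla u$, which produces the extra contribution $4\int u^2|\nabla u|^2$) gives the Nehari-type identity
\begin{equation*}
\|\nabla u\|_2^2+4\|u\nabla u\|_2^2+\lambda c=\tau\|u\|_q^q+\|u\|_{2\cdot 2^*}^{2\cdot 2^*}. \tag{N}
\end{equation*}
For the Pohozaev identity I would differentiate $t\mapsto J_\lambda(u(\cdot/t))$ at $t=1$; under this dilation both gradient-type integrals scale as $t^{N-2}$ while the mass and the nonlinear integrals scale as $t^{N}$, yielding
\begin{equation*}
\frac{N-2}{2}\|\nabla u\|_2^2+(N-2)\|u\nabla u\|_2^2+\frac{N\lambda}{2}c=\frac{N\tau}{q}\|u\|_q^q+\frac{N-2}{4}\|u\|_{2\cdot 2^*}^{2\cdot 2^*}. \tag{P}
\end{equation*}
As a consistency check, the combination $\frac{N}{2}(\mathrm{N})-(\mathrm{P})$ reduces to the Pohozaev-manifold identity $Q_0(u)=0$ extracted from \eqref{Pohozaev type mainfold id} at $\mu=0$, which is reassuring.

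Next I would eliminate the critical term. Multiplying (P) by $4/(N-2)$ and subtracting (N) cancels \emph{both} $\|u\nabla u\|_2^2$ and $\|u\|_{2\cdot 2^*}^{2\cdot 2^*}$ simultaneously, leaving the clean identity
\begin{equation*}
\|\nabla u\|_2^2+\frac{N+2}{N-2}\,\lambda c=\frac{4N-(N-2)q}{(N-2)q}\,\tau\,\|u\|_q^q.
\end{equation*}
Since $c>0$, $\lambda>0$ and $N\ge 3$, the left-hand side is strictly positive. Since $2<q<2\cdot 2^*=\frac{4N}{N-2}$, the coefficient $\frac{4N-(N-2)q}{(N-2)q}$ is strictly positive, so the right-hand side has the sign of $\tau$ and is therefore $\le 0$ by hypothesis. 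This is a direct contradiction, and note that no restriction on $c>0$ is used anywhere in the argument, consistent with the statement holding for every $c>0$.

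The only step I expect to require genuine care is the rigorous justification of (P) on the non-smooth space $\widetilde{X}$, since $J_\lambda$ is not of class $\mathcal{C}^1$ there; however, the regularity needed to run the standard Pohozaev computation (boundedness of $u$ and $C^{1,\alpha}_{\rm loc}$-type estimates) is precisely the regularity framework already in force throughout the paper — the positive results, Theorems~\ref{thm: main result1-1}--\ref{thm: main result3}, produce solutions in $L^\infty(\R^N)$ — so the sign analysis above closes the proof for all $\tau\le 0$ and all $c>0$.
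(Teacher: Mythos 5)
Your proof is correct and follows exactly the paper's argument: the paper also writes down the Nehari identity (N) and the Pohozaev identity (P), then takes the same linear combination (the paper solves for $\lambda\|u\|_2^2$, which after multiplying by $\frac{N+2}{N-2}$ is literally your clean identity) and concludes from $\lambda>0$, $\tau\le 0$, and $4N-(N-2)q>0$. Your added remark about justifying (P) on $\widetilde X$ is a reasonable caveat that the paper glosses over, but otherwise the two proofs coincide.
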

%\begin{remark}
%	We note that Theorem \ref{thm: main result4} generalizes and extends Theorem 1.1 in \cite{GZ2025}. Clearly, when $\tau=0$, our nonexistence result covers theirs as a special case.
%\end{remark}

\subsection{Highlights of this paper}

In contrast to the pure nonlinearity case (see \cite{CJS2010,JLW2015,LZ2023}), the functional $I$ consists of four distinct terms that exhibit different scaling behaviors with respect to the dilation $t^{\frac{N}{2}}u(t\cdot)$, which enriches significantly the geometric structures of the constrained functional.
In this subsection, we provide a thorough exposition of the challenges we have overcome and the methods we have developed in addressing our problem. We consider these contributions to be central to our work and among its most innovative aspects.

	\textbf{First of all}, since we are dealing with problems involving the Sobolev critical exponent, establishing a rigorous upper bound for the energy level is a crucial step in proving our main results.
Through precise calculations and innovative analytical techniques, we derive two key energy estimates: the first is presented in Remark \ref{thm1rem}, and the second is as follows:
\begin{equation}\label{classic eq esti}
	\begin{cases}
		\check{M}_{\mu}(c)\leq\frac{1}{2N}\mathcal{S}^{\frac{N}{2}}-\delta,~~~&\text{for}~~~\frac{2(N+2)}{N-2}<q<2\cdot2^*,\\
		\widehat{M}_{\mu}(c)\leq\frac{1}{2N}\mathcal{S}^{\frac{N}{2}}-\delta,~~~&\text{for}~~~q_N<q<\frac{4N}{N-2},
	\end{cases}
\end{equation}
where $\mu>0$ and $\delta>0$ are sufficiently small. Here $\check{M}_{\mu}(c), \widehat{M}_{\mu}(c)$ are the mountain pass levels given in Lemma \ref{mp struc2} and Lemma \ref{mp struc3 eq3}, respectively, and $q_N$ is defined by
\begin{equation}\label{qn definition}
	q_N:=
	\begin{cases}
		4+\frac{4}{N},~~~\text{for}~~~N\ge5,\\
		\frac{2(N+2)}{N-2},~~~\text{for}~~~N=3,4.
	\end{cases}
\end{equation}
To achieve these estimates, a pivotal issue lies in how to choose appropriate test functions, and we are confronted with two main obstacles:
\begin{itemize}
	\item The test function defined by \eqref{test function} is not suitable for use in the calculation of the mountain pass level. The reason is that the truncation function in \eqref{test function} lacks a clear and precise definition in the annular region $B_2\backslash B_1$.  As a result, the function $U_\eps$ does not belong to $S(c)$, as shown in \eqref{test fun esti5}.
	\item The utilization of the perturbation technique prevents us from first projecting $U_\eps$ onto $S(c)$ and then onto $\Lambda_\mu(c)$ to accurately compute the mountain pass level, as was typically done in \cite{S2} for the $L^2$-supercritical case. Moreover, for the $L^2$-subcritical case, such a projection onto the Nehari-Pohozaev manifold is not unique, which introduces additional complications.
\end{itemize}
To address these challenges, we first construct a family of test functions as shown in equation \eqref{test functions2} for the original functional $I_0$. 
Through providing an exact definition of $\widehat{U}_\eps$ on the annulus $B_{\eps^{-\beta}}\backslash B_{\eps^{-\alpha}}$ and carefully selecting appropriate parameters $\alpha,\beta>0$, we are able to show that $\|\widehat{U}_\eps\|_2^2\to c$ as $\eps\to0$, which ensures that $\widehat{U}_\eps$ lies near the constraint $S(c)$, see \eqref{energy2 est2}; Next, we define $\widehat{V}_\eps:=\frac{\sqrt{c}}{\|\widehat{U}_{\eps}\|_2}\widehat{U}_{\eps}$ to project $\widehat{U}_\eps(x)$ onto $S_r(c)$. Then for all $q\in(2,2\cdot2^*)$, the following inequality holds.
\begin{align}\label{intro esti eq}	I_{0}(t^{\frac{N}{2}}\widehat{V}_{\eps}(tx))&\leq\frac{1}{2N}\mathcal{S}^{\frac{N}{2}}+\frac{t^2}{2}\mathcal{O}\left(\eps^{\frac{(4\alpha+1)(N-2)}{2N}}\right)+t^{N+2}\mathcal{O}\left(\eps^{\frac{(4\alpha+1)(N^2-4)}{4N}}\right)\nonumber\\
	&\quad-\frac{t^{\frac{N(N+2)}{N-2}}}{2\cdot2^*}\mathcal{O}\bigg(\eps^{\frac{(4\alpha+1)(N+2)}{4}}\bigg)-\frac{\tau t^{\frac{N(q-2)}{2}}}{q}\mathcal{O}\left(\eps^{\frac{N}{2}-\frac{(N-2)q}{8}}\right),~~~~\forall t>0.
\end{align}
Owing to the lack of smoothness of $I_0$, and by observing that for sufficiently small $\mu\in(0,1]$, the perturbed functional $I_\mu$ and the original functional $I_0$ are equivalent in a certain sense under the scaling transformation $t^{\frac{N}{2}}\widehat{V}_{\eps}(t\cdot)$, we can
demonstrate \eqref{classic eq esti} and further apply the compactness result established for the smooth functionals $I_\mu$ in Proposition \ref{converge process for mu to 0}.
For more details, see Lemma \ref{energy estimate2}.

\textbf{Secondly}, we develop two distinct convergence frameworks: one for the non-radial case through profile decomposition, and another for radial solutions using delicate compactness analysis.
Based on the perturbed method, for any given $\mu\in (0,1]$, we study the existence of solutions to the following perturbed problem:
\begin{equation}\label{pertu prob eq}
	\begin{cases}
		-\mu\Delta_{\theta} u-\Delta u-u\Delta (u^2)+\lambda u=\tau|u|^{q-2}u+|u|^{2\cdot2^*-2}u~~\text{in}~~\R^N,\\
		\int_{\R^N}|u|^2dx=c.
	\end{cases}
\end{equation}
We address the local minimizers $\{v_{\mu_n}\}$ and the mountain pass solutions $\{u_{\mu_n}\}$ separately, providing two different analytical versions of concentration compactness type results in Section \ref{Convergence issues} to establish strong convergence in $\widetilde{X}$.
As previously mentioned, as $\mu\to0^+$, these two convergence issues fundamentally relies on the Brezis-Lieb lemma for the quasilinear term as follows:
\begin{equation}\label{bl pro intro}
		\int_{\R^N}|w_{\mu_n}|^2|\nabla w_{\mu_n}|^2dx=\int_{\R^N}|\tilde w_{\mu_n}|^2|\nabla \tilde w_{\mu_n}|^2dx+\int_{\R^N}|\bar w|^2|\nabla \bar w|^2dx+o_n(1),
	\end{equation}
where $\tilde w_{\mu_n}:=w_{\mu_n}-\bar w$. To demonstrate \eqref{bl pro intro}, we first derive the $L^{\infty}$-estimate for the approximating solutions by using Moser's iteration, a process that simultaneously proves the existence of weak solutions to the original problem, see Lemma \ref{decomposition of quasi} for further details.
We note that this splitting property relies essentially on the relationship between solutions to the perturbed equations and those of the limiting equation. This not only enables us to establish a profile decomposition theorem in $\widetilde{X}$, but also serves as the crucial mechanism for our compactness analysis within the Sobolev critical regime.
Let us now briefly outline the proof strategies for both convergence schemes.

\begin{itemize}
	\item To prove Proposition \ref{mu to0 local mini}, we employ the profile decomposition theorem developed in Theorem \ref{profile decom} for the approximating solutions $\{v_{\mu_n}\}$ to separately decompose each term in the functional $I_{\mu_n}$.
By utilizing a novel scaling transformation as in \eqref{scal pro eq}, we project each decomposed component function onto the set $V_{\mu_n}(c)$. Using the definition of $m_{\mu_n}(c)$, we show that the perturbation tends to zero, i.e., $\mu_n\|\nabla v_{\mu_n}\|_\theta^\theta\to0$, and we establish strong convergence in the non-radial symmetric space $\widetilde{X}$.
	\item In the proof of Proposition \ref{converge process for mu to 0}, we discuss in the radially symmetric space $X_r$.
	For any fixed $\mu\in(0,1]$, we establish the existence of normalized solutions $u_\mu$ 
  to \eqref{pertu prob eq}. This proof is standard since, in this case, $2\cdot2^*$ is no longer a critical exponent for the perturbation problem.
	Nevertheless, we still confront a loss of compactness in the convergence issues as $\mu\to0^+$, due to the Sobolev critical term $|u|^{2\cdot2^*-2}u$.
	By utilizing the splitting property \eqref{bl pro intro}, we perform a detailed compactness analysis on the Nehari-Pohozaev manifold $\Lambda_{\mu}(c)$ to obtain an alternative conclusion.
\end{itemize}
We believe that these new convergence results constitute one of the central contributions of our work, offering fresh insights for further developments in the field, particularly for quasilinear problems with Sobolev critical growth.

\textbf{Finally}, the rigorous treatment of the Lagrange multiplier's non-vanishing property serves as a key element in our analysis, as it permits the relaxation of dimensional and exponent-range restrictions encountered in earlier studies. 
Unlike the classical Schr\"odinger equation, the Nehari-Pohozaev identity we derive in \eqref{pro local min eq4} includes additional gradient terms, which complicates the verification of $\lambda\neq0$. Consequently, our analysis distinguishes between two regimes.
\begin{itemize}
	\item In cases where precise energy estimates are unavailable: We consider sufficiently large $\tau>0$ to effectively control the mountain pass energy. This facilitates the application of our convergence theorem, as established in Proposition \ref{converge process for mu to 0}, and simultaneously ensure $\lambda>0$ through \eqref{pro local min eq4}.
	\item In cases where precise energy estimates are available: We innovatively exploit the monotonicity of the mountain pass level with respect to the mass $c>0$.  This approach enables the establishment of $\lambda>0$ for arbitrary $\tau>0$ and sufficiently small $c>0$.
\end{itemize}
Previous works \cite{LZ2023, ML2024, ZCW2023} focused on results for specific space dimensions. In contrast, this paper provides a comprehensive study of the existence and multiplicity of normalized solutions to equation \eqref{quasi eq}, covering the entire spectrum of subcritical perturbations in the interval $2<q<2\cdot2^*$ and for $N\ge3$. The main innovation of this work is the novel analytical approaches we employ to resolve convergence issues, which empower us to rigorously establish the non-triviality of the corresponding Lagrange multipliers. This breakthrough unifies the analysis across the full range of $q$ and $N$. We believe that this methodological advancement provides a valuable framework for addressing related problems in quasilinear Schr\"odinger equations.

\medskip

\subsection{Paper outline}
This paper is organized as follows.
\begin{itemize}
	\item Section \ref{intro} presents our main results and highlights of this paper.
	\item Section \ref{preliminary} provides some preliminaries that will be frequently used throughout the paper.
	\item Section \ref{Mountain pass solution subcritical} is devoted to the case where $q\in(2,2+\frac{4}{N})$. We obtain a local minimizer and study the mountain pass structure for $I_{\mu}|_{S(c)}$ in  Subsection \ref{Local minimum solution} and Subsection \ref{Mountain pass struc1}, respectively.
	\item Section \ref{Mountain pass solution subcritical1}  explores the case where $q\in[2+\frac{4}{N},4+\frac{4}{N})$, offering a precise energy estimate.
	\item Section \ref{mass super quasi} addresses the case where $q\in[4+\frac{4}{N},2\cdot2^*)$. We explore properties of the associated Nehari-Pohozaev manifold and establish an equivalent minimax characterization of the mountain pass level.
	\item Section \ref{compactness1} demonstrates the compactness of the Palais-Smale sequences obtained in Sections \ref{Mountain pass solution subcritical}-\ref{mass super quasi} and proves the existence of mountain pass type critical points for $I_{\mu}|_{S(c)}$.
	\item Section \ref{Convergence issues} investigates convergence issues as $\mu\to0^+$ and completes the proof of Theorems \ref{thm: main result1-1}-\ref{thm: main result1}, Theorem \ref{thm: main result2}, Theorems \ref{thm: main result3-1}-\ref{thm: main result3} and Theorem \ref{thm: main result4}.
	\item The Appendix provides an $L^{\infty}$ estimate, a splitting lemma and the profile decomposition of approximating solutions, all of which play a fundamental role  in proving our main results.
\end{itemize}

\medskip

{\small  Throughout this paper, we make use of the following notations:
\begin{itemize}
	\item  $L^s(\R^N),1\leq s\leq+\infty$, denotes the usual Lebesgue space endowed with the norm
		\begin{equation*}
			\|u\|_s:=\left(\int_{\R^N}|u|^sdx\right)^{\frac{1}{s}},~~1\leq s<+\infty,~~~~\|u\|_{\infty}:=\inf\{M>0:|u(x)|\leq M~\text{almost everywhere in}~\R^N\};
		\end{equation*}
	\item  $W^{1,s}(\R^N),1\leq s<+\infty$, denotes the usual Sobolev space endowed with the norm $\|u\|_{W^{1,s}(\R^N)}:=\left(\|\nabla u\|^s_s+\|u\|^s_s\right)^{\frac{1}{s}}$. In particular, $W^{1,2}(\R^N)$ is denoted by $H^1(\R^N)$;
	\item  $W^{1,s}_{rad}(\R^N):=\{u\in W^{1,s}(\R^N)~|~u(x)=u(|x|)~~\text{a.e. in}~~\R^N\}$;
	\item $C_0^{\infty}(\R^N)$ denotes the space of infinitely differentiable functions with compact support in $\R^N$;
	\item For any $x\in\R^N$ and $r>0$, $B_r(x):=\{y\in\R^N~:~|y-x|<r\}$ denotes the open ball centered at $x$ with radius $r$, and $B_r:=B_r(0)$ is the open ball centered at the origin with radius $r$;
	\item The symbols $"\rightarrow"$ and $"\rightharpoonup"$ represent strong convergence and weak convergence, respectively;
	\item $\mathcal{O}(h)$ and $o(h)$ mean that $|\mathcal{O}(h)|\leq C|h|$ and $o(h)/|h|\to0$, respectively;
	\item $C, C_1, C_2, \cdots$ denote positive constants whose value may change from line to line.
\end{itemize}
}

\section{Preliminary}\label{preliminary}\setcounter{equation}{0}

In this section, we present some preliminary results that will be utilized later. We begin with several useful inequalities.
The following Gagliardo-Nirenberg inequality can be found in \cite{1983wein}:
\begin{equation}\label{gn}
	\|u\|_p\leq \mathcal{C}_1(p,N)\|\nabla u\|_2^{\gamma_p}\|u\|_2^{1-\gamma_p},~~~\text{where}~~~N\ge3,~ p\in\left[2,\frac{2N}{N-2}\right].
\end{equation}

Next, we introduce the following sharp Gagliardo-Nirenberg type inequality, which is detailed in \cite{agueh2008}:
\begin{equation}\label{gn1}
	\int_{\R^N}|u|^{\frac{p}{2}}dx\leq\frac{C(p,N)}{\|Q_p\|_{1}^{\frac{p-2}{N+2}}}\left(\int_{\R^N}|u|dx\right)^{\frac{4N-p(N-2)}{2(N+2)}}\left(\int_{\R^N}|\nabla u|^2\right)^{\frac{N(p-2)}{2(N+2)}},~~\forall u\in\mathcal{E}^1,
\end{equation}
where $2<p<2\cdot2^*$, the constant $C(p,N)$ is explicitly given by
\begin{equation*}
	C(p,N)=\frac{p(N+2)}{[4N-(N-2)p]^{\frac{4-N(p-2)}{2(N+2)}}[2N(p-2)]^{\frac{N(p-2)}{2(N+2)}}},
\end{equation*}
and the Banach space $\mathcal{E}^q, q\ge1$, is defined as
\begin{equation*}
	\mathcal{E}^q:=\{u\in L^q(\R^N):|\nabla u|\in L^2(\R^N)\},
\end{equation*}
equipped with the norm $\|u\|_{\mathcal{E}^q}:=\|\nabla u\|_2+\|u\|_q$. For more details, see \cite{KP1997}.  Moreover, $Q_p$ is an extremal function for the inequality \eqref{gn1} in $\R^N$ and is the unique radially symmetric positive solution to the following equation (see \cite{ST2000}):
\begin{equation*}
	-\Delta u+1=u^{\frac{p}{2}-1}~~~\text{in}~\R^N.
\end{equation*}
By substituting $u$ with $u^2$ in \eqref{gn1}, we have the following Gagliardo-Nirenberg type inequality:
\begin{equation}\label{gn3}
	\int_{\R^N}|u|^pdx\leq\frac{C(p,N)}{\|Q_p\|_{1}^{\frac{p-2}{N+2}}}\left(\int_{\R^N}|u|^2dx\right)^{\frac{4N-p(N-2)}{2(N+2)}}\left(4\int_{\R^N}|u|^2|\nabla u|^2\right)^{\frac{N(p-2)}{2(N+2)}}.
\end{equation}
For simplicity, we rewrite \eqref{gn3} as:
\begin{equation}\label{gn2}
	\int_{\R^N}|u|^pdx\leq\mathcal{C}_2(p,N)\left(\int_{\R^N}|u|^2dx\right)^{\frac{4N-p(N-2)}{2(N+2)}}\left(\int_{\R^N}|u|^2|\nabla u|^2\right)^{\frac{N(p-2)}{2(N+2)}}.
\end{equation}

We need the Lions' lemma as follows.

\begin{lemma}\cite[Lemma I.1]{lionslemma}\label{lions lemma}
	Let $1<r\leq\infty, 1\leq s<\infty$ with $s\neq\frac{Nr}{N-r}$ if $r<N$. Assume that $\{u_n\}$ is bounded in $L^s(\R^N)$, $\{|\nabla u_n|\}$ is bounded in $L^r(\R^N)$, and
	\begin{equation*}
		\sup_{y\in\R^N}\int_{B_R(y)}|u_n|^sdx\to0,~~~\text{for some}~~R>0.
	\end{equation*}
	Then $u_n\to0$ as $n\to+\infty$ in $L^{\beta}(\R^N)$, for any $s<\beta<\frac{Nr}{N-r}$.
\end{lemma}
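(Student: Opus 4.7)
The proof strategy is classical and due to Lions: combine a bounded-overlap covering of $\R^N$ with local interpolation and the Sobolev embedding. First, I would choose a countable family of balls $\{B_R(y_k)\}_{k \in \mathbb{N}}$ with centers $y_k$ on a cubic lattice of sufficiently small sidelength, such that every point of $\R^N$ belongs to at most $M = M(N)$ of the balls. This reduces the global $L^\beta$-estimate to local estimates combined with the bounded-overlap property.

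Second, on each ball I would apply the standard $L^p$-interpolation
\begin{equation*}
	\|u\|_{L^\beta(B_R(y_k))} \leq \|u\|_{L^s(B_R(y_k))}^{1-\alpha}\|u\|_{L^{r^*}(B_R(y_k))}^\alpha,
\end{equation*}
with $r^* = Nr/(N-r)$ when $r < N$ (and with $r^*$ taken arbitrarily large when $r \geq N$, since $W^{1,r}$ then embeds locally into every $L^q$), and $\alpha \in (0,1)$ determined by $\beta^{-1} = (1-\alpha)s^{-1} + \alpha (r^*)^{-1}$. The Sobolev--Poincar\'e inequality on the ball then yields
\begin{equation*}
	\|u\|_{L^{r^*}(B_R(y_k))} \leq C\bigl(\|\nabla u\|_{L^r(B_R(y_k))} + \|u\|_{L^s(B_R(y_k))}\bigr).
\end{equation*}
Raising the interpolation bound to the $\beta$-th power and combining, I obtain
\begin{equation*}
	\|u\|_{L^\beta(B_R(y_k))}^\beta \leq C\|u\|_{L^s(B_R(y_k))}^{(1-\alpha)\beta}\bigl(\|\nabla u\|_{L^r(B_R(y_k))}^{\alpha\beta} + \|u\|_{L^s(B_R(y_k))}^{\alpha\beta}\bigr).
\end{equation*}

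Third, I would extract a small factor from the local $L^s$-norms via $\|u_n\|_{L^s(B_R(y_k))} \leq \sup_{y\in\R^N}\|u_n\|_{L^s(B_R(y))}$, and then sum over $k$ by invoking H\"older's inequality in the discrete $\ell^p$-sum with exponents chosen so that the remaining localized norms combine into the global $L^s(\R^N)$-norm of $u_n$ and the global $L^r(\R^N)$-norm of $|\nabla u_n|$. The bounded-overlap property keeps the multiplicative constant finite and independent of $n$. Since the hypotheses guarantee that $\|u_n\|_s$ and $\|\nabla u_n\|_r$ are uniformly bounded, one would conclude
\begin{equation*}
	\|u_n\|_{L^\beta(\R^N)} \leq C \Bigl(\sup_{y\in\R^N}\|u_n\|_{L^s(B_R(y))}\Bigr)^{\delta} \longrightarrow 0,
\end{equation*}
for some $\delta > 0$ depending on $\beta, s, r, N$.

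The main technical obstacle is choosing the interpolation parameter $\alpha$ (and the exponents in the subsequent $\ell^p$-H\"older step) so that after exponentiating and summing, the powers of the local norms combine cleanly into the global quantities one wants to bound. The natural selection $\alpha\beta = r$ works directly when $r < \beta$, while the regime $s < \beta \leq r$ demands a more delicate application of H\"older in the summation, with exponents $p = s/((1-\alpha)\beta)$ and $q = r/(\alpha\beta)$; the admissibility of this choice is exactly encoded in the open range $s < \beta < r^*$ together with the exclusion $s \neq r^*$ in the hypothesis (which is what renders the interpolation range non-empty). The breakdown of the argument at the endpoint $\beta = r^*$ reflects the critical Sobolev scaling, at which mass can concentrate on vanishing scales without being detected by the hypothesis on $\sup_y \|u_n\|_{L^s(B_R(y))}$, thereby justifying the strict inequality $\beta < r^*$ in the statement.
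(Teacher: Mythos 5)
This lemma is a direct citation in the paper (Lions, Ann.\ Inst.\ H.\ Poincar\'e 1984, Lemma I.1); there is no proof in the paper itself to compare against. Evaluating your sketch on its own merits: the overall plan (bounded-overlap covering, local interpolation between $L^s$ and $L^{r^*}$, local Sobolev--Poincar\'e, extraction of a sup-factor, global summation) is indeed the classical Lions--Willem argument, and your local inequality $\|u\|_{L^{r^*}(B)}\leq C(\|\nabla u\|_{L^r(B)}+\|u\|_{L^s(B)})$ is correct. However, the crucial summation step is not set up correctly. After interpolating on each ball, the exponents are locked by $(1-\alpha)\beta/s+\alpha\beta/r^*=1$; your proposed discrete H\"older exponents $p=s/((1-\alpha)\beta)$ and $q=r/(\alpha\beta)$ therefore satisfy $1/p+1/q=(1-\alpha)\beta/s+\alpha\beta/r>1$ whenever $r<r^*$, so that H\"older inequality is inadmissible. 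Worse, if you instead use the admissible choice with $r^*$ in place of $r$, the constraint becomes an exact identity and leaves no room to peel off a positive power of $\sup_y\|u_n\|_{L^s(B_R(y))}$, so no vanishing factor emerges.

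The correct way to close the argument is to note that the condition $\alpha\beta=r$ is not a free choice available for every $\beta>r$: since $\alpha$ is already determined by $\beta$, it pins down the single exponent $\beta_0:=r(N+s)/N\in(s,r^*)$. For that $\beta_0$ no discrete H\"older is needed at all: expanding $(\|\nabla u\|_{L^r(B_k)}+\|u\|_{L^s(B_k)})^r\leq 2^{r-1}(\|\nabla u\|_{L^r(B_k)}^r+\|u\|_{L^s(B_k)}^r)$ and factoring out $(\sup_k\|u\|_{L^s(B_k)})^{rs/N}$ from the first summand and $(\sup_k\|u\|_{L^s(B_k)}^s)^{\beta_0/s-1}$ from the second, the remaining sums are $\sum_k\|\nabla u\|_{L^r(B_k)}^r\leq M\|\nabla u\|_r^r$ and $\sum_k\|u\|_{L^s(B_k)}^s\leq M\|u\|_s^s$, both bounded, and both prefactors vanish. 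This gives $u_n\to 0$ in $L^{\beta_0}$. The remaining $\beta\in(s,r^*)$ with $\beta\neq\beta_0$ are then handled by Lebesgue interpolation between the (bounded) $L^s$-norm and $\|u_n\|_{\beta_0}$ if $\beta<\beta_0$, or between $\|u_n\|_{\beta_0}$ and the (bounded, via homogeneous Sobolev) $L^{r^*}$-norm if $\beta>\beta_0$. So the regime $s<\beta\leq r$ that you single out is not resolved by ``a more delicate H\"older'' but by the two-step prove-at-$\beta_0$-then-interpolate structure, which your sketch is missing.
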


Adapting the argument from \cite[Lemma 2.1]{LZ2023}, we obtain the following result.

\begin{lemma}\label{poho id}
	Any critical point $u$ of $I_{\mu}$ restricted on $S(c)$ is contained in $\Lambda_{\mu}(c)$.
\end{lemma}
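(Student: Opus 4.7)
My plan is to recover $Q_\mu(u)$ as a directional derivative of $I_\mu$ along the $L^2$-preserving dilation
\[
s \star u(x) := s^{N/2}\, u(sx), \qquad s > 0,
\]
and then invoke the constrained critical point condition. First I would verify directly that $\|s\star u\|_2^2 = \|u\|_2^2$, so the curve $s \mapsto s \star u$ lies entirely on $S(c)$ whenever $u \in S(c)$. Since $u\in X = W^{1,\theta}(\R^N)\cap H^1(\R^N)$ and $I_\mu$ is of class $\mathcal{C}^1$ on $X$, the real-valued map $s\mapsto I_\mu(s\star u)$ is then $\mathcal{C}^1$ on $(0,\infty)$.

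Next I would compute $I_\mu(s\star u)$ term by term via change of variables, obtaining
\[
I_\mu(s\star u) = \tfrac{\mu}{\theta}s^{\theta(1+\gamma_\theta)}\|\nabla u\|_\theta^\theta + \tfrac{s^2}{2}\|\nabla u\|_2^2 + s^{N+2}\|u\nabla u\|_2^2 - \tfrac{\tau}{q}s^{q\gamma_q}\|u\|_q^q - \tfrac{1}{2\cdot 2^*} s^{2\cdot 2^*\gamma_{2\cdot 2^*}}\|u\|_{2\cdot 2^*}^{2\cdot 2^*}.
\]
Using the identities $q\gamma_q = N(q-2)/2$ and $2\cdot 2^*\gamma_{2\cdot 2^*} = N(2^*-1)$, differentiation at $s=1$ reproduces exactly the five terms of $Q_\mu(u)$ in \eqref{Pohozaev type mainfold id}. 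The quasilinear block is the most delicate: the combined scaling of $|u|^2$ and $|\nabla u|^2$ produces a factor $s^{N+2}$, which is precisely what yields the coefficient $N+2$ in $Q_\mu$.

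Finally, since $u$ is a critical point of $I_\mu|_{S(c)}$, the Lagrange multiplier rule supplies $\lambda \in \R$ such that $dI_\mu(u) = \lambda\, d\Psi(u)$, where $\Psi(u) := \int_{\R^N}|u|^2\,dx - c$. Because $s\star u \in \Psi^{-1}(0)$ for all $s$, the tangent vector $v := \partial_s(s\star u)|_{s=1}$ satisfies $\langle d\Psi(u),v\rangle = 0$, and the chain rule then gives
\[
Q_\mu(u) = \left.\tfrac{d}{ds}\right|_{s=1} I_\mu(s\star u) = \langle dI_\mu(u),v\rangle = \lambda \langle d\Psi(u),v\rangle = 0,
\]
which is the desired inclusion $u \in \Lambda_\mu(c)$. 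The only real technical point is justifying the chain rule at $s = 1$, which reduces to continuity of $s\mapsto s\star u$ as a map into $X$; this is standard for $u \in X$, so I anticipate no conceptual obstacle beyond the scaling-exponent bookkeeping that has to be matched term-by-term against $Q_\mu$.
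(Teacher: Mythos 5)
Your computation is correct: with $\gamma_q = \tfrac{N(q-2)}{2q}$ the exponent bookkeeping does match term-by-term, and $\tfrac{d}{ds}\big|_{s=1}I_\mu(s\star u) = Q_\mu(u)$ follows directly from the explicit formula \eqref{fibermap1}. The Lagrange-multiplier setup and the observation that the dilation curve lies on $S(c)$ are also fine. The problem is in the very last sentence, where you assert that justifying the chain rule at $s=1$ ``reduces to continuity of $s\mapsto s\star u$ as a map into $X$.'' This is not true: the chain rule $\tfrac{d}{ds}\big|_{s=1}I_\mu(\phi(s)) = \langle dI_\mu(u),\,\partial_s\phi(1)\rangle$ requires $\phi(s)=s\star u$ to be \emph{differentiable} as a curve in $X$, with derivative in $X$. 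The formal tangent vector is $v = \tfrac{N}{2}u + x\cdot\nabla u$, and for this to lie in $X = W^{1,\theta}(\R^N)\cap H^1(\R^N)$ you need $x\cdot\nabla u$ and its gradient in $L^\theta\cap L^2$ — roughly one more derivative and a weighted integrability that a generic $u\in X$ does not possess. Continuity of $\phi$ together with $C^1$-smoothness of $s\mapsto I_\mu(\phi(s))$ does \emph{not} force the constrained critical point condition to kill $\psi'(1)$; one can construct finite-dimensional cautionary examples where a non-differentiable curve through a constrained critical point produces a $C^1$ composition with nonzero derivative.

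The paper invokes \cite[Lemma~2.1]{LZ2023}, and the standard route there is precisely how one closes this gap: from the Lagrange condition $I_\mu'(u)+\lambda u=0$ in $X^*$, one uses elliptic regularity for the $\mu$-perturbed equation (this is where the extra $p$-Laplacian term earns its keep — it makes the operator uniformly elliptic, so $u\in W^{2,p}_{\rm loc}$), derives the Pohozaev identity rigorously by testing against cut-off versions of $x\cdot\nabla u$ on $B_R$ and letting $R\to\infty$, and then combines Pohozaev with the Nehari identity $\langle I_\mu'(u)+\lambda u,u\rangle=0$ to eliminate $\lambda$ and obtain $Q_\mu(u)=0$. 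Your scaling picture is the right intuition and explains \emph{why} $Q_\mu$ is the correct functional, but as written the argument is incomplete; either establish the a posteriori regularity of $u$ first (after which your $\phi$ genuinely is a $C^1$ curve in $X$ and the chain rule applies), or follow the regularity-plus-integration-by-parts route directly.
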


Define
\begin{equation*}
	S_r(c):=S(c)\cap X_r,~~~~X_r:=W^{1,\theta}_{rad}(\R^N)\cap H^1_{rad}(\R^N).
\end{equation*}
Let $u\in S_r(c)$ be arbitrary but fixed. For any given $\mu\in(0,1]$ and $s\in\R$, we introduce an auxiliary functional, similar to that in \cite{Jeanjean1997}, to construct a Palais-Smale sequence:
\begin{align*}
	\tilde{I}_{\mu}:~ &\R\times S_r(c)\to\R,\\
	&(s,u)\mapsto I_{\mu}(\mathcal{H}(s,u)),
\end{align*}
where
\begin{equation*}\label{scaling for fun}
	\mathcal{H}(s,u(x)):=e^{\frac{Ns}{2}}u(e^sx),~~\forall x\in\R^N.
\end{equation*}
By a direct computation, we obtain
\begin{align}\label{fibermap1}
	\tilde{I}_{\mu}(s,u)=\frac{\mu}{\theta}e^{\theta(1+\gamma_{\theta})s}\|\nabla u\|_\theta^\theta+\frac{e^{2s}}{2}\|\nabla u\|_2^2+e^{(N+2)s}\|u\nabla u\|_2^2-\frac{\tau\, e^{\frac{N(q-2)s}{2}}}{q}\|u\|_q^q-\frac{e^{N(2^*-1)s}}{2\cdot2^*}\|u\|_{2\cdot2^*}^{2\cdot2^*}.
\end{align}
Note that $\tilde{I}_{\mu}$ is of class $\mathcal{C}^1$, and a Palais-Smale sequence for $\tilde{I}_{\mu}|_{\R\times S_r(c)}$ is also a  Palais-Smale sequence for $\tilde{I}_{\mu}|_{\R\times S(c)}$,  as shown in \cite[Theorem 1.28]{Willem1996}.
We also recall that the tangent space at a point $u\in S(c)\subset X$ is given by
\begin{equation*}
	T_{u}S(c)=\{v\in X:\int_{\R^N}uvdx=0\}.
\end{equation*}
Following standard arguments as in \cite{LZ2023}, we have
\begin{lemma}\label{isomorphism thm}
	For $u\in S(c)$ and $s\in\R$, the map $\varphi\mapsto\mathcal{H}(s,\varphi)$ from $T_{u}S(c)$ to $T_{\mathcal{H}(s,\varphi)}S(c)$ is a linear isomorphism with inverse $\psi\mapsto\mathcal{H}(-s,\varphi)$.
\end{lemma}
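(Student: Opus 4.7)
The plan is to verify the three defining properties: (a) linearity and well-definedness of $\varphi \mapsto \mathcal{H}(s,\varphi)$ on the relevant tangent space, (b) the image lies in $T_{\mathcal{H}(s,u)}S(c)$, and (c) $\mathcal{H}(-s,\cdot)$ is a two-sided inverse. Since $\mathcal{H}(s,\cdot)$ is a rescaling, essentially all steps boil down to the change-of-variables $y = e^{s}x$.

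First I would observe that $\mathcal{H}(s,\cdot)$ is manifestly linear in its second argument, since $\mathcal{H}(s, \alpha\varphi_1 + \beta\varphi_2)(x) = e^{Ns/2}(\alpha\varphi_1 + \beta\varphi_2)(e^s x) = \alpha \mathcal{H}(s,\varphi_1)(x) + \beta \mathcal{H}(s,\varphi_2)(x)$. Next, I would note that the dilation $\varphi(x) \mapsto e^{Ns/2}\varphi(e^s x)$ preserves $L^2(\R^N)$ exactly (by the chosen normalization) and maps $W^{1,\theta}(\R^N)\cap H^1(\R^N) = X$ to itself; hence if $\varphi \in X$ then $\mathcal{H}(s,\varphi) \in X$.

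For (b), the crucial step is the $L^2$-orthogonality computation. For $\varphi \in T_u S(c)$, the change of variables $y=e^s x$ gives
\begin{equation*}
\int_{\R^N} \mathcal{H}(s,u)(x)\,\mathcal{H}(s,\varphi)(x)\,dx = e^{Ns}\int_{\R^N} u(e^s x)\varphi(e^s x)\,dx = \int_{\R^N} u(y)\varphi(y)\,dy = 0,
\end{equation*}
so $\mathcal{H}(s,\varphi)\in T_{\mathcal{H}(s,u)}S(c)$. Conversely, for any $\psi \in T_{\mathcal{H}(s,u)}S(c)$, the same computation applied to $\mathcal{H}(-s,\psi)$ shows it lies in $T_u S(c)$.

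Finally, for (c), a direct computation yields
\begin{equation*}
\mathcal{H}(-s,\mathcal{H}(s,\varphi))(x) = e^{-Ns/2}\,\mathcal{H}(s,\varphi)(e^{-s}x) = e^{-Ns/2}\,e^{Ns/2}\varphi(e^s\cdot e^{-s}x) = \varphi(x),
\end{equation*}
and symmetrically $\mathcal{H}(s,\mathcal{H}(-s,\psi)) = \psi$, so $\mathcal{H}(-s,\cdot)$ is the two-sided inverse. Combined with (a)--(b), this establishes that $\varphi \mapsto \mathcal{H}(s,\varphi)$ is a linear isomorphism from $T_u S(c)$ onto $T_{\mathcal{H}(s,u)}S(c)$. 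There is no genuine obstacle here; the statement is essentially a bookkeeping consequence of the $L^2$-preserving nature of the scaling $\mathcal{H}$, included to justify the identification of tangent spaces used later when passing derivatives of $\tilde{I}_\mu$ through the $\mathcal{H}$-transformation.
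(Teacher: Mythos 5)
Your proof is correct and is precisely the standard change-of-variables argument the paper alludes to (it does not write out a proof, merely citing standard arguments as in [LZ2023]). You also implicitly correct two typos in the paper's statement of the lemma, which should read the map into $T_{\mathcal{H}(s,u)}S(c)$ with inverse $\psi\mapsto\mathcal{H}(-s,\psi)$.
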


\section{The case $2<q<2+\frac{4}{N}$}\label{Mountain pass solution subcritical}\setcounter{equation}{0}
\subsection{Local minimizers for the perturbed problem}\label{Local minimum solution}
Observe that for any $u\in S(c)$, using \eqref{gn2} and the Sobolev inequality, we get 
\begin{align}
	I_{\mu}(u)&\ge\frac{\mu}{\theta}\int_{\R^N}|\nabla u|^{\theta}dx+\frac{1}{2}\int_{\R^N}|\nabla u|^{2}dx+\int_{\R^N}|u|^2|\nabla u|^{2}dx\nonumber\\
	&\quad-\frac{\tau \mathcal{C}_2(q,N)c^{\frac{4N-q(N-2)}{2(N+2)}}}{q}\left(\int_{\R^N}|u|^2|\nabla u|^{2}dx\right)^{\frac{N(q-2)}{2(N+2)}}-\frac{1}{2\cdot2^*}\left(\frac{4}{\mathcal{S}}\right)^{\frac{2^*}{2}}\left(\int_{\R^N}|u|^2|\nabla u|^{2}dx\right)^{\frac{N}{N-2}}\nonumber\\
	&\ge\xi_{\mu}(u)\left(\frac{1}{2}-\frac{\tau \mathcal{C}_2(q,N)c^{\frac{4N-q(N-2)}{2(N+2)}}}{q}(\xi_{\mu}(u))^{\frac{N(q-2)}{2(N+2)}-1}-\frac{1}{2\cdot2^*}\left(\frac{4}{\mathcal{S}}\right)^{\frac{2^*}{2}}(\xi_{\mu}(u))^{\frac{2}{N-2}}\right), \label{iupro1}
\end{align}
where
\begin{equation*}
	\xi_{\mu}(u):=\frac{\mu}{\theta}\int_{\R^N}|\nabla u|^{\theta}dx+\int_{\R^N}|\nabla u|^{2}dx+\int_{\R^N}|u|^2|\nabla u|^{2}dx.
\end{equation*}

Set
\begin{equation}\label{alpha012}
	\alpha_0:=\frac{N(q-2)}{2(N+2)}-1,~~~\alpha_1:=\frac{4N-q(N-2)}{2(N+2)},~~~\alpha_2:=\frac{2}{N-2}.
\end{equation}
Since $q<2+\frac{4}{N}$, it is easy to verify that
\begin{equation*}
	\alpha_0\in(-1,0),~~~\alpha_1\in\left(\frac{2}{N},1\right),~~~\alpha_2\in(0,2].
\end{equation*}
We define
\begin{equation*}
	f(c,\rho):=\frac{1}{2}-\frac{\tau \mathcal{C}_2(q,N)}{q}c^{\alpha_1}\rho^{\alpha_0}-\frac{1}{2\cdot2^*}\left(\frac{4}{\mathcal{S}}\right)^{\frac{2^*}{2}}\rho^{\alpha_2}.
\end{equation*}
Set $g_{c}(\rho):=f(c,\rho)$. Similar to the proof in \cite[Lemma 2.1]{JJLV2022}, we obtain the following result.
\begin{lemma}\label{functioncrho pro}
The function $g_{c}(\rho)$ admits a unique global maximum for each $c>0$, and the maximum value satisfies
	\begin{equation*}
		\begin{cases}
			\max\limits_{\rho>0}g_c(\rho)>0~~~\text{if}~~~c<c_0,\\
			\max\limits_{\rho>0}g_c(\rho)=0~~~\text{if}~~~c=c_0,\\
			\max\limits_{\rho>0}g_c(\rho)<0~~~\text{if}~~~c>c_0,
		\end{cases}
	\end{equation*}
	where $c_0$ is defined by \eqref{func pro c value}.
\end{lemma}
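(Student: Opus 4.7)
The plan is to study $g_c(\rho)$ as a one-variable calculus problem, exploiting the specific signs of the exponents $\alpha_0, \alpha_1, \alpha_2$ recorded just before the lemma. Write
\begin{equation*}
    g_c(\rho) = \frac{1}{2} - A(c)\,\rho^{\alpha_0} - B\,\rho^{\alpha_2},
    \qquad A(c) := \frac{\tau \mathcal{C}_2(q,N)}{q}\,c^{\alpha_1},
    \qquad B := \frac{1}{2\cdot 2^*}\Bigl(\frac{4}{\mathcal{S}}\Bigr)^{2^*/2}.
\end{equation*}
Since $\alpha_0 \in (-1,0)$, the term $A(c)\rho^{\alpha_0}$ blows up as $\rho\to 0^+$, so $g_c(\rho)\to -\infty$; since $\alpha_2\in(0,2]$, the term $B\rho^{\alpha_2}$ blows up as $\rho\to+\infty$, so again $g_c(\rho)\to -\infty$. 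Hence any global maximum must be attained at an interior critical point.

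Next I would differentiate and observe
\begin{equation*}
    g_c'(\rho) = -A(c)\alpha_0\,\rho^{\alpha_0-1} - B\alpha_2\,\rho^{\alpha_2-1}.
\end{equation*}
Because $\alpha_0<0 < \alpha_2$, the coefficient $-A(c)\alpha_0$ is strictly positive, so the equation $g_c'(\rho)=0$ is equivalent to
\begin{equation*}
    \rho^{\alpha_2-\alpha_0} = -\frac{A(c)\alpha_0}{B\alpha_2}
    = \Bigl[-\frac{\alpha_0}{\alpha_2}\,\frac{\tau\mathcal{C}_2(q,N)\,2^*\mathcal{S}^{2^*/2}}{2^{\,2^*-1}\,q}\Bigr]\,c^{\alpha_1},
\end{equation*}
which has a \emph{unique} positive solution $\rho_*(c)$ since $\alpha_2-\alpha_0>0$. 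A sign check of $g_c'$ on either side of $\rho_*(c)$ shows this is the global maximum, so uniqueness of the maximizer is settled.

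Then I would substitute back to evaluate $g_c(\rho_*(c))$. A key algebraic observation is that when one inserts $\rho_*(c)^{\alpha_0}$ and $\rho_*(c)^{\alpha_2}$ into the two nonlinear terms, both contributions carry the same power of $c$, namely
\begin{equation*}
    \alpha_1 + \frac{\alpha_0\alpha_1}{\alpha_2-\alpha_0}
    = \frac{\alpha_1\alpha_2}{\alpha_2-\alpha_0}.
\end{equation*}
A direct computation using $\alpha_1 = \frac{4N-q(N-2)}{2(N+2)}$, $\alpha_2 = \frac{2}{N-2}$ and $\alpha_0 = \frac{N(q-2)}{2(N+2)}-1$ shows that this exponent simplifies exactly to $\tfrac{2}{N}$, and collecting the constants yields precisely the quantity $K$ defined after \eqref{func pro c value}. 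Thus
\begin{equation*}
    \max_{\rho>0} g_c(\rho) = g_c(\rho_*(c)) = \frac{1}{2} - K\,c^{2/N}.
\end{equation*}

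The conclusion is immediate from the strict monotonicity of $c\mapsto \tfrac{1}{2}-Kc^{2/N}$: the right-hand side is positive, zero, or negative precisely when $c^{2/N} < 1/(2K)$, $c^{2/N}= 1/(2K)$, or $c^{2/N}>1/(2K)$, which using $c_0 = (1/(2K))^{N/2}$ gives the trichotomy claimed. I expect the main (only) obstacle to be the algebraic verification that $\alpha_1\alpha_2/(\alpha_2-\alpha_0)=2/N$ and that the resulting coefficient matches $K$; everything else is monotonicity/asymptotic analysis of a two-term power function.
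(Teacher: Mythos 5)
Your proposal is correct and follows essentially the same calculus argument as the reference the paper cites (Jeanjean--Jendrej--Le--Visciglia, Lemma 2.1): locate the unique interior critical point of the two-term power function, substitute back, and observe that the maximum collapses to $\tfrac12 - K c^{2/N}$. The algebraic identity $\alpha_1\alpha_2/(\alpha_2-\alpha_0) = 2/N$ does check out (the numerator $4N - q(N-2)$ cancels between $\alpha_1\alpha_2$ and $\alpha_2-\alpha_0$), and the two terms of $K$ match the ones displayed after \eqref{func pro c value}.
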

\begin{remark}
Taking into account the ranges of $\alpha_0$ and $\alpha_2$, a standard argument establishes that the function $g_{c}(\rho)$ admits a unique maximum point at $\rho_c$ given by
\begin{equation*}\label{func pro rho value}
	\rho_c:=\left[-\frac{\alpha_0}{\alpha_2}\frac{\tau \mathcal{C}_2(q,N)2^*\mathcal{S}^{\frac{2^*}{2}}}{2^{2^*-1}q}\right]^{\frac{1}{\alpha_2-\alpha_0}}c^{\frac{\alpha_1}{\alpha_2-\alpha_0}}.
\end{equation*}
Obviously, $\rho_0:=\rho_{c_0}>0$ depends only on $\tau, q, N$ and is independent of $\mu\in(0,1]$. Moreover, it is straightforward to verify that Lemma \ref{functioncrho pro} holds for both the $\mu=0$ case and the $L^2$-subcritical case, i.e., $2<q<4+\frac{4}{N}$.
\end{remark}

\begin{lemma}\label{functioncrho pro1} Let $c_1>0$ and $\rho_1>0$ satisfy $f(c_1,\rho_1)\ge0$. Then
$f(c_2,\rho_2)\ge0$ for all $c_2\in(0,c_1]$ and $\rho_2\in\left[\frac{c_2}{c_1}\rho_1,\rho_1\right]$.
\end{lemma}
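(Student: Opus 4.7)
The plan is to reduce the claim to a two-endpoint check combined with the unimodal structure of $g_{c_2}(\rho) := f(c_2,\rho)$ already exploited in Lemma \ref{functioncrho pro}. The function $f(c,\rho)$ is not monotone in $\rho$, so one cannot simply compare values at a single endpoint; however, since $f$ is decreasing in $c$ (at fixed $\rho$) and since $g_c$ has a unique global maximum with $g_c(\rho)\to -\infty$ as $\rho\to 0^+$ or $\rho\to\infty$, nonnegativity at the two endpoints will propagate to the whole interval.

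First I would check the \emph{right endpoint}, $\rho_2=\rho_1$. Since only the term $c^{\alpha_1}\rho^{\alpha_0}$ depends on $c$ and $\alpha_1>0$, the estimate
\[
f(c_2,\rho_1)\ge f(c_1,\rho_1)\ge 0
\]
follows at once from $c_2\le c_1$. Next I would check the \emph{left endpoint}, $\rho_2=\tfrac{c_2}{c_1}\rho_1$. A direct substitution gives
\[
c_2^{\alpha_1}\rho_2^{\alpha_0}=\Bigl(\tfrac{c_2}{c_1}\Bigr)^{\alpha_0+\alpha_1}c_1^{\alpha_1}\rho_1^{\alpha_0},\qquad \rho_2^{\alpha_2}=\Bigl(\tfrac{c_2}{c_1}\Bigr)^{\alpha_2}\rho_1^{\alpha_2}.
\]
The crucial algebraic identity is
\[
\alpha_0+\alpha_1=\frac{N(q-2)}{2(N+2)}-1+\frac{4N-q(N-2)}{2(N+2)}=\frac{q-2}{N+2}>0,
\]
so both exponents $\alpha_0+\alpha_1$ and $\alpha_2$ are strictly positive. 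Combined with $c_2/c_1\le 1$, both ``bad'' terms at $(c_2,\rho_2)$ are dominated by their counterparts at $(c_1,\rho_1)$, yielding $f(c_2,\rho_2)\ge f(c_1,\rho_1)\ge 0$.

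To pass from the endpoints to the whole interval, I would argue that $g_{c_2}$ is strictly increasing on $(0,\rho_{c_2})$ and strictly decreasing on $(\rho_{c_2},\infty)$, where $\rho_{c_2}$ is its unique critical point; indeed, the derivative
\[
g_{c_2}'(\rho)=-\alpha_0\,\frac{\tau\mathcal{C}_2(q,N)}{q}\,c_2^{\alpha_1}\rho^{\alpha_0-1}-\alpha_2\,\frac{1}{2\cdot 2^*}\Bigl(\tfrac{4}{\mathcal{S}}\Bigr)^{2^*/2}\rho^{\alpha_2-1}
\]
has a first summand that is positive (since $\alpha_0<0$) and dominates for small $\rho$, while the second is negative and dominates for large $\rho$. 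Hence the superlevel set $\{\rho>0:g_{c_2}(\rho)\ge 0\}$ is a closed interval. Since both endpoints $\tfrac{c_2}{c_1}\rho_1$ and $\rho_1$ lie in this interval by the two computations above, the whole of $[\tfrac{c_2}{c_1}\rho_1,\rho_1]$ does as well, proving the lemma.

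The only non-routine step is spotting the cancellation $\alpha_0+\alpha_1=(q-2)/(N+2)$, which is what makes the rescaling $\rho_2=(c_2/c_1)\rho_1$ the right choice; the remainder is a comparison argument driven by the unimodality already recorded in Lemma \ref{functioncrho pro}. No further restriction on $q$ beyond $q>2$ is needed for the endpoint comparison, so the lemma holds in the full $L^2$-subcritical regime and will later be applied to transfer the local minimization geometry from one mass level to a smaller one.
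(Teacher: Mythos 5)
Your proposal is correct. The endpoint computations are right: the identity $\alpha_0+\alpha_1=\tfrac{q-2}{N+2}>0$ is indeed the pivot, and your unimodality argument (a strictly increasing-then-decreasing profile for $g_{c_2}$, established via the sign structure of $g_{c_2}'$) validly converts the two endpoint inequalities into nonnegativity on the whole segment. The paper itself defers the proof to \cite{JJLV2022}, so there is no explicit argument to compare against, but this is the standard route for such subaddivity-type lemmas in the Sobolev-critical setting.

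One remark: you do not actually need the unimodality step. The two ``bad'' terms in $f$ have opposite monotonicities in $\rho$ (since $\alpha_0<0<\alpha_2$), so for \emph{every} $\rho\in\bigl[\tfrac{c_2}{c_1}\rho_1,\rho_1\bigr]$ one has
\[
c_2^{\alpha_1}\rho^{\alpha_0}\le c_2^{\alpha_1}\Bigl(\tfrac{c_2}{c_1}\rho_1\Bigr)^{\alpha_0}=\Bigl(\tfrac{c_2}{c_1}\Bigr)^{\alpha_0+\alpha_1}c_1^{\alpha_1}\rho_1^{\alpha_0}\le c_1^{\alpha_1}\rho_1^{\alpha_0},
\qquad
\rho^{\alpha_2}\le\rho_1^{\alpha_2},
\]
which yields $f(c_2,\rho)\ge f(c_1,\rho_1)\ge 0$ directly in one line. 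Each term is bounded by taking $\rho$ at the endpoint where that term is largest, and the interval $\bigl[\tfrac{c_2}{c_1}\rho_1,\rho_1\bigr]$ is designed precisely so that this works for both. Your version is correct but a bit longer; the endpoint-plus-unimodality route would be genuinely necessary only if the two bad terms had the same monotonicity, which they do not.
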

\begin{proof}
The proof can be derived following the arguments in \cite{JJLV2022}, and is therefore omitted here.
\end{proof}
\begin{remark}\label{remark f pro1}
	For further reference, we note that by Lemma \ref{functioncrho pro1}, it is not difficult to obtain that $f(c_0,\rho_0)=0$ and $f(c,\rho_0)>0$ for all $c\in(0,c_0)$. Moreover, for any $c_1\in(0,c]$, we have
	\begin{equation*}
		f(c_1,\rho)\ge0,~~~\text{for any}~~~~\rho\in\left[\frac{c_1}{c}\rho_0,\rho_0\right].
	\end{equation*}
\end{remark}

Define $\mathcal{B}_{\rho_0}:=\{u\in X: \xi_{\mu}(u)<\rho_0\}$. Consider the set $V_\mu(c)$ given by
\begin{equation}\label{local mini set}
	V_\mu(c):=S(c)\cap \mathcal{B}_{\rho_0},
\end{equation}
and the minimization problem:
\begin{equation*}
	m_\mu(c):=\inf_{u\in V_\mu(c)} I_{\mu}(u),~~~~\text{for any}~~c\in(0,c_0).
\end{equation*}
The properties of $m_\mu(c)$ are stated as follows.

\begin{lemma}\label{mcproperty}
	For any $\mu\in(0,1]$ and $c\in(0,c_0)$, the following three assertions hold.
	\begin{enumerate}[label=(\roman*)]
		\item $m_\mu(c)<0<\inf\limits_{u\in\partial V_\mu(c)}I_{\mu}(u)$.
		\item The function $c\mapsto m_\mu(c)$ is a continuous mapping.
		\item For all $\alpha\in(0,c)$, we have $m_\mu(c)\leq m_\mu(\alpha)+m_\mu(c-\alpha)$. Moreover, if $m_\mu(\alpha)$ or $m_\mu(c-\alpha)$ is attained, then the inequality is strict.
	\end{enumerate}
\end{lemma}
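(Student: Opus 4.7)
The plan is to handle the three assertions in order, using the geometry of $V_\mu(c)$ encoded in Lemma \ref{functioncrho pro} and Remark \ref{remark f pro1} together with mass-preserving scaling and disjoint-support constructions.

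For (i), to show $m_\mu(c)<0$ I would fix any $\varphi\in C_0^\infty(\R^N)\cap S(c)$ and use the mass-preserving dilation $\varphi_t(x):=t^{N/2}\varphi(tx)$, for which
\begin{equation*}
I_\mu(\varphi_t)=\frac{\mu}{\theta}t^{\theta(1+\gamma_\theta)}\|\nabla\varphi\|_\theta^\theta+\frac{t^2}{2}\|\nabla\varphi\|_2^2+t^{N+2}\|\varphi\nabla\varphi\|_2^2-\frac{\tau t^{N(q-2)/2}}{q}\|\varphi\|_q^q-\frac{t^{N(2^*-1)}}{2\cdot2^*}\|\varphi\|_{2\cdot2^*}^{2\cdot2^*}.
\end{equation*}
Since $q<2+\frac{4}{N}$, the negative term has exponent $N(q-2)/2<2$, which is strictly smaller than each positive exponent above (using $\theta>4N/(N+2)$ and $N\ge3$); hence $I_\mu(\varphi_t)<0$ for all small $t>0$, while simultaneously $\xi_\mu(\varphi_t)\to0$ so that $\varphi_t\in V_\mu(c)$. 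The boundary estimate follows directly from \eqref{iupro1}: any $u\in\partial V_\mu(c)$ satisfies $\xi_\mu(u)=\rho_0$, so $I_\mu(u)\ge\rho_0\, g_c(\rho_0)$; the monotonicity of $c\mapsto g_c(\rho_0)$ coming from the coefficient $c^{\alpha_1}$ with $\alpha_1>0$, together with $g_{c_0}(\rho_0)=0$, yields $g_c(\rho_0)>0$ for every $c\in(0,c_0)$.

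For (ii), I would prove continuity through the renormalization $u\mapsto\sqrt{c'/c}\,u$ mapping $S(c)$ to $S(c')$. Upper semicontinuity is immediate: if $u\in V_\mu(c)$ is a near-minimizer, the functions $w_n:=\sqrt{c_n/c}\,u$ have $\xi_\mu(w_n)\to\xi_\mu(u)<\rho_0$ and hence lie in $V_\mu(c_n)$ for large $n$, with $I_\mu(w_n)\to I_\mu(u)$. For lower semicontinuity I take near-minimizers $u_n\in V_\mu(c_n)$ and set $v_n:=\sqrt{c/c_n}\,u_n$; the crucial quantitative input is that $I_\mu(u_n)<0$ eventually forces $g_{c_n}(\xi_\mu(u_n))<0$ through \eqref{iupro1}, and Remark \ref{remark f pro1} then locates $\xi_\mu(u_n)$ in the lower branch, giving the sharp bound $\xi_\mu(u_n)<(c_n/c_0)\rho_0$. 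Consequently $\xi_\mu(v_n)\le(1+o(1))(c/c_0)\rho_0<\rho_0$, so $v_n\in V_\mu(c)$ and $I_\mu(v_n)-I_\mu(u_n)\to 0$ closes the argument.

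For (iii), the non-strict subadditivity follows from a translation argument. I would approximate near-minimizing sequences for $m_\mu(\alpha)$ and $m_\mu(c-\alpha)$ by smooth compactly supported functions $u_n$ and $v_n$, then translate to set $w_n:=u_n+v_n(\,\cdot\,-y_n)$ with $|y_n|\to\infty$. For $n$ large the supports are disjoint, so $\|w_n\|_2^2=c$, $I_\mu(w_n)=I_\mu(u_n)+I_\mu(v_n)$, and $\xi_\mu(w_n)=\xi_\mu(u_n)+\xi_\mu(v_n)$. Reusing the sharp bound from (ii), $\xi_\mu(w_n)<(\alpha/c_0)\rho_0+((c-\alpha)/c_0)\rho_0=(c/c_0)\rho_0<\rho_0$ since $c<c_0$, so $w_n\in V_\mu(c)$ and letting $n\to\infty$ gives $m_\mu(c)\le m_\mu(\alpha)+m_\mu(c-\alpha)$. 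For the strict inequality when $m_\mu(\alpha)$ is attained by some $u_\alpha$, I would combine the Euler–Lagrange equation tested against $u_\alpha$ with the Pohozaev identity from Lemma \ref{poho id} to extract that the multiplier $\lambda$ is strictly positive in the subcritical regime $q<2+4/N$; the scaling $\hat u_\sigma:=\sqrt{\sigma}\,u_\alpha$, which still lies in $V_\mu(\sigma\alpha)$ for $\sigma$ near $1$, then satisfies $I_\mu(\hat u_\sigma)=m_\mu(\alpha)-\frac{\lambda\alpha}{2}(\sigma-1)+O((\sigma-1)^2)$, producing $m_\mu(\alpha+\beta)\le m_\mu(\alpha)-\delta\beta$ for some $\delta>0$ and all sufficiently small $\beta>0$. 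Applying the non-strict subadditivity to the split $(\alpha+\beta,c-\alpha-\beta)$ and invoking the continuity from (ii) then upgrades the inequality to strict.

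The main obstacle is the positivity of $\lambda$ in the strictness part of (iii). Unlike the semilinear setting, our equation simultaneously involves the $\theta$-perturbation $-\mu\Delta_\theta u$, the quasilinear term $\|u_\alpha\nabla u_\alpha\|_2^2$, and the Sobolev-critical term $\|u_\alpha\|_{2\cdot2^*}^{2\cdot2^*}$, each entering the two identities with different Pohozaev weights, so neither \eqref{Pohozaev type mainfold id} nor the Lagrange equation alone pins down the sign of $\lambda$. Its positivity must be extracted by a careful algebraic combination of the two identities, relying crucially on the strict subcriticality $q<2+\frac{4}{N}$ and the negativity $I_\mu(u_\alpha)=m_\mu(\alpha)<0$ established in (i).
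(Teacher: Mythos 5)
Your treatment of (i) is essentially the paper's argument: the boundary estimate from \eqref{iupro1} together with Remark \ref{remark f pro1}, and negativity from the mass-preserving dilation $s\mapsto s^{N/2}u(s\cdot)$ with the small exponent $N(q-2)/2<2$. In (ii) you use the amplitude rescaling $u\mapsto\sqrt{c'/c}\,u$ where the paper uses the dilation $u\mapsto t^{1/(N+2)}u(t^{-1/(N+2)}\cdot)$; both send $S(c_n)\to S(c)$, both use near-minimizers with $I_\mu<0$ to locate $\xi_\mu$ in the lower branch, and the argument closes either way, so this is a harmless variation. Your sharper bound $\xi_\mu(u_n)<(c_n/c_0)\rho_0$ (via Lemma \ref{functioncrho pro1} with $c_1=c_0$) is correct and slightly stronger than the paper's $(c_n/c)\rho_0$.

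Part (iii) is where you genuinely diverge, and it is where the gap is. Your non-strict subadditivity via compactly supported approximants translated to disjoint supports is fine, and your claim $\lambda>0$ for a local minimizer also holds: from the Nehari identity plus $Q_\mu(u_\alpha)=0$ (Lemma \ref{poho id}), one reconstructs the classical Pohozaev identity and gets
\begin{equation*}
I_\mu(u_\alpha)=\frac{\mu}{N}\|\nabla u_\alpha\|_\theta^\theta+\frac{1}{N}\|\nabla u_\alpha\|_2^2+\frac{2}{N}\|u_\alpha\nabla u_\alpha\|_2^2-\frac{\lambda}{2}\|u_\alpha\|_2^2,
\end{equation*}
so $m_\mu(\alpha)<0$ forces $\lambda>0$ (and in fact $\lambda\alpha/2>|m_\mu(\alpha)|$; note this does not use $q<2+\frac4N$ beyond making $m_\mu(\alpha)<0$). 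What does \emph{not} close is the last step. From the non-strict subadditivity applied to the split $(\alpha+\beta,\,c-\alpha-\beta)$ you get
\begin{equation*}
m_\mu(c)\le m_\mu(\alpha)-\delta\beta+m_\mu(c-\alpha-\beta),
\end{equation*}
and to conclude $m_\mu(c)<m_\mu(\alpha)+m_\mu(c-\alpha)$ you need $m_\mu(c-\alpha-\beta)-m_\mu(c-\alpha)<\delta\beta$ for some $\beta>0$. Continuity of $m_\mu$ only gives $m_\mu(c-\alpha-\beta)-m_\mu(c-\alpha)=o(1)$, not $o(\beta)$, and since $m_\mu$ is strictly decreasing this difference is actually bounded \emph{below} by a positive multiple of $\beta$; no upper rate of the required form is available from (i)--(ii) alone. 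So ``invoking the continuity from (ii)'' does not upgrade the inequality to strict.

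The paper's proof avoids this entirely by establishing the scaling inequality $m_\mu(t\alpha)\le t\,m_\mu(\alpha)$ for $t\in(1,c/\alpha]$: with $v:=t^{1/(N+2)}u(t^{-1/(N+2)}\cdot)$ and $I_\mu(u)<0$, each positive term of $I_\mu$ scales with exponent $\le 1$ and each negative term with exponent $>1$ in $t$, which gives the strict pointwise inequality $I_\mu(v)<tI_\mu(u)$ for $t>1$. Taking $\eps=0$ when $m_\mu(\alpha)$ is attained then makes the scaling inequality strict, and adding the two copies $m_\mu(c)<\frac{c}{\alpha}m_\mu(\alpha)$ and $m_\mu(c)\le\frac{c}{c-\alpha}m_\mu(c-\alpha)$ weighted by $\alpha/c$ and $(c-\alpha)/c$ yields the strict subadditivity in one line. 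If you want to keep your Lagrange-multiplier argument you would need to supplement it with a quantitative one-sided Lipschitz bound for $m_\mu$ near $c-\alpha$, but the cleanest such bound is exactly a reformulation of the paper's scaling inequality, so you would not actually be saving work.
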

\begin{proof}
\begin{enumerate}[label=(\roman*)]
\item 
For any $u\in \partial V_\mu(c)$, using \eqref{iupro1} and Remark \ref{remark f pro1}, we get that
$$I_\mu(u)\geq \xi_{\mu}(u)f(\|u\|_2^2,\xi_{\mu}(u)) =\rho_0 f(c,\rho_0)>0. $$
Now for any $u\in S(c)$, define $u_s(x):=s^{\frac{N}{2}}u(sx), \forall s\in(0, +\infty)$.
Then we have
\begin{align}\label{mcproperty eq1-1}
	\psi_u(s)&:=I_\mu(u_s)=\frac{\mu}{\theta}s^{\theta(1+\gamma_\theta)}\int_{\R^N}|\nabla u|^\theta dx +\frac{s^2}{2}\int_{\R^N}|\nabla u|^2dx+s^{N+2}\int_{\R^N}|u|^2|\nabla u|^2dx\nonumber\\
				&\quad-\frac{\tau}{q}s^{\frac{N(q-2)}{2}}\int_{\R^N}|u|^qdx-\frac{1}{2\cdot2^*}s^{N(2^*-1)}\int_{\R^N}|u|^{2\cdot2^*}dx.
\end{align}
Due to the fact that $\frac{N(q-2)}{2}<2$ and $N(2^*-1)>N+2$, we deduce that there exists sufficiently small $s_0>0$ such that $\xi_\mu(u_{s_0})<\rho_0$ and $I_\mu(u_{s_0})=\psi_u(s_0)<0$. Hence, $m_\mu(c)<0$.
\item 
Fix an arbitrary $c\in(0,c_0)$. Assume that $\{c_n\}\subset (0,c_0)$ is a sequence such that $c_n\rightarrow c$. By (i), for any $\epsilon>0$, there exists a sequence $\{u_n\}\subset V_\mu(c_n)$ satisfying
\begin{align}\label{mcproperty eq3}
I_\mu(u_n)\leq m_\mu(c_n)+\epsilon \quad  \text{and} \quad  I_\mu(u_n)<0.
\end{align}
We set $v_n(x):=t_n^{\frac{1}{N+2}}u_n(t_n^{-\frac{1}{N+2}}x)$ for $t_n=\frac{c}{c_n}$. Consequently, $v_n\in S(c)$. Moreover,
if $c_n\geq c$, which means $c_n\rightarrow c^{+}$, we have
\begin{align*}
	\frac{\mu}{\theta}\|\nabla v_n\|_{\theta}^{\theta}+\|\nabla v_n\|_2^2+\|v_n\nabla v_n\|_2^2&\leq\frac{\mu}{\theta}\|\nabla u_n\|_{\theta}^{\theta}+\|\nabla u_n\|_2^2+\|u_n\nabla u_n\|_2^2<\rho_0,
\end{align*}
which implies that $v_n\in V_\mu(c)$. On the other hand, taking $c_n< c$ such that $c_n\rightarrow c^{-}$, by Lemma \ref{functioncrho pro1}, we can see that $f(c_n,\rho)\ge0$ for any $\rho\in\left[\frac{c_n}{c}\rho_0,\rho_0\right]$. Using Remark \ref{remark f pro1} again, we get that $f(c_n,\xi_\mu(u_n))<0$, which implies that $\xi_\mu(u_n)<\frac{c_n}{c}\rho_0$ and
\begin{align*}
	\frac{\mu}{\theta}\|\nabla v_n\|_{\theta}^{\theta}+\|\nabla v_n\|_2^2+\|v_n\nabla v_n\|_2^2&\leq\frac{\mu}{\theta}t_n\|\nabla u_n\|_{\theta}^{\theta}+t_n\|\nabla u_n\|_2^2+t_n\|u_n\nabla u_n\|_2^2<\rho_0.
\end{align*}
This implies that $v_n\in V_\mu(c)$. Therefore, we deduce that
\begin{equation*}
	m_\mu(c)\leq I_\mu(v_n)=I_\mu(u_n)+(I_\mu(v_n)-I_\mu(u_n))=I_\mu(u_n)+o_n(1),
\end{equation*}
which together with \eqref{mcproperty eq3} implies that
\begin{equation*}
	m_\mu(c)\leq m_\mu(c_n)+\varepsilon+o_n(1).
\end{equation*}
Reversing the argument we can deduce similarly that $m_\mu(c_n)\leq m_\mu(c)+\varepsilon+o_n(1)$.
Therefore, (\romannumeral2) holds.
\item 
It is suffices to prove that, for any fixed $\alpha\in (0,c)$,  the following inequality holds:
\begin{align}\label{mcproperty eq4}
m_\mu(t\alpha)\leq tm_\mu(\alpha),~~~\forall ~t\in \left(1,\frac{c}{\alpha}\right].
\end{align}
To prove \eqref{mcproperty eq4}, we first observe, by virtue of point (\romannumeral1), for any $\varepsilon>0$ sufficiently small, there exists $u\in V_\mu(\alpha)$ such that
\begin{align}\label{mcproperty eq5}
I_\mu(u)\leq m_\mu(\alpha)+\varepsilon \quad \text{and} \quad I_\mu(u)<0.
\end{align}
By Lemma \ref{functioncrho pro1} and \eqref{mcproperty eq5}, we get $\xi_\mu(u)<\frac{\alpha}{c}\rho_0$.
Next, we define a new function $v(x):=t^{\frac{1}{N+2}}u(t^{-\frac{1}{N+2}}x)$. Then 
\begin{equation*}\begin{aligned}
\xi_\mu(v)<t \xi_\mu(u)<\rho_0.
\end{aligned}
\end{equation*}
This implies that $v\in V_\mu(t \alpha)$. Therefore, we obtain
\begin{equation*}
m_\mu(t\alpha)\leq I_\mu(v)<tI_\mu(u)\leq t(m_\mu(\alpha)+\varepsilon).
\end{equation*}
Since $\varepsilon>0$ is arbitrary, we conclude that $m_\mu(t\alpha)\leq tm_\mu(\alpha)$.  If $m_\mu(\alpha)$ is attained, then there exists a function $u_0\in V_\mu(\alpha)$ such that $I_{\mu}(u_0)=m_{\mu}(\alpha)$ and $I_\mu(u_0)<0$. In this case, we can take $\varepsilon=0$ in \eqref{mcproperty eq5}, and the strict inequality follows immediately.
\end{enumerate}
\end{proof}

The following result is a direct consequence of applying Lemma \ref{lions lemma}.
\begin{lemma}\label{nonvanish}
	Assume that $\mu\in(0,1]$ and $c\in (0,c_0)$, and a sequence $\{u_n\}\subset \mathcal{B}_{\rho_0}$ satisfies $\|u_n\|^2_2\rightarrow c$ and $I_\mu(u_n)\rightarrow m_\mu(c)$. Then, there exist $\delta>0$ and a sequence $\{y_n\}\subset\mathbb{R}^N$ such that for some $R>0$,
  \begin{equation*}\label{nonvanish eq1}
  \int_{B_R(y_n)}|u_n|^2dx\geq \delta>0.
  \end{equation*}  
\end{lemma}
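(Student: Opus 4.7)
\noindent\textbf{Proof plan for Lemma \ref{nonvanish}.} The approach is a standard contradiction argument combined with Lions' concentration-compactness lemma, but carefully exploiting the constraint $c<c_0$ to absorb the Sobolev critical term.

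Suppose, for contradiction, that vanishing occurs, i.e., for every $R>0$,
\begin{equation*}
  \sup_{y\in\R^N}\int_{B_R(y)}|u_n|^2\,dx\;\to\;0.
\end{equation*}
Since $\xi_\mu(u_n)<\rho_0$ and $\|u_n\|_2^2\to c$, the sequence $\{u_n\}$ is bounded in $H^1(\R^N)$. First I would apply Lemma \ref{lions lemma} with $r=s=2$ (so $Nr/(N-r)=2^*$) to deduce that $u_n\to 0$ in $L^\beta(\R^N)$ for every $\beta\in(2,2^*)$. Since $q\in(2,2+\tfrac{4}{N})\subset(2,2^*)$ for $N\ge 3$, this gives in particular $\|u_n\|_q\to 0$, and hence the subcritical perturbation term in $I_\mu(u_n)$ is $o_n(1)$.

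Next I would control the Sobolev critical term by $\|u_n\nabla u_n\|_2^2$. Writing $\|u_n\|_{2\cdot 2^*}^{2\cdot 2^*}=\|u_n^2\|_{2^*}^{2^*}$ and applying the Sobolev inequality to $u_n^2\in\mathcal{D}^{1,2}(\R^N)$, together with $|\nabla u_n^2|=2|u_n\nabla u_n|$, yields
\begin{equation*}
  \frac{1}{2\cdot 2^*}\|u_n\|_{2\cdot 2^*}^{2\cdot 2^*}
  \;\le\;\frac{1}{2\cdot 2^*}\left(\frac{4}{\mathcal{S}}\right)^{\!2^*/2}\!\bigl(\|u_n\nabla u_n\|_2^2\bigr)^{N/(N-2)}
  \;=\;\frac{1}{2\cdot 2^*}\left(\frac{4}{\mathcal{S}}\right)^{\!2^*/2}\!\|u_n\nabla u_n\|_2^{2}\cdot\bigl(\|u_n\nabla u_n\|_2^2\bigr)^{\alpha_2},
\end{equation*}
where $\alpha_2=\tfrac{2}{N-2}$ as in \eqref{alpha012}. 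The crucial observation is that, by the definition of $\rho_0$ as the unique maximizer of $g_{c_0}$ together with $g_{c_0}(\rho_0)=f(c_0,\rho_0)=0$, one has $\tfrac{1}{2\cdot 2^*}(4/\mathcal{S})^{2^*/2}\rho_0^{\alpha_2}\le \tfrac12$. Using $\|u_n\nabla u_n\|_2^2\le \xi_\mu(u_n)<\rho_0$, this upgrades to
\begin{equation*}
  \frac{1}{2\cdot 2^*}\|u_n\|_{2\cdot 2^*}^{2\cdot 2^*}
  \;\le\;\frac{1}{2}\,\|u_n\nabla u_n\|_2^2.
\end{equation*}

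Combining these two estimates with the definition of $I_\mu$ gives
\begin{equation*}
  I_\mu(u_n)\;\ge\;\frac{\mu}{\theta}\|\nabla u_n\|_\theta^\theta+\frac{1}{2}\|\nabla u_n\|_2^2+\frac{1}{2}\|u_n\nabla u_n\|_2^2+o_n(1)\;\ge\;o_n(1),
\end{equation*}
so $\liminf_{n\to\infty}I_\mu(u_n)\ge 0$. On the other hand, Lemma \ref{mcproperty}(i) gives $m_\mu(c)<0$, contradicting $I_\mu(u_n)\to m_\mu(c)$. Hence vanishing cannot occur, and the desired $\delta>0$, $R>0$ and $\{y_n\}\subset\R^N$ exist.

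The only delicate point is step three, where one must correctly recognize that the $\rho_0$-threshold built into $V_\mu(c)$ is exactly what is needed to absorb the critical term into the quasilinear kinetic term; everything else is standard once the Lions alternative is invoked.
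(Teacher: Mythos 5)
Your proof is correct and matches what the authors have in mind. The paper states Lemma \ref{nonvanish} as ``a direct consequence of Lemma \ref{lions lemma}'' without spelling out the details, but the argument you supply is precisely the one the authors themselves use later inside the proof of Proposition \ref{mu to0 local mini} (for the sequence $\{v_n\}$): rule out vanishing by Lions' lemma to kill the subcritical term, then absorb the Sobolev critical term into $\int |u_n|^2|\nabla u_n|^2\,dx$ via the constraint $\xi_\mu(u_n)<\rho_0$ together with $f(c_0,\rho_0)=0$, arriving at $\liminf I_\mu(u_n)\ge 0$ and contradicting $m_\mu(c)<0$. Your exponent bookkeeping ($\alpha_2=\tfrac{2}{N-2}$, $1+\alpha_2=\tfrac{2^*}{2}$), the observation that $\tfrac{1}{2\cdot 2^*}(4/\mathcal{S})^{2^*/2}\rho_0^{\alpha_2}<\tfrac12$ because $f(c_0,\rho_0)=0$ and the $q$-term in $f$ is strictly positive, and the use of $q<2+\tfrac{4}{N}<2^*$ to invoke Lions with $r=s=2$ are all sound. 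So this is the same route, not a different one.
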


\begin{lemma}\label{strong convergence}
	Assume that $\mu\in(0,1]$ and $c\in (0,c_0)$, and a sequence $\{u_n\}\subset \mathcal{B}_{\rho_0}$ satisfies $\| u_n\|^2_2\rightarrow c$ and $I_{\mu}(u_n)\to m_\mu(c)$. Then there exists $v_\mu\in V_{\mu}(c)$ such that, up to translation, $u_n\rightarrow v_\mu$ in $X$.
\end{lemma}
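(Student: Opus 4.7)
The statement is a concentration-compactness theorem for a minimizing sequence of the local minimum problem. I will rule out vanishing via Lemma \ref{nonvanish}, rule out dichotomy via the strict subadditivity in Lemma \ref{mcproperty}(iii), and then conclude strong convergence. The critical technical ingredient is a Brezis-Lieb type splitting for the quasilinear term, which does not hold in its standard form because of the nine cross terms displayed in the introduction.

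\textbf{Step 1 (setup and non-vanishing).} Since $\{u_n\}\subset\mathcal{B}_{\rho_0}$ the sequence is bounded in $X$ together with the quasilinear seminorm $\|u\nabla u\|_2$. Lemma \ref{nonvanish} furnishes translations $\{y_n\}\subset\R^N$ such that $w_n(x):=u_n(x+y_n)$ satisfies $\int_{B_R(0)}|w_n|^2\,dx\ge\delta>0$. Passing to a subsequence, $w_n\rightharpoonup v_\mu$ in $X$ with $v_\mu\neq 0$ and $w_n\to v_\mu$ almost everywhere in $\R^N$. Set $\alpha:=\|v_\mu\|_2^2\in(0,c]$; the aim is to show $\alpha=c$.

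\textbf{Step 2 (the splitting, main obstacle).} The integrals $\|\nabla u\|_\theta^\theta$, $\|\nabla u\|_2^2$, $\|u\|_q^q$, $\|u\|_{2\cdot 2^*}^{2\cdot 2^*}$ split by the classical Brezis-Lieb lemma. For the quasilinear term $\|w_n\nabla w_n\|_2^2=\tfrac14\|\nabla w_n^2\|_2^2$, the weak convergence $\nabla w_n^2\rightharpoonup\nabla v_\mu^2$ in $L^2$ (which follows from the a.e.\ convergence of $w_n^2$ and the $L^2$-boundedness of $\nabla w_n^2$) combined with Brezis-Lieb only yields
\begin{equation*}
\|\nabla w_n^2\|_2^2=\|\nabla v_\mu^2\|_2^2+\|\nabla(w_n^2-v_\mu^2)\|_2^2+o_n(1),
\end{equation*}
but the right-hand side is \emph{not} $\|\nabla\tilde w_n^2\|_2^2$, where $\tilde w_n:=w_n-v_\mu$, precisely because of the unwanted cross terms. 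Following the strategy announced in Remark \ref{thm1rem}, I will replace the pair $(v_\mu,\tilde w_n)$ by two functions with asymptotically disjoint supports obtained from a cutoff at a scale $R_n\to\infty$ chosen so that the $L^2$-, gradient-, and quasilinear energies of $v_\mu$ outside $B_{R_n}$ (and those of $\tilde w_n$ inside $B_{R_n}$) tend to zero. Under this disjoint-support decomposition the identity $\nabla(f+g)^2=\nabla f^2+\nabla g^2$ has disjoint supports so the quasilinear integral splits exactly, and the reassembly errors are controlled by the tails of $v_\mu$ in the transition annulus together with the local strong convergence $w_n\to v_\mu$. This delivers
\begin{equation*}
I_\mu(w_n)=I_\mu(v_\mu)+I_\mu(\tilde w_n)+o_n(1),\qquad \xi_\mu(w_n)=\xi_\mu(v_\mu)+\xi_\mu(\tilde w_n)+o_n(1).
\end{equation*}

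\textbf{Step 3 (ruling out dichotomy and strong convergence).} Suppose for contradiction $\alpha\in(0,c)$. By weak lower semicontinuity $\xi_\mu(v_\mu)\le\rho_0$, and since $I_\mu(v_\mu)\le\liminf I_\mu(w_n)=m_\mu(c)<0<\inf_{\partial V_\mu(c)}I_\mu$ from Lemma \ref{mcproperty}(i), in fact $\xi_\mu(v_\mu)<\rho_0$, so $v_\mu\in V_\mu(\alpha)$. By Step 2 and a harmless $L^2$-rescaling as in the proof of Lemma \ref{mcproperty}(ii), the tail $\tilde w_n$ lies in $V_\mu(c-\alpha)$ up to an $o_n(1)$ change of $I_\mu$. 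Then
\begin{equation*}
m_\mu(c)=I_\mu(v_\mu)+\lim_{n\to\infty}I_\mu(\tilde w_n)\ge m_\mu(\alpha)+m_\mu(c-\alpha).
\end{equation*}
If $I_\mu(v_\mu)=m_\mu(\alpha)$ then $v_\mu$ attains $m_\mu(\alpha)$ and Lemma \ref{mcproperty}(iii) makes the inequality strict; otherwise $I_\mu(v_\mu)>m_\mu(\alpha)$ and the strict inequality is immediate. Either way $m_\mu(c)>m_\mu(\alpha)+m_\mu(c-\alpha)\ge m_\mu(c)$, a contradiction. Hence $\alpha=c$, so $\|\tilde w_n\|_2\to 0$ and $I_\mu(\tilde w_n)\to 0$. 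Plugging $\tilde w_n$ into \eqref{iupro1} and noting that $f(\|\tilde w_n\|_2^2,\rho)$ is bounded below by a positive constant on $[0,\rho_0]$ as $\|\tilde w_n\|_2\to 0$ forces $\xi_\mu(\tilde w_n)\to 0$, which gives $w_n\to v_\mu$ strongly in $X$. The limit lies in $V_\mu(c)$ by construction.

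I expect Step 2 to be the only genuinely new step; the rest is a clean but standard concentration-compactness argument made possible by the strict subadditivity of $c\mapsto m_\mu(c)$.
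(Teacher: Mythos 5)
Your Steps~1 and~3 match the paper's concentration-compactness skeleton: vanishing is excluded by Lemma~\ref{nonvanish}, dichotomy is excluded by the strict subadditivity in Lemma~\ref{mcproperty}~(iii), and the final compactness follows once $\|v_\mu\|_2^2=c$. The problem is Step~2.

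You propose to achieve the full Brezis-Lieb equality
$I_\mu(w_n)=I_\mu(v_\mu)+I_\mu(\tilde w_n)+o_n(1)$
by cutting $v_\mu$ and $\tilde w_n=w_n-v_\mu$ at a radius $R_n\to\infty$ so as to make their supports asymptotically disjoint. This cannot work: you ask that ``the gradient and quasilinear energies of $\tilde w_n$ inside $B_{R_n}$ tend to zero,'' but $\nabla w_n\rightharpoonup \nabla v_\mu$ is only \emph{weak} in $L^2$, so $\nabla\tilde w_n$ need not become small in $L^2_{\rm loc}$, no matter how slowly $R_n\to\infty$. Concretely, the dangerous cross term in the quasilinear energy is $\int_{\R^N}|v_\mu|^2|\nabla\tilde w_n|^2\,dx$: after localizing to a large ball $B_R$ where $v_\mu$ is away from zero, this is comparable to $\|\nabla\tilde w_n\|_{L^2(B_R)}^2$, which is in general $\mathcal O(1)$, not $o_n(1)$. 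In the paper the Brezis-Lieb splitting for the quasilinear term (Lemma~\ref{decomposition of quasi}) is proved only under the additional structural hypothesis that both $u_n$ and the weak limit solve the approximating and limiting equations respectively, together with the uniform $L^\infty$ and $\mathcal C^{1,\beta}$ bounds from Moser iteration; a minimizing sequence carries none of that information, so the equality is not available here.

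The paper avoids this entirely. It cuts the sequence $u_n$ itself (not the remainder $\tilde w_n$) into an inner piece $z_n=\eta_{\tilde R}(\cdot-y_n)u_n$ and an outer piece $w_n=(1-\eta_{\tilde R}(\cdot-y_n))u_n$, where $\eta_{\tilde R}$ is a fixed cutoff. Because $0\le\eta_{\tilde R}\le 1$ and $r>1$, the pointwise inequality $\eta_{\tilde R}^r+(1-\eta_{\tilde R})^r\le 1$ makes each of the positive contributions $\|\nabla\cdot\|_\theta^\theta$, $\|\nabla\cdot\|_2^2$, $\|\cdot\nabla\cdot\|_2^2$ superadditive up to errors of order $o_n(1)+o_{\tilde R}(1)$, while the negative contributions are controlled in the transition annulus via the small $L^2$ mass there. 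The outcome is a one-sided inequality
$I_\mu(u_n)\ge I_\mu(z_n)+I_\mu(w_n)-o_n(1)-o_{\tilde R}(1)$,
which, combined with $I_\mu(w_n)\ge m_\mu(\|w_n\|_2^2)$, weak lower semicontinuity applied to $z_n$, and letting $\tilde R\to\infty$, already gives $m_\mu(c)\ge m_\mu(c-c_1)+I_\mu(v_\mu)$ with $c_1=\|v_\mu\|_2^2$. That inequality alone feeds the subadditivity contradiction. Your argument would need the equality form of the splitting, and that is the missing piece; replace Step~2 by the one-sided cutoff estimate on $u_n$ as in the paper and the rest of your outline goes through.
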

\begin{proof}
	We employ the concentration-compactness principle to analyze the sequences $\{u_n\}$. Since $\{u_n\}\subset \mathcal{B}_{\rho_0}$, it follows that $\{u_n\}$ is bounded in $X$ and $\{u_n\nabla u_n\}$ is bounded in $L^2(\R^N)^N$. Define $\omega_n:=u_n(\cdot+y_n)$. Then, by Lemma \ref{nonvanish} and the Rellich compactness theorem, there exists $0\neq v_\mu\in X$ such that, up to a subsequence,
	\begin{equation}\label{strong eq1}
		\begin{cases}
		\omega_n\rightharpoonup v_\mu ~~~&\text{in}~~~ W^{1,\theta}(\R^N),\\
		\omega_n\rightharpoonup v_\mu ~~~&\text{in}~~~ H^1(\R^N),\\
		\omega_n\nabla \omega_n\rightharpoonup \tilde\omega~~~&\text{in}~~~L^2(\R^N)^N,\\
		\omega_n\to v_\mu~~~&\text{in}~~~L^{\beta}_{loc}(\R^N),~~\beta\in[1,\frac{N\theta}{N-\theta}).
		\end{cases}
	  \end{equation}
	 Next, we claim that $\tilde\omega=v_\mu\nabla v_\mu$. To see this, observe that since $\|\nabla \omega_n^2\|_2^2=4\int_{\R^N}|\omega_n|^2|\nabla \omega_n|^2dx$ is bounded, we have $\omega_n^2\rightharpoonup \bar\omega$ in $H^1(\R^N)$. By the uniqueness of weak limits, it follows that $\bar\omega=v_\mu^2$. Consequently, we conclude that $\tilde\omega=v_\mu\nabla v_\mu$, and the claim holds.

	  Denote $c_1=\|v_\mu\|_2^2>0$. Clearly, $c_1\leq c$. In what follows, we are going to prove that the dichotomy does not occur, i.e., $c_1=c$. Arguing by contradiction, we may assume that $0<c_1<c$. First of all, for any $\eps>0$, there exists $R_1=R_1(\eps)>0$ such that
	  \begin{equation*}
		c_1-\eps<\int_{B_{R_1}}|v_\mu|^2dx\leq c_1.
	  \end{equation*}
	  Thus, for $n$ sufficiently large, we have
	  \begin{equation}\label{strong eq2}
		c_1-\eps<\int_{B_{R_1}}|\omega_n|^2dx\leq c_1~~~~\text{and}~~~~\int_{2B_{R_1}\backslash B_{R_1}}|\omega_n|^2dx\leq\eps.
	  \end{equation}
	  Let $\tilde{R}=\max\{R,R_1\}$, where $R$ is given in Lemma \ref{nonvanish}. Choose a smooth function $\eta_{\tilde{R}}$ defined by
	  \begin{equation*}
		\eta_{\tilde{R}}(t):=
		\begin{cases}
			1~~~~&\text{if}~~~0\leq|t|\leq\tilde{R},\\
			0~~~~&\text{if}~~~|t|\ge2\tilde{R},
		\end{cases}
	  \end{equation*}
	  with $|\nabla\eta_{\tilde{R}}|\leq\frac{2}{\tilde{R}}$. Set $z_n(x):=\eta_{\tilde{R}}(|x-y_n|)u_n(x)$ and $w_n(x):=\left(1-\eta_{\tilde{R}}(|x-y_n|)\right)u_n(x)$. Clearly, $z_n$ and $w_n$ belong to $X$ and $u_n=z_n+w_n$. Moreover, we have
	  \begin{equation*}
		\liminf_{n\to+\infty}\int_{B_{\tilde{R}}(y_n)}|z_n|^2dx=\liminf_{n\to+\infty}\int_{B_{\tilde{R}}(y_n)}|u_n|^2dx\ge\delta>0.
	  \end{equation*}
	  From this, we derive that 
	   \begin{align}\label{strong eq8}
		I_\mu(u_n)&=I_{\mu}(z_n)+I_{\mu}(w_n)+\frac{\mu}{\theta}\int_{B_{2\tilde{R}}(y_n)\backslash B_{\tilde{R}}(y_n)}\left(|\nabla u_n|^{\theta}-|\nabla z_n|^{\theta}-|\nabla w_n|^{\theta}\right)dx\nonumber\\
		&\quad+\frac{1}{2}\int_{B_{2\tilde{R}}(y_n)\backslash B_{\tilde{R}}(y_n)}\left(|\nabla u_n|^{2}-|\nabla z_n|^{2}-|\nabla w_n|^{2}\right)dx\nonumber\\
		&\quad+\int_{B_{2\tilde{R}}(y_n)\backslash B_{\tilde{R}}(y_n)}\left(|u_n|^2|\nabla u_n|^{2}-|z_n|^2|\nabla z_n|^{2}-|w_n|^2|\nabla w_n|^{2}\right)dx\nonumber\\
		&\quad-\frac{\tau}{q}\int_{B_{2\tilde{R}}(y_n)\backslash B_{\tilde{R}}(y_n)}\left(|u_n|^{q}-|z_n|^{q}-|w_n|^{q}\right)dx\nonumber\\
		&\quad-\frac{1}{2\cdot2^*}\int_{B_{2\tilde{R}}(y_n)\backslash B_{\tilde{R}}(y_n)}\left(|u_n|^{2\cdot2^*}-|z_n|^{2\cdot2^*}-|w_n|^{2\cdot2^*}\right)dx.
	  \end{align}
	We can now verify that
	  \begin{equation}\label{strong eq3}
		\int_{B_{2\tilde{R}}(y_n)\backslash B_{\tilde{R}}(y_n)}\left(|\nabla u_n|^{\theta}-|\nabla z_n|^{\theta}-|\nabla w_n|^{\theta}\right)dx\ge\int_{B_{2\tilde{R}}(y_n)\backslash B_{\tilde{R}}(y_n)}\left(1-\eta_{\tilde{R}}^{\theta}-(1-\eta_{\tilde{R}})^{\theta}\right)|\nabla u_n|^{\theta}dx-o_{\tilde{R}}(1),
	  \end{equation}
	  \begin{equation}\label{strong eq4}
		\int_{B_{2\tilde{R}}(y_n)\backslash B_{\tilde{R}}(y_n)}\left(|\nabla u_n|^{2}-|\nabla z_n|^{2}-|\nabla w_n|^{2}\right)dx\ge\int_{B_{2\tilde{R}}(y_n)\backslash B_{\tilde{R}}(y_n)}\left(1-\eta_{\tilde{R}}^{2}-(1-\eta_{\tilde{R}})^{2}\right)|\nabla u_n|^{2}dx-o_{\tilde{R}}(1),
	  \end{equation}
	  \begin{align}\label{strong eq5}
		&\int_{B_{2\tilde{R}}(y_n)\backslash B_{\tilde{R}}(y_n)}\left(|u_n|^2|\nabla u_n|^{2}-|z_n|^2|\nabla z_n|^{2}-|w_n|^2|\nabla w_n|^{2}\right)dx\nonumber\\
		&\ge\int_{B_{2\tilde{R}}(y_n)\backslash B_{\tilde{R}}(y_n)}\left(1-\eta_{\tilde{R}}^{4}-(1-\eta_{\tilde{R}})^{4}\right)|u_n|^2|\nabla u_n|^{2}dx-o_{\tilde{R}}(1).
	  \end{align}
	  Moreover, using \eqref{strong eq2} and the interpolation inequality, we obtain
	  \begin{equation}\label{strong eq6}
		\left|\int_{B_{2\tilde{R}}(y_n)\backslash B_{\tilde{R}}(y_n)}\left(|u_n|^{q}-|z_n|^{q}-|w_n|^{q}\right)dx\right|\leq3\eps
	  \end{equation}
	  and
	  \begin{equation}\label{strong eq7}
		\left|\int_{B_{2\tilde{R}}(y_n)\backslash B_{\tilde{R}}(y_n)}\left(|u_n|^{2\cdot2^*}-|z_n|^{2\cdot2^*}-|w_n|^{2\cdot2^*}\right)dx\right|\leq3\eps,
	  \end{equation}
	  for $n$ large enough.
	  Substituting \eqref{strong eq3}-\eqref{strong eq7} into \eqref{strong eq8}, we deduce that
	  \begin{equation}\label{strong eq9}
		I_\mu(u_n)\ge I_{\mu}(z_n)+I_{\mu}(w_n)-o_n(1)-o_{\tilde{R}}(1).
	  \end{equation}
	  By a direct computation, we have
	  \begin{align*}
		\|w_n\|_2^2&=\left\langle u_n(x)-\eta_{\tilde{R}}(|x-y_n|)u_n(x),u_n(x)-\eta_{\tilde{R}}(|x-y_n|)u_n(x)\right\rangle\\
		&=\left\langle u_n,u_n\right\rangle-2\left\langle u_n,\eta_{\tilde{R}}(|x-y_n|)u_n\right\rangle+\left\langle \eta_{\tilde{R}}(|x-y_n|)u_n,\eta_{\tilde{R}}(|x-y_n|)u_n\right\rangle.
	  \end{align*}
	  It follows from $\omega_n\to v_\mu$ in $L^2_{loc}(\R^N)$ that as $n\to+\infty$,
	  \begin{equation*}
		\|w_n\|_2^2\to c-2\int_{\R^N}\eta_{\tilde{R}}v_\mu^2dx+\int_{\R^N}\eta_{\tilde{R}}^2v_\mu^2dx:=\delta_{\tilde{R}}>0.
	  \end{equation*}
	 Moreover, observe that
	  \begin{align*}
		\xi_{\mu}(w_n)\leq o_{\tilde{R}}(1)+\frac{\mu}{\theta}\int_{\R^N\backslash B_{\tilde{R}}}|\nabla \omega_n|^{\theta}dx+\int_{\R^N\backslash B_{\tilde{R}}}|\nabla \omega_n|^{2}dx+\int_{\R^N\backslash B_{\tilde{R}}}|\omega_n|^2|\nabla \omega_n|^{2}dx,
	  \end{align*}
	  which implies that $\{w_n\}\subset\mathcal{B}_{\rho_0}$ for $\tilde{R}$ sufficiently large. Hence, in view of Lemma \ref{mcproperty} (\romannumeral2) we get
	  \begin{equation*}
		\liminf_{n\to+\infty}~I_{\mu}(w_n)\ge\liminf_{n\to+\infty}~m_\mu(\|w_n\|_2^2)= m_\mu(\delta_{\tilde{R}}).
	  \end{equation*}
	  Taking the limit as $n\to+\infty$ in \eqref{strong eq9} and using \eqref{strong eq1}, we infer that
	  \begin{equation*}
		m_\mu(c)\ge m_\mu(\delta_{\tilde{R}})+I_{\mu}(\eta_{\tilde{R}}v_\mu)-o_{\tilde{R}}(1).
	  \end{equation*}
	 Then, letting $\tilde{R}\to+\infty$, we deduce that
	  \begin{equation}\label{strong eq10}
		m_\mu(c)\ge m_\mu(c-c_1)+I_{\mu}(v_\mu).
	  \end{equation}
	  By the lower semi-continuity we have $v_\mu\in V_\mu(c_1)$, which implies that $I_\mu(v_\mu)\geq m_\mu(c_1)$. Suppose $I_\mu(v_\mu)>m_\mu(c_1)$. Then, combining \eqref{strong eq10} and Lemma \ref{mcproperty} (\romannumeral3), we obtain
 \begin{align*}
m_\mu(c)>m_\mu(c-c_1)+m_\mu(c_1)\geq m_\mu(c-c_1+c_1)=m_\mu(c),
  \end{align*}
which yields a contradiction. Therefore, it must hold that $I_\mu(v_\mu)=m_\mu(c_1)$, implying that $v_\mu$ is a local minimizer of $I_\mu$ in $V_\mu(c_1)$. Using Lemma \ref{mcproperty} (\romannumeral3) and \eqref{strong eq10}, we obtain
 \begin{align*}
m_\mu(c)\geq m_\mu(c-c_1)+I_\mu(v_\mu)=m_\mu(c-c_1)+m_\mu(c_1)> m_\mu(c-c_1+c_1)=m_\mu(c),
  \end{align*}
which is also a contradiction. Hence, it follows that $\|v_\mu\|^2_2=c$. Consequently, $\omega_n\to v_\mu$ in $L^2(\R^N)$. Using the lower semi-continuity and interpolation, we infer that
\begin{align*}
	&m_\mu(c)=\lim_{n\to+\infty}I_\mu(u_n)=\lim_{n\to+\infty}I_\mu(\omega_n)\ge I_\mu(v_\mu)\ge m_\mu(c).
\end{align*}
This implies that $\|\nabla\omega_n\|^2_2\rightarrow \|\nabla v_\mu\|^2_2$, $\| \nabla \omega_n\|^\theta_\theta\rightarrow \| \nabla v_\mu\|^\theta_\theta$ and $\|\omega_n\nabla\omega_n\|^2_2\rightarrow \|v_\mu\nabla v_\mu\|^2_2$. Therefore, $u_n\rightarrow v_\mu$ in $X$.
\end{proof}

Building on the preceding preliminary results, we conclude that
\begin{lemma}\label{local minimum for perturbed}
  For any fixed $\mu\in(0,1]$ and $c\in(0,c_0)$, the functional $I_\mu$ has a critical point $v_\mu\in V_\mu(c)$ satisfying
  \begin{equation*}
	I_\mu(v_\mu)=m_\mu(c),~~~I'_\mu(v_\mu)+\tilde\lambda_\mu v_\mu=0,
  \end{equation*}
  for some $\tilde\lambda_\mu\in\R$.
\end{lemma}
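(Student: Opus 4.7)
The plan is to combine the energy barrier from Lemma~\ref{mcproperty}(i) with the compactness result of Lemma~\ref{strong convergence}, and then close the argument via a standard Lagrange multiplier rule. First, I would select any minimizing sequence $\{u_n\}\subset V_\mu(c)$ for the level $m_\mu(c)$, so that $\|u_n\|_2^2=c$ and $I_\mu(u_n)\to m_\mu(c)$. Since each $u_n\in V_\mu(c)\subset\mathcal B_{\rho_0}$, the hypotheses of Lemma~\ref{strong convergence} are satisfied, which yields (after a translation that leaves $I_\mu$, $\|\cdot\|_2$, and the set $V_\mu(c)$ invariant) a limit $v_\mu\in V_\mu(c)$ such that $u_n\to v_\mu$ strongly in $X$ and $\|u_n\nabla u_n\|_2^2\to\|v_\mu\nabla v_\mu\|_2^2$. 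Strong convergence in $X$ together with the convergence of the quasilinear term guarantees $I_\mu(v_\mu)=m_\mu(c)$ and $\|v_\mu\|_2^2=c$, so $v_\mu$ realizes the infimum.

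Next, I would argue that $v_\mu$ lies in the \emph{interior} of $V_\mu(c)$ relative to $S(c)$. By Lemma~\ref{mcproperty}(i) we have
\begin{equation*}
I_\mu(v_\mu)=m_\mu(c)<0<\inf_{u\in\partial V_\mu(c)} I_\mu(u),
\end{equation*}
so $v_\mu\notin \partial V_\mu(c)$, i.e.\ $\xi_\mu(v_\mu)<\rho_0$. Because $V_\mu(c)$ is open in $S(c)$ (it is the intersection of the open set $\mathcal B_{\rho_0}\subset X$ with $S(c)$), $v_\mu$ is in fact a local minimizer of $I_\mu$ restricted to $S(c)$.

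To conclude, I would invoke the Lagrange multiplier rule. Since $I_\mu\in\mathcal C^1(X,\R)$ and $S(c)$ is a smooth codimension-one Hilbert submanifold of $X$ (the map $u\mapsto \|u\|_2^2-c$ is $\mathcal C^1$ with nowhere-vanishing derivative on $S(c)$), and $v_\mu$ is a local minimum of $I_\mu|_{S(c)}$, there exists $\tilde\lambda_\mu\in\R$ such that $I_\mu'(v_\mu)+\tilde\lambda_\mu v_\mu=0$ in $X^*$, which is the required Euler--Lagrange equation.

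The only genuinely delicate step is the compactness conclusion from Lemma~\ref{strong convergence}; everything else is essentially bookkeeping. In particular, the openness of $V_\mu(c)$ in $S(c)$ is crucial: without the strict inequality $\xi_\mu(v_\mu)<\rho_0$ furnished by the energy barrier in Lemma~\ref{mcproperty}(i), one could only admit tangent variations compatible with $\xi_\mu(u)\le\rho_0$ and would not immediately obtain a free critical point equation. The barrier inequality therefore plays the double role of ruling out boundary minimizers and of upgrading the constrained minimum to a genuine critical point of $I_\mu$ on $S(c)$.
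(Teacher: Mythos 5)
Your argument is correct and coincides with the paper's own proof: both pick a minimizing sequence in $V_\mu(c)$, invoke Lemma~\ref{strong convergence} to pass to a strong $X$-limit $v_\mu\in V_\mu(c)$ realizing $m_\mu(c)$, use the barrier $m_\mu(c)<0<\inf_{\partial V_\mu(c)}I_\mu$ from Lemma~\ref{mcproperty}(i) to place $v_\mu$ in the interior of $V_\mu(c)$ relative to $S(c)$, and then apply the Lagrange multiplier rule (which the paper leaves implicit). The only cosmetic slip is calling $S(c)$ a Hilbert submanifold of $X$; since $X=W^{1,\theta}\cap H^1$ is merely Banach, it is a $\mathcal{C}^1$ Banach submanifold, but the multiplier rule applies unchanged.
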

\begin{proof}
For any sequence $\{u_n\}\subset V_\mu(c)$ satisfying $I_\mu(u_n)\rightarrow m_\mu(c)$, we deduce from Lemma \ref{strong convergence} that, up to translation, $u_n\rightarrow v_\mu\in V_{\mu}(c)$ in $X$.
Furthermore, using the fact that $m_\mu(c)<0$ and the argument presented in Lemma \ref{strong convergence}, we conclude that $v_\mu\in V_\mu(c)$ is a local minimizer for $I_\mu$ on $V_\mu(c)$. 
\end{proof}
\begin{remark}\label{symmetric local}
	For further reference, we note that $v_\mu\in V_\mu(c)$ obtained in the above lemma is positive and radially symmetric non-increasing.
	In fact, let $v_\mu^*$ denote the Schwartz rearrangement of $|v_\mu|$. Consequently, $v_\mu^*$ is a positive, radially symmetric, and non-increasing function. Moreover, it is straightforward to verify that Lemma 4.3 in \cite{CJS2010} also holds for the Sobolev critical case, that is, 
\begin{equation*}
	\|\nabla v_\mu^*\|_{\theta}^{\theta}\leq\|\nabla v_\mu\|_{\theta}^{\theta},~~~\|\nabla v_\mu^*\|_{2}^{2}+\|v_\mu^*\nabla v_\mu^*\|_{2}^{2}\leq\|\nabla v_\mu\|_{2}^{2}+\|v_\mu\nabla v_\mu\|_{2}^{2},~~~\|v_\mu^*\|_{2}^{2}=\|v_\mu\|_{2}^{2}.
\end{equation*}
Thus, $v_\mu^*\in V_\mu(c)$ and $I_\mu(v_\mu^*)=I_\mu(v_\mu)$, which implies that $m_\mu(c)$ is reached by $v_\mu^*$ satisfying
\begin{equation*}
	-\mu\Delta_{\theta} v_\mu^*-\Delta v_\mu^*-v_\mu^*\Delta (v_\mu^{*2})+\tilde\lambda_{\mu} v_\mu^*=\tau|v_\mu^*|^{q-2}v_\mu^*+|v_\mu^*|^{2\cdot2^*-2}v_\mu^*~~~\text{in}~~~\R^N,
\end{equation*}
for some $\tilde\lambda_{\mu}\in\R$. Hereafter, without loss of generality, we still denote $v_\mu^*$ by $v_\mu$.
\end{remark}

\subsection{The mountain pass structure and Palais-Smale sequences}\label{Mountain pass struc1}
Starting from the previous local minimizer $u_{\mu,c}:=v_\mu$ obtained in Lemma \ref{local minimum for perturbed}, we can derive the following mountain pass structure for $I_{\mu}$ restricted on $S_r(c)$.
\begin{lemma}\label{mountain pass struc}
	For any $c\in(0,c_0)$ and fixed $\mu\in(0,1]$, there exists $\zeta_{0}(c)>0$ independent of $\mu$ such that
	\begin{equation*}\label{mountain pass struc eq1}
		M_{\mu}(c):=\inf\limits_{\gamma\in\Gamma_{\mu}(c)}\max\limits_{t\in[0,1]}I_{\mu}(\gamma(t))\ge\zeta_{0}(c)>\sup\limits_{\gamma\in\Gamma_{\mu}(c)}\max\{I_{\mu}(\gamma(0)), I_{\mu}(\gamma(1))\},
	\end{equation*}
	where
	\begin{equation*}\label{mountain pass struc eq2}
		\Gamma_{\mu}(c):=\{\gamma\in \mathcal{C}([0,1], S_r(c)): \gamma(0)=u_{\mu,c}, I_{\mu}(\gamma(1))<2m_{\mu}(c)\}.
	\end{equation*}
\end{lemma}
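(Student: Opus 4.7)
The plan is to exploit three ingredients that are already in place: (i) the local minimizer $v_\mu = u_{\mu,c}$ satisfies $\xi_\mu(v_\mu)<\rho_0$ and $I_\mu(v_\mu)=m_\mu(c)<0$ (Lemma \ref{local minimum for perturbed}, Remark \ref{symmetric local}); (ii) the functional is uniformly positive on the boundary $\partial V_\mu(c)=\{u\in S(c):\xi_\mu(u)=\rho_0\}$, by the estimate \eqref{iupro1} and Remark \ref{remark f pro1}; and (iii) the radial scaling $\mathcal{H}(s,v_\mu)$ drives the energy to $-\infty$ as $s\to+\infty$. Combining (ii) with (i) and (iii) and a continuity argument then produces the mountain pass geometry with a separation constant that does not depend on $\mu$.

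First I would verify $\Gamma_\mu(c)\neq\emptyset$. Since $v_\mu$ is radial (Remark \ref{symmetric local}) and $\mathcal{H}(s,\cdot)$ preserves both $S(c)$ and radial symmetry, the continuous path $\gamma_0(t):=\mathcal{H}(ts_1,v_\mu)$ belongs to $\mathcal{C}([0,1],S_r(c))$ with $\gamma_0(0)=v_\mu$. From \eqref{fibermap1}, the dominant exponent is $N(2^*-1)$, strictly larger than $N+2$, $2$, $\theta(1+\gamma_\theta)$ and $N(q-2)/2$ for $N\ge 3$ and $q<2\cdot 2^*$, so $\tilde I_\mu(s,v_\mu)\to-\infty$ as $s\to+\infty$. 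Picking $s_1$ large enough that $I_\mu(\mathcal{H}(s_1,v_\mu))<2m_\mu(c)$ gives $\gamma_0\in\Gamma_\mu(c)$.

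Next I would set up the barrier. By \eqref{iupro1} and Remark \ref{remark f pro1}, for every $c\in(0,c_0)$ and every $u\in\partial V_\mu(c)$ one has
$$I_\mu(u)\ge \rho_0\, f(c,\rho_0)=:\zeta_0(c)>0,$$
and the quantities $\rho_0$ and $f(c,\rho_0)$ are defined purely in terms of $c,\tau,q,N,\mathcal{S}$, so $\zeta_0(c)$ is independent of $\mu\in(0,1]$. Let $\gamma\in\Gamma_\mu(c)$ be arbitrary. Then $\gamma(0)=v_\mu$ has $\xi_\mu(\gamma(0))<\rho_0$, whereas $\xi_\mu(\gamma(1))>\rho_0$: indeed $\xi_\mu(\gamma(1))<\rho_0$ would place $\gamma(1)\in V_\mu(c)$ and force $I_\mu(\gamma(1))\ge m_\mu(c)$, contradicting $I_\mu(\gamma(1))<2m_\mu(c)<m_\mu(c)$; while $\xi_\mu(\gamma(1))=\rho_0$ would force $I_\mu(\gamma(1))\ge\zeta_0(c)>0$, contradicting $I_\mu(\gamma(1))<0$. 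By continuity of $t\mapsto\xi_\mu(\gamma(t))$, there exists $t^*\in(0,1)$ with $\gamma(t^*)\in\partial V_\mu(c)$, hence $\max_{t\in[0,1]}I_\mu(\gamma(t))\ge\zeta_0(c)$. Taking the infimum over $\gamma\in\Gamma_\mu(c)$ gives $M_\mu(c)\ge\zeta_0(c)$, and the strict separation
$$\sup_{\gamma\in\Gamma_\mu(c)}\max\{I_\mu(\gamma(0)),I_\mu(\gamma(1))\}\le \max\{m_\mu(c),2m_\mu(c)\}=m_\mu(c)<0<\zeta_0(c)$$
is immediate.

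The main conceptual issue is identifying the correct barrier; once the thresholds $\rho_0$ and $\zeta_0(c)$ are in hand, the argument reduces to the standard ``local minimum versus lower-energy point'' mountain pass topology. The only subtleties worth checking carefully are that $\rho_0$, $f(c,\rho_0)$ and hence $\zeta_0(c)$ are genuinely $\mu$-independent (which is clear from their definitions) and that the exponent $N(2^*-1)$ is strictly the largest one in \eqref{fibermap1}, which ensures the endpoint of the path can be pushed below $2m_\mu(c)$ uniformly in $\mu$.
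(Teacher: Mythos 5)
Your proof is correct and takes essentially the same route as the paper's: identify $\rho_0$ as the barrier, use estimate \eqref{iupro1} together with Remark \ref{remark f pro1} to show $I_\mu\ge\rho_0 f(c,\rho_0)=:\zeta_0(c)>0$ on the sphere $\{\xi_\mu=\rho_0\}$, then argue that any admissible path must cross this sphere because $\gamma(0)\in V_\mu(c)$ while $I_\mu(\gamma(1))<2m_\mu(c)<0$ forces $\xi_\mu(\gamma(1))>\rho_0$. The one (minor) stylistic difference is that the paper works with the $\mu$-free barrier $\partial V_0(c)=\{u\in S(c):\|\nabla u\|_2^2+\|u\nabla u\|_2^2=\rho_0\}$, whereas you work with $\partial V_\mu(c)=\{u\in S(c):\xi_\mu(u)=\rho_0\}$; your choice ties more directly to the definition of $V_\mu(c)$ (so the step ``$\gamma(1)\notin V_\mu(c)$ implies the path crosses the barrier'' is literally an intermediate value argument on the continuous map $t\mapsto\xi_\mu(\gamma(t))$), and both yield the same $\mu$-independent constant $\zeta_0(c)=\rho_0 f(c,\rho_0)$. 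You also explicitly check $\Gamma_\mu(c)\neq\emptyset$ via the scaling path $\mathcal{H}(ts_1,v_\mu)$, which the paper leaves implicit; that verification (dominance of the exponent $N(2^*-1)$ in \eqref{fibermap1}) is correct.
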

\begin{proof}
	Define
	\begin{equation*}
		V_0(c):=\{u\in S(c):\|\nabla u\|_2^2+\|u\nabla u\|_2^2<\rho_0\}.
	\end{equation*}
	Then for any $u\in\partial V_0(c)$, we have
	\begin{align*}
		I_\mu(u)&\ge\left(\|\nabla u\|_2^2+\|u\nabla u\|_2^2\right)\bigg(\frac{1}{2}-\frac{\tau \mathcal{C}_2(q,N)c^{\frac{4N-q(N-2)}{2(N+2)}}}{q}\left(\|\nabla u\|_2^2+\|u\nabla u\|_2^2\right)^{\frac{N(q-2)}{2(N+2)}-1}\\
		&\quad-\frac{1}{2\cdot2^*}\left(\frac{4}{\mathcal{S}}\right)^{\frac{2^*}{2}}\left(\|\nabla u\|_2^2+\|u\nabla u\|_2^2\right)^{\frac{2}{N-2}}\bigg)\\
		&=\rho_0 f(c,\rho_0):=\zeta_{0}(c)>0.
	\end{align*}
	For any $\gamma\in\Gamma_{\mu}(c)$, we have $\gamma(0)=u_{\mu,c}\in V_{\mu}(c)\subset V_{0}(c)$ and $I_{\mu}(\gamma(1))<2m_{\mu}(c)$, which implies that $\gamma(1)\notin V_{\mu}(c)$. Moreover, by the continuity of $\gamma(t)$ on $[0,1]$, there exists $t_0\in(0,1)$ such that
	\begin{equation*}
		\gamma(t_0)\in\partial V_{0}(c)~~~\text{and}~~~\max\limits_{t\in[0,1]}I_{\mu}(\gamma(t))\ge I_{\mu}(\gamma(t_0))\ge\zeta_{0}(c),
	\end{equation*}
	which implies that the conclusion is valid.
\end{proof}

We introduce the family of paths
\begin{equation*}
	\tilde{\Gamma}_{\mu}(c):=\{\tilde{\gamma}\in \mathcal{C}([0,1], \R\times S_r(c)): \tilde{\gamma}(0)=(0,u_{\mu,c}), \tilde{I}_{\mu}(\tilde{\gamma}(1))<2m_{\mu}(c)\}
\end{equation*}
and the associated minimax value
\begin{equation*}
	\tilde{M}_{\mu}(c):=\inf\limits_{\tilde{\gamma}\in\tilde{\Gamma}_{\mu}(c)}\max\limits_{t\in[0,1]}\tilde{I}_{\mu}(\tilde{\gamma}(t)),
\end{equation*}
where $ \tilde{I}_{\mu}$ is given by \eqref{fibermap1}.
Then we have the following result:

\begin{lemma}\label{eq of the mpv}
	Let $c\in(0,c_0)$. Then $M_{\mu}(c)=\tilde{M}_{\mu}(c)$.
\end{lemma}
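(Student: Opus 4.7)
The plan is to prove the two inequalities $\tilde{M}_{\mu}(c)\leq M_{\mu}(c)$ and $M_{\mu}(c)\leq \tilde{M}_{\mu}(c)$ separately by using the scaling map $\mathcal{H}$ to transport paths between $\Gamma_{\mu}(c)$ and $\tilde\Gamma_{\mu}(c)$. The key facts I will rely on are: (i) $\mathcal{H}(0,u)=u$; (ii) $\mathcal{H}(s,u)\in S_r(c)$ whenever $u\in S_r(c)$, since the scaling $e^{Ns/2}u(e^s\cdot)$ preserves both the $L^2$-norm and radial symmetry; and (iii) by definition $\tilde I_{\mu}(s,u)=I_{\mu}(\mathcal{H}(s,u))$.

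For the first inequality, given $\gamma\in\Gamma_{\mu}(c)$ I set $\tilde\gamma(t):=(0,\gamma(t))$. Then $\tilde\gamma(0)=(0,u_{\mu,c})$, and $\tilde I_{\mu}(\tilde\gamma(1))=I_{\mu}(\mathcal{H}(0,\gamma(1)))=I_{\mu}(\gamma(1))<2m_{\mu}(c)$, so $\tilde\gamma\in\tilde\Gamma_{\mu}(c)$. Since $\tilde I_{\mu}(\tilde\gamma(t))=I_{\mu}(\gamma(t))$ for every $t\in[0,1]$, taking the maximum and then the infimum over $\gamma\in\Gamma_{\mu}(c)$ gives $\tilde{M}_{\mu}(c)\leq M_{\mu}(c)$.

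For the reverse inequality, given any $\tilde\gamma(t)=(\alpha(t),\eta(t))\in\tilde\Gamma_{\mu}(c)$ I define $\gamma(t):=\mathcal{H}(\alpha(t),\eta(t))$. By (ii), $\gamma(t)\in S_r(c)$ for all $t$; by (i), $\gamma(0)=\eta(0)=u_{\mu,c}$; and $I_{\mu}(\gamma(1))=\tilde I_{\mu}(\tilde\gamma(1))<2m_{\mu}(c)$. Hence $\gamma\in\Gamma_{\mu}(c)$, provided the path is continuous from $[0,1]$ into $S_r(c)$ with the $X$-topology. Granting this, the identity $I_{\mu}(\gamma(t))=\tilde I_{\mu}(\tilde\gamma(t))$ yields $\max_t I_{\mu}(\gamma(t))=\max_t \tilde I_{\mu}(\tilde\gamma(t))$, and taking infima gives $M_{\mu}(c)\leq\tilde M_{\mu}(c)$.

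The only nontrivial point I anticipate is verifying that the map $(s,u)\mapsto\mathcal{H}(s,u)$ is jointly continuous from $\R\times X$ into $X$, so that $\gamma$ inherits continuity from $\tilde\gamma$. This can be handled term-by-term: each norm $\|\mathcal{H}(s,u)\|_{\theta}$, $\|\mathcal{H}(s,u)\|_2$, $\|\nabla\mathcal{H}(s,u)\|_{\theta}$, $\|\nabla\mathcal{H}(s,u)\|_2$ scales as an explicit power of $e^{s}$ times the corresponding norm of $u$, and continuity in $u$ follows from the continuity of dilations in the Lebesgue and Sobolev norms, while continuity in $s$ uses the continuity of the dilation semigroup on $L^p$. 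Once this is checked, both directions combine to give $M_{\mu}(c)=\tilde M_{\mu}(c)$.
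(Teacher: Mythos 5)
Your proof is correct and essentially identical to the paper's: both directions transport paths via the scaling map $\mathcal{H}$, sending $\gamma\mapsto(0,\gamma)$ for the inclusion $\tilde{M}_{\mu}(c)\leq M_{\mu}(c)$ and $\tilde\gamma\mapsto\mathcal{H}\circ\tilde\gamma$ for the reverse, and both rely on the defining identity $\tilde I_{\mu}(s,u)=I_{\mu}(\mathcal{H}(s,u))$. The joint continuity of $(s,u)\mapsto\mathcal{H}(s,u)$ on $\R\times X$ that you flag is indeed the point that makes $\mathcal{H}\circ\tilde\gamma$ an admissible path; the paper leaves it implicit but your reduction to boundedness of the scaling factors plus continuity of dilations in Lebesgue/Sobolev norms is the standard justification.
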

\begin{proof}
	On the one hand, for any $\gamma\in\Gamma_{\mu}(c)$, let $\tilde{\gamma}(t):=(0,\gamma(t))$. It is easy to observe that
	\begin{equation*}
		\tilde{\gamma}\in\tilde{\Gamma}_{\mu}(c),~~\mbox{and}~~\tilde{I}_{\mu}(\tilde{\gamma}(t))=\tilde{I}_{\mu}(0,\gamma(t))=I_{\mu}(\gamma(t)),
	\end{equation*}
	which implies that $\tilde{M}_{\mu}(c)\leq M_{\mu}(c)$.
	On the other hand, for any $\tilde{\gamma}\in\tilde{\Gamma}_{\mu}(c)$, we have $\mathcal{H}\circ\tilde{\gamma}\in\Gamma_{\mu}(c)$.
	By Lemma \ref{mountain pass struc}, there exists $\tilde{\zeta}_{\mu}(c)>0$ such that
	\begin{equation*}
		\inf\limits_{\tilde{\gamma}\in\tilde{\Gamma}_{\mu}(c)}\max\limits_{t\in[0,1]}\tilde{I}_{\mu}(\tilde{\gamma}(t))\ge\zeta_{\mu}(c)>\tilde{\zeta}_{\mu}(c)\ge\sup\limits_{\tilde{\gamma}\in\tilde{\Gamma}_{\mu}(c)}\max\{\tilde{I}_{\mu}(\tilde{\gamma}(0)),\tilde{I}_{\mu}(\tilde{\gamma}(1))\},
	\end{equation*}
	where $\zeta_{\mu}(c):=\inf\limits_{u\in\partial V_{\mu}(c)}I_{\mu}(u)>0$. This implies that $M_{\mu}(c)\leq\tilde{M}_{\mu}(c)$.
	Combining these two results, we conclude that $M_{\mu}(c)=\tilde{M}_{\mu}(c)$. 
\end{proof}

Following the Ghoussoub's minimax approach introduced in \cite{Ghoussoub1993}, we establish a result showing the existence of a Palais-Smale sequence of $\tilde{I}_{\mu}$ at level $\tilde{M}_{\mu}(c)$ with an additional property.

\begin{lemma}\label{exist for ps seq}
	For any $\mu\in(0,1]$ and $c\in(0,c_0)$, there exists a sequence $\{u_n\}\subset S_r(c)$ satisfying
	\begin{equation*}
		I_{\mu}(u_n)\to M_{\mu}(c)>0,~~~~~~~~~\|I'_{\mu}|_{S_r(c)}(u_n)\|_{X^*}\to0,~~~~~~~~~Q_{\mu}(u_n)\to0.
	\end{equation*}
\end{lemma}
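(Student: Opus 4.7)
The plan is to apply Jeanjean's scaling trick to the auxiliary functional $\tilde I_\mu$ on the product Banach manifold $\R \times S_r(c)$, exploiting the equality $\tilde M_\mu(c) = M_\mu(c)$ of Lemma \ref{eq of the mpv}. Every $\gamma \in \Gamma_\mu(c)$ lifts to $\tilde\gamma := (0, \gamma) \in \tilde\Gamma_\mu(c)$ with the same maximum, so I would first pick a minimizing sequence of trivial lifts $\tilde\gamma_n(t) = (0, \gamma_n(t))$ satisfying
\begin{equation*}
\max_{t \in [0,1]} \tilde I_\mu(\tilde\gamma_n(t)) \leq M_\mu(c) + \tfrac{1}{n}.
\end{equation*}
I would then invoke an Ekeland/Ghoussoub type minimax principle (in the spirit of \cite{Ghoussoub1993,Jeanjean1997}) on the $C^1$ Finsler manifold $\R \times S_r(c)$, endowed with the product metric. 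This produces, for each $n$, a modified path $\tilde\beta_n$ at uniform distance $O(1/n)$ from $\tilde\gamma_n$ together with a point $(s_n, v_n) \in \R \times S_r(c)$ near its maximum such that
\begin{equation*}
\tilde I_\mu(s_n, v_n) \to M_\mu(c), \quad \partial_s \tilde I_\mu(s_n, v_n) \to 0, \quad \| \partial_u \tilde I_\mu(s_n, v_n)|_{T_{v_n} S_r(c)} \|_{X_r^*} \to 0.
\end{equation*}
Since $\tilde\gamma_n([0,1]) \subset \{0\} \times S_r(c)$ and the product metric dominates $|s|$, the uniform closeness of $\tilde\beta_n$ to $\tilde\gamma_n$ forces $s_n \to 0$.

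Next I would pull back to $S_r(c)$ by setting $u_n := \mathcal{H}(s_n, v_n)$. A direct differentiation of \eqref{fibermap1} in $s$, compared with \eqref{Pohozaev type mainfold id}, yields the identity $\partial_s \tilde I_\mu(s,u) = Q_\mu(\mathcal{H}(s,u))$, hence
\begin{equation*}
I_\mu(u_n) = \tilde I_\mu(s_n, v_n) \to M_\mu(c) > 0 \quad \text{and} \quad Q_\mu(u_n) = \partial_s \tilde I_\mu(s_n, v_n) \to 0.
\end{equation*}
For the constrained gradient, any $\varphi \in T_{u_n} S_r(c)$ with $\|\varphi\|_X \leq 1$ admits a preimage $\psi := \mathcal{H}(-s_n, \varphi) \in T_{v_n} S_r(c)$ by Lemma \ref{isomorphism thm}, whose norm is uniformly bounded because $s_n \to 0$; hence $\langle I'_\mu(u_n), \varphi \rangle = \langle \partial_u \tilde I_\mu(s_n, v_n), \psi \rangle = o_n(1)$, which gives $\|I'_\mu|_{S_r(c)}(u_n)\|_{X^*} \to 0$. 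Since both $I_\mu$ and the $L^2$-constraint are $O(N)$-invariant, Palais' principle of symmetric criticality upgrades this to $\|I'_\mu|_{S(c)}(u_n)\|_{X^*} \to 0$.

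The main obstacle is the simultaneous localization of the critical sequence in both the $s$- and $u$-variables. Without the control $s_n \to 0$, the scaling $u_n = \mathcal{H}(s_n, v_n)$ could produce an unbounded sequence in $X$, and the isomorphism of Lemma \ref{isomorphism thm} would fail to transfer tangential gradient estimates in a uniform way. The remedy, which I would carry out in detail, is to perform the Ekeland refinement specifically within the class of paths starting as $(0, \gamma(t))$: the $s$-projection of $\tilde\beta_n(t)$ is then controlled by its deviation from the trivial embedding, which is exactly the smallness provided by the product Finsler distance on $\R \times S_r(c)$.
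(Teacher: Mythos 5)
Your proposal is correct and follows essentially the same route as the paper: apply Ghoussoub's minimax theorem to $\tilde I_\mu$ on $\R\times S_r(c)$ with trivial lifts $(0,\gamma(t))$, use the localization estimate to control $s_n$, identify $\partial_s\tilde I_\mu(s,u)=Q_\mu(\mathcal{H}(s,u))$, and transfer the tangential gradient bound back to $S_r(c)$ via the scaling isomorphism of Lemma \ref{isomorphism thm}. The only stray addition is the final invocation of symmetric criticality, which is unnecessary since the lemma only asserts a Palais--Smale property for the restriction to $S_r(c)$.
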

\begin{proof} We will adopt the notation from \cite[Theorem 5.2]{Ghoussoub1993}. Define the following sets:
	\begin{equation*}
		\Lambda_{\mu}^-(c):=\{u\in\Lambda_r(c):I_{\mu}(u)<0\},~~~~~\mathcal{E}_{\mu}^c:=\{u\in S_r(c):I_{\mu}(u)<2m_{\mu}(c)\}.
	\end{equation*}
Let
	\begin{equation*}
		A=\tilde{\gamma}([0,1]),~~~\mathcal{X}= \R\times S_r(c),~~~\mathcal{F}=\{\tilde{\gamma}([0,1]):\tilde{\gamma}\in\tilde{\Gamma}_{\mu}(c)\},
	\end{equation*}
	\begin{equation*}
		B=(\{0\}\times \Lambda_{\mu}^-(c))\cup(\{0\}\times \mathcal{E}_{\mu}^c),~~~F=\{(s,u)\in\R\times S_r(c):\tilde{I}_{\mu}(s,u)\ge\tilde{M}_{\mu}(c)\}.
	\end{equation*}
It is straightforward to verify that $\mathcal{F}$ is a homotopy stable family of compact subsets of $\mathcal{X}$ with an extended closed boundary $B$.
	Moreover, using the definition of $\tilde{M}_{\mu}(c)$ and $\tilde{I}_{\mu}(s,u)$, we can conclude that $F$ is a dual set for $\mathcal{F}$ with $\phi=\tilde{I}_{\mu}$ and $\tilde{d}=\tilde{M}_{\mu}(c)$. Consequently, by \cite[Theorem 5.2]{Ghoussoub1993}, for the minimizing sequence $\{y_n=(0,\beta_n)\}\subset\tilde{\Gamma}_{\mu}(c)$ with $\beta_n\ge0$ a.e. in $\R^N$, there exists a Palais-Smale sequence $\{(s_n,w_n)\}\subset\R\times S_r(c)$ for $\tilde{I}_{\mu}|_{\R\times S_r(c)}$ at the level $\tilde{M}_{\mu}(c)>0$. That is, as $n\to+\infty$,
	\begin{equation}\label{exist for ps seq eq2}
		\partial_s\tilde{I}_{\mu}(s_n,w_n)\to0~~~~\text{and}~~~~\|\partial_u\tilde{I}_{\mu}(s_n,w_n)\|_{(T_{w_n}S_r(c))^*}\to0,
	\end{equation}
	with the additional property that
	\begin{equation}\label{exist for ps seq eq3}
		|s_n|+\dist_{X}(w_n,\beta_n([0,1]))\to0.
	\end{equation}
	By \eqref{exist for ps seq eq3}, we know that $\{s_n\}$ is bounded. Moreover, it follows from \eqref{exist for ps seq eq2} that $Q_{\mu}(\mathcal{H}(s_n,w_n))\to0$ and that
	\begin{align*}
		&\mu e^{\theta(1+\gamma_{\theta})s_n}\int_{\R^N}|\nabla w_n|^{\theta-2}\nabla w_n\nabla\phi dx+e^{2s_n}\int_{\R^N}\nabla w_n\nabla \phi dx+2e^{(N+2)s_n}\int_{\R^N}|\nabla w_n|^{2}w_n\phi dx\nonumber\\
		&+2e^{(N+2)s_n}\int_{\R^N}|w_n|^{2}\nabla w_n\nabla \phi dx-\tau\, e^{\frac{N(q-2)s_n}{2}}\int_{\R^N}|w_n|^{q-2}w_n\phi dx-e^{N(2^*-1)s_n}\int_{\R^N}|w_n|^{2\cdot2^*-2}w_n\phi dx\\
		&=o(1)\|\phi\|_{X},~~~~\forall\phi\in T_{w_n}S_r(c),
	\end{align*}
	which implies that as $n\to+\infty$,
	\begin{equation}\label{exist for ps seq eq4}
		\langle I'_{\mu}(\mathcal{H}(s_n,w_n)),\mathcal{H}(s_n,\phi)\rangle=o(1)\|\phi\|_{X}=o(1)\|\mathcal{H}(s_n,\phi)\|_{X},~~~~\forall\phi\in T_{w_n}S_r(c).
	\end{equation}
	Let $u_n:=\mathcal{H}(s_n,w_n)\in S_r(c)$. Then taking into account \eqref{exist for ps seq eq4} and using Lemma \ref{isomorphism thm}, we deduce that $\{u_n\}\subset S_r(c)$ is a Palais-Smale sequence for $I_{\mu}$ restricted on $S_r(c)$ at the level $M_{\mu}(c)$, with $Q_{\mu}(u_n)\to0$.
\end{proof}

At this stage, since we are dealing with a critical exponent regime, it is necessary to derive a refined energy estimate, which is crucial for obtaining the compactness of the Palais-Smale sequence as $\mu\to0^+$.
Following the approach of Brezis-Nirenberg \cite{BN1983}, we define a function $u_{\eps}$ by
\begin{equation*}
	u_{\eps}(x):=\frac{(N(N-2)\eps)^{\frac{N-2}{8}}}{(\eps+|x|^2)^{\frac{N-2}{4}}},~~~\eps>0.
\end{equation*}
Clearly, the function $v_{\eps}:=u^2_{\eps}$ solves the equation $-\Delta v_{\eps}=v_{\eps}^{\frac{N+2}{N-2}}$.
Let $\xi\in C_0^{\infty}(\R^N)$ be a radially non-increasing cut-off function such that $\xi\equiv1$ in $B_1$ and $\xi\equiv0$ in $\R^N\backslash B_2$. We then define
\begin{equation}\label{test function}
	U_{\eps}(x):=\xi(x)u_{\eps}(x).
\end{equation}
As $\epsilon\to0$, the function $U_{\eps}$ satisfies the following estimates:
\begin{align}\label{test fun esti1}
	&4\int_{\R^N}|U_{\eps}|^2|\nabla U_{\eps}|^2dx=\int_{\R^N}|\nabla(U_{\eps}^2)|^2dx=\mathcal{S}^{\frac{N}{2}}+\mathcal{O}(\eps^{\frac{N-2}{2}}).\\
&\int_{\R^N}|U_{\eps}|^{2\cdot2^*}dx=\mathcal{S}^{\frac{N}{2}}+\mathcal{O}(\eps^{\frac{N}{2}}).\label{test fun esti2}\\
&\int_{\R^N}|\nabla U_{\eps}|^2dx=\mathcal{O}(\eps^{\frac{N-2}{4}}|\ln\eps|).\label{test fun esti3}
\end{align}
\begin{equation}\label{test fun esti4}
	\int_{\R^N}|U_{\eps}|^{r}dx=
	\begin{cases}
		\mathcal{O}(\eps^{\frac{N}{2}-\frac{(N-2)r}{8}}),~~~2^*<r<2\cdot2^*;\\
		\mathcal{O}(\eps^{\frac{N}{4}}|\ln\eps|),~~~r=2^*;\\
		\mathcal{O}(\eps^{\frac{(N-2)r}{8}}),~~~1\leq r<2^*.
	\end{cases}
\end{equation}
In particular, we have
\begin{equation}\label{test fun esti5}
	\int_{\R^N}|U_{\eps}|^{2}dx=\mathcal{O}(\eps^{\frac{N-2}{4}}).
\end{equation}
In the remainder of this subsection, we will prove the following lemma.
\begin{lemma}\label{energy estimate}
	Let $q\in(2,2+\frac{4}{N}), \tau>0$ and $c\in(0,c_0)$. Then there exist $0<\mu_0<1$ and $\delta_0>0$ such that
	\begin{equation*}
		M_{\mu}(c)\leq m_{\mu}(c)+\frac{\mathcal{S}^{\frac{N}{2}}}{2N}-\delta',~~~\text{for all}~~~\mu\in(0,\mu_0).
	\end{equation*}
\end{lemma}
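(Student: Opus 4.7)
The plan is to test $M_\mu(c)$ along a scaled path joining the local minimizer $v_\mu$ to a large multiple of the truncated Aubin--Talenti bubble $U_\eps$ in \eqref{test function}. Let $v_\mu \in V_\mu(c)$ denote the radial, positive local minimizer supplied by Lemma \ref{local minimum for perturbed} and Remark \ref{symmetric local}, so that $I_\mu(v_\mu) = m_\mu(c)$ and $v_\mu = u_{\mu,c}$. For $\eps, t > 0$ set $\widetilde W_{\eps,t} := v_\mu + t U_\eps$ and $W_{\eps,t}(x) := \eta^{(N-2)/4} \widetilde W_{\eps,t}(\eta x)$ with $\eta := (\|\widetilde W_{\eps,t}\|_2^2/c)^{2/(N+2)}$. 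The scaling $u \mapsto \eta^{(N-2)/4} u(\eta\,\cdot\,)$ preserves both the quasilinear kinetic term $\int |u|^2|\nabla u|^2\,dx$ and the $L^{2\cdot 2^*}$ norm while rescaling $\|u\|_2^2$ by $\eta^{-(N+2)/2}$, so $W_{\eps,t} \in S_r(c)$ (radial symmetry is preserved because $U_\eps$ is centered at the origin and $v_\mu$ is radial). At $t=0$ one has $\eta = 1$ and $W_{\eps,0} = v_\mu = u_{\mu,c}$; as $t\to\infty$ the critical term forces $I_\mu(W_{\eps,t}) \to -\infty$, so a suitable $T_\eps$ makes $I_\mu(W_{\eps,T_\eps}) < 2 m_\mu(c)$ and produces an admissible path in $\Gamma_\mu(c)$. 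Consequently
\[
  M_\mu(c) \leq \max_{t \in [0,T_\eps]} I_\mu(W_{\eps,t}).
\]

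Next I would expand $I_\mu(W_{\eps,t})$. From \eqref{test fun esti5} and Cauchy--Schwarz we have $\|\widetilde W_{\eps,t}\|_2^2 = c + O(t\,\eps^{(N-2)/8}) + O(t^2\eps^{(N-2)/4})$ uniformly on any bounded $t$-interval, hence $\eta = 1 + o_\eps(1)$. Expanding the nine-term quasilinear integrand of $|v_\mu + tU_\eps|^2|\nabla(v_\mu + tU_\eps)|^2$ displayed in the introduction together with binomial expansions of $|v_\mu + tU_\eps|^{2\cdot 2^*}$ and $|v_\mu + tU_\eps|^q$, I would write $I_\mu(W_{\eps,t}) = I_\mu(v_\mu) + \Phi_\eps(t) + R_\mu(\eps,t)$ where the pure bubble part obeys, via \eqref{test fun esti1}--\eqref{test fun esti2} and the algebraic identity $\tfrac14 - \tfrac{1}{2\cdot 2^*} = \tfrac{1}{2N}$,
\[
  \max_{t \geq 0} \Phi_\eps(t) = \max_{t \geq 0}\Bigl[t^4 \|U_\eps\nabla U_\eps\|_2^2 - \tfrac{t^{2\cdot 2^*}}{2\cdot 2^*}\|U_\eps\|_{2\cdot 2^*}^{2\cdot 2^*}\Bigr] + o_\eps(1) = \frac{\mathcal{S}^{\frac{N}{2}}}{2N} + o_\eps(1),
\]
attained at some $t^\star > 0$ bounded uniformly in $\eps$. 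The remainder $R_\mu(\eps,t)$ collects the $\eta$-rescaling error, the quasilinear and critical cross terms, the $L^q$-interaction, and the perturbation $\tfrac{\mu}{\theta}\|\nabla W_{\eps,t}\|_\theta^\theta$.

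The decisive step is to prove that $R_\mu(\eps,t^\star)$ carries a strictly negative leading contribution. Since $v_\mu$ is continuous with $v_\mu(0)>0$ and $U_\eps$ concentrates at the origin, the subcritical piece satisfies
\[
  -\tfrac{\tau}{q}\bigl(\|v_\mu + t^\star U_\eps\|_q^q - \|v_\mu\|_q^q\bigr) \leq -\tau t^\star \int_{\R^N} v_\mu^{q-1} U_\eps\,dx + \text{l.o.t.},
\]
and by \eqref{test fun esti4} with $r=1$ the last integral is bounded below by $c(v_\mu)\,\eps^{(N-2)/8}$. Every remaining piece of $R_\mu(\eps,t^\star)$, namely the quasilinear cross terms (e.g.\ $\int v_\mu U_\eps|\nabla U_\eps|^2$, $\int v_\mu^2 \nabla v_\mu\cdot\nabla U_\eps$), the $L^{2\cdot 2^*}$ cross terms, and the $\eta$-rescaling correction, can be bounded by $O(\eps^\alpha)$ with $\alpha > (N-2)/8$ for all $N\geq 3$, using the $L^\infty$ and $H^1$ bounds on $v_\mu$ (uniform in $\mu$ by Moser iteration, cf.\ Remark \ref{thm1rem}) together with the estimates \eqref{test fun esti1}--\eqref{test fun esti5}. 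Splitting the maximization range near $t^\star$ versus away from $t^\star$ (so that $\Phi_\eps$ has already dropped by a positive constant on the latter), and choosing first $\eps$ small enough for the subcritical gain to dominate every error and then $\mu_0$ small enough to absorb $\tfrac{\mu}{\theta}\|\nabla W_{\eps,t}\|_\theta^\theta$ on the bounded $t$-range, produces a uniform $\delta' > 0$ with $M_\mu(c) \leq m_\mu(c) + \mathcal{S}^{N/2}/(2N) - \delta'$ for every $\mu \in (0,\mu_0)$.

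The main obstacle I anticipate is tracking the nine quasilinear cross terms together with the $\eta$-rescaling correction and showing that each one is of genuinely higher order than $\eps^{(N-2)/8}$. This is most delicate in dimension $N = 3$, where $(N-2)/8$ is smallest and the hierarchy is tightest; the pointwise positivity $v_\mu(0) > 0$ together with uniform $L^\infty$, $L^q$, and $H^1$ estimates on $v_\mu$ (from the local minimizer structure and Moser iteration) should provide exactly the $\eps$-power hierarchy needed to close the argument.
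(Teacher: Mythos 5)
Your overall ansatz---test function $W_{\eps,t}=\eta^{(N-2)/4}(v_\mu+tU_\eps)(\eta\cdot)$ starting from the local minimizer, the path construction, and the bubble contribution $\mathcal{S}^{N/2}/(2N)$---matches the paper's, but the mechanism you propose for the strict deficit $-\delta'$ rests on an unjustified order-of-$\eps$ claim and omits the crucial algebraic step. You assert that every remaining piece of $R_\mu(\eps,t^\star)$ is $O(\eps^\alpha)$ with $\alpha>(N-2)/8$; this is false for the linear-in-$U_\eps$ cross terms. For instance, integrating by parts gives $\int v_\mu^2\nabla v_\mu\cdot\nabla U_\eps\,dx=-\tfrac13\int v_\mu^3\Delta U_\eps\,dx$, which is genuinely of size $\simeq\eps^{(N-2)/8}$ when $v_\mu(0)>0$, and the same order holds for $\int\nabla v_\mu\cdot\nabla U_\eps\,dx$, $\int|\nabla v_\mu|^2 v_\mu U_\eps\,dx$ and $\int v_\mu^{2\cdot 2^*-1}U_\eps\,dx$. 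These compete with your $L^q$-gain at exactly the same $\eps$-order and carry mixed signs, so your claim that $-\tau t^\star\int v_\mu^{q-1}U_\eps\,dx$ dominates the whole remainder is unsupported.

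The idea you are missing is that the paper does not estimate those linear cross terms one by one. Instead it tests the Euler--Lagrange equation $I_\mu'(u_{\mu,c})+\lambda_c u_{\mu,c}=0$ against $\phi=U_\eps$ (equation \eqref{energy estimate eq4}), which converts the entire linear bundle
\begin{equation*}
t\Bigl[\int\nabla u_{\mu,c}\!\cdot\!\nabla U_\eps + 2\!\int u_{\mu,c}^2\nabla u_{\mu,c}\!\cdot\!\nabla U_\eps + 2\!\int|\nabla u_{\mu,c}|^2 u_{\mu,c}U_\eps - \tau\!\int u_{\mu,c}^{q-1}U_\eps - \int u_{\mu,c}^{2\cdot 2^*-1}U_\eps\Bigr]
\end{equation*}
into $-\lambda_c t\int u_{\mu,c}U_\eps\,dx-\mu t\int|\nabla u_{\mu,c}|^{\theta-2}\nabla u_{\mu,c}\!\cdot\!\nabla U_\eps\,dx$. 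Only after this cancellation do the surviving error terms (the higher-order cross estimates \eqref{energy estimate eq10}--\eqref{energy estimate eq16} and the $\eta$-rescaling corrections \eqref{energy estimate eq7}, \eqref{energy estimate eq9}) organize into a hierarchy from which $-\delta'$ is extracted. A second subtlety you flatten in ``first $\eps$ small, then $\mu_0$ small'': the residual $\mu$-term carries $\int|\nabla u_{\mu,c}|^{\theta-2}\nabla u_{\mu,c}\!\cdot\!\nabla U_\eps\,dx=\mathcal{O}\bigl(\eps^{(4N-\theta(N+2))/(8\theta)}\bigr)$, whose exponent is negative because $\theta>4N/(N+2)$, so this factor blows up as $\eps\to0$. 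One cannot fix $\eps$ and shrink $\mu$ independently; the paper couples them via $\mu=\eps^{\alpha/\theta}$ with $\alpha\in\bigl(\tfrac{\theta(N+2)-4N}{8},\tfrac{\theta(N+2)-3N-2}{8}\bigr)$ precisely so that $\mu$ times this blow-up tends to zero while remaining of lower $\eps$-order than all other error terms, and that coupling is entirely absent from your proposal.
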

\begin{proof}
	Let $u_{\mu,c}$ be a local minimizer of $I_{\mu}$ on $V_{\mu}(c)$. Then, by Lemma \ref{strong convergence}, we have
	\begin{equation}\label{energy estimate eq8}
		\|u_{\mu,c}\|_2^2=c,~~~I_{\mu}(u_{\mu,c})=m_{\mu}(c),~~~u_{\mu,c}(x)>0,~~~\forall~x\in\R^N
	\end{equation}
	and
	\begin{equation}\label{energy estimate eq3}
		\langle I'_{\mu}(u_{\mu,c})+\lambda_c u_{\mu,c},\phi\rangle=0, ~~~\forall \phi\in X.
	\end{equation}
	Using $\phi=u_{\mu,c}$ as a test function in \eqref{energy estimate eq3}, we obtain
	\begin{align}\label{energy estimate eq1}
		&\mu\int_{\R^N}|\nabla u_{\mu,c}|^{\theta}dx+\int_{\R^N}|\nabla u_{\mu,c}|^2dx+4\int_{\R^N}|u_{\mu,c}|^2|\nabla u_{\mu,c}|^2dx+\lambda_c\int_{\R^N}|u_{\mu,c}|^2dx\nonumber\\
		&-\tau\int_{\R^N}|u_{\mu,c}|^qdx-\int_{\R^N}|u_{\mu,c}|^{2\cdot2^*}dx=0.
	\end{align}
	Combining \eqref{energy estimate eq1} and $Q_{\mu}(u_{\mu,c})=0$, it follows from $\theta>\frac{4N}{N+2}$ that
	\begin{equation}\label{energy estimate eq2}
		\frac{\tau(4N+2q-Nq)}{q(N+2)}\|u_{\mu,c}\|_q^q\leq\lambda_c\|u_{\mu,c}\|_2^2+\frac{N-2}{N+2}\|\nabla u_{\mu,c}\|_2^2.
	\end{equation}
	Moreover, using $\phi=U_\eps$ as a test function in \eqref{energy estimate eq3}, we have
	\begin{align}\label{energy estimate eq4}
		&\mu\int_{\R^N}|\nabla u_{\mu,c}|^{\theta-2}\nabla u_{\mu,c}\nabla U_\eps dx+\int_{\R^N}\nabla u_{\mu,c}\nabla U_\eps dx+2\int_{\R^N}|u_{\mu,c}|^2\nabla u_{\mu,c}\nabla U_\eps dx+2\int_{\R^N}|\nabla u_{\mu,c}|^2u_{\mu,c}U_\eps dx\nonumber\\
		&+\lambda_c\int_{\R^N}u_{\mu,c}U_\eps dx-\tau\int_{\R^N}u_{\mu,c}^{q-1}U_\eps dx-\int_{\R^N}u_{\mu,c}^{2\cdot2^*-1}U_\eps dx=0.
	 \end{align}
	For any $\eps>0$ and $t>0$, we define
	 \begin{equation*}
		\widetilde{W}_{\eps,t}:=u_{\mu,c}+tU_\eps,~~~~W_{\eps,t}(x):=\eta^{\frac{N-2}{4}}\widetilde{W}_{\eps,t}(\eta x),
	 \end{equation*}
	 where $\eta=\left(\frac{\|\widetilde{W}_{\eps,t}\|_2^2}{c}\right)^{\frac{2}{N+2}}$. Then we immediately get that
	 \begin{equation}\label{energy estimate eq5}
		\|\nabla W_{\eps,t}\|_2^2=\eta^{\frac{2-N}{2}}\|\nabla \widetilde{W}_{\eps,t}\|_2^2,~~~\|\nabla W^2_{\eps,t}\|_2^2=\|\nabla \widetilde{W}^2_{\eps,t}\|_2^2,~~~\|W_{\eps,t}\|_q^q=\eta^{\frac{(N-2)q-4N}{4}}\|\widetilde{W}_{\eps,t}\|_q^q,
	 \end{equation}
	 \begin{equation}\label{energy estimate eq6}
		\|W_{\eps,t}\|_{2\cdot2^*}^{2\cdot2^*}=\|\widetilde{W}_{\eps,t}\|_{2\cdot2^*}^{2\cdot2^*},~~~\|W_{\eps,t}\|_2^2=\eta^{-\frac{N+2}{2}}\|\widetilde{W}_{\eps,t}\|_2^2=c.
	 \end{equation}
	 In view of \eqref{energy estimate eq5}-\eqref{energy estimate eq6}, we have
	 \begin{align*}
		I_0(W_{\eps,t})%&=\frac{\eta^{\frac{2-N}{2}}}{2}\int_{\R^N}|\nabla \widetilde{W}_{\eps,t}|^2dx+\frac{1}{4}\int_{\R^N}|\nabla \widetilde{W}^2_{\eps,t}|^2dx-\frac{\tau}{q}\eta^{\frac{(N-2)q-4N}{4}}\int_{\R^N}|\widetilde{W}_{\eps,t}|^qdx-\frac{1}{2\cdot2^*}\int_{\R^N}|\widetilde{W}_{\eps,t}|^{2\cdot2^*}dx\nonumber\\
		&\leq I_0(u_{\mu,c})+\frac{1}{2}\left(\eta^{\frac{2-N}{2}}-1\right)\int_{\R^N}|\nabla u_{\mu,c}|^2dx+t\eta^{\frac{2-N}{2}}\int_{\R^3}\nabla u_{\mu,c}\nabla U_\eps dx+\frac{t^2}{2}\eta^{\frac{2-N}{2}}\int_{\R^N}|\nabla U_\eps|^2dx\nonumber\\
		&\quad+\frac{t^4}{4}\int_{\R^N}|\nabla U_{\eps}^2|^2dx+t^2\int_{\R^N}|\nabla (u_{\mu,c}U_\eps)|^2dx+\frac{t^2}{2}\int_{\R^N}\nabla u_{\mu,c}^2\nabla U_\eps^2dx\nonumber\\
		&\quad+t\int_{\R^N}\nabla u_{\mu,c}^2\nabla(u_{\mu,c}U_\eps)dx+t^3\int_{\R^N}\nabla U_{\eps}^2\nabla(u_{\mu,c}U_\eps)dx\nonumber\\
		&\quad+\frac{\tau}{q}\left(1-\eta^{\frac{(N-2)q-4N}{4}}\right)\int_{\R^N}u_{\mu,c}^qdx-\frac{\tau}{q}\eta^{\frac{(N-2)q-4N}{4}}t^q\int_{\R^N}U_\eps^qdx-\frac{1}{2\cdot2^*}t^{2\cdot2^*}\int_{\R^N}U_\eps^{2\cdot2^*}dx\nonumber\\
		&:=L_1(t).
	 \end{align*}
	 It is not difficult to verify that, uniformly for small $\eps>0$, $L_1(t)\to-\infty$ as $t\to+\infty$ and $L_1(t)\to I_0(u_{\mu,c})$ as $t\to0$ due to $\eta\to1$. Hence, there exist $\eps_0>0$ and $0<t_1<t_2<+\infty$ such that for any $t\in(0,t_1)$ or $t\in(t_2,+\infty)$ and $\eps\in(0,\eps_0]$, we have
	 \begin{equation*}
		I_0(W_{\eps,t})<I_0(u_{\mu,c})+\frac{\mathcal{S}^{\frac{N}{2}}}{2N}.
	 \end{equation*}

	 Next, we address the case $t_1\leq t\leq t_2$.
	 By \eqref{test fun esti5}, we have
	 \begin{align*}
		\eta^{\frac{N+2}{2}}=\frac{\|u_{\mu,c}+tU_\eps\|_2^2}{c}=1+\frac{2t}{c}\int_{\R^N}u_{\mu,c}U_\eps dx+t^2\mathcal{O}\left(\eps^{\frac{N-2}{4}}\right)~~~\mbox{as}~~\eps\to0.
	 \end{align*}
	 Utilizing the Taylor expansion, we deduce that, as $\eps\to0$,
	 \begin{align}\label{energy estimate eq7}
		&1-\eta^{\frac{(N-2)q-4N}{4}}=1-\eta^{\frac{N+2}{2}\frac{(N-2)q-4N}{2(N+2)}}=\frac{t}{c}\frac{4N-(N-2)q}{N+2}\int_{\R^N}u_{\mu,c}U_\eps dx+t^2\mathcal{O}\left(\eps^{\frac{N-2}{4}}\right).\\
	&\eta^{\frac{2-N}{2}}-1=\eta^{\frac{N+2}{2}\frac{2-N}{N+2}}-1=\frac{t}{c}\frac{2(2-N)}{N+2}\int_{\R^N}u_{\mu,c}U_\eps dx-t^2\mathcal{O}\left(\eps^{\frac{N-2}{4}}\right).\label{energy estimate eq9}
	 \end{align}
	 Moreover, since the bounded function $u_{\mu,c}$ is a radial critical point of $I_\mu$, by \eqref{test fun esti3}, \eqref{test fun esti4}, \eqref{test fun esti5} and the regularity of $u_{\mu,c}$ (see Remark \ref{regular rmk}), as $\eps\to0$, we have the following estimates:
	 \begin{eqnarray}\label{energy estimate eq10}
		&&\int_{\R^N}u_{\mu,c}^2|\nabla U_\eps|^2dx=\mathcal{O}\left(\eps^{\frac{N-2}{4}}|\ln \eps|\right).\\
	&&\int_{\R^N}|\nabla u_{\mu,c}|^2U_\eps^2dx=\mathcal{O}\left(\eps^{\frac{N-2}{4}}\right).\label{energy estimate eq11}
	 \end{eqnarray}
	 \begin{align}\label{energy estimate eq12}
		\int_{\R^N}\nabla u^2_{\mu,c}\nabla U_\eps^2dx%\leq4\left(\int_{\R^N}|u_{\mu,c}U_\eps|^2dx\right)^{\frac{1}{2}}\left(\int_{\R^N}|\nabla u_{\mu,c}\nabla U_\eps|^2dx\right)^{\frac{1}{2}}
=\mathcal{O}\left(\eps^{\frac{N-2}{4}}\sqrt{|\ln \eps|}\right).
	 \end{align}
	 \begin{align}\label{energy estimate eq13}
		\int_{\R^N}U_\eps^2\nabla u_{\mu,c}\nabla U_\eps dx&=CB_N^3\omega_N\eps^{\frac{3(N-2)}{8}}\left[\int_0^2\frac{3\xi^2(r)\xi'(r)r^{N-1}}{(\eps+r^2)^{\frac{3(N-2)}{4}}}dr+\left|\frac{3(2-N)}{2}\right|\int_0^2\frac{\xi^3(r)r^{N}}{(\eps+r^2)^{\frac{3N-2}{4}}}dr\right]\nonumber\\
		%&\leq CB_N^3\omega_N\eps^{\frac{3(N-2)}{8}}\left(\mathcal{O}(1)+\frac{3(N-2)}{2}\eps^{\frac{4-N}{4}}\int_0^{\frac{2}{\sqrt{\eps}}}\frac{s^{N}}{(1+s^2)^{\frac{3N-2}{4}}}ds\right)\nonumber\\
		&:=\zeta_\eps(N)=
		\begin{cases}
			\mathcal{O}(\eps^{\frac{N+2}{8}})~~~&\text{if}~~~N>4;\\
			\mathcal{O}(\eps^{\frac{3}{4}}|\ln\eps|)~~~&\text{if}~~~N=4;\\
			\mathcal{O}(\eps^{\frac{3}{8}})~~~&\text{if}~~~N=3.
		\end{cases}
	 \end{align}
	 \begin{align}\label{energy estimate eq14}
		\int_{\R^N}|\nabla U_\eps|^2 u_{\mu,c}U_\eps dx%&\leq\int_{\R^N}|\nabla U_\eps|^2 U_\eps dx
&\leq CB_N^3\omega_N\eps^{\frac{3(N-2)}{8}}\left(\mathcal{O}(1)+\eps^{\frac{2-N}{4}}\int_0^{\frac{2}{\sqrt{\eps}}}\frac{s^{N+1}}{(1+s^2)^{\frac{3N+2}{4}}}ds\right)\nonumber\\
		&=\mathcal{O}\left(\eps^{\frac{N-2}{8}}\right).
	 \end{align}
	 \begin{align}\label{energy estimate eq15}
		\int_{\R^N}|\nabla u_{\mu,c}|^{\theta-2}\nabla u_{\mu,c}\nabla U_\eps dx%&\leq\left(\int_{\R^N}|\nabla u_{\mu,c}|^{\theta}dx\right)^{\frac{\theta-1}{\theta}}\left(\int_{\R^N}|\nabla U_\eps|^{\theta}dx\right)^{\frac{1}{\theta}}\nonumber\\
&\le CB_N^{\theta}\omega_N\eps^{\frac{(N-2)\theta}{8\theta}}\left(\mathcal{O}(1)+\left(\eps^{\frac{N(2-\theta)}{4}}\int_0^{\frac{2}{\sqrt{\eps}}}\frac{s^{\theta+N-1}}{(1+s^2)^{\frac{(N+2)\theta}{4}}}ds\right)^{\frac{1}{\theta}}\right)\nonumber\\
		&=\mathcal{O}\left(\eps^{\frac{4N-\theta(N+2)}{8\theta}}\right).
	 \end{align}
	 \begin{align}
		\int_{\R^N}u_{\mu,c}U_\eps^{2\cdot2^*-1} dx=\mathcal{O}\left(\eps^{\frac{N-2}{8}}\right).\label{energy estimate eq16}
	 \end{align}
	From \eqref{test fun esti1}-\eqref{test fun esti5}, \eqref{energy estimate eq8} and \eqref{energy estimate eq2}-\eqref{energy estimate eq16}, we derive that
	 \begin{align}\label{energy estimate eq17}
		I_0(W_{\eps,t})%&=\frac{\eta^{\frac{2-N}{2}}}{2}\int_{\R^N}|\nabla \widetilde{W}_{\eps,t}|^2dx+\frac{1}{4}\int_{\R^N}|\nabla \widetilde{W}^2_{\eps,t}|^2dx-\frac{\tau}{q}\eta^{\frac{(N-2)q-4N}{4}}\int_{\R^N}|\widetilde{W}_{\eps,t}|^qdx-\frac{1}{2\cdot2^*}\int_{\R^N}|\widetilde{W}_{\eps,t}|^{2\cdot2^*}dx\nonumber\\
		&\leq\frac{1}{2}\int_{\R^N}|\nabla u_{\mu,c}|^2dx+\frac{1}{4}\int_{\R^N}|\nabla u^2_{\mu,c}|^2dx-\frac{\tau}{q}\int_{\R^N}u_{\mu,c}^qdx-\frac{1}{2\cdot2^*}\int_{\R^N}u_{\mu,c}^{2\cdot2^*}dx\nonumber\\
		&\quad+t^2\int_{\R^N}u_{\mu,c}^2|\nabla U_\eps|^2dx+t^2\int_{\R^N}|\nabla u_{\mu,c}|^2U_\eps^2dx+t^2\int_{\R^N}\nabla u^2_{\mu,c}\nabla U_\eps^2dx+2t^3\int_{\R^N}U_\eps^2\nabla u_{\mu,c}\nabla U_\eps dx\nonumber\\
		&\quad+2t^3\int_{\R^N}|\nabla U_\eps|^2 u_{\mu,c}U_\eps dx-\lambda_ct\int_{\R^N}u_{\mu,c}U_\eps dx-\mu t\int_{\R^N}|\nabla u_{\mu,c}|^{\theta-2}\nabla u_{\mu,c}\nabla U_\eps dx\nonumber\\
		&\quad+\frac{1}{2}\left(\eta^{\frac{2-N}{2}}-1\right)\int_{\R^N}|\nabla u_{\mu,c}|^2dx+\left(\eta^{\frac{2-N}{2}}-1\right)t\int_{\R^3}\nabla u_{\mu,c}\nabla U_\eps dx\nonumber\\
		&\quad+\frac{\tau}{q}\left(1-\eta^{\frac{(N-2)q-4N}{4}}\right)\int_{\R^N}u_{\mu,c}^qdx+\tau\left(1-\eta^{\frac{(N-2)q-4N}{4}}\right)t\int_{\R^N}u_{\mu,c}^{q-1}U_\eps dx\nonumber\\
		&\quad+\frac{t^2}{2}\eta^{\frac{2-N}{2}}\int_{\R^N}|\nabla U_\eps|^2dx+\frac{t^4}{4}\int_{\R^N}|\nabla U^2_\eps|^2dx-\frac{t^{2\cdot2^*}}{2\cdot2^*}\int_{\R^N}U_\eps^{2\cdot2^*}dx-t^{2\cdot2^*-1}\int_{\R^N}u_{\mu,c}U_\eps^{2\cdot2^*-1} dx\nonumber\\
		&\leq I_0(u_{\mu,c})+t^2\mathcal{O}\left(\eps^{\frac{N-2}{4}}|\ln \eps|\right)+t^2\mathcal{O}\left(\eps^{\frac{N-2}{4}}\right)+t^2\mathcal{O}\left(\eps^{\frac{N-2}{4}}\sqrt{|\ln \eps|}\right)+2t^3\zeta_\eps(N)+2t^3\mathcal{O}\left(\eps^{\frac{N-2}{8}}\right)\nonumber\\
		&\quad-\mu t\mathcal{O}\left(\eps^{\frac{4N-\theta(N+2)}{8\theta}}\right)+t^2\mathcal{O}\left(\eps^{\frac{N-2}{4}}\right)-t^2\mathcal{O}\left(\eps^{\frac{N-2}{4}}\sqrt{|\ln \eps|}\right)\nonumber\\
		&\quad+t^2\mathcal{O}\left(\eps^{\frac{N-2}{4}}|\ln \eps|\right)+\frac{t^4}{4}\mathcal{S}^{\frac{N}{2}}+t^4\mathcal{O}\left(\eps^{\frac{N-2}{2}}\right)-\frac{t^{2\cdot2^*}}{2\cdot2^*}\mathcal{S}^{\frac{N}{2}}-t^{2\cdot2^*}\mathcal{O}\left(\eps^{\frac{N}{2}}\right)-t^{2\cdot2^*-1}\mathcal{O}\left(\eps^{\frac{N-2}{8}}\right).
	 \end{align}
It is straightforward to observe that
	 \begin{equation*}
		\frac{t^4}{4}-\frac{t^{2\cdot2^*}}{2\cdot2^*}\leq\frac{1}{2N},~~~\forall t>0.
	 \end{equation*}
	 Moreover, by choosing $\mu=\eps^{\frac{\alpha}{\theta}}$ for some $\alpha>0$ satisfying $\alpha\in\left(\frac{\theta(N+2)-4N}{8},\frac{\theta(N+2)-3N-2}{8}\right)$, it follows from \eqref{energy estimate eq17} that there exist small $\eps_0>0$ and $\delta'>0$ such that
	 \begin{equation*}\label{energy estimate eq18}
		\sup_{t>0}I_0(W_{\eps_0,t})\leq I_0(u_{\mu,c})+\frac{\mathcal{S}^{\frac{N}{2}}}{2N}-2\delta'.
	 \end{equation*}
	 Clearly, $W_{\eps_0,t}\in S_r(c)$ for all $t\ge0$, $W_{\eps_0,0}=u_{\mu,c}$ and $I(W_{\eps_0,t})\leq I_{\mu}(W_{\eps_0,t})<2m_{\mu}(c)$ for sufficiently large $t>0$. Therefore, there exists a sufficiently large $t_0>0$ such that
	 \begin{equation*}
		I_{\mu}(W_{\eps_0,t_0})<2m_{\mu}(c).
	 \end{equation*}
	 Let $\gamma(t):=W_{\eps_0,tt_0}$. Then $\gamma(t)\in\Gamma_\mu(c)$. Moreover, observe that there exist $0<T_1<T_2$ such that
	\begin{equation*}
		\sup_{t>0}I_{\mu}(W_{\eps,t})=\sup_{t\in[T_1,T_2]}I_{\mu}(W_{\eps,t}),~~~~~~\sup_{t>0}I(W_{\eps,t})=\sup_{t\in[T_1,T_2]}I(W_{\eps,t}).
	\end{equation*}
From this, we deduce that
	\begin{align*}
		M_{\mu}(c)\leq\sup_{t>0}I_{\mu}(W_{\eps_0,t})&\leq C(\eps_0)\mu+I_0(u_{\mu,c})+\frac{\mathcal{S}^{\frac{N}{2}}}{2N}-2\delta'\leq I_{\mu}(u_{\mu,c})+\frac{\mathcal{S}^{\frac{N}{2}}}{2N}-\delta'=m_{\mu}(c)+\frac{\mathcal{S}^{\frac{N}{2}}}{2N}-\delta'.
	\end{align*}
	This completes the proof.
\end{proof}
%We can prove that the local minimum $u_c\in S(c)$ obtained before lies on $\Lambda^+(c)$ and can be characterized by
%\begin{equation*}
%	I_{\mu}(u_c)=\inf_{u\in\Lambda^+(c)}I_{\mu}(u)=\inf_{u\in V_{\mu}(c)}I_{\mu}(u)=m_{\mu}(c).
%\end{equation*}

\section{The case $2+\frac{4}{N}\leq q< 4+\frac{4}{N}$}\label{Mountain pass solution subcritical1}\setcounter{equation}{0}
\subsection{Mountain pass structure}\label{Mountain pass solution1}
In this section, we investigate the case $2+\frac{4}{N}\leq q< 4+\frac{4}{N}$. We first show that the functional $I_{\mu}$ exhibits the mountain pass geometry.

\begin{lemma}\label{mp struc1}
	Let $\mu\in(0,1]$ and $\tau>0$. Assume that one of the following conditions holds:
	\begin{enumerate}[label=(\roman*)]
		\item $N\ge3, ~q=2+\frac{4}{N}$, $c\in(0,\bar{c}_1)$.
		\item $N=3, ~2+\frac{4}{N}<q<4+\frac{4}{N}, c>0$.
		\item $N\ge4, ~2+\frac{4}{N}<q\leq2^*, c>0$.
		\item $N\ge4, ~2^*<q<4+\frac{4}{N}, c>0$.
	\end{enumerate}
	Then there exists $\mathcal{K}_{\mu}=\mathcal{K}_{\mu}(c,\tau,q,N)>0$ small enough such that
	\begin{equation}\label{mp struc1 eq1}
		0<\sup_{u\in \mathcal{A}_{\mu}(\mathcal{K}_{\mu},c)}I_{\mu}(u)<\inf_{u\in\partial \mathcal{A}_{\mu}(2\mathcal{K}_{\mu},c)}I_{\mu}(u),
	\end{equation}
	where
	\begin{equation*}
		\mathcal{A}_{\mu}(\mathcal{K}_{\mu},c):=\{u\in S(c):\frac{\mu}{\theta}\|\nabla u\|_{\theta}^{\theta}+\frac{1}{2}\|\nabla u\|_2^2+\int_{\R^N}|u|^2|\nabla u|^2dx<\mathcal{K}_{\mu}\},
	\end{equation*}
	\begin{equation*}
		\partial \mathcal{A}_{\mu}(2\mathcal{K}_{\mu},c):=\{u\in S(c):\frac{\mu}{\theta}\|\nabla u\|_{\theta}^{\theta}+\frac{1}{2}\|\nabla u\|_2^2+\int_{\R^N}|u|^2|\nabla u|^2dx=2\mathcal{K}_{\mu}\}.
	\end{equation*}
\end{lemma}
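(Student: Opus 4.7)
The plan is to bound $I_\mu$ from above on $\mathcal{A}_\mu(\mathcal{K}_\mu,c)$ and from below on $\partial\mathcal{A}_\mu(2\mathcal{K}_\mu,c)$ in terms of the quantity
\[
\rho(u):=\frac{\mu}{\theta}\|\nabla u\|_\theta^\theta+\frac{1}{2}\|\nabla u\|_2^2+\|u\nabla u\|_2^2,
\]
so that $I_\mu(u)=\rho(u)-\frac{\tau}{q}\|u\|_q^q-\frac{1}{2\cdot 2^*}\|u\|_{2\cdot 2^*}^{2\cdot 2^*}$. The upper bound $I_\mu(u)\le \rho(u)<\mathcal{K}_\mu$ on $\mathcal{A}_\mu(\mathcal{K}_\mu,c)$ is immediate, and the positivity of $\sup I_\mu$ will follow by evaluating $I_\mu$ at any $u\in S(c)$ with $\rho(u)$ small enough that the lower-order nonlinearities are dominated. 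The substantive task is therefore to produce a lower bound on $\partial\mathcal{A}_\mu(2\mathcal{K}_\mu,c)$ that strictly exceeds $\mathcal{K}_\mu$.

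The critical Sobolev term is controlled uniformly via $\mathcal{S}\|u\|_{2\cdot 2^*}^2\le 4\|u\nabla u\|_2^2$, giving $\frac{1}{2\cdot 2^*}\|u\|_{2\cdot 2^*}^{2\cdot 2^*}\le C\rho(u)^{2^*}$, a strictly superlinear perturbation in $\rho$. For $\|u\|_q^q$ I would distinguish three regimes. In cases (ii)--(iii), with $q\in(2+\tfrac{4}{N},4+\tfrac{4}{N})$ and $q\le 2^*$, the Gagliardo--Nirenberg inequality \eqref{gn} yields $\|u\|_q^q\le \mathcal{C}_1^q(q,N)\,c^{q(1-\gamma_q)/2}\|\nabla u\|_2^{q\gamma_q}\le C\rho(u)^{q\gamma_q/2}$ with exponent $q\gamma_q/2=N(q-2)/4>1$. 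In case (iv), where $q>2^*$ (which forces $N\ge 4$, since $2^*<4+\tfrac{4}{N}$ only in that range), I would interpolate $\|u\|_q\le\|u\|_{2^*}^{\theta_1}\|u\|_{2\cdot 2^*}^{1-\theta_1}$ with $\theta_1=(2\cdot 2^*-q)/q\in(0,1)$ and combine the Sobolev inequality with $\mathcal{S}\|u\|_{2\cdot 2^*}^2\le 4\|u\nabla u\|_2^2$ to get $\|u\|_q^q\le C\|\nabla u\|_2^{2\cdot 2^*-q}(\|u\nabla u\|_2^2)^{q-2^*}\le C\rho(u)^{q/2}$ with $q/2>1$. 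In each of these three regimes one then has $I_\mu(u)\ge \rho(u)-C\rho(u)^\kappa-C\rho(u)^{2^*}$ for some $\kappa>1$, so on $\partial\mathcal{A}_\mu(2\mathcal{K}_\mu,c)$ the estimate $I_\mu(u)\ge 2\mathcal{K}_\mu-C(2\mathcal{K}_\mu)^\kappa-C(2\mathcal{K}_\mu)^{2^*}$ strictly exceeds $\mathcal{K}_\mu$ once $\mathcal{K}_\mu$ is chosen small enough.

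Case (i), $q=2+\tfrac{4}{N}$, is the delicate one, because \eqref{gn} now produces only $\|u\|_q^q\le \mathcal{C}_1^q c^{2/N}\|\nabla u\|_2^2$, a truly quadratic quantity in $\|\nabla u\|_2$ rather than a strictly higher-order perturbation of $\rho(u)$. Here the restriction $c<\bar{c}_1$ must be used quantitatively: the explicit form of $\bar{c}_1$ in \eqref{mass normal critical sup} is precisely tuned so that $\frac{\tau}{q}\mathcal{C}_1^q c^{2/N}<\frac{1}{4}$ with some fixed slack $\delta>0$, hence $\frac{\tau}{q}\|u\|_q^q\le \bigl(\tfrac{1}{2}-2\delta\bigr)\rho(u)$. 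This gives $I_\mu(u)\ge \bigl(\tfrac{1}{2}+2\delta\bigr)\rho(u)-C\rho(u)^{2^*}$, and at $\rho(u)=2\mathcal{K}_\mu$ the bound $(1+4\delta)\mathcal{K}_\mu-C(2\mathcal{K}_\mu)^{2^*}$ strictly exceeds $\mathcal{K}_\mu$ for $\mathcal{K}_\mu$ small enough, yielding \eqref{mp struc1 eq1}.

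The hard part will be preserving the factor-of-two margin between $\mathcal{K}_\mu$ and $2\mathcal{K}_\mu$ in case (i): unlike the other regimes, the $L^2$-critical perturbation $\|u\|_q^q$ is not strictly higher-order in $\rho(u)$, so the multiplicative coefficient remaining after absorbing it into the leading term must stay strictly greater than $1/2$. The factor $2$ in the denominator of $\bar{c}_1$ is precisely what engineers this gap and thus the mountain pass geometry.
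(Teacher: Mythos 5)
Your proposal is correct in outline and uses essentially the same estimates as the paper: Gagliardo–Nirenberg \eqref{gn} for $q\le 2^*$, Lebesgue interpolation plus Sobolev for $2^*<q<4+\tfrac{4}{N}$, the quasilinear Sobolev inequality for the critical term, and the quantitative role of $\bar c_1$ in the $L^2$-critical case $q=2+\tfrac{4}{N}$. The only organizational difference is that you bound $\sup_{\mathcal{A}_\mu(\mathcal{K}_\mu,c)}I_\mu\le\mathcal{K}_\mu$ and $\inf_{\partial\mathcal{A}_\mu(2\mathcal{K}_\mu,c)}I_\mu>\mathcal{K}_\mu$ separately, whereas the paper estimates the difference $I_\mu(v)-I_\mu(u)$ directly (dropping the nonnegative nonlinear terms of $u$); these are trivially equivalent and the resulting inequalities are identical.

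There is, however, a computational slip in case (iv). The Sobolev inequality that applies here is
\[
\mathcal{S}\Bigl(\int_{\R^N}|u|^{2\cdot 2^*}\,dx\Bigr)^{\frac{2}{2^*}}\le 4\int_{\R^N}|u|^2|\nabla u|^2\,dx,
\]
which reads $\mathcal{S}\,\|u\|_{2\cdot 2^*}^{4}\le 4\|u\nabla u\|_2^2$, not $\mathcal{S}\,\|u\|_{2\cdot 2^*}^{2}\le 4\|u\nabla u\|_2^2$ as you wrote. With $\theta_1=(2\cdot 2^*-q)/q$ one has $\|u\|_q^q\le\|u\|_{2^*}^{2\cdot 2^*-q}\,\|u\|_{2\cdot 2^*}^{2(q-2^*)}$, and the corrected Sobolev bound gives $\|u\|_{2\cdot 2^*}^{2(q-2^*)}\le C\,(\|u\nabla u\|_2^2)^{(q-2^*)/2}$, so
\[
\|u\|_q^q\le C\,(\|\nabla u\|_2^2)^{\frac{2\cdot 2^*-q}{2}}(\|u\nabla u\|_2^2)^{\frac{q-2^*}{2}}\le C\,\rho(u)^{\frac{2^*}{2}},
\]
with combined exponent $\tfrac{2^*}{2}=\tfrac{N}{N-2}$, not $\tfrac{q}{2}$. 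Your stated bound $\|u\|_q^q\le C\|\nabla u\|_2^{2\cdot 2^*-q}(\|u\nabla u\|_2^2)^{q-2^*}$ is actually stronger than what these inequalities deliver, hence false as written. Since $\tfrac{N}{N-2}>1$ the corrected exponent still yields a strictly superlinear perturbation of $\rho(u)$, so the mountain-pass geometry follows exactly as you describe; the slip does not affect the conclusion, only the exponent appearing in the lower bound.
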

\begin{proof}
	For any $u\in \mathcal{A}_{\mu}(\mathcal{K}_{\mu},c), v\in\partial \mathcal{A}_{\mu}(2\mathcal{K}_{\mu},c)$, we have
	\begin{equation*}
		\frac{\mu}{\theta}\|\nabla u\|_{\theta}^{\theta}+\frac{1}{2}\|\nabla u\|_{2}^{2}+\int_{\R^N}|u|^2|\nabla u|^{2}dx<\mathcal{K}_{\mu},~~~\frac{\mu}{\theta}\|\nabla v\|_{\theta}^{\theta}+\frac{1}{2}\|\nabla v\|_{2}^{2}+\int_{\R^N}|v|^2|\nabla v|^{2}dx=2\mathcal{K}_{\mu}.
	\end{equation*}
	\textbf{Case 1:} $N=3, ~2+\frac{4}{N}\leq q<4+\frac{4}{N}$ or $N\ge4, ~2+\frac{4}{N}\leq q\leq2^*$.
	Using \eqref{gn} and the Sobolev inequality, we get that
	\begin{align*}
		&I_{\mu}(v)-I_{\mu}(u)\ge \mathcal{K}_{\mu}-\frac{\tau}{q}\int_{\R^N}|v|^{q}dx-\frac{1}{2\cdot2^*}\int_{\R^N}|v|^{2\cdot2^*}dx\\
		%&\ge K-\frac{\tau}{q}C(q,N)\left(\int_{\R^N}|v|^{2}dx\right)^{\frac{2q-N(q-2)}{4}}\left(\int_{\R^N}|\nabla v|^2dx\right)^{\frac{N(q-2)}{4}}-\frac{1}{2\cdot2^*}\left(\frac{4}{\mathcal{S}}\right)^{\frac{2^*}{2}}\left(\int_{\R^N}|v|^2|\nabla v|^{2}dx\right)^{\frac{N}{N-2}}\\
		&\ge \mathcal{K}_{\mu}-\frac{\tau \mathcal{C}_1^q(q,N)c^{\frac{2q-N(q-2)}{4}}}{q}\left(\int_{\R^N}|\nabla v|^{2}dx\right)^{\frac{N(q-2)}{4}}-\frac{1}{2\cdot2^*}\left(\frac{4}{\mathcal{S}}\right)^{\frac{2^*}{2}}\left(\int_{\R^N}|v|^2|\nabla v|^{2}dx\right)^{\frac{N}{N-2}}\\
		&\ge \mathcal{K}_{\mu}-\frac{2^{\frac{N(q-2)}{2}}\tau \mathcal{C}_1^q(q,N)c^{\frac{2q-N(q-2)}{4}}}{q}\mathcal{K}_{\mu}^{\frac{N(q-2)}{4}}-\frac{2^{\frac{2}{N-2}}}{2^*}\left(\frac{4}{\mathcal{S}}\right)^{\frac{2^*}{2}}\mathcal{K}_{\mu}^{\frac{N}{N-2}}.
	\end{align*}
	It follows from $0<c<\bar{c}_1$ when $q=2+\frac{4}{N}$ and $\frac{N(q-2)}{4}>1$ when $q>2+\frac{4}{N}$ that there exists a sufficiently small $\mathcal{K}_{\mu}>0$ such that $I_{\mu}(v)-I_{\mu}(u)>0$, which gives \eqref{mp struc1 eq1}.

	\noindent\textbf{Case 2:} $N\ge4, ~2^*<q<4+\frac{4}{N}$. Using the interpolation inequality and the Sobolev inequality, we have
	\begin{align*}
		&I_{\mu}(v)-I_{\mu}(u)\ge \mathcal{K}_{\mu}-\frac{\tau}{q}\left(\int_{\R^N}|v|^{2^*}dx\right)^{\frac{2\cdot2^*-q}{2^*}}\left(\int_{\R^N}|v|^{2\cdot2^*}dx\right)^{\frac{q-2^*}{2^*}}-\frac{1}{2\cdot2^*}\left(\frac{4}{\mathcal{S}}\right)^{\frac{2^*}{2}}\left(\int_{\R^N}|v|^2|\nabla v|^{2}dx\right)^{\frac{N}{N-2}}\\
		&\ge \mathcal{K}_{\mu}-\frac{2^{q-2^*}\tau}{\mathcal{S}^{\frac{2^*}{2}}q}\left(\int_{\R^N}|\nabla v|^{2}dx\right)^{\frac{2\cdot2^*-q}{2}}\left(\int_{\R^N}|v|^2|\nabla v|^{2}dx\right)^{\frac{q-2^*}{2}}-\frac{1}{2\cdot2^*}\left(\frac{4}{\mathcal{S}}\right)^{\frac{2^*}{2}}\left(\int_{\R^N}|v|^2|\nabla v|^{2}dx\right)^{\frac{N}{N-2}}\\
		&\ge \mathcal{K}_{\mu}-\frac{2^{\frac{2^*+q}{2}}\tau}{\mathcal{S}^{\frac{2^*}{2}}q}\mathcal{K}_{\mu}^{\frac{N}{N-2}}-\frac{2^{\frac{2}{N-2}}}{2^*}\left(\frac{4}{\mathcal{S}}\right)^{\frac{2^*}{2}}\mathcal{K}_{\mu}^{\frac{N}{N-2}}.
	\end{align*}
This establishes equation \eqref{mp struc1 eq1}.
\end{proof}

\begin{lemma}\label{mp struc2}
	Under the assumptions of Lemma \ref{mp struc1}, the following properties hold:
	\begin{equation*}
		\check{\Gamma}_{\mu}(c):=\{\gamma\in \mathcal{C}([0,1], S_r(c)): \gamma(0)\in\mathcal{A}_{\mu}(\mathcal{K}_{\mu},c), I_{\mu}(\gamma(1))<0\}\neq\varnothing
	\end{equation*}
	and
	\begin{equation*}
		\check{M}_{\mu}(c):=\inf\limits_{\gamma\in\check{\Gamma}_{\mu}(c)}\max\limits_{t\in[0,1]}I_{\mu}(\gamma(t))\ge\check{\zeta}_{0}(c)>\sup\limits_{\gamma\in\check{\Gamma}_{\mu}(c)}\max\{I_{\mu}(\gamma(0)), I_{\mu}(\gamma(1))\},
	\end{equation*}
	where $\check{\zeta}_0(c)>0$ is independent of $\mu>0$.
\end{lemma}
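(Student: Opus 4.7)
\textbf{Proof strategy for Lemma \ref{mp struc2}.} The plan is to construct an explicit path witnessing $\check{\Gamma}_{\mu}(c) \neq \varnothing$ and then exploit the geometric separation from Lemma \ref{mp struc1} to extract the minimax inequality with $\mu$-uniform constants. Throughout I would fix $\mathcal{K} := \mathcal{K}_{\mu} > 0$ small; the first subtlety to check is that the proof of Lemma \ref{mp struc1} in fact allows $\mathcal{K}$ to be chosen independent of $\mu$, since the Gagliardo--Nirenberg and Sobolev bounds applied there control $\|v\|_q^q$ and $\|v\|_{2\cdot2^*}^{2\cdot2^*}$ in terms of $\|\nabla v\|_2^2 \leq 4\mathcal{K}$ and $\|v\nabla v\|_2^2 \leq 2\mathcal{K}$, neither of which involve $\mu$.

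For the non-emptiness, I would fix any radial $u \in S_r(c) \cap C_c^{\infty}(\R^N)$ and consider the $L^2$-preserving dilation $u_s(x) := s^{N/2} u(sx)$, which stays in $S_r(c)$. The formula \eqref{mcproperty eq1-1} for $\psi_u(s) = I_{\mu}(u_s)$ shows that all positive exponents of $s$ are strictly positive while the top-order negative term carries the exponent $N(2^*-1) > N+2$. Hence $\psi_u(s) \to 0$ and $\xi_{\mu}(u_s) \to 0$ as $s \to 0^+$, whereas $\psi_u(s) \to -\infty$ as $s \to +\infty$. Therefore one can choose $0 < s_0 \ll 1$ with $u_{s_0} \in \mathcal{A}_{\mu}(\mathcal{K}, c)$ and $s_1 \gg 1$ with $I_{\mu}(u_{s_1}) < 0$. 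The continuous dilation path $\gamma(t) := u_{s_0 + t(s_1 - s_0)}$ then lies in $\check{\Gamma}_{\mu}(c)$.

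For the key estimate, I would observe that on $\partial\mathcal{A}_{\mu}(2\mathcal{K}, c)$ one has the identity $\tfrac{\mu}{\theta}\|\nabla v\|_{\theta}^{\theta} + \tfrac{1}{2}\|\nabla v\|_2^2 + \|v\nabla v\|_2^2 = 2\mathcal{K}$, so
\[
I_{\mu}(v) = 2\mathcal{K} - \frac{\tau}{q}\|v\|_q^q - \frac{1}{2\cdot 2^*}\|v\|_{2\cdot 2^*}^{2\cdot 2^*}.
\]
Applying exactly the same Gagliardo--Nirenberg/Sobolev chain of inequalities as in Case 1 or Case 2 of Lemma \ref{mp struc1} yields
\[
I_{\mu}(v) \;\geq\; 2\mathcal{K} - C_1(\tau, q, c, N)\,\mathcal{K}^{\alpha} - C_2(N)\,\mathcal{K}^{\beta},
\]
with exponents $\alpha, \beta > 1$ coming from the critical/supercritical character of the nonlinearities. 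Choosing $\mathcal{K}$ even smaller if necessary (still independent of $\mu$), the right-hand side is a strictly positive quantity, which I would then name $\check{\zeta}_0(c)$.

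Finally, given any $\gamma \in \check{\Gamma}_{\mu}(c)$, since $I_{\mu}(\gamma(1)) < 0 < \check{\zeta}_0(c)$ and $\inf_{\partial\mathcal{A}_{\mu}(2\mathcal{K},c)} I_{\mu} \geq \check{\zeta}_0(c)$, the endpoint $\gamma(1)$ cannot belong to $\mathcal{A}_{\mu}(2\mathcal{K},c)$; since $\gamma(0) \in \mathcal{A}_{\mu}(\mathcal{K},c) \subset \mathcal{A}_{\mu}(2\mathcal{K},c)$ and $\gamma$ is continuous into $X$, there exists $t_0 \in (0,1)$ with $\gamma(t_0) \in \partial\mathcal{A}_{\mu}(2\mathcal{K},c)$, giving $\max_{t \in [0,1]} I_{\mu}(\gamma(t)) \geq I_{\mu}(\gamma(t_0)) \geq \check{\zeta}_0(c)$, hence $\check{M}_{\mu}(c) \geq \check{\zeta}_0(c)$. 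Moreover, by Lemma \ref{mp struc1}, $I_{\mu}(\gamma(0)) \leq \sup_{\mathcal{A}_{\mu}(\mathcal{K},c)} I_{\mu} < \check{\zeta}_0(c)$, while $I_{\mu}(\gamma(1)) < 0 < \check{\zeta}_0(c)$, so $\sup_{\gamma \in \check{\Gamma}_{\mu}(c)} \max\{I_{\mu}(\gamma(0)), I_{\mu}(\gamma(1))\} < \check{\zeta}_0(c)$, which finishes the chain of inequalities. The principal obstacle is the $\mu$-independence of $\check{\zeta}_0(c)$, crucial for the later passage $\mu \to 0^+$; this is precisely what is secured by reading off Lemma \ref{mp struc1} carefully and keeping the $\mu$-term as an inert nonnegative quantity throughout.
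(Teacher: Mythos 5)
Your proof is correct in outline and fills in the ``classical arguments'' that the paper explicitly omits (the published proof is literally one sentence referring back to Lemma \ref{mp struc1}). The dilation path $s\mapsto u_s$ for non-emptiness, the computation of $I_\mu$ on $\partial\mathcal{A}_\mu(2\mathcal{K},c)$, and the intermediate-value crossing argument are all the standard steps, and you are right that the $\mu$-independence of $\mathcal{K}$ comes from the fact that the Gagliardo--Nirenberg and Sobolev estimates in Lemma \ref{mp struc1} involve only $\|\nabla v\|_2^2$ and $\|v\nabla v\|_2^2$, not the weighted $\theta$-gradient.

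Two small imprecisions, neither fatal. First, your claim that the exponents $\alpha,\beta$ are both $>1$ is false in the borderline case $q=2+\tfrac4N$, where $\alpha=\tfrac{N(q-2)}4=1$; there the positivity of $2\mathcal K-C_1\mathcal K^{\alpha}-C_2\mathcal K^{\beta}$ is secured instead by the mass restriction $c<\bar c_1$, which makes $C_1<2$ (this is exactly the role of $\bar c_1$ in the paper's Case~1). Second, the step ``$\sup_{\mathcal{A}_\mu(\mathcal{K},c)}I_\mu<\check\zeta_0(c)$ by Lemma \ref{mp struc1}'' is not a direct invocation of that lemma, since $\check\zeta_0(c)$ is your explicit lower bound, not the infimum appearing in \eqref{mp struc1 eq1}. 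The clean way is the elementary bound $I_\mu(u)\le\xi_\mu(u)<\mathcal K$ for $u\in\mathcal{A}_\mu(\mathcal{K},c)$, together with $\check\zeta_0(c)>\mathcal K$ for $\mathcal K$ small, which yields the strict inequality without appealing to \eqref{mp struc1 eq1} at all. With those fixes the argument is complete.
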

\begin{proof}
The conclusion can be readily derived by employing Lemma \ref{mp struc1} and invoking classical arguments. Therefore, we omit the details here.
\end{proof}

\begin{lemma}\label{ps existence sec2}
	Under the assumptions of Lemma \ref{mp struc1}, there exists a sequence $\{u_n\}\subset S_r(c)$ satisfying
	\begin{equation*}
		I_{\mu}(u_n)\to \check{M}_{\mu}(c)>0,~~~~~~~~~\|I'_{\mu}|_{S_r(c)}(u_n)\|_{X^*}\to0,~~~~~~~~~Q_{\mu}(u_n)\to0.
	\end{equation*}
\end{lemma}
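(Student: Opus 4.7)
The plan is to mirror the strategy used in the proof of Lemma \ref{exist for ps seq}, adapting it to the mountain pass geometry established in Lemma \ref{mp struc2}. The core idea is to lift the problem to $\R \times S_r(c)$ via the scaling $\mathcal{H}(s,u)(x) = e^{Ns/2}u(e^s x)$ and apply Ghoussoub's minimax principle \cite{Ghoussoub1993}, so that the extra directional derivative in the $s$-variable automatically yields the Pohozaev-type information $Q_\mu(u_n) \to 0$.

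First, I would introduce the augmented functional $\tilde I_\mu(s,u) := I_\mu(\mathcal{H}(s,u))$ given explicitly by \eqref{fibermap1}, together with the augmented family of paths
\begin{equation*}
\check{\tilde\Gamma}_\mu(c) := \{\tilde\gamma \in \mathcal{C}([0,1], \R \times S_r(c)) : \tilde\gamma(0) \in \{0\}\times\mathcal{A}_\mu(\mathcal{K}_\mu,c),\ \tilde I_\mu(\tilde\gamma(1)) < 0\}
\end{equation*}
and the associated minimax value $\check{\tilde M}_\mu(c) := \inf_{\tilde\gamma \in \check{\tilde\Gamma}_\mu(c)} \max_{t \in [0,1]} \tilde I_\mu(\tilde\gamma(t))$. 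Exactly as in the proof of Lemma \ref{eq of the mpv}, the injection $\gamma \mapsto (0,\gamma)$ shows $\check{\tilde M}_\mu(c) \leq \check M_\mu(c)$, while the projection $\tilde\gamma \mapsto \mathcal{H}\circ\tilde\gamma$ lands in $\check\Gamma_\mu(c)$ and, by the invariance $\tilde I_\mu(\tilde\gamma(t)) = I_\mu(\mathcal{H}\circ\tilde\gamma(t))$, gives the reverse inequality. Hence $\check{\tilde M}_\mu(c) = \check M_\mu(c) \geq \check\zeta_0(c) > 0$.

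Next, I would apply Ghoussoub's minimax principle (\cite[Theorem 5.2]{Ghoussoub1993}) to the homotopy stable family $\mathcal{F} = \{\tilde\gamma([0,1]) : \tilde\gamma \in \check{\tilde\Gamma}_\mu(c)\}$ with extended closed boundary $B = (\{0\}\times \mathcal{A}_\mu(\mathcal{K}_\mu,c)) \cup (\{0\}\times\{u \in S_r(c): I_\mu(u) < 0\})$, and dual set $F = \{(s,u) \in \R\times S_r(c): \tilde I_\mu(s,u) \geq \check{\tilde M}_\mu(c)\}$. Lemma \ref{mp struc2} provides the strict separation $\sup_{B\cap(\{0\}\times S_r(c))} \tilde I_\mu < \check{\tilde M}_\mu(c) \leq \inf_F \tilde I_\mu$, which is exactly the hypothesis needed. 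This yields a sequence $(s_n,w_n) \in \R \times S_r(c)$ with
\begin{equation*}
\tilde I_\mu(s_n,w_n) \to \check{\tilde M}_\mu(c), \qquad \partial_s \tilde I_\mu(s_n,w_n) \to 0, \qquad \|\partial_u \tilde I_\mu(s_n,w_n)\|_{(T_{w_n}S_r(c))^*} \to 0,
\end{equation*}
together with the localization $|s_n| + \dist_X(w_n, \beta_n([0,1])) \to 0$, where $\{(0,\beta_n)\}$ is any minimizing sequence in $\check{\tilde\Gamma}_\mu(c)$. In particular, $\{s_n\}$ is bounded.

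Finally, setting $u_n := \mathcal{H}(s_n,w_n) \in S_r(c)$, a direct computation from \eqref{fibermap1} identifies $\partial_s \tilde I_\mu(s_n,w_n) = Q_\mu(u_n)$, so $Q_\mu(u_n)\to 0$; moreover $I_\mu(u_n) = \tilde I_\mu(s_n,w_n) \to \check M_\mu(c)$. Transporting the constrained derivative back via the linear isomorphism of Lemma \ref{isomorphism thm} and using boundedness of $\{e^{s_n}\}$ exactly as in \eqref{exist for ps seq eq4} gives $\|I_\mu'|_{S_r(c)}(u_n)\|_{X^*} \to 0$. I expect the only subtle point to be the verification that the chosen boundary $B$ is truly an extended closed boundary (in the sense of \cite{Ghoussoub1993}), i.e., that every path in $\check{\tilde\Gamma}_\mu(c)$ has endpoints in $B$ and that the strict inequality in Lemma \ref{mp struc2} uniformly separates $\sup_B \tilde I_\mu$ from $\check{\tilde M}_\mu(c)$; this is precisely what \eqref{mp struc1 eq1} was crafted to guarantee, so no additional work beyond quoting it should be required.
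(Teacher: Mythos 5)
Your proposal matches the paper's proof essentially line for line: both lift the problem to $\R \times S_r(c)$ via $\mathcal{H}$, define the analogous augmented path family and show equality of minimax levels as in Lemma \ref{eq of the mpv}, apply Ghoussoub's minimax principle with the same choices of $\mathcal{F}$, $B$, and $F$, and then transport back via Lemma \ref{isomorphism thm}. The paper compresses the final verification to a citation of Lemma \ref{exist for ps seq}, which is exactly the argument you spell out.
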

\begin{proof}
	Let $\tilde{I}_{\mu}$ be defined by \eqref{fibermap1}. As argued before, we set
	\begin{equation*}
		\overline{\Gamma}_{\mu}(c):=\{\overline{\gamma}\in \mathcal{C}([0,1], \R\times S_r(c)): \overline{\gamma}(0)=(0,\overline{\gamma}_1(0)), \overline{\gamma}_1(0)\in\mathcal{A}_{\mu}(\mathcal{K}_{\mu},c), \tilde{I}_{\mu}(\overline{\gamma}(1))<0\}
	\end{equation*}
	and 
	\begin{equation*}
		\overline{M}_{\mu}(c):=\inf\limits_{\overline{\gamma}\in\overline{\Gamma}_{\mu}(c)}\max\limits_{t\in[0,1]}\tilde{I}_{\mu}(\overline{\gamma}(t)).
	\end{equation*}
	Similar to Lemma \ref{eq of the mpv}, we can see that $\check{M}_{\mu}(c)=\overline{M}_{\mu}(c)$.
	Moreover, define
	\begin{equation*}
		\mathcal{E}_{\mu}^0:=\{u\in S_r(c):I_{\mu}(u)<0\}.
	\end{equation*}
	Let
	\begin{equation*}
		A=\overline{\gamma}([0,1]),~~~\mathcal{X}= \R\times S_r(c),~~~\mathcal{F}=\{\overline{\gamma}([0,1]):\overline{\gamma}\in\overline{\Gamma}_{\mu}(c)\},
	\end{equation*}
	\begin{equation*}
		B=(\{0\}\times \mathcal{A}_{\mu}(\mathcal{K}_{\mu},c))\cup(\{0\}\times \mathcal{E}_{\mu}^0),~~~F=\{(s,u)\in\R\times S_r(c):\tilde{I}_{\mu}(s,u)\ge\overline{M}_{\mu}(c)\}.
	\end{equation*}
By employing the same argument as in Lemma \ref{exist for ps seq}, we complete the proof.
\end{proof}

We now present a precise estimate of the energy level $\check{M}_{\mu}(c)$ as defined in Lemma \ref{mp struc2}. In order to achieve the desired result, we need to modify the test function  $U_\eps$ given by \eqref{test function} appropriately.
Define the radial symmetry nonnegative functions as follows:
\begin{equation}\label{test functions2}
	\widehat{U}_\eps(x):=B_N
	\begin{cases}
		\left(\frac{\sqrt{\eps}}{\eps+|x|^2}\right)^{\frac{N-2}{4}}~~~&\text{if}~~~0\leq |x|< \eps^{-\alpha},\\
		\left(\frac{\eps^{2\alpha+1/2}}{1+\eps^{2\alpha+1}}\right)^{\frac{N-2}{4}}\frac{1-\eps^{\beta}|x|}{1-\eps^{\beta-\alpha}}~~~&\text{if}~~~\eps^{-\alpha}\leq |x|<\eps^{-\beta},\\
		0~~~&\text{if}~~~|x|\ge \eps^{-\beta},
	\end{cases}
\end{equation}
where $B_N:=(N(N-2))^{\frac{N-2}{8}}$ and $\alpha>0, \beta>0$ are parameters to be determined later. Then we derive
\begin{align}\label{energy2 est2}
	&\|\widehat{U}_\eps\|_2^2=\int_{\R^N}|\widehat{U}_\eps|^{2}dx=\omega_N\int_0^{+\infty}|\widehat{U}_\eps(r)|^{2}r^{N-1}dr\nonumber\\
	&=\omega_NB_N^2\bigg[\int_0^{\eps^{-\alpha}}\frac{\eps^{\frac{N-2}{4}}r^{N-1}}{(\eps+r^2)^{\frac{N-2}{2}}}dr+\int_{\eps^{-\alpha}}^{\eps^{-\beta}}\left(\frac{\eps^{2\alpha+1/2}}{1+\eps^{2\alpha+1}}\right)^{\frac{N-2}{2}}\frac{(1-\eps^{\beta}r)^2r^{N-1}}{(1-\eps^{\beta-\alpha})^2}dr\bigg]\nonumber\\
	&=\omega_NB_N^2\bigg[\eps^{\frac{N+2}{4}}\int_0^{\eps^{-\alpha-\frac{1}{2}}}\frac{s^{N-1}}{(1+s^2)^{\frac{N-2}{2}}}ds\nonumber\\
	&\quad+\left(\frac{\eps^{2\alpha+1/2}}{1+\eps^{2\alpha+1}}\right)^{\frac{N-2}{2}}\frac{2\eps^{-\beta(N+2)}-\left[N(N+1)\eps^{-2\alpha}-2N(N+2)\eps^{-\beta-\alpha}+(N+1)(N+2)\eps^{-2\beta}\right]\eps^{-N\alpha}}{N(N+1)(N+2)(\eps^{-\beta}-\eps^{-\alpha})^2}\bigg]\nonumber\\
	&=\omega_NB_N^2\bigg[\eps^{\frac{N+2}{4}}\tilde{I}(\eps,\alpha)\nonumber\\
&~~~~~+\frac{2\eps^{-\beta(N+2)-\frac{N-2}{4}}-\left[N(N+1)\eps^{-2\alpha}-2N(N+2)\eps^{-\beta-\alpha}+(N+1)(N+2)\eps^{-2\beta}\right]\eps^{-N\alpha-\frac{N-2}{4}}}{N(N+1)(N+2)(\eps^{-\beta}-\eps^{-\alpha})^2\left(1+\eps^{-2\alpha-1}\right)^{\frac{N-2}{2}}}\bigg],
\end{align}
where
\begin{equation*}
	\tilde{I}(\eps,\alpha):=\int_0^{\eps^{-\alpha-\frac{1}{2}}}\frac{s^{N-1}}{(1+s^2)^{\frac{N-2}{2}}}ds=\mathcal{O}(\eps^{-2\alpha-1})~~~~\mbox{as}~~\eps\to0.
\end{equation*}
Choose $\max\{\frac{4N^2-4N+8-N(N-2)q}{16(N-2)},0\}<\alpha<\frac{N-2}{8}$ and $\beta>\alpha$ such that $c=\lim\limits_{\eps\to0}\|\widehat{U}_\eps\|_2^2$.
 From this, we derive that $\eps^{\beta}=\mathcal{O}(\eps^{\frac{(4\alpha+1)(N-2)}{4N}})$, and more precisely,
\begin{equation}\label{energy mass id1}
	\lim_{\eps\to0}\frac{\eps^{\frac{(4\alpha+1)(N-2)}{4N}}}{\eps^{\beta}}=\left(\frac{N(N+1)(N+2)c}{2\omega_NB_N^2}\right)^{\frac{1}{N}}.
\end{equation}
Moreover, we have
\begin{align}\label{energy2 est1}
	\|\nabla \widehat{U}_\eps\|_{2}^{2}&=\int_{\R^N}|\nabla \widehat{U}_\eps|^{2}dx=\omega_N\int_0^{+\infty}|\widehat{U}_\eps'(r)|^{2}r^{N-1}dr\nonumber\\
	&=\omega_NB_N^{2}\bigg[\left(\frac{N-2}{2}\right)^{2}\eps^{\frac{N-2}{4}}\int_0^{\eps^{-\alpha-\frac{1}{2}}}\frac{s^{N+1}}{(1+s^2)^{\frac{N+2}{2}}}ds+\left(\frac{\eps^{2\alpha+1/2}}{1+\eps^{2\alpha+1}}\right)^{\frac{N-2}{2}}\frac{\eps^{2\beta-N\beta}\left(1-\eps^{N\beta-N\alpha}\right)}{N\left(1-\eps^{\beta-\alpha}\right)^{2}}\bigg]\nonumber\\
	&=\mathcal{O}\left(\eps^{\frac{(4\alpha+1)(N-2)}{2N}}\right).
\end{align}
\begin{align}\label{energy2 est3}
	\|\widehat{U}_\eps\|_{2\cdot2^*}^{2\cdot2^*}&=\int_{\R^N}|\widehat{U}_\eps|^{2\cdot2^*}dx=\omega_N\int_0^{+\infty}|\widehat{U}_\eps(r)|^{2\cdot2^*}r^{N-1}dr\nonumber\\
	&=\omega_NB_N^{2\cdot2^*}\bigg[\int_0^{\eps^{-\alpha-\frac{1}{2}}}\frac{s^{N-1}}{(1+s^2)^N}ds+\left(\frac{\eps^{2\alpha+1/2}}{1+\eps^{2\alpha+1}}\right)^N\int_{0}^{1-\eps^{\beta-\alpha}}\frac{\eps^{-\beta N}s^{\frac{4N}{N-2}}(1-s)^{N-1}}{(1-\eps^{\beta-\alpha})^{\frac{4N}{N-2}}}ds\bigg]\nonumber\\
	%&=\omega_NB_N^{2\cdot2^*}\int_0^{+\infty}\frac{s^{N-1}}{(1+s^2)^N}ds+\omega_NB_N^{2\cdot2^*}\left[-\int_{\sqrt{n}}^{+\infty}\frac{s^{N-1}}{(1+s^2)^N}ds+\left(\frac{\sqrt{n}}{1+n}\right)^N\int_1^{2}(2-r)^{\frac{4N}{N-2}}r^{N-1}dr\right]\nonumber\\
	&=\mathcal{S}^{\frac{N}{2}}+\omega_NB_N^{2\cdot2^*}\bigg[-\int_{\eps^{-\alpha-\frac{1}{2}}}^{+\infty}\frac{s^{N-1}}{(1+s^2)^N}ds+\eps^{-\beta N}\left(\frac{\eps^{2\alpha+1/2}}{1+\eps^{2\alpha+1}}\right)^N\int_{0}^{1-\eps^{\beta-\alpha}}\frac{s^{\frac{4N}{N-2}}(1-s)^{N-1}}{(1-\eps^{\beta-\alpha})^{\frac{4N}{N-2}}}ds\bigg]\nonumber\\
	&=\mathcal{S}^{\frac{N}{2}}+\mathcal{O}\bigg(\eps^{\frac{(4\alpha+1)(N+2)}{4}}\bigg).
\end{align}
\begin{align}\label{energy2 est4}
	&4\int_{\R^N}|\widehat{U}_\eps|^2|\nabla \widehat{U}_\eps|^2dx=\int_{\R^N}|\nabla (\widehat{U}_\eps^2)|^{2}dx=\omega_N\int_0^{+\infty}\left|\frac{d}{dr}\widehat{U}^2_\eps(r)\right|^{2}r^{N-1}dr\nonumber\\
	&=\omega_NB_N^4\bigg[(N-2)^2\int_0^{\eps^{-\alpha-\frac{1}{2}}}\frac{s^{N+1}}{(1+s^2)^N}ds+4\left(\frac{\eps^{2\alpha+1/2}}{1+\eps^{2\alpha+1}}\right)^{N-2}\int_0^{1-\eps^{\beta-\alpha}}\frac{\eps^{\beta(2-N)}s^2(1-s)^{N-1}}{(1-\eps^{\beta-\alpha})^4}ds\bigg]\nonumber\\
	&=\mathcal{S}^{\frac{N}{2}}+\omega_NB_N^{4}\bigg[-(N-2)^2\int_{\eps^{-\alpha-\frac{1}{2}}}^{+\infty}\frac{s^{N+1}}{(1+s^2)^N}ds+4\eps^{\beta(2-N)}\left(\frac{\eps^{2\alpha+1/2}}{1+\eps^{2\alpha+1}}\right)^{N-2}\int_0^{1-\eps^{\beta-\alpha}}\frac{s^2(1-s)^{N-1}}{(1-\eps^{\beta-\alpha})^4}ds\bigg]\nonumber\\
	&=\mathcal{S}^{\frac{N}{2}}+\mathcal{O}\left(\eps^{\frac{(4\alpha+1)(N^2-4)}{4N}}\right).
\end{align}
\begin{align}\label{energy2 est5}
	\|\widehat{U}_\eps\|_q^q=\omega_N\int_0^{+\infty}|\widehat{U}_\eps(r)|^{q}r^{N-1}dr\ge\omega_NB_N^q\eps^{\frac{N}{2}-\frac{(N-2)q}{8}}\int_0^{1}\frac{s^{N-1}}{(1+s^2)^{\frac{(N-2)q}{4}}}ds=\mathcal{O}\left(\eps^{\frac{N}{2}-\frac{(N-2)q}{8}}\right).
\end{align}

\begin{lemma}\label{energy estimate2}
	Let $q\in\left(\frac{2(N+2)}{N-2},2\cdot2^*\right), \tau>0$ and $c>0$. Then there exist $0<\mu_1<1$ and $\delta'>0$ such that
	\begin{equation*}
		\check{M}_{\mu}(c)\leq\frac{\mathcal{S}^{\frac{N}{2}}}{2N}-\delta',~~~\text{for all}~~~\mu\in(0,\mu_1).
	\end{equation*}
	Moreover, the same conclusion holds provided that $q\in(2,2\cdot2^*)$ for $N\ge3$ and $\tau>0$ is suffciently large.
\end{lemma}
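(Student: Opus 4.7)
The plan is to bound $\check{M}_\mu(c)$ from above by exhibiting a concrete path in $\check{\Gamma}_\mu(c)$ whose maximum value of $I_\mu$ lies strictly below $\mathcal{S}^{N/2}/(2N)$. I take the $L^2$-renormalization $\widehat{V}_\eps:=(\sqrt{c}/\|\widehat{U}_\eps\|_2)\widehat{U}_\eps\in S_r(c)$ of the truncated bubble $\widehat{U}_\eps$ from \eqref{test functions2}, and consider the dilation family
\[
w_t(x):=t^{N/2}\widehat{V}_\eps(tx),\qquad t\ge 0,
\]
which lies in $S_r(c)$ for every $t\ge 0$. The standard scaling identities give $\|\nabla w_t\|_2^2=t^2\|\nabla \widehat V_\eps\|_2^2$, $\|w_t\nabla w_t\|_2^2=t^{N+2}\|\widehat V_\eps\nabla\widehat V_\eps\|_2^2$, and $\|w_t\|_{2\cdot 2^*}^{2\cdot 2^*}=t^{N(2^*-1)}\|\widehat V_\eps\|_{2\cdot 2^*}^{2\cdot 2^*}$. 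Since $N(2^*-1)>N+2$, the $L^{2\cdot 2^*}$-term dominates as $t\to +\infty$ and hence $I_\mu(w_t)<0$ for $t$ large; on the other hand $w_t\in \mathcal{A}_\mu(\mathcal{K}_\mu,c)$ for $t$ small. Reparametrizing $t\in[0,T]$ onto $[0,1]$ thus yields an admissible path in $\check{\Gamma}_\mu(c)$.

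The central estimate is \eqref{intro esti eq}: substituting \eqref{energy2 est1}--\eqref{energy2 est5} and using $\|\widehat U_\eps\|_2^2\to c$ from \eqref{energy2 est2} shows that, uniformly for $t$ in a bounded interval $[T_1,T_2]$ containing the maximizer of $t\mapsto I_0(w_t)$,
\[
\sup_{t\ge 0} I_0(w_t)\le \frac{\mathcal S^{N/2}}{2N}+\mathcal{O}\!\left(\eps^{\frac{(4\alpha+1)(N-2)}{2N}}\right)-\frac{C\tau}{q}\,\eps^{\frac{N}{2}-\frac{(N-2)q}{8}},
\]
where $C>0$ depends on $c,q,N$. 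Here I have retained only the leading positive error (the $\eps$-exponents of the other two positive remainders $(4\alpha+1)(N^2-4)/(4N)$ and $(4\alpha+1)(N+2)/4$ are strictly larger for every $N\ge 3$) together with the leading negative gain from the $L^q$-term. The main obstacle is to tune $\alpha$ so that the gain dominates as $\eps\to 0$: this demands $N/2-(N-2)q/8<(4\alpha+1)(N-2)/(2N)$, or equivalently $\alpha>(4N^2-4N+8-N(N-2)q)/(16(N-2))$. Combined with the built-in upper bound $\alpha<(N-2)/8$ (which itself came from $\beta>\alpha$ in the construction of $\widehat U_\eps$), such an $\alpha$ exists if and only if $q>2(N+2)/(N-2)$, which is precisely the hypothesis of the lemma. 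Freezing $\alpha$ in this window and then selecting $\eps_0>0$ small enough yields $\sup_t I_0(w_t)\le \mathcal S^{N/2}/(2N)-2\delta'$ for some $\delta'>0$.

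It remains to pass from $I_0$ to $I_\mu$. For the fixed $\eps_0$ and the compact range $t\in[T_1,T_2]$, the perturbation is uniformly bounded: $(\mu/\theta)\|\nabla w_t\|_\theta^\theta\le C'(\eps_0)\mu$. Choosing $\mu_1>0$ with $C'(\eps_0)\mu_1<\delta'$ gives $\sup_t I_\mu(w_t)\le \mathcal S^{N/2}/(2N)-\delta'$ for every $\mu\in(0,\mu_1)$, which combined with the admissibility of the path yields the first assertion. For the second statement $q\in(2,2\cdot 2^*)$ with $\tau$ large, the exponent inequality above may fail, but since the positive errors carry coefficients independent of $\tau$ while the negative gain scales linearly in $\tau$, one instead fixes an arbitrary admissible $\eps_0>0$ and chooses $\tau=\tau(c,q,N,\eps_0)>0$ so large that the bounded positive errors are absorbed by $(C\tau/q)\,\eps_0^{N/2-(N-2)q/8}$; the perturbation step then proceeds verbatim. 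The delicate bookkeeping of the four $\eps$-exponents, and in particular the identification of $q=2(N+2)/(N-2)$ as the threshold at which the admissible window for $\alpha$ degenerates, is the principal difficulty of the argument.
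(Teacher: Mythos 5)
Your proposal is correct and follows essentially the same strategy as the paper: normalize $\widehat{U}_\eps$ to $\widehat{V}_\eps\in S_r(c)$, estimate $I_0$ along the mass-preserving dilation $t\mapsto t^{N/2}\widehat{V}_\eps(t\cdot)$ using \eqref{energy2 est1}--\eqref{energy2 est5}, identify $\frac{(4\alpha+1)(N-2)}{2N}$ as the smallest positive remainder exponent, and observe that the window for $\alpha$ inside $(0,\frac{N-2}{8})$ becomes nonempty precisely when $q>\frac{2(N+2)}{N-2}$. The only minor deviation is in the second assertion, where the paper couples $\tau=\eps^{-1}$ and re-derives a wider window $\frac{4N^2-12N+8-N(N-2)q}{16(N-2)}<\alpha<\frac{N-2}{8}$, whereas you fix $\eps_0$ and let $\tau\to\infty$; your variant is also valid (because $\sup_{t>0}I_0(t^{N/2}\widehat{V}_{\eps_0}(t\cdot))$ is nonincreasing in $\tau$ and decreases to $0$, a fact worth stating explicitly since the $L^q$-gain cannot pointwise ``absorb'' errors as $t\to0$), and both then pass from $I_0$ to $I_\mu$ identically over a fixed compact $t$-interval.
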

\begin{proof}
	Define $\widehat{V}_\eps:=\frac{\sqrt{c}}{\|\widehat{U}_{\eps}\|_2}\widehat{U}_{\eps}\in S_r(c)$. Then we immediately get that $\frac{1}{2}\leq\frac{\sqrt{c}}{\|\widehat{U}_{\eps}\|_2}\leq\frac{3}{2}$ for sufficiently small $\eps>0$.
	It follows from \eqref{pertu fun} and \eqref{energy2 est2}-\eqref{energy2 est5} that
	\begin{align*}
		I_{0}(t^{\frac{N}{2}}\widehat{V}_{\eps}(tx))&=\frac{t^2}{2}\left(\frac{\sqrt{c}}{\|\widehat{U}_{\eps}\|_2}\right)^2\int_{\R^N}|\nabla \widehat{U}_{\eps}|^2dx+\left(\frac{\sqrt{c}}{\|\widehat{U}_{\eps}\|_2}\right)^{4}t^{N+2}\int_{\R^N}|\widehat{U}_{\eps}|^2|\nabla\widehat{U}_{\eps}|^2dx\\
		&\quad-\left(\frac{\sqrt{c}}{\|\widehat{U}_{\eps}\|_2}\right)^q\frac{\tau t^{q\gamma_q}}{q}\int_{\R^N}|\widehat{U}_{\eps}|^qdx-\left(\frac{\sqrt{c}}{\|\widehat{U}_{\eps}\|_2}\right)^{2\cdot2^*}\frac{t^{\frac{N(N+2)}{N-2}}}{2\cdot2^*}\int_{\R^N}|\widehat{U}_{\eps}|^{2\cdot2^*}dx\\
		&\leq\frac{t^2}{2}\mathcal{O}\left(\eps^{\frac{(4\alpha+1)(N-2)}{2N}}\right)+t^{N+2}\left[\frac{\mathcal{S}^{\frac{N}{2}}}{4}+\mathcal{O}\left(\eps^{\frac{(4\alpha+1)(N^2-4)}{4N}}\right)\right]\\
		&\quad-\frac{\tau t^{q\gamma_q}}{q}\mathcal{O}\left(\eps^{\frac{N}{2}-\frac{(N-2)q}{8}}\right)-\frac{t^{\frac{N(N+2)}{N-2}}}{2\cdot2^*}\left[\mathcal{S}^{\frac{N}{2}}+\mathcal{O}\bigg(\eps^{\frac{(4\alpha+1)(N+2)}{4}}\bigg)\right]\\
		&\leq\frac{1}{2N}\mathcal{S}^{\frac{N}{2}}+\frac{t^2}{2}\mathcal{O}\left(\eps^{\frac{(4\alpha+1)(N-2)}{2N}}\right)+t^{N+2}\mathcal{O}\left(\eps^{\frac{(4\alpha+1)(N^2-4)}{4N}}\right)\\
		&\quad-\frac{t^{\frac{N(N+2)}{N-2}}}{2\cdot2^*}\mathcal{O}\bigg(\eps^{\frac{(4\alpha+1)(N+2)}{4}}\bigg)-\frac{\tau t^{q\gamma_q}}{q}\mathcal{O}\left(\eps^{\frac{N}{2}-\frac{(N-2)q}{8}}\right),~~~~\forall t>0.
	\end{align*}
If $q>\frac{2(N+2)}{N-2}$, we can choose $\max\{\frac{4N^2-4N+8-N(N-2)q}{16(N-2)},0\}<\alpha<\frac{N-2}{8}$, such that there exist sufficiently small constants $\eps_1>0$ and $\delta'>0$ satisfying the inequality
	\begin{equation*}\label{claim esti2 eq1}
	\sup_{t>0}I_{0}(t^{\frac{N}{2}}\widehat{V}_{\eps_1}(tx))\leq\frac{\mathcal{S}^{\frac{N}{2}}}{2N}-2\delta'.
	\end{equation*}
On the other hand, if $2<q\leq\frac{2(N+2)}{N-2}$, we can select $\tau=\eps^{-1}$ and $\max\{\frac{4N^2-12N+8-N(N-2)q}{16(N-2)},0\}<\alpha<\frac{N-2}{8}$, such that the above inequality still holds.

Note that there exist constants $0<T_3<T_4$ such that
	\begin{equation*}
\sup_{t>0}I_{\mu}(t^{\frac{N}{2}}\widehat{V}_{\eps}(tx))=\sup_{t\in[T_3,T_4]}I_{\mu}(t^{\frac{N}{2}}\widehat{V}_{\eps}(tx)),~~~~~~\sup_{t>0}I_0(t^{\frac{N}{2}}\widehat{V}_{\eps}(tx))=\sup_{t\in[T_3,T_4]}I_0(t^{\frac{N}{2}}\widehat{V}_{\eps}(tx)).
	\end{equation*}
	From this, we conclude that
	\begin{align*}\label{claim esti2 eq}
		\sup_{t>0}I_{\mu}(t^{\frac{N}{2}}\widehat{V}_{\eps_1}(tx))&\leq C(\eps_1)\mu+\sup_{t\in[T_3,T_4]}I_0(t^{\frac{N}{2}}\widehat{V}_{\eps_1}(tx))\leq C(\eps_1)\mu+\frac{\mathcal{S}^{\frac{N}{2}}}{2N}-2\delta'\leq \frac{\mathcal{S}^{\frac{N}{2}}}{2N}-\delta',
	\end{align*}
	where $\mu>0$ is sufficiently small. Moreover, we have
	\begin{align*}
		I_{\mu}(t^{\frac{N}{2}}\widehat{V}_{\eps_1}(tx))&=\frac{\mu}{\theta}t^{\theta(1+\gamma_{\theta})}\int_{\R^N}|\nabla \widehat{V}_{\eps_1}|^{\theta}dx+\frac{t^2}{2}\int_{\R^N}|\nabla \widehat{V}_{\eps_1}|^{2}dx+t^{N+2}\int_{\R^N}|\widehat{V}_{\eps_1}|^2|\nabla \widehat{V}_{\eps_1}|^{2}dx\nonumber\\
		&\quad-\frac{\tau\, t^{\frac{N(q-2)}{2}}}{q}\int_{\R^N}|\widehat{V}_{\eps_1}|^qdx-\frac{t^{N(2^*-1)}}{22^*}\int_{\R^N}|\widehat{V}_{\eps_1}|^{22^*}dx,
	\end{align*}
	which implies that there exist constants $t_1>0$ small enough and $t_2>0$ large enough such that
	\begin{equation*}
		t_1^{\frac{N}{2}}\widehat{V}_{\eps_1}(t_1x)\in\mathcal{A}_\mu(\mathcal{K}_\mu,c)~~~~\text{and}~~~~I_\mu(t_2^{\frac{N}{2}}\widehat{V}_{\eps_1}(t_2x))<0.
	\end{equation*}
	Let $\gamma_{\eps_1}(t):=(t_1+(t_2-t_1)t)^{\frac{N}{2}}\widehat{V}_{\eps_1}((t_1+(t_2-t_1)t)x)$. Then $\gamma_{\eps_1}\in\check{\Gamma}_{\mu}(c)$ and we can deduce that
	\begin{equation*}
		\check{M}_{\mu}(c)\leq\sup_{t>0}I_{\mu}(t^{\frac{N}{2}}\widehat{V}_{\eps_1}(tx))\leq\frac{\mathcal{S}^{\frac{N}{2}}}{2N}-\delta',~~\text{for some small}~~\mu>0.
	\end{equation*}
	This completes our proof.
\end{proof}

\section{The case $4+\frac{4}{N}\leq q<2\cdot2^*$}\label{mass super quasi}\setcounter{equation}{0}
\subsection{Properties of $\Lambda_\mu(c)$}
We first develop several properties of $\Lambda_\mu(c)$ as defined in \eqref{Pohozaev type mainfold}.  By adapting the approach from \cite{Bartsch2017} (see also \cite{LZ2023}), we obtain
\begin{lemma}\label{poho mani pro1}
Let $0<\mu\leq1$ and $c>0$. Then $\Lambda_\mu(c)$ is a $\mathcal{C}^1$-manifold of codimension $1$ in $S(c)$, and therefore it is a $\mathcal{C}^1$-manifold of codimension $2$ in $X$.
\end{lemma}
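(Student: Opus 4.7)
My plan is to apply the implicit function theorem in the standard way: since $\Lambda_\mu(c)$ is cut out inside $S(c)$ by the single scalar equation $Q_\mu = 0$, and $Q_\mu$ is of class $\mathcal{C}^1$ on $X$ (inheriting regularity from the individual terms of $I_\mu$, each of which has already been verified smooth on $X$), it suffices to show that for every $u \in \Lambda_\mu(c)$ the restricted differential $dQ_\mu(u)|_{T_u S(c)}$ is nonzero. This immediately yields codimension $1$ inside $S(c)$, and therefore codimension $2$ inside $X$ since $S(c)$ itself is a codimension-$1$ manifold in $X$.

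To produce a tangent direction on which $dQ_\mu(u)$ does not vanish, I would exploit the $L^2$-preserving scaling $\mathcal{H}(t,u)(x) = e^{Nt/2}u(e^t x)$ from Section \ref{preliminary}. Because $\|\mathcal{H}(t,u)\|_2 = \|u\|_2 = \sqrt{c}$ for every $t$, the vector $\varphi := \partial_t \mathcal{H}(t,u)|_{t=0}$ lies in $T_u S(c)$. The semigroup identity $\mathcal{H}(t,\mathcal{H}(s,u)) = \mathcal{H}(s+t,u)$ gives $\tilde{I}_\mu(s,\mathcal{H}(t,u)) = \tilde{I}_\mu(s+t,u)$, so differentiating in $s$ at $s=0$ and in $t$ at $t=0$ produces
$$\langle dQ_\mu(u), \varphi\rangle \;=\; \Psi_u''(0), \qquad \Psi_u(s) := \tilde{I}_\mu(s,u).$$
Hence the manifold property reduces to the purely one-dimensional statement $\Psi_u''(0)\neq 0$ whenever $\Psi_u'(0)=Q_\mu(u)=0$.

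From the explicit form \eqref{fibermap1}, $\Psi_u(s)$ is a sum of five exponentials $e^{a_i s}$ with positive coefficients and signs $(+,+,+,-,-)$, where
$$a_1=\theta(1+\gamma_\theta),\quad a_2=2,\quad a_3=N+2,\quad a_4=q\gamma_q=\tfrac{N(q-2)}{2},\quad a_5=N(2^*-1).$$
The trick I would use is to compute $\Psi_u''(0) - a_3\Psi_u'(0)$, which on $\Lambda_\mu(c)$ equals $\Psi_u''(0)$ exactly; every term then acquires a factor $a_i(a_i-a_3)$ with a definite sign. The admissible range $\frac{4N}{N+2}<\theta<\frac{4N+4}{N+2}$ forces $0<a_1<a_3$, and clearly $a_2<a_3$; meanwhile $q\geq 4+\frac{4}{N}$ is equivalent to $a_4\geq a_3$, and $a_5>a_3$ for all $N\geq 3$. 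Tracking signs (the $(+,+)$ terms come with $a_i-a_3<0$ and the $(-,-)$ terms come with $a_i-a_3\geq 0$), all five contributions to $\Psi_u''(0)$ are nonpositive, with the $a_1$ and $a_2$ contributions strictly negative. Thus $\Psi_u''(0)<0$.

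The main thing to be careful about is the sign bookkeeping in the last step, and in particular the use of the threshold $q\geq 4+\frac{4}{N}$, which is precisely what places us in Section \ref{mass super quasi} and prevents the $L^q$ term from spoiling monotonicity. The $\mathcal{C}^1$ regularity of $Q_\mu$, while routine given the prior setup on $X$, is the other point that must be checked before invoking the implicit function theorem; both steps are technical but follow established patterns from \cite{LZ2023}.
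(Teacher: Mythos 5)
Your strategy is the right one and your key sign computation is correct: writing $\Psi_u(s)=\sum_i c_i e^{a_i s}$ with $c_1,c_2,c_3>0$, $c_4,c_5<0$, and exponents $a_1=\tfrac{(N+2)\theta}{2}-N\in(N,N+2)$, $a_2=2$, $a_3=N+2$, $a_4=\tfrac{N(q-2)}{2}\ge N+2$ (here $q\ge 4+\tfrac{4}{N}$ is used), $a_5=\tfrac{N(N+2)}{N-2}>N+2$, the quantity $\Psi_u''(0)=\Psi_u''(0)-a_3\Psi_u'(0)=\sum_i c_i a_i(a_i-a_3)$ is a sum of nonpositive terms with the $a_1$, $a_2$, $a_5$ contributions strictly negative for $u\neq 0$, so $\Psi_u''(0)<0$ on $\Lambda_\mu(c)$.

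However, there is a genuine gap in how you invoke that computation. You assert that $\varphi:=\partial_t\mathcal{H}(t,u)\big|_{t=0}=\tfrac{N}{2}u+x\cdot\nabla u$ lies in $T_uS(c)\subset X$ and that $\langle dQ_\mu(u),\varphi\rangle=\Psi_u''(0)$. But for a generic $u\in X=W^{1,\theta}(\R^N)\cap H^1(\R^N)$, the function $x\cdot\nabla u$ need not belong to $X$ (there is no control on decay or on second derivatives), so the curve $t\mapsto\mathcal{H}(t,u)$ is not Fr\'echet differentiable into $X$ and $\varphi$ is not a legitimate tangent vector. What is true is that the scalar map $s\mapsto\Psi_u(s)$ is smooth (the norms transform by explicit exponentials) and that $\Psi_u'(s)=Q_\mu(\mathcal{H}(s,u))$ and $\Psi_u''(0)$ exist; the identification of $\Psi_u''(0)$ with a pairing $\langle dQ_\mu(u),\varphi\rangle$ is precisely the unjustified step. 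The rigorous route, which is the one in \cite{Bartsch2017} adapted in \cite{LZ2023}, is to argue by contradiction: assume $dQ_\mu(u)\big|_{T_uS(c)}=0$, deduce the existence of a Lagrange multiplier $\beta\in\R$ with $dQ_\mu(u)=\beta u$ in $X^*$, interpret this as a quasilinear Euler--Lagrange equation for $u$, and apply the Pohozaev identity for \emph{that} equation. This produces exactly the scalar relation $\Psi_u''(0)=0$ you are trying to use, without ever needing $\tfrac{N}{2}u + x\cdot\nabla u\in X$; your inequality $\Psi_u''(0)<0$ then completes the contradiction. In short, the target of the computation is right, but the step feeding it into the implicit function theorem needs the Lagrange multiplier/Pohozaev detour rather than the direct pairing with $\varphi$.
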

\begin{lemma}\label{poho mani pro2}
For any $\mu\in(0,1]$ and any $u\in S(c)$, there exists a unique number $s_u>0$ such that $u_{s_u}\in \Lambda_\mu(c)$.
\end{lemma}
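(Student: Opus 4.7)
The plan is to reduce the problem to analyzing the scalar function $\psi(s) := I_\mu(u_s)$, where $u_s(x) := s^{N/2} u(sx)$ is the $L^2$-preserving dilation (so $u_s \in S(c)$ for every $s>0$). A direct computation gives
\[
\psi(s) = \frac{\mu}{\theta} s^{\theta(1+\gamma_\theta)} \|\nabla u\|_\theta^\theta + \frac{s^2}{2} \|\nabla u\|_2^2 + s^{N+2} \|u \nabla u\|_2^2 - \frac{\tau s^{N(q-2)/2}}{q} \|u\|_q^q - \frac{s^{N(2^*-1)}}{2 \cdot 2^*} \|u\|_{2 \cdot 2^*}^{2 \cdot 2^*},
\]
and $s\,\psi'(s) = Q_\mu(u_s)$. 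Consequently $u_{s_u} \in \Lambda_\mu(c)$ is equivalent to $s_u$ being a positive critical point of $\psi$, and all five coefficients above are strictly positive since $u \in S(c)$ is nontrivial and $\mu, \tau > 0$.

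The proof hinges on the exponent ordering
\[
2 < \theta(1+\gamma_\theta) < N+2 \le \frac{N(q-2)}{2} < N(2^*-1),
\]
where $\theta(1+\gamma_\theta) = \theta(N+2)/2 - N$ lies strictly between $2$ and $N+2$ by $2 < 4N/(N+2) < \theta < (4N+4)/(N+2)$, the middle inequality uses $q \ge 4+4/N$ (equality only at $q = 4+4/N$), and the last uses $q < 2\cdot 2^*$. From this: $\psi'(s) \sim s\|\nabla u\|_2^2 > 0$ as $s\to 0^+$, $\psi(0^+) = 0$, and $\psi(s)\to-\infty$ as $s\to+\infty$, so $\psi$ attains a positive global maximum at some $s_u\in(0,\infty)$ with $Q_\mu(u_{s_u}) = 0$, giving existence.

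For uniqueness I will show that at any positive critical point $s^*$ one has $\psi''(s^*) < 0$, so every critical point is a strict local maximum and hence there is only one. Using $Q_\mu(u_{s^*}) = 0$ and the identity $\psi''(s^*) = (s^*)^{-1}\tfrac{d}{ds}Q_\mu(u_s)|_{s^*}$, it suffices to establish $\tfrac{d}{ds}Q_\mu(u_s)|_{s^*} < 0$. Multiplying $Q_\mu(u_{s^*}) = 0$ by $N(q-2)/2$ and subtracting from $s^*\tfrac{d}{ds}Q_\mu(u_s)|_{s^*}$ eliminates the $s^{N(q-2)/2}$ term and leaves
\[
s^*\frac{d}{ds}Q_\mu(u_s)\Big|_{s^*} = \sum_{p_i \in \{2,\,\theta(1+\gamma_\theta),\,N+2,\,N(2^*-1)\}} \Big(p_i - \tfrac{N(q-2)}{2}\Big)\,a_i\,(s^*)^{p_i},
\]
with $a_i > 0$ the corresponding positive coefficients of $Q_\mu(u_s)$. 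The exponent ordering forces every factor $p_i - N(q-2)/2$ to be nonpositive, and the factor at $p_i = 2$, namely $2 - N(q-2)/2$, is strictly negative since $N(q-2)/2 > 2$, so the whole sum is strictly negative. The main subtlety is the endpoint $q = 4+4/N$, where $p_3 = N(q-2)/2$ coincide; a naive ``positive part vs.\ negative part'' split of $Q_\mu(u_s)$ then produces a coefficient of indefinite sign on $s^{N+2}$, but the subtraction trick above sidesteps this cleanly, as the corresponding summand simply vanishes while strict negativity survives from the $p_i = 2$ contribution. Thus existence and uniqueness go through uniformly for $q \in [4+4/N,\,2\cdot 2^*)$.
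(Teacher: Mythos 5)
Your overall scheme is the right one and, modulo presentation, reaches the correct conclusion: pass to the fiber map $\psi(s)=I_\mu(u_s)$, identify $\Lambda_\mu(c)$ with critical points of $\psi$ via $s\psi'(s)=Q_\mu(u_s)$, get existence from $\psi(0^+)=0$ and $\psi(s)\to-\infty$, and get uniqueness by arguing that every positive critical point is a strict local maximum. This is logically interchangeable with what the paper alludes to through $q\gamma_q\ge N+2$ and $q\gamma_q\ge\theta(1+\gamma_\theta)$ (the standard move is to divide $Q_\mu(u_s)=0$ by $s^{q\gamma_q}$ and note that the positive-coefficient side becomes non-increasing while the other side is non-decreasing, hence at most one crossing), but your second-derivative packaging of the same exponent comparison is a legitimate alternative.

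There is, however, a sign error in your uniqueness step that, taken literally, breaks the argument. After subtracting $\frac{N(q-2)}{2}\cdot Q_\mu(u_{s^*})=0$ from $s^*\frac{d}{ds}Q_\mu(u_s)\big|_{s^*}$, the surviving exponents are $\theta(1+\gamma_\theta),\,2,\,N+2,\,N(2^*-1)$, and you assert that all the factors $p_i-\frac{N(q-2)}{2}$ are nonpositive. This is false for $p_i=N(2^*-1)$: since $q<2\cdot2^*$, one has $N(2^*-1)-\frac{N(q-2)}{2}>0$, strictly. Simultaneously, your display declares all $a_i>0$, but the coefficient of $s^{N(2^*-1)}$ in $Q_\mu(u_s)$ is $-\gamma_{2\cdot2^*}\|u\|_{2\cdot2^*}^{2\cdot2^*}<0$. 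If one actually used your stated formula with $a_i>0$ and a positive factor on the $N(2^*-1)$ term, that contribution would be positive and, being the highest-order term, would dominate for large $s^*$, giving the wrong sign. The two mistakes cancel: tracking the true coefficient, the contribution of this exponent is
\begin{equation*}
\Bigl(N(2^*-1)-\tfrac{N(q-2)}{2}\Bigr)\bigl(-\gamma_{2\cdot2^*}\|u\|_{2\cdot2^*}^{2\cdot2^*}\bigr)(s^*)^{N(2^*-1)}<0,
\end{equation*}
while the other three contributions are $\le 0$ with the $p_i=2$ one strictly negative, so $\psi''(s^*)<0$ is indeed valid. You should keep the negative sign on the $|u|^{2\cdot2^*}$ term explicit (three terms with nonpositive factor times positive coefficient, one term with positive factor times negative coefficient) rather than claim that every factor is nonpositive.
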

\begin{proof}
	Using \eqref{Pohozaev type mainfold id} and \eqref{mcproperty eq1-1}, we perform a direct computation to obtain $\psi_u'(s)=\frac{d}{ds}I_\mu(u_s)=\frac{1}{s}Q(u_s)$, which implies that
	\begin{equation*}
		\psi_u'(s)=0\Longleftrightarrow u_s\in\Lambda_\mu(c).
	\end{equation*}
	It is straightforward to observe that $\psi_u(s)\to0^+$ as $s\to0$ and $\psi_u(s)\to-\infty$ as $s\to+\infty$. Consequently, the maximum value $\max\limits_{s\in(0,+\infty)}\psi_u(s)$ is attained at some $s_u>0$, such that $\psi_u'(s_u)=0$ and $u_{s_u}\in\Lambda_\mu(c)$.
	Moreover, it follows from $q\gamma_q\ge N+2$ and $q\gamma_q\ge\theta(1+\gamma_\theta)$ that the critical point $s_u>0$ is unique for any $u\in X\backslash\{0\}$.
\end{proof}
From Lemma \ref{poho mani pro2}, we immediately get the following results.
\begin{corollary}\label{poho mani pro3}
	Let $\mu\in(0,1]$, $q\in[4+\frac{4}{N},2\cdot2^*)$, $\tau>0$ and $c>0$. Then for any $u\in\Lambda_\mu(c)$, there holds
	\begin{equation*}
		I_\mu(u)=\max\limits_{s>0}I_\mu(u_s).
	\end{equation*}
\end{corollary}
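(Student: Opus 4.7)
The plan is to read off the corollary directly from the proof of Lemma \ref{poho mani pro2}. For any $u\in S(c)$, set $\psi_u(s):=I_\mu(u_s)$ with $u_s(x)=s^{N/2}u(sx)$. As already established inside the proof of Lemma \ref{poho mani pro2}, one has the identity $\psi_u'(s)=s^{-1}Q_\mu(u_s)$, so positive critical points of $\psi_u$ correspond exactly to dilations of $u$ lying on $\Lambda_\mu(c)$.

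Now fix $u\in\Lambda_\mu(c)$. Then $Q_\mu(u)=0$, hence $\psi_u'(1)=0$, i.e.\ $s=1$ is a critical point of $\psi_u$. In the regime $q\ge 4+\tfrac{4}{N}$ we have
\begin{equation*}
q\gamma_q=\frac{N(q-2)}{2}\ge N+2,
\end{equation*}
and the admissible range $\tfrac{4N}{N+2}<\theta<\tfrac{4N+4}{N+2}$ yields $\theta(1+\gamma_\theta)=\tfrac{(N+2)\theta}{2}-N<N+2\le q\gamma_q$. These are precisely the inequalities used at the end of the proof of Lemma \ref{poho mani pro2} to conclude that the positive critical point $s_u$ of $\psi_u$ is unique. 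Consequently, this unique critical point must coincide with $s=1$.

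Together with the boundary behavior $\psi_u(s)\to 0^+$ as $s\to 0^+$ and $\psi_u(s)\to -\infty$ as $s\to +\infty$ recorded in Lemma \ref{poho mani pro2}, uniqueness forces $s=1$ to be the global maximum of $\psi_u$ on $(0,+\infty)$. Therefore
\begin{equation*}
I_\mu(u)=\psi_u(1)=\max_{s>0}\psi_u(s)=\max_{s>0}I_\mu(u_s),
\end{equation*}
which is the claim. No genuine obstacle arises: the corollary is essentially a reinterpretation of the uniqueness-of-maximizer property proved in Lemma \ref{poho mani pro2}, specialized to the value $s=1$ singled out by membership in $\Lambda_\mu(c)$; the only point worth flagging is the verification, carried out above, that the exponent comparison $\theta(1+\gamma_\theta)<N+2\le q\gamma_q$ remains valid throughout the $L^2$-supercritical window $q\in[4+\tfrac{4}{N},2\cdot 2^*)$, so that the uniqueness argument in Lemma \ref{poho mani pro2} indeed transfers without modification.
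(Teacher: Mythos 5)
Your proof is correct and is essentially what the paper intends: the paper presents this corollary as an immediate consequence of Lemma \ref{poho mani pro2}, whose proof already establishes that $\psi_u'(s)=s^{-1}Q_\mu(u_s)$, that $\psi_u$ has a unique positive critical point (under precisely the exponent inequalities $\theta(1+\gamma_\theta)<N+2\le q\gamma_q$ you verified), and that this critical point is a global maximum by the boundary behavior $\psi_u(0^+)=0^+$, $\psi_u(+\infty)=-\infty$. Your only addition is explicitly spelling out that for $u\in\Lambda_\mu(c)$ the condition $Q_\mu(u)=0$ forces $s=1$ to be that unique critical point, which is exactly the intended reading of "immediately get."
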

\begin{corollary}\label{poho mani pro4}
	Let $\mu\in(0,1]$, $q\in[4+\frac{4}{N},2\cdot2^*)$, $\tau>0$ and $c>0$. Then there holds
	\begin{equation*}
		\inf_{u\in\Lambda_\mu(c)}I_\mu(u)=\widehat{m}_\mu(c)=\inf_{u\in S(c)}\max_{s>0}I_\mu(u_s).
	\end{equation*}
\end{corollary}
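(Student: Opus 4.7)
The plan is to establish the two equalities as a direct chain of inequalities using the two preceding results: Lemma \ref{poho mani pro2} (existence and uniqueness of the dilation $s_u$ sending $u \in S(c)$ onto $\Lambda_\mu(c)$) and Corollary \ref{poho mani pro3} (identification of the fiber maximum with the value on $\Lambda_\mu(c)$). The first equality is a definitional rewrite of $\widehat{m}_\mu(c) := \inf_{u \in \Lambda_\mu(c)} I_\mu(u)$, so the substance of the corollary lies in proving the minimax identity
\begin{equation*}
\inf_{u\in\Lambda_\mu(c)}I_\mu(u)=\inf_{u\in S(c)}\max_{s>0}I_\mu(u_s),
\end{equation*}
which I would handle by proving the two opposite inequalities separately.

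For the inequality ``$\geq$'', I would observe that $\Lambda_\mu(c) \subset S(c)$, so any $v \in \Lambda_\mu(c)$ is an admissible competitor on the right-hand side. By Corollary \ref{poho mani pro3}, $I_\mu(v) = \max_{s>0} I_\mu(v_s) \geq \inf_{u \in S(c)} \max_{s>0} I_\mu(u_s)$. Taking the infimum over $v \in \Lambda_\mu(c)$ yields the desired inequality.

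For the reverse inequality ``$\leq$'', I would fix an arbitrary $u \in S(c)$ and invoke Lemma \ref{poho mani pro2} to produce the unique $s_u > 0$ with $u_{s_u} \in \Lambda_\mu(c)$. Since $\psi_u(s) = I_\mu(u_s)$ attains its global maximum on $(0,\infty)$ precisely at $s_u$ (as established in the proof of Lemma \ref{poho mani pro2}, where $\psi_u(s) \to 0^+$ as $s \to 0$, $\psi_u(s) \to -\infty$ as $s \to \infty$, and $\psi_u'(s) = 0 \iff u_s \in \Lambda_\mu(c)$ has a unique positive root), we get $\max_{s>0} I_\mu(u_s) = I_\mu(u_{s_u}) \geq \inf_{v \in \Lambda_\mu(c)} I_\mu(v)$. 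Taking the infimum over $u \in S(c)$ closes the argument.

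I do not anticipate a genuine obstacle here; the result is a formal consequence of the variational structure already catalogued. The only care needed is to make sure one invokes the correct clause of Lemma \ref{poho mani pro2}, namely both the \emph{existence} of $s_u$ (needed for the ``$\leq$'' direction to guarantee $u_{s_u} \in \Lambda_\mu(c)$) and the global maximality of $\psi_u$ at $s_u$ (which was obtained in that lemma from the growth exponents $q\gamma_q \geq N+2$ and $q\gamma_q \geq \theta(1+\gamma_\theta)$, valid precisely because $q \in [4 + \tfrac{4}{N}, 2 \cdot 2^*)$ in the current regime). Once these are in hand, the proof is a two-line sandwich argument and no further estimates are required.
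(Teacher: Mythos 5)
Your proof is correct and matches what the paper leaves implicit: the authors state that Corollaries \ref{poho mani pro3} and \ref{poho mani pro4} follow immediately from Lemma \ref{poho mani pro2}, and your two-sided sandwich argument via the unique projection $s_u$ and the fiber-maximum identity from Corollary \ref{poho mani pro3} is precisely that deduction. No gaps; the invocations of $\Lambda_\mu(c)\subset S(c)$ and the global maximality of $\psi_u$ at $s_u$ (valid since $q\gamma_q\geq N+2$ in this $q$-range) are exactly the right clauses.
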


Inspired by \cite[Lemma 5.3]{BJL2013}, we establish the following monotonicity property for $\widehat{m}_\mu(c)$.
\begin{lemma}\label{poho mani pro6}
	Let $\mu\in(0,1]$, $q\in[4+\frac{4}{N},2\cdot2^*)$, $\tau>0$ and $c>0$. Then the function $c\mapsto \widehat{m}_\mu(c)$ is non-increasing on $(0,+\infty)$.
\end{lemma}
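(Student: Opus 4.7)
The plan is to adapt the mass-injection argument of \cite{BJL2013}: given $0 < c_1 < c_2$ and $\varepsilon > 0$, I will construct a test function $w \in S(c_2)$ whose fiber $s \mapsto I_\mu(w_s)$ attains a maximum value at most $\widehat m_\mu(c_1) + \varepsilon$. Combined with the min-max representation $\widehat m_\mu(c_2) = \inf_{u \in S(c_2)} \max_{s > 0} I_\mu(u_s)$ provided by Corollary \ref{poho mani pro4}, this will yield $\widehat m_\mu(c_2) \leq \widehat m_\mu(c_1)$, and the arbitrariness of $\varepsilon > 0$ will conclude the proof.

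First, by Corollary \ref{poho mani pro4} together with density of $C_c^\infty(\R^N)$ in $X$, I would fix $u \in S(c_1)$ with compact support, say $\supp u \subset B_{R_u}$, satisfying $\max_{s > 0} I_\mu(u_s) \leq \widehat m_\mu(c_1) + \varepsilon/2$; this step uses continuity of the fiber map $\psi_u(s) := I_\mu(u_s)$ under $X$-approximation, plus a renormalization $u \mapsto \sqrt{c_1/\|u\|_2^2}\, u$ to restore the mass constraint. Next pick $v_0 \in C_c^\infty(\R^N)$ with $\|v_0\|_2^2 = c_2 - c_1$ and $\supp v_0 \subset B_1$, and for $R > 1$ set $v_R(x) := R^{-N/2} v_0(x/R)$, so that in the scaling $w_t(x) = t^{N/2} w(tx)$ one has $v_R = (v_0)_{1/R}$ and $(v_R)_s = (v_0)_{s/R}$. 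Choosing $y_R \in \R^N$ with $|y_R| > R_u + R$, for every $s > 0$ the supports of $u_s$ and of $(v_R(\cdot - y_R))_s$ lie in disjoint balls; thus $w_R := u + v_R(\cdot - y_R) \in S(c_2)$, and pointwise disjointness produces, term by term,
\begin{equation*}
I_\mu((w_R)_s) = \psi_u(s) + \psi_{v_0}(s/R), \quad \forall\, s > 0,
\end{equation*}
where translation invariance of $I_\mu$ has also been used.

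The main obstacle is then to show that $\max_{s > 0}[\psi_u(s) + \psi_{v_0}(s/R)] \to \max_{s > 0} \psi_u(s)$ as $R \to \infty$. Since $\psi_{v_0}(t) \to 0$ as $t \to 0^+$, the perturbation $\psi_{v_0}(s/R)$ vanishes pointwise in $s$ but not uniformly, so a priori the maximizer could drift to large $s$. The key is that in the $L^2$-supercritical regime $q \geq 4 + \tfrac{4}{N}$, Lemma \ref{poho mani pro2} guarantees $\psi_u(s) \to -\infty$ as $s \to +\infty$: if the argmax $s_R^*$ of $\psi_u(\cdot) + \psi_{v_0}(\cdot/R)$ diverged along a subsequence, then $\psi_u(s_R^*) \to -\infty$ and the sum would tend to $-\infty$, contradicting the lower bound $\psi_u(s_u) + \psi_{v_0}(s_u/R) \to \max_{s > 0} \psi_u(s)$ obtained by plugging $s = s_u$, the maximizer of $\psi_u$. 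Hence $\{s_R^*\}$ stays bounded, $\psi_{v_0}(s_R^*/R) \to 0$, and the claimed limit follows. Taking $R$ sufficiently large therefore yields
\begin{equation*}
\widehat m_\mu(c_2) \leq \max_{s > 0} I_\mu((w_R)_s) \leq \max_{s > 0} \psi_u(s) + \varepsilon/2 \leq \widehat m_\mu(c_1) + \varepsilon,
\end{equation*}
completing the argument.
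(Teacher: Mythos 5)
Your proof is correct and follows essentially the same construction as the paper's: truncate a near-optimal function to compact support, add a compactly supported bump carrying the surplus mass $c_2-c_1$ at a distant location, then scale the bump (your $R\to\infty$ corresponds to the paper's $\kappa\to 0^+$) so that its contribution to each term of the fiber map vanishes. The only difference is presentational: you argue directly with the characterization $\widehat m_\mu(c)=\inf_{u\in S(c)}\max_{s>0}I_\mu(u_s)$ of Corollary \ref{poho mani pro4} and control the argmax by a compactness/contradiction argument that exploits $\psi_u(s)\to-\infty$ as $s\to+\infty$, whereas the paper projects the perturbed function onto $\Lambda_\mu(\hat c_2)$ via Lemma \ref{poho mani pro2} and tracks the projection parameter $s_\kappa\to\hat s_0$ explicitly.
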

\begin{proof}
	For any $0<\hat{c}_1<\hat{c}_2<+\infty$, it suffices to show that $\widehat{m}_\mu(\hat{c}_2)\leq\widehat{m}_\mu(\hat{c}_1)$. By the definition of $\widehat{m}_\mu(\hat{c}_1)$, there exists $u\in\Lambda_\mu(\hat{c}_1)$ such that
	\begin{equation*}
		I_\mu(u)<\widehat{m}_\mu(\hat{c}_1)+\eps,
	\end{equation*}
	for any $\eps>0$. Let $\eta\in\mathcal{C}_0^{\infty}(\R^N)$ a cut-off function satisfying $\eta\equiv1$ in $B_1$, $\eta\equiv0$ in $\R^N\backslash B_2$ and $0\leq\eta\leq1$ for $1\leq|x|\leq2$. Then, for $\delta\in(0,1]$ small, define
	\begin{equation*}
		u^{\delta}(x):=\eta(\delta x)u(x).
	\end{equation*}
	It is easy to verify that $u^{\delta}\to u$ in $X$ as $\delta\to0$. By continuity, we have
	\begin{equation}\label{poho mani pro6 eq1}
		I_\mu(u^{\delta})\to I_\mu(u)<\widehat{m}_\mu(\hat{c}_1)+\frac{\eps}{4},~~~~Q_\mu(u^{\delta})\to Q_\mu(u)=0.
	\end{equation}
	By Lemma \ref{poho mani pro2}, for any $\delta>0$, there exists some $s_{\delta}>0$ such that $u^{\delta}_{s_{\delta}}\in\Lambda_\mu(c)$. We claim that $\{s_{\delta}\}$ is bounded. Indeed, if $s_{\delta}\to+\infty$ as $\delta\to0$, recalling that $u^\delta\to u\neq0$ in $X$ as $\delta\to0$, we have
	\begin{align*}
		0=\lim_{\delta\to0}\frac{I_\mu(u^{\delta}_{s_{\delta}})}{s_{\delta}^{N+2}}&=\frac{\mu}{\theta}s_\delta^{\theta(1+\gamma_\theta)-N-2}\int_{\R^N}|\nabla u^\delta|^\theta dx +\frac{1}{2s_\delta^{N}}\int_{\R^N}|\nabla u^\delta|^2dx+\int_{\R^N}|u^\delta|^2|\nabla u^\delta|^2dx\nonumber\\
		&\quad-\frac{\tau}{q}s_\delta^{\frac{N(q-2)}{2}-N-2}\int_{\R^N}|u^\delta|^qdx-\frac{1}{2\cdot2^*}s_\delta^{N(2^*-1)-N-2}\int_{\R^N}|u^\delta|^{2\cdot2^*}dx.\\
		&=-\infty,
	\end{align*}
	which yields a contradiction. Thus, we may assume that, up to a subsequence, $s_{\delta}\to\hat s$ as $\delta\to0$. Then obviously we can check that $Q_\mu(u_{s_{\delta}}^{\delta})\to Q_\mu(u_{\hat s})$. By the uniqueness and the fact that $Q_\mu(u)=0$, we get $\hat s=1$. Moreover, we have 
	\begin{align}\label{poho mani pro2 eq1}
		I_\mu(u)-I_\mu(u_s)&=\frac{1-s^{N+2}}{N+2}Q_\mu(u)+\frac{\mu}{\theta}\left(1-s^{\theta(1+\gamma_\theta)}-\frac{\theta(1+\gamma_\theta)(1-s^{N+2})}{N+2}\right)\int_{\R^N}|\nabla u|^\theta dx\nonumber\\
		&\quad+\left(\frac{1-s^2}{2}-\frac{1-s^{N+2}}{N+2}\right)\int_{\R^N}|\nabla u|^2dx+\tau\left(\frac{N(q-2)(1-s^{N+2})}{2(N+2)q}-\frac{1-s^{q\gamma_q}}{q}\right)\int_{\R^N}|u|^qdx\nonumber\\
		&\quad+\left(\frac{1-s^{N+2}}{4}-\frac{(N-2)(1-s^{N(2^*-1)})}{4N}\right)\int_{\R^N}|u|^{2\cdot2^*}dx\nonumber\\
		&\ge\frac{1-s^{N+2}}{N+2}Q_\mu(u)+\left(\frac{1-s^2}{2}-\frac{1-s^{N+2}}{N+2}\right)\int_{\R^N}|\nabla u|^2dx\nonumber\\
		&\quad+\left(\frac{1-s^{N+2}}{4}-\frac{(N-2)(1-s^{N(2^*-1)})}{4N}\right)\int_{\R^N}|u|^{2\cdot2^*}dx.
	\end{align}
	Hence,
	\begin{align*}
		I_\mu(u_{s_{\delta}}^{\delta})&\leq I_\mu(u^\delta)-\frac{1-s_\delta^{N+2}}{N+2}Q_\mu(u^\delta)-\left(\frac{1-s_\delta^2}{2}-\frac{1-s_\delta^{N+2}}{N+2}\right)\int_{\R^N}|\nabla u^\delta|^2dx\\
		&\quad-\left(\frac{1-s_\delta^{N+2}}{4}-\frac{(N-2)(1-s_\delta^{N(2^*-1)})}{4N}\right)\int_{\R^N}|u^\delta|^{2\cdot2^*}dx,
	\end{align*}
	which, in combination with \eqref{poho mani pro6 eq1}, shows that there exists $\delta_0\in(0,1)$ sufficiently small such that
	\begin{equation}\label{poho mani pro6 eq8}
		I_\mu(u_{s_{\delta_0}}^{\delta_0})\leq I_\mu(u^{\delta_0})+\frac{\eps}{8}\leq I_\mu(u)+\frac{\eps}{4}<\widehat{m}_\mu(\hat{c}_1)+\frac{\eps}{2}.
	\end{equation}
	For the above $\delta_0>0$, we set $v\in\mathcal{C}^{\infty}_0(\R^N)$ such that $\supp v\subset B_{4/\delta_0}\backslash B_{2/\delta_0}$ and define $v^{\delta_0}:=\left(\frac{\hat{c}_2-\|u^{\delta_0}\|_2^2}{\|v\|_2^2}\right)^{\frac{1}{2}}v$.
	Then, we have $\|v^{\delta_0}\|_2^2=\hat{c}_2-\|u^{\delta_0}\|_2^2$. For any $\kappa\in(0,1)$, we set $w^{\kappa}:=u^{\delta_0}+v^{\delta_0}_\kappa$ with $\|v^{\delta_0}_\kappa\|_2^2=\|v^{\delta_0}\|_2^2$ and it is easy to verify that
	\begin{equation*}
		\dist\{\supp u^{\delta_0}, \supp v^{\delta_0}_\kappa\}\ge\frac{2}{\delta_0\kappa}-\frac{2}{\delta_0}>0.
	\end{equation*}
	Furthermore, we have
	\begin{equation}\label{poho mani pro6 eq3}
		\|\nabla w^{\kappa}\|_2^2=\|\nabla u^{\delta_0}\|_2^2+\kappa^2\|\nabla v^{\delta_0}\|_2^2,~~~~~~\|\nabla w^{\kappa}\|_\theta^\theta=\|\nabla u^{\delta_0}\|_\theta^\theta+\kappa^{\theta(1+\gamma_{\theta})}\|\nabla v^{\delta_0}\|_\theta^\theta,
	\end{equation}
	\begin{equation}\label{poho mani pro6 eq5}
		\|w^{\kappa}\nabla w^{\kappa}\|_2^2=\|u^{\delta_0}\nabla u^{\delta_0}\|_2^2+\kappa^{N+2}\|v^{\delta_0}\nabla v^{\delta_0}\|_2^2,~~~~
	\end{equation}
	\begin{equation}\label{poho mani pro6 eq7}
		\|w^{\kappa}\|_q^q=\|u^{\delta_0}\|_q^q+\kappa^{\frac{N(q-2)}{2}}\|v^{\delta_0}\|_q^q,~~~~~~\|w^{\kappa}\|_{2\cdot2^*}^{2\cdot2^*}=\|u^{\delta_0}\|_{2\cdot2^*}^{2\cdot2^*}+\kappa^{N(2^*-1)}\|v^{\delta_0}\|_{2\cdot2^*}^{2\cdot2^*}.
	\end{equation}
	By \eqref{poho mani pro6 eq3}-\eqref{poho mani pro6 eq7}, we deduce that as $\kappa\to0$,
	\begin{equation*}
		I_\mu(w^{\kappa})\to I_\mu(u^{\delta_0})~~~~\text{and}~~~~Q_\mu(w^{\kappa})\to Q_\mu(u^{\delta_0}).
	\end{equation*}
	Moreover, it follows from $\|w^{\kappa}\|_2^2=\|u^{\delta_0}\|_2^2+\|v^{\delta_0}\|_2^2=\hat{c}_2$ that $w^{\kappa}\in S(\hat{c}_2)$ for any $\kappa\in(0,1)$. Consequently, by Lemma \ref{poho mani pro2}, there exists $s_\kappa>0$ such that $w^{\kappa}_{s_\kappa}\in\Lambda_\mu(\hat{c}_2)$. Using a similar argument as before, we infer that $\{s_\kappa\}$ is bounded. Thus we may assume that up to a subsequence, $s_\kappa\to\hat s_0$ as $\kappa\to0$. 
	In view of \eqref{poho mani pro6 eq3}-\eqref{poho mani pro6 eq7} again, as $\kappa\to0$, we get
	\begin{equation*}
		\|\nabla w^{\kappa}_{s_\kappa}\|_2^2\to \hat s_0^2\|\nabla u^{\delta_0}\|_2^2,~~~\|\nabla w^{\kappa}_{s_\kappa}\|_\theta^\theta\to\hat s_0^{\theta(1+\gamma_{\theta})}\|\nabla u^{\delta_0}\|_\theta^\theta,~~~\|w^{\kappa}_{s_\kappa}\nabla w^{\kappa}_{s_\kappa}\|_2^2\to \hat s_0^{N+2}\|u^{\delta_0}\nabla u^{\delta_0}\|_2^2,
	\end{equation*}
	\begin{equation*}
		\|w^{\kappa}_{s_\kappa}\|_q^q\to\hat s_0^{\frac{N(q-2)}{2}}\|u^{\delta_0}\|_q^q,~~~~~\|w^{\kappa}_{s_\kappa}\|_{2\cdot2^*}^{2\cdot2^*}\to\hat s_0^{N(2^*-1)}\|u^{\delta_0}\|_{2\cdot2^*}^{2\cdot2^*},
	\end{equation*}
	which implies that there exists $\kappa_0\in(0,1)$ sufficiently small such that
	\begin{equation*}\label{poho mani pro6 eq9}
		I_\mu(w^{\kappa}_{s_\kappa})\leq I_\mu(u^{\delta_0}_{\hat s_0})+\frac{\eps}{2}.
	\end{equation*}
	Therefore, together with Corollary \ref{poho mani pro3} and \eqref{poho mani pro6 eq8} we deduce
	\begin{align*}
		\widehat{m}_\mu(\hat{c}_2)\leq I_\mu(w^{\kappa}_{s_\kappa})\leq I_\mu(u^{\delta_0}_{\hat s_0})+\frac{\eps}{2}\leq\max_{s>0}I_\mu(u^{\delta_0}_s)+\frac{\eps}{2}=I_\mu(u^{\delta_0}_{s_{\delta_0}})+\frac{\eps}{2}\leq \widehat{m}_\mu(\hat{c}_1)+\eps.
	\end{align*}
	This completes our proof, owing to the arbitrariness of $\eps>0$.
\end{proof}

\subsection{The existence of Palais-Smale sequence}
In this subsection, we will give an additional minimax characterization of $\widehat{m}_{\mu}(c)$.
To start with, we show the functional $I_\mu$ possesses a mountain pass geometry on the constraint $S(c)$.
\begin{lemma}\label{mp struc3}
Let $\mu\in(0,1]$ and $\tau>0$. Assume that one of the following conditions holds:
\begin{enumerate}[label=(\roman*)]
	\item $q=4+\frac{4}{N}$, $c\in(0,\bar{c}_3)$.
	\item $4+\frac{4}{N}<q<2\cdot2^*$, $c>0$.
\end{enumerate}
Then there exists $\mathcal{K}_{\mu}=\mathcal{K}_{\mu}(c,\tau,q,N)>0$ small enough such that
	\begin{equation}\label{mp struc3 eq1}
		0<\sup_{u\in \mathcal{A}_{\mu}(\mathcal{K}_{\mu},c)}I_{\mu}(u)<\inf_{u\in\partial \mathcal{A}_{\mu}(2\mathcal{K}_{\mu},c)}I_{\mu}(u).
	\end{equation}
\end{lemma}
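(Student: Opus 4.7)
The plan is to adapt the argument of Lemma~\ref{mp struc1} to the present regime $q\in[4+\tfrac{4}{N},2\cdot 2^*)$. For any $u\in\mathcal{A}_\mu(\mathcal{K}_\mu,c)$ the nonpositivity of the nonlinear contribution $-\tfrac{\tau}{q}\|u\|_q^q-\tfrac{1}{2\cdot 2^*}\|u\|_{2\cdot 2^*}^{2\cdot 2^*}$ gives the trivial upper bound $I_\mu(u)<\mathcal{K}_\mu$, whereas for $v\in\partial\mathcal{A}_\mu(2\mathcal{K}_\mu,c)$ the kinetic part equals exactly $2\mathcal{K}_\mu$, so
\begin{equation*}
I_\mu(v)-I_\mu(u) > \mathcal{K}_\mu - \tfrac{\tau}{q}\|v\|_q^q - \tfrac{1}{2\cdot 2^*}\|v\|_{2\cdot 2^*}^{2\cdot 2^*}.
\end{equation*}
Since $\|v\nabla v\|_2^2\leq 2\mathcal{K}_\mu$, the sharp Gagliardo-Nirenberg inequality \eqref{gn2} together with the Sobolev inequality $\mathcal{S}\|v\|_{2\cdot 2^*}^2\leq 4\|v\nabla v\|_2^2$ yields
\begin{equation*}
\|v\|_q^q\leq \mathcal{C}_2(q,N)\,c^{\frac{4N-q(N-2)}{2(N+2)}}(2\mathcal{K}_\mu)^{\frac{N(q-2)}{2(N+2)}}, \qquad \|v\|_{2\cdot 2^*}^{2\cdot 2^*}\leq \Bigl(\tfrac{4}{\mathcal{S}}\Bigr)^{\!2^*/2}(2\mathcal{K}_\mu)^{N/(N-2)}.
\end{equation*}

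Next I would split according to the value of $q$. In case (ii), $q>4+\tfrac{4}{N}$, the exponents $\tfrac{N(q-2)}{2(N+2)}$ and $\tfrac{N}{N-2}$ are both strictly greater than $1$, so both nonlinear contributions are of order strictly higher than $\mathcal{K}_\mu$ as $\mathcal{K}_\mu\to 0^+$, and a choice of $\mathcal{K}_\mu>0$ sufficiently small (independent of $\mu$) produces $I_\mu(v)-I_\mu(u)\geq\delta>0$. In case (i), $q=4+\tfrac{4}{N}$, the Gagliardo-Nirenberg exponent equals exactly $1$; substituting this in leaves
\begin{equation*}
I_\mu(v)-I_\mu(u) > \Bigl(1-\tfrac{2\tau \mathcal{C}_2(q,N)}{q}\,c^{2/N}\Bigr)\mathcal{K}_\mu - \tfrac{1}{2\cdot 2^*}\Bigl(\tfrac{4}{\mathcal{S}}\Bigr)^{2^*/2}(2\mathcal{K}_\mu)^{N/(N-2)},
\end{equation*}
and the restriction $c\in(0,\bar c_3)$ is designed precisely so that the leading bracket is strictly positive; the higher-order Sobolev remainder is then absorbed by shrinking $\mathcal{K}_\mu$.

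The positivity $\sup_{\mathcal{A}_\mu(\mathcal{K}_\mu,c)}I_\mu>0$ is verified by evaluating on a spreading family $u_R(x)=c^{1/2}\|\phi\|_2^{-1}R^{-N/2}\phi(x/R)$ with $\phi\in\mathcal{C}_0^\infty(\mathbb{R}^N)\setminus\{0\}$: one finds $\|\nabla u_R\|_2^2\sim R^{-2}$, while $\|u_R\|_q^q\sim R^{-N(q-2)/2}$ and $\|u_R\nabla u_R\|_2^2,\|u_R\|_{2\cdot 2^*}^{2\cdot 2^*}\sim R^{-N-2}$. Since $q>2+\tfrac{4}{N}$ in the present regime, both nonlinear terms are $o(R^{-2})$, so for $R$ large enough $u_R\in\mathcal{A}_\mu(\mathcal{K}_\mu,c)$ and $I_\mu(u_R)>0$.

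The main technical point I expect is matching the derived leading-order threshold in case (i) with the explicit formula for $\bar c_3$ in \eqref{mass quasi critical sup}: this is a matter of tracking the factor $2$ arising from $\|v\nabla v\|_2^2\leq 2\mathcal{K}_\mu$ against the normalization of $\mathcal{C}_2(q,N)$, and possibly exploiting that $\|v\nabla v\|_2^2$ is strictly smaller than $2\mathcal{K}_\mu$ unless the other kinetic summands vanish. It is a bookkeeping issue rather than a conceptual obstacle, but it must be done with care to land on exactly the stated range for $c$.
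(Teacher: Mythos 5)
Your proof is correct and follows essentially the same route as the paper's: both bound $I_\mu(v)-I_\mu(u)$ from below by dropping the nonnegative lower-order terms of $u$, estimate $\|v\|_q^q$ via the Gagliardo–Nirenberg inequality \eqref{gn2} and $\|v\|_{2\cdot 2^*}^{2\cdot 2^*}$ via Sobolev, and then split according to whether the G–N exponent $\tfrac{N(q-2)}{2(N+2)}$ equals or exceeds $1$. Your worry about ``landing exactly'' on $\bar c_3$ is unfounded in a favourable direction: working out the bracket gives $1-\tfrac{2\tau N\mathcal{C}_2 c^{2/N}}{4N+4}$, which is positive iff $c<\big(\tfrac{2(N+1)}{\tau N\mathcal{C}_2}\big)^{N/2}$, and $\bar c_3$ as defined in \eqref{mass quasi critical sup} is strictly smaller than this, so $c<\bar c_3$ is a sufficient (indeed slightly stronger) condition — no delicate bookkeeping is needed. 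Two small remarks: your Sobolev inequality should read $\mathcal{S}\|v\|_{2\cdot 2^*}^4\le 4\|v\nabla v\|_2^2$ (not $\|v\|_{2\cdot 2^*}^2$), though the bound you derive from it is correct; and your explicit verification of $\sup_{\mathcal{A}_\mu(\mathcal{K}_\mu,c)}I_\mu>0$ via the spreading family $u_R$ is a useful addition that the paper leaves implicit.
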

\begin{proof}
 Using \eqref{gn2} and the Sobolev inequality, for any $u\in \mathcal{A}_{\mu}(\mathcal{K}_{\mu},c)$, $v\in\partial \mathcal{A}_{\mu}(2\mathcal{K}_{\mu},c)$, we get that
	\begin{align*}
		I_{\mu}(v)-I_{\mu}(u)&\ge \mathcal{K}_{\mu}-\frac{\tau \mathcal{C}_2(q,N)c^{\frac{4N-q(N-2)}{2(N+2)}}}{q}\left(\int_{\R^N}|v|^2|\nabla v|^{2}dx\right)^{\frac{N(q-2)}{2(N+2)}}-\frac{1}{2\cdot2^*}\left(\frac{4}{\mathcal{S}}\right)^{\frac{2^*}{2}}\left(\int_{\R^N}|v|^2|\nabla v|^{2}dx\right)^{\frac{N}{N-2}}\\
		&\ge \mathcal{K}_{\mu}-\frac{2^{\frac{N(q-2)}{2(N+2)}}\tau \mathcal{C}_2(q,N)c^{\frac{4N-q(N-2)}{2(N+2)}}}{q}\mathcal{K}_{\mu}^{\frac{N(q-2)}{2(N+2)}}-\frac{2^{\frac{2}{N-2}}}{2^*}\left(\frac{4}{\mathcal{S}}\right)^{\frac{2^*}{2}}\mathcal{K}_{\mu}^{\frac{N}{N-2}}.
	\end{align*}
	If $q=4+\frac{4}{N}$, we have
	\begin{align*}
		I_{\mu}(v)-I_{\mu}(u)\ge \left(1-\frac{2\tau N\mathcal{C}_2\left(4+\frac{4}{N},N\right)c^{\frac{2}{N}}}{4N+4}\right)\mathcal{K}_{\mu}-\frac{2^{\frac{2}{N-2}}}{2^*}\left(\frac{4}{\mathcal{S}}\right)^{\frac{2^*}{2}}\mathcal{K}_{\mu}^{\frac{N}{N-2}}.
	\end{align*}
	From $0<c<\bar{c}_3$, where $\bar{c}_3$ is defined by (\ref{mass quasi critical sup}), it follows that there exists sufficiently small $\mathcal{K}_{\mu}>0$ such that $I_{\mu}(v)-I_{\mu}(u)>0$, which implies \eqref{mp struc3 eq1}. If $4+\frac{4}{N}<q<2\cdot2^*$, choosing $\mathcal{K}_{\mu}>0$ suffciently small, we also have the same conclusion since $\frac{N(q-2)}{2(N+2)}>1$.
\end{proof}

Next, we present the following conclusions without proof, as the arguments are analogous to those in Lemma \ref{mp struc2} and Lemma \ref{ps existence sec2}.
\begin{lemma}\label{mp struc3 eq3}
	Under the assumptions of Lemma \ref{mp struc3}, there holds:
	\begin{equation*}
		\widehat{\Gamma}_{\mu}(c):=\{\gamma\in \mathcal{C}([0,1], S_r(c)): \gamma(0)\in\mathcal{A}_{\mu}(\mathcal{K}_{\mu},c), I_{\mu}(\gamma(1))<0\}\neq\varnothing
	\end{equation*}
	and
	\begin{equation*}
		\widehat{M}_{\mu}(c):=\inf\limits_{\gamma\in\widehat{\Gamma}_{\mu}(c)}\max\limits_{t\in[0,1]}I_{\mu}(\gamma(t))\ge\widehat{\zeta}_{0}(c)>\sup\limits_{\gamma\in\widehat{\Gamma}_{\mu}(c)}\max\{I_{\mu}(\gamma(0)), I_{\mu}(\gamma(1))\},
	\end{equation*}
	where $\widehat{\zeta}_0(c)>0$ is independent of $\mu>0$.
\end{lemma}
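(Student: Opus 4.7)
The plan is to mirror the classical mountain pass construction used implicitly in Lemma \ref{mp struc2}. The quantitative geometry of Lemma \ref{mp struc3} already supplies a topological barrier of uniform height: setting
\[\widehat{\zeta}_0(c):=\inf_{u\in\partial\mathcal{A}_\mu(2\mathcal{K}_\mu,c)}I_\mu(u),\]
the inequality $0<\sup_{\mathcal{A}_\mu(\mathcal{K}_\mu,c)}I_\mu<\widehat{\zeta}_0(c)$ holds, and since $\mathcal{K}_\mu=\mathcal{K}_\mu(c,\tau,q,N)$ is $\mu$-free and all Gagliardo-Nirenberg/Sobolev estimates in the proof of Lemma \ref{mp struc3} are uniform in $\mu\in(0,1]$, both $\mathcal{K}_\mu$ and the lower bound for $\widehat{\zeta}_0(c)$ can be chosen independently of $\mu$. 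It then remains to produce one admissible path, and to show that every admissible path must cross the sphere $\partial\mathcal{A}_\mu(2\mathcal{K}_\mu,c)$.

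For the non-emptiness of $\widehat{\Gamma}_\mu(c)$, I would use the $L^2$-preserving dilation $u_s(x):=s^{N/2}u(sx)$ applied to any fixed radial $u\in S_r(c)\cap C_0^\infty(\R^N)$. Reading off \eqref{mcproperty eq1-1}, the combination $\tfrac{\mu}{\theta}\|\nabla u_s\|_\theta^\theta+\tfrac{1}{2}\|\nabla u_s\|_2^2+\|u_s\nabla u_s\|_2^2$ is a sum of strictly positive powers of $s$ and therefore tends to $0$ as $s\to 0^+$, so $u_{s_1}\in\mathcal{A}_\mu(\mathcal{K}_\mu,c)$ for some small $s_1>0$. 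On the other hand, the exponent $N(2^*-1)=\tfrac{N(N+2)}{N-2}$ of the critical term strictly dominates $\theta(1+\gamma_\theta)$, $2$, $N+2$ and $\tfrac{N(q-2)}{2}$ for every $q\in(2,2\cdot 2^*)$, so $I_\mu(u_s)\to-\infty$ as $s\to+\infty$ and $I_\mu(u_{s_2})<0$ for some $s_2>s_1$. The map $\gamma_0(t):=u_{s_1+t(s_2-s_1)}$ is then a continuous path in $S_r(c)$ that belongs to $\widehat{\Gamma}_\mu(c)$.

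For the minimax inequality I argue by continuity. Repeating the Gagliardo-Nirenberg and Sobolev estimates used in the proof of Lemma \ref{mp struc3}, one checks that with $\mathcal{K}_\mu$ chosen small (as in the hypothesis) every $u\in S(c)$ satisfying $\tfrac{\mu}{\theta}\|\nabla u\|_\theta^\theta+\tfrac{1}{2}\|\nabla u\|_2^2+\|u\nabla u\|_2^2\leq 2\mathcal{K}_\mu$ must have $I_\mu(u)\geq 0$; this uses $\tfrac{N(q-2)}{2(N+2)}\geq 1$ (which follows from $q\geq 4+\tfrac{4}{N}$) together with $\tfrac{N}{N-2}>1$. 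Now, given $\gamma\in\widehat{\Gamma}_\mu(c)$, the left endpoint satisfies $\tfrac{\mu}{\theta}\|\nabla\gamma(0)\|_\theta^\theta+\tfrac{1}{2}\|\nabla\gamma(0)\|_2^2+\|\gamma(0)\nabla\gamma(0)\|_2^2<\mathcal{K}_\mu$, while $I_\mu(\gamma(1))<0$ forces the analogous quantity at $\gamma(1)$ to strictly exceed $2\mathcal{K}_\mu$. Continuity along $\gamma$ then yields $t_0\in(0,1)$ with $\gamma(t_0)\in\partial\mathcal{A}_\mu(2\mathcal{K}_\mu,c)$, so $\max_{t\in[0,1]}I_\mu(\gamma(t))\geq\widehat{\zeta}_0(c)$. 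Taking the infimum over $\gamma$ and combining with $I_\mu(\gamma(0))\leq\sup_{\mathcal{A}_\mu(\mathcal{K}_\mu,c)}I_\mu<\widehat{\zeta}_0(c)$ and $I_\mu(\gamma(1))<0<\widehat{\zeta}_0(c)$ produces the full statement. I expect the only delicate bookkeeping to be the verification that the threshold $\widehat{\zeta}_0(c)$ can genuinely be taken independent of $\mu$, which will matter crucially in the $\mu\to 0^+$ passage of Proposition \ref{converge process for mu to 0}.
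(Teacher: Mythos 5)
Your proof is correct and follows the same route the paper itself gestures at; indeed the paper provides no written argument here, deferring instead to ``the arguments are analogous to those in Lemma \ref{mp struc2} and Lemma \ref{ps existence sec2},'' and Lemma \ref{mp struc2} is in turn deferred to ``classical arguments.'' You have simply spelled out that classical mountain-pass construction explicitly: use the $L^2$-preserving dilation $s\mapsto u_s$ from \eqref{mcproperty eq1-1} to produce one admissible path (small $s$ lands in $\mathcal{A}_\mu(\mathcal{K}_\mu,c)$ because every exponent $\theta(1+\gamma_\theta)$, $2$, $N+2$ in $\xi_\mu(u_s)$ is positive; large $s$ makes $I_\mu<0$ because $N(2^*-1)$ dominates), and then use the sign barrier $I_\mu\ge 0$ on $\{\xi_\mu\le 2\mathcal{K}_\mu\}$ together with continuity of $\xi_\mu\circ\gamma$ to force each admissible path to cross $\partial\mathcal{A}_\mu(2\mathcal{K}_\mu,c)$, on which Lemma \ref{mp struc3} gives a uniform positive lower bound. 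Your observation that $\mathcal{K}_\mu$ and $\widehat{\zeta}_0(c)$ can be taken $\mu$-free is correct and indeed the essential point that the paper's notation $\mathcal{K}_\mu(c,\tau,q,N)$ encodes: the Gagliardo--Nirenberg and Sobolev bounds in Lemma \ref{mp struc3} are stated entirely in terms of $\xi_\mu$ and the constants $c,\tau,q,N$, never in terms of $\mu$ itself, so the threshold $\widehat{\zeta}_0(c)\ge 2\mathcal{K}_\mu\bigl(1-C_1(2\mathcal{K}_\mu)^{\gamma-1}-C_2(2\mathcal{K}_\mu)^{2/(N-2)}\bigr)$ is $\mu$-independent once $\mathcal{K}_\mu$ is.
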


\begin{lemma}\label{ps existence sec3}
	Under the assumptions of Lemma \ref{mp struc3}, there exists a sequence $\{u_n\}\subset S_r(c)$ satisfying
	\begin{equation*}
		I_{\mu}(u_n)\to \widehat{M}_{\mu}(c)>0,~~~~~~~~~\|I'_{\mu}|_{S_r(c)}(u_n)\|_{X^*}\to0,~~~~~~~~~Q_{\mu}(u_n)\to0.
	\end{equation*}
\end{lemma}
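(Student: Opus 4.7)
The plan is to mirror the strategy of Lemma \ref{ps existence sec2} verbatim, adapting only the relevant energy level and path family. More precisely, I would introduce the auxiliary functional $\tilde{I}_\mu$ on $\R \times S_r(c)$ defined in \eqref{fibermap1}, together with the lifted family of paths
\begin{equation*}
\overline{\Gamma}_\mu(c) := \{\overline{\gamma} \in \mathcal{C}([0,1], \R \times S_r(c)) : \overline{\gamma}(0) = (0,\overline{\gamma}_1(0)),\ \overline{\gamma}_1(0) \in \mathcal{A}_\mu(\mathcal{K}_\mu, c),\ \tilde{I}_\mu(\overline{\gamma}(1)) < 0\}
\end{equation*}
and the associated minimax value $\overline{M}_\mu(c) := \inf_{\overline{\gamma} \in \overline{\Gamma}_\mu(c)} \max_{t \in [0,1]} \tilde{I}_\mu(\overline{\gamma}(t))$. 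By lifting any path $\gamma \in \widehat{\Gamma}_\mu(c)$ to $(0, \gamma) \in \overline{\Gamma}_\mu(c)$ on the one hand, and using the scaling map $\mathcal{H}$ on the other to project any lifted path back into $S_r(c)$, one obtains the two-sided inequality $\widehat{M}_\mu(c) = \overline{M}_\mu(c)$ exactly as in Lemma \ref{eq of the mpv}.

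Next, I would apply Ghoussoub's minimax principle \cite[Theorem 5.2]{Ghoussoub1993} with the following choices, parallel to those in Lemma \ref{ps existence sec2}: take $A = \overline{\gamma}([0,1])$, $\mathcal{X} = \R \times S_r(c)$, $\mathcal{F} = \{\overline{\gamma}([0,1]) : \overline{\gamma} \in \overline{\Gamma}_\mu(c)\}$, and the closed boundary
\begin{equation*}
B = (\{0\} \times \mathcal{A}_\mu(\mathcal{K}_\mu, c)) \cup (\{0\} \times \mathcal{E}_\mu^0),
\end{equation*}
where $\mathcal{E}_\mu^0 := \{u \in S_r(c) : I_\mu(u) < 0\}$, together with the dual set $F = \{(s,u) \in \R \times S_r(c) : \tilde{I}_\mu(s,u) \ge \overline{M}_\mu(c)\}$. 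The mountain pass separation established in Lemma \ref{mp struc3 eq3} guarantees that $\mathcal{F}$ is a homotopy-stable family and $F$ is dual for it with $\phi = \tilde{I}_\mu$ and $\tilde{d} = \overline{M}_\mu(c)$. Ghoussoub's theorem then yields a sequence $\{(s_n, w_n)\} \subset \R \times S_r(c)$ such that
\begin{equation*}
\partial_s \tilde{I}_\mu(s_n, w_n) \to 0, \qquad \|\partial_u \tilde{I}_\mu(s_n, w_n)\|_{(T_{w_n} S_r(c))^*} \to 0,
\end{equation*}
with the additional localization $|s_n| + \operatorname{dist}_X(w_n, \beta_n([0,1])) \to 0$, where $\{(0, \beta_n)\}$ is a chosen minimizing sequence in $\overline{\Gamma}_\mu(c)$.

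The key step is then to transfer this sequence back to the original constraint. The localization forces $\{s_n\}$ to be bounded, so setting $u_n := \mathcal{H}(s_n, w_n) \in S_r(c)$ and exploiting the identity $\partial_s \tilde{I}_\mu(s,u) = Q_\mu(\mathcal{H}(s,u))$, one directly gets $Q_\mu(u_n) \to 0$ and $I_\mu(u_n) \to \widehat{M}_\mu(c)$. Finally, the linear isomorphism property provided by Lemma \ref{isomorphism thm} converts the constrained derivative estimate on $w_n$ into $\|I_\mu'|_{S_r(c)}(u_n)\|_{X^*} \to 0$, as in the final step of Lemma \ref{exist for ps seq}. The only point that is not literally identical to Lemma \ref{ps existence sec2} is the verification that $\mathcal{F}$ is homotopy stable with boundary $B$; but this is purely formal since the boundary components $\mathcal{A}_\mu(\mathcal{K}_\mu, c)$ and $\mathcal{E}_\mu^0$ are again open in $S_r(c)$, separated in energy by Lemma \ref{mp struc3}, so no new analytic input is needed and no real obstacle arises.
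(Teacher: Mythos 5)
Your proposal is correct and takes essentially the same route the paper intends: the paper states this lemma without proof, noting that the argument is analogous to Lemma \ref{ps existence sec2}, and your write-up faithfully reproduces that argument with $\check{\Gamma}_\mu(c)$, $\check{M}_\mu(c)$ replaced by $\widehat{\Gamma}_\mu(c)$, $\widehat{M}_\mu(c)$, relying on the same auxiliary functional $\tilde I_\mu$, the same lifted path family and boundary set for Ghoussoub's theorem, and Lemma \ref{isomorphism thm} to transfer the constrained derivative estimate back to $S_r(c)$.
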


\begin{lemma}\label{poho mani pro5}
	Under the assumptions of Lemma \ref{mp struc3}, we have $\widehat{M}_{\mu}(c)=\widehat{m}_\mu(c)$.
\end{lemma}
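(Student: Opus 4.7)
The plan is to establish the two reverse inequalities $\widehat{M}_\mu(c) \geq \widehat{m}_\mu(c)$ and $\widehat{M}_\mu(c) \leq \widehat{m}_\mu(c)$ separately, exploiting the one-parameter fibration $s \mapsto u_s$ studied in Lemma \ref{poho mani pro2} and Corollary \ref{poho mani pro3}, together with Schwarz symmetrization.

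For the lower bound, I would fix an arbitrary $\gamma \in \widehat{\Gamma}_\mu(c)$ and show that the path must intersect $\Lambda_\mu(c)$; combined with Corollary \ref{poho mani pro4}, this yields $\max_{t} I_\mu(\gamma(t)) \geq \widehat{m}_\mu(c)$. The crossing will be detected via a sign change of the continuous map $t \mapsto Q_\mu(\gamma(t))$. At $t = 0$, the point $\gamma(0) \in \mathcal{A}_\mu(\mathcal{K}_\mu, c)$ has small gradient-type norms, so I would apply the Gagliardo-Nirenberg inequality \eqref{gn2} and the Sobolev inequality to bound $\tau\gamma_q\|u\|_q^q$ and $\gamma_{2\cdot 2^*}\|u\|_{2\cdot 2^*}^{2\cdot 2^*}$ by superlinear powers of the quasilinear norm $\|u\nabla u\|_2^2$; shrinking $\mathcal{K}_\mu$ if necessary, and invoking $c < \bar{c}_3$ in the borderline case $q = 4 + \tfrac{4}{N}$ exactly as in Lemma \ref{mp struc3}, the positive terms of $Q_\mu$ dominate, giving $Q_\mu(\gamma(0)) > 0$. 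At $t = 1$, since $I_\mu(\gamma(1)) < 0$, I would invoke the shape of $\psi_{\gamma(1)}(s) := I_\mu((\gamma(1))_s)$: by Lemma \ref{poho mani pro2}, together with $\psi_{\gamma(1)}(s) \to 0^+$ as $s \to 0^+$ and $\psi_{\gamma(1)}(s) \to -\infty$ as $s \to +\infty$, the function increases from zero to a unique positive maximum at some $s_\star$ and then strictly decreases to $-\infty$; hence $\psi_{\gamma(1)}(1) < 0$ forces $1 > s_\star$, whence $Q_\mu(\gamma(1)) = \psi_{\gamma(1)}'(1) < 0$. Continuity then provides $t^{\ast} \in (0,1)$ with $Q_\mu(\gamma(t^{\ast})) = 0$, i.e., $\gamma(t^{\ast}) \in \Lambda_\mu(c)$, and the lower bound follows.

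For the upper bound, I would first use Corollary \ref{poho mani pro4} and Schwarz symmetrization (as in Remark \ref{symmetric local}) to note that $\widehat{m}_\mu(c) = \inf_{v \in S_r(c)} \max_{s>0} I_\mu(v_s)$, since rearrangement does not increase $\|\nabla v\|_\theta$, $\|\nabla v\|_2$ or $\|v\nabla v\|_2$ while preserving all $L^p$ norms, so $\max_s I_\mu(v_s^{\ast}) \leq \max_s I_\mu(v_s)$ for every $v \in S(c)$. For any $\varepsilon > 0$, I pick $v \in S_r(c)$ with $\max_{s>0} I_\mu(v_s) < \widehat{m}_\mu(c) + \varepsilon$. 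Because $\|\nabla v_s\|_\theta^\theta$, $\|\nabla v_s\|_2^2$ and $\|v_s\nabla v_s\|_2^2$ are all positive powers of $s$, I can choose $s_1 > 0$ small enough that $v_{s_1} \in \mathcal{A}_\mu(\mathcal{K}_\mu, c)$, and $s_2 > 0$ large enough that $I_\mu(v_{s_2}) < 0$. The affine reparametrization $\gamma(t) := v_{s_1 + t(s_2 - s_1)}$ lies in $S_r(c)$ and belongs to $\widehat{\Gamma}_\mu(c)$, and
\[
\max_{t \in [0,1]} I_\mu(\gamma(t)) = \max_{s \in [s_1,s_2]} \psi_v(s) \leq \max_{s > 0} \psi_v(s) < \widehat{m}_\mu(c) + \varepsilon,
\]
so $\widehat{M}_\mu(c) < \widehat{m}_\mu(c) + \varepsilon$, and sending $\varepsilon \to 0$ completes the argument.

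The main obstacle I expect is the endpoint-sign analysis at $t = 0$: in the borderline case $q = 4 + \tfrac{4}{N}$, the quasilinear term $(N+2)\|u\nabla u\|_2^2$ and the subcritical term $\tau\gamma_q \|u\|_q^q$ scale identically in $\|u\nabla u\|_2^2$ under the Gagliardo-Nirenberg bound \eqref{gn2}, so positivity of $Q_\mu$ on $\mathcal{A}_\mu(\mathcal{K}_\mu, c)$ rests on a constant comparison that forces precisely the mass threshold $c < \bar{c}_3$; verifying this quantitative balance is the only place where the case analysis of Lemma \ref{mp struc3} must be carefully tracked.
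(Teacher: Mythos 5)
Your argument is correct and follows the same overall strategy as the paper: establish the two reverse inequalities by showing that every path in $\widehat{\Gamma}_\mu(c)$ must cross $\Lambda_\mu(c)$, and by constructing a nearly optimal path from the $\inf$-$\max$ characterization in Corollary \ref{poho mani pro4}. Two sub-steps are handled differently. For the endpoint sign $Q_\mu(\gamma(1)) < 0$, you argue from the monotone-then-decreasing profile of $\psi_{\gamma(1)}$ guaranteed by Lemma \ref{poho mani pro2}; the paper instead lets $s \to 0$ in \eqref{poho mani pro2 eq1} to get the elementary pointwise inequality $I_\mu(u) \geq \tfrac{1}{N+2} Q_\mu(u)$ for all $u\in S(c)$, which gives the same sign in one line — both routes are valid, the paper's being marginally tidier. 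For the upper bound, your explicit Schwarz-symmetrization step establishing $\inf_{S(c)} \max_{s>0} I_\mu(u_s) = \inf_{S_r(c)} \max_{s>0} I_\mu(u_s)$ addresses a genuine subtlety that the paper glosses over: $\widehat{m}_\mu(c)$ is an infimum over all of $\Lambda_\mu(c)\subset S(c)$ while $\widehat{M}_\mu(c)$ is defined via paths valued in the radial constraint $S_r(c)$, so the paper's construction of $u_{(1-t)s_1+ts_2}$ from an arbitrary $u\in\Lambda_\mu(c)$ only lands in $\widehat{\Gamma}_\mu(c)$ after first replacing $u$ by a radial competitor. Your rearrangement argument (decreasing the three positive scaling coefficients $\|\nabla u\|_\theta^\theta$, $\|\nabla u\|_2^2$, $\|u\nabla u\|_2^2$ while preserving $\|u\|_q^q$ and $\|u\|_{2\cdot 2^*}^{2\cdot 2^*}$, hence not increasing $\max_s I_\mu(u_s)$) fills this in cleanly, so on this point your proof is somewhat more careful than the paper's.
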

\begin{proof}
	On the one hand, for any $u\in\Lambda_\mu(c)$, there exist $s_1<0$ small and $s_2>0$ large such that
	\begin{equation*}
		u_{s_1}\in\mathcal{A}_\mu(\mathcal{K}_\mu,c)~~~~\text{and}~~~~I_\mu(u_{s_2})<0.
	\end{equation*}
	Let $\widehat{\gamma}(t):=u_{(1-t)s_1+ts_2},~\forall t\in[0,1]$. Clearly, $\widehat{\gamma}\in\widehat{\Gamma}_{\mu}(c)$. By Corollary \ref{poho mani pro3}, we have
	\begin{equation*}
		\widehat{M}_{\mu}(c)\leq\inf_{u\in\Lambda_\mu(c)}I_\mu(u)=\widehat{m}_\mu(c).
	\end{equation*}
	On the other hand, letting $s\to0$ in \eqref{poho mani pro2 eq1}, we get
	\begin{align*}
		I_\mu(u)\ge\frac{1}{N+2}Q_\mu(u)+\frac{N}{2(N+2)}\int_{\R^N}|\nabla u|^2dx+\frac{1}{2N}\int_{\R^N}|u|^{2\cdot2^*}dx\ge\frac{1}{N+2}Q_\mu(u),~~\forall u\in S(c).
	\end{align*}
	Thus for any $\gamma\in\widehat{\Gamma}_{\mu}(c)$, we have $Q_\mu(\gamma(1))\leq(N+2)I_\mu(\gamma(1))<0$.
By employing a similar argument as in Lemma \ref{poho mani pro2}, it can be readily verified that there exists $u_1\in\mathcal{A}_\mu(\mathcal{K}_\mu,c)$ such that $Q_\mu(u_1)>0$.  This implies that there exists some $t_0$ such that $\gamma(t_0)\in\Lambda_\mu(c)$. Consequently, we deduce that
	\begin{equation*}
		\max_{t\in[0,1]}I_\mu(\gamma(t))\ge\inf_{u\in\Lambda_\mu(c)}I_\mu(u)=\widehat{m}_\mu(c),
	\end{equation*}
which in turn implies that $\widehat{M}_{\mu}(c)\ge\widehat{m}_\mu(c)$.
This completes the proof.
\end{proof}
\begin{remark}
	By Lemma \ref{poho id}, any critical point of $I_\mu|_{S(c)}$ lies in the set  $\Lambda_\mu(c)$. Therefore, if  $\widehat{m}_\mu(c)$ attained, the minimizer must be a ground state critical point of $I_\mu|_{S(c)}$.
\end{remark}

Recalling the definition of $q_N$ given by \eqref{qn definition}, we now present the following result for further convenience.

\begin{lemma}\label{energy estimate2 masssuper}
	Let $q\in\left(q_N,\frac{4N}{N-2}\right), \tau>0$ and $c>0$. Then there exist $0<\mu_2<1$ and $\delta''>0$ such that
	\begin{equation*}
		\widehat{M}_{\mu}(c)\leq\frac{\mathcal{S}^{\frac{N}{2}}}{2N}-\delta'',~~~\text{for all}~~~\mu\in(0,\mu_2).
	\end{equation*}
	Moreover, the same conclusion holds if $q\in(4+\frac{4}{N},2\cdot2^*)$ for $N\ge3$ and $\tau>0$ is sufficiently large.
\end{lemma}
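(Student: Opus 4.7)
The plan is to mirror the argument of Lemma \ref{energy estimate2}, using the family of test functions $\widehat{U}_\eps$ defined in \eqref{test functions2} and its $L^2$-normalization $\widehat{V}_\eps := \frac{\sqrt{c}}{\|\widehat{U}_\eps\|_2}\widehat{U}_\eps \in S_r(c)$. I will first evaluate the unperturbed functional $I_0$ along the $L^2$-preserving dilation $t \mapsto t^{N/2}\widehat{V}_\eps(t\,\cdot\,)$ and then transfer the resulting estimate to $I_\mu$. Applying the asymptotic expansions \eqref{energy2 est2}--\eqref{energy2 est5} yields, exactly as in the proof of Lemma \ref{energy estimate2}, the pointwise bound
\begin{align*}
I_0\bigl(t^{\frac{N}{2}}\widehat{V}_{\eps}(t\,\cdot\,)\bigr) &\leq \frac{\mathcal{S}^{N/2}}{2N} + \frac{t^2}{2}\mathcal{O}\bigl(\eps^{(4\alpha+1)(N-2)/(2N)}\bigr) + t^{N+2}\mathcal{O}\bigl(\eps^{(4\alpha+1)(N^2-4)/(4N)}\bigr)\\
&\quad - \frac{t^{N(N+2)/(N-2)}}{2\cdot 2^*}\mathcal{O}\bigl(\eps^{(4\alpha+1)(N+2)/4}\bigr) - \frac{\tau t^{q\gamma_q}}{q}\mathcal{O}\bigl(\eps^{N/2-(N-2)q/8}\bigr),
\end{align*}
valid for every $t > 0$.

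The decisive step is the selection of $\alpha$ within the open window $\bigl(\max\{\tfrac{4N^2-4N+8-N(N-2)q}{16(N-2)},0\},\,\tfrac{N-2}{8}\bigr)$. A direct computation shows that non-emptiness of this window is equivalent to $q > \tfrac{2(N+2)}{N-2}$, and this is automatic under the hypothesis $q > q_N$: for $N = 3,4$ we have $q_N = \tfrac{2(N+2)}{N-2}$ by definition, whereas for $N \geq 5$ one verifies directly that $4 + \tfrac{4}{N} > \tfrac{2(N+2)}{N-2}$. With such an $\alpha$ fixed, the negative subcritical contribution $-\tau t^{q\gamma_q}\mathcal{O}(\eps^{N/2-(N-2)q/8})$ dominates every positive perturbation uniformly on compact $t$-intervals. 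Combined with the elementary identity $\sup_{t > 0}\bigl(\tfrac{t^{N+2}}{4} - \tfrac{t^{N(N+2)/(N-2)}}{2\cdot 2^*}\bigr)\mathcal{S}^{N/2} = \tfrac{\mathcal{S}^{N/2}}{2N}$, this yields, for some small $\eps_2 > 0$ and $\delta'' > 0$,
\begin{equation*}
\sup_{t > 0} I_0\bigl(t^{\frac{N}{2}}\widehat{V}_{\eps_2}(t\,\cdot\,)\bigr) \leq \frac{\mathcal{S}^{N/2}}{2N} - 2\delta''.
\end{equation*}
Because this supremum is attained on a compact interval $[T_3, T_4]$ independent of $\mu$, and because on that interval the perturbation satisfies $\tfrac{\mu}{\theta}t^{\theta(1+\gamma_\theta)}\|\nabla\widehat{V}_{\eps_2}\|_\theta^\theta \leq C(\eps_2)\mu$, the same estimate with $I_0$ replaced by $I_\mu$ and $2\delta''$ replaced by $\delta''$ persists for every $\mu \in (0,\mu_2)$.

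To realize this supremum as the maximum of $I_\mu$ along an admissible path in $\widehat{\Gamma}_\mu(c)$, I would choose $t_1 > 0$ small enough that $t_1^{N/2}\widehat{V}_{\eps_2}(t_1\,\cdot\,) \in \mathcal{A}_\mu(\mathcal{K}_\mu, c)$, which is possible since $I_\mu \to 0$ along the dilation as $t \to 0^+$, and $t_2 > 0$ large enough that $I_\mu(t_2^{N/2}\widehat{V}_{\eps_2}(t_2\,\cdot\,)) < 0$, which follows from the Sobolev critical term scaling as $t^{N(N+2)/(N-2)}$ with exponent strictly exceeding $N+2$. A linear reparametrization onto $[0,1]$ delivers $\gamma \in \widehat{\Gamma}_\mu(c)$, and the claimed bound on $\widehat{M}_\mu(c)$ follows from Lemma \ref{mp struc3 eq3}.

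The concluding \emph{moreover} statement, covering every $q \in (4+\tfrac{4}{N}, 2\cdot 2^*)$ for $N \geq 3$ under the assumption that $\tau$ is sufficiently large, is obtained exactly as in the final paragraph of the proof of Lemma \ref{energy estimate2}: take $\tau = \eps^{-1}$ and relax the lower bound on $\alpha$ to $\max\{\tfrac{4N^2-12N+8-N(N-2)q}{16(N-2)},0\}$, which widens the admissible window enough to absorb the subcritical regime $q \leq q_N$. The hardest part of the argument is the verification that the admissible window for $\alpha$ is non-empty; the threshold $q_N$ is calibrated precisely to deliver this in both dimensional regimes $N=3,4$ and $N \geq 5$, and every other step is a straightforward repackaging of estimates already developed earlier in the paper.
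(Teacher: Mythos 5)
Your proposal is correct and is essentially the proof the paper intends: the paper states Lemma \ref{energy estimate2 masssuper} without proof precisely because the computation is identical to that of Lemma \ref{energy estimate2}, the only new ingredient being the observation that $q>q_N$ guarantees $q>\frac{2(N+2)}{N-2}$ (an identity for $N=3,4$ and a direct check of $4+\frac{4}{N}>\frac{2(N+2)}{N-2}$ for $N\ge 5$), so that the window $\bigl(\max\{\frac{4N^2-4N+8-N(N-2)q}{16(N-2)},0\},\frac{N-2}{8}\bigr)$ is nonempty. Your handling of the window check, the pointwise estimate for $I_0(t^{N/2}\widehat V_\eps(t\cdot))$, the transfer to $I_\mu$ on a compact $t$-interval $[T_3,T_4]$, the construction of a path in $\widehat\Gamma_\mu(c)$ via the $L^2$-preserving dilation, and the $\tau=\eps^{-1}$ trick in the ``moreover'' part all match the paper's machinery.
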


\section{The compactness of the Palais-Smale sequences}\label{compactness1}\setcounter{equation}{0}
In this section, we establish the compactness of the Palais-Smale sequences obtained in Lemma \ref{exist for ps seq}, Lemma \ref{ps existence sec2} and Lemma \ref{ps existence sec3}. For simplicity, we denote these three different mountain pass values, i.e., $M_{\mu}(c)$, $\check{M}_{\mu}(c)$ and $\widehat{M}_{\mu}(c)$, uniformly by $M_\mu^*(c)$.
\begin{lemma}\label{existence of sol}
	Assume that $\mu\in(0,1]$. Let $\{u_n\}\subset S_r(c)$ be a sequence obtained in Lemma \ref{exist for ps seq}, Lemma \ref{ps existence sec2} or Lemma \ref{ps existence sec3}. Then, there exists $u_{\mu}\not\equiv0$ such that, up to a subsequence, $u_n\rightharpoonup u_{\mu}$ in $X_r$ as $n\to+\infty$.
\end{lemma}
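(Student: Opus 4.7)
The plan has three stages: first prove $\{u_n\}$ is bounded in $X_r$, then extract a weak limit $u_\mu$, and finally rule out $u_\mu\equiv 0$ by combining radial compactness with the sharp upper bound on $M_\mu^*(c)$ supplied by Lemmas \ref{energy estimate}, \ref{energy estimate2} and \ref{energy estimate2 masssuper}.

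For boundedness, I would form the linear combination $I_\mu(u_n)-\tfrac{1}{N+2}Q_\mu(u_n)$, which expands as
\[
\tfrac{\mu[N+2-\theta(1+\gamma_\theta)]}{\theta(N+2)}\|\nabla u_n\|_\theta^\theta+\tfrac{N-2}{2(N+2)}\|\nabla u_n\|_2^2+\tfrac{1}{2N}\|u_n\|_{2\cdot 2^*}^{2\cdot 2^*}+\tfrac{\tau[q\gamma_q-(N+2)]}{q(N+2)}\|u_n\|_q^q.
\]
The first three coefficients are strictly positive (because $\theta(1+\gamma_\theta)\in(N,N+2)$ and $N\ge 3$) and the $\|u_n\nabla u_n\|_2^2$ contribution cancels identically. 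When $q\ge 4+\tfrac{4}{N}$ (the settings of Lemmas \ref{ps existence sec2} and \ref{ps existence sec3}) the $q$-coefficient is non-negative, so boundedness of $\|\nabla u_n\|_\theta$, $\|\nabla u_n\|_2$, $\|u_n\|_{2\cdot 2^*}$ and $\|u_n\|_q$ is immediate. For $q<4+\tfrac{4}{N}$ the $q$-coefficient is negative, and I would absorb the extra term via the interpolation $\|u_n\|_q^q\le c^{q(1-t)/2}\|u_n\|_{2\cdot 2^*}^{qt}$ with $qt=(q-2)\cdot 2^*/(2^*-1)<2\cdot 2^*$ followed by Young's inequality tuned so that only a fraction of $\tfrac{1}{2N}\|u_n\|_{2\cdot 2^*}^{2\cdot 2^*}$ is consumed. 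Finally solving $Q_\mu(u_n)=o(1)$ for $(N+2)\|u_n\nabla u_n\|_2^2$ gives the quasilinear bound, so $\{u_n\}$ is bounded in $X_r$.

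Boundedness of $\{u_n\}$ in $X_r$ together with boundedness of $\{u_n^2\}$ in $H^1_{rad}(\R^N)$ (from $\|\nabla u_n^2\|_2=2\|u_n\nabla u_n\|_2$) and the compact embedding $H^1_{rad}\hookrightarrow L^r(\R^N)$ for $r\in(2,2^*)$ produces, up to a subsequence, a weak limit $u_\mu\in X_r$ and strong convergence $u_n\to u_\mu$ in $L^s(\R^N)$ for every $s\in(2,2\cdot 2^*)$. Assume for contradiction that $u_\mu\equiv 0$, so $\|u_n\|_q^q\to 0$. Writing $A_n:=\|u_n\nabla u_n\|_2^2$, the Sobolev inequality $\mathcal{S}\|u_n\|_{2\cdot 2^*}^4\le 4A_n$ together with $Q_\mu(u_n)=o(1)$ gives
\[
A_n\le\tfrac{1}{4}\|u_n\|_{2\cdot 2^*}^{2\cdot 2^*}+o(1)\qquad\text{and}\qquad \|u_n\|_{2\cdot 2^*}^{2\cdot 2^*}\le(4A_n/\mathcal{S})^{2^*/2}.
\]
If $A_n\to 0$, then $Q_\mu(u_n)=o(1)$ forces all positive terms in $I_\mu(u_n)$ to vanish, so $I_\mu(u_n)\to 0$, contradicting $M_\mu^*(c)>0$. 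Otherwise the two displayed inequalities combine into $A_n\ge \mathcal{S}^{N/2}/4-o(1)$, hence $\|u_n\|_{2\cdot 2^*}^{2\cdot 2^*}\ge 4A_n-o(1)\ge \mathcal{S}^{N/2}-o(1)$, and the combination in the previous paragraph (with the vanishing $q$-term dropped) yields
\[
M_\mu^*(c)+o(1)\ge\tfrac{1}{2N}\|u_n\|_{2\cdot 2^*}^{2\cdot 2^*}\ge\tfrac{\mathcal{S}^{N/2}}{2N}-o(1),
\]
contradicting the strict bound $M_\mu^*(c)<\mathcal{S}^{N/2}/(2N)$ provided by the energy estimates. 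Hence $u_\mu\not\equiv 0$.

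The main obstacle I anticipate is the nontriviality step, because the coupling between $\|u\nabla u\|_2^2$ and $\|u\|_{2\cdot 2^*}^{2\cdot 2^*}$ saturates exactly at the threshold $\mathcal{S}^{N/2}/(2N)$, so the entire contradiction rests on the strict inequality $M_\mu^*(c)<\mathcal{S}^{N/2}/(2N)$; any slackness in the energy estimates of Sections \ref{Mountain pass solution subcritical}--\ref{mass super quasi} would invalidate the argument. A secondary technical point in the boundedness step for $q<4+\tfrac{4}{N}$ is verifying that the interpolation exponent $qt$ remains strictly below $2\cdot 2^*$ and that the Young-type absorption succeeds uniformly across the admissible ranges of $c$ and $\tau$ covered by the three source lemmas, but this is a routine check.
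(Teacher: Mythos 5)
Your boundedness argument is sound (modulo small arithmetic slips: the coefficient of $\|\nabla u_n\|_2^2$ in $I_\mu-\tfrac{1}{N+2}Q_\mu$ is $\tfrac{N}{2(N+2)}$, not $\tfrac{N-2}{2(N+2)}$, but it is still positive so nothing breaks), and the step of recovering the $\|u_n\nabla u_n\|_2^2$ bound from $Q_\mu(u_n)=o(1)$ is fine. The paper uses slightly different linear combinations ($I_\mu-\tfrac{N-2}{N(N+2)}Q_\mu$ for $q\le 4+\tfrac4N$, $I_\mu-\tfrac{1}{q\gamma_q}Q_\mu$ for larger $q$) and absorbs the subcritical term via the quasilinear Gagliardo--Nirenberg inequality \eqref{gn2} rather than your interpolation into $L^{2\cdot 2^*}$; both routes work.

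The genuine gap is in the nontriviality step. You only use the $H^1_{rad}$--compactness and therefore only get $u_n\to 0$ in $L^s$ on the \emph{open} interval $s\in(2,2\cdot 2^*)$; this leaves the critical term $\|u_n\|_{2\cdot 2^*}^{2\cdot 2^*}$ alive, forcing you into a bubbling dichotomy whose contradictory horn needs the strict threshold $M_\mu^*(c)<\tfrac{1}{2N}\mathcal{S}^{N/2}$. But that bound is only established (Lemmas \ref{energy estimate}, \ref{energy estimate2}, \ref{energy estimate2 masssuper}) for $\mu$ small, whereas Lemma \ref{existence of sol} asserts the conclusion for \emph{every} $\mu\in(0,1]$. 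Your proof therefore does not cover the full claimed range of $\mu$. The paper's argument avoids this entirely by exploiting the $W^{1,\theta}$ part of $X_r$: since $\theta>\tfrac{4N}{N+2}$, one has $\theta^*:=\tfrac{N\theta}{N-\theta}>2\cdot 2^*$, and interpolating the $H^1_{rad}$--compactness ($u_n\to 0$ in $L^s$, $2<s<2^*$) against mere $L^{\theta^*}$--boundedness yields $u_n\to 0$ in $L^{2\cdot 2^*}$. Then $Q_\mu(u_n)\to 0$ forces all the positive gradient terms, and hence $I_\mu(u_n)$, to vanish, contradicting $M_\mu^*(c)>0$ with no reference to the sharp energy level. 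In short: for fixed $\mu>0$ the perturbation makes $2\cdot 2^*$ subcritical relative to $X_r$, and this is the observation your proposal misses. Incorporate that interpolation step (replacing your threshold argument) and the proof closes for all $\mu\in(0,1]$.
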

\begin{proof}
	We first establish the boundedness of $\{u_n\}$ in $X_r$, which we will discuss in the following two cases.

	\textit{Case 1:} $2<q\leq 4+\frac{4}{N}$.
	It follows from \eqref{gn2}, $I_{\mu}(u_n)\to M_{\mu}^*(c)$, and $Q_{\mu}(u_n)\to0$ that
	\begin{align*}
		M_{\mu}^*(c)+o_n(1)%=\frac{\mu}{\theta}\left(1-\theta(1+\gamma_{\theta})\frac{N-2}{N(N+2)}\right)\int_{\R^N}|\nabla u_n|^{\theta}dx+\left(\frac{1}{2}-\frac{N-2}{N(N+2)}\right)\int_{\R^N}|\nabla u_n|^{2}dx\\
		%&+\frac{2}{N}\int_{\R^N}|u_n|^2|\nabla u_n|^2dx+\tau\left(\frac{\gamma_q(N-2)}{N(N+2)}-\frac{1}{q}\right)\int_{\R^N}|u_n|^qdx\\
		&\ge\frac{\mu}{\theta}\left(1-\theta(1+\gamma_{\theta})\frac{N-2}{N(N+2)}\right)\int_{\R^N}|\nabla u_n|^{\theta}dx+\left(\frac{1}{2}-\frac{N-2}{N(N+2)}\right)\int_{\R^N}|\nabla u_n|^{2}dx\\
		&+\frac{2}{N}\int_{\R^N}|u_n|^2|\nabla u_n|^2dx+\tau\left(\frac{\gamma_q(N-2)}{N(N+2)}-\frac{1}{q}\right)\mathcal{C}_2(q,N)c^{\frac{4N-q(N-2)}{2(N+2)}}\left(\int_{\R^N}|u_n|^2|\nabla u_n|^{2}dx\right)^{\frac{N(q-2)}{2(N+2)}}\\
	&\ge C_1\left(\int_{\R^N}|\nabla u_n|^{\theta}dx+\int_{\R^N}(1+|u_n|^2)|\nabla u_n|^{2}dx\right)-C_2\left(\int_{\R^N}(1+|u_n|^2)|\nabla u_n|^{2}dx\right)^{\frac{N(q-2)}{2(N+2)}},
	\end{align*}
for some $C_1, C_2>0$. For $q<4+\frac{4}{N}$ or $q<4+\frac{4}{N}$ with
$0<c<\bar{c}_3=\left(\frac{2N+2}{\tau N\mathcal{C}_2\left(4+\frac{4}{N},N\right)}\right)^{\frac{N}{2}}$, we deduce that $\{\int_{\R^N}(1+|u_n|^2)|\nabla u_n|^{2}dx\}$ and $\{\int_{\R^N}|\nabla u_n|^{\theta}dx\}$ are both bounded. 

	\textit{Case 2:} $4+\frac{4}{N}<q<2\cdot2^*$. Using $I_{\mu}(u_n)\to M_{\mu}^*(c)$ and $Q_{\mu}(u_n)\to0$ again, together with \eqref{gn2}, we obtain that $\{\|u_n\|_{\theta}^{\theta}\}$ is bounded and thus $\{u_n\}$ is bounded in $X_r$. 
	Therefore, up to translation, $u_n\rightharpoonup u_{\mu}$ weakly in $X_r$.

Next, we proceed by contradiction, assuming that $u_{\mu}\equiv0$. Then $\|u_n\|_r^r\to0$ in $L^r(R^N)$ for $r\in(2,2^*)$. By the interpolation inequality, we further obtain $\|u_n\|_r^r\to0$ in $L^r(R^N)$ for $r\in(2,\frac{N\theta}{N-\theta})$.
In particular,
	\begin{equation*}
		\int_{\R^N}|u_n|^{q}dx\to0~~~\text{and}~~~\int_{\R^N}|u_n|^{\frac{4N}{N-2}}dx\to0.
	\end{equation*}
	Therefore, by $Q_{\mu}(u_n)\to0$, we have
	\begin{equation*}
		\mu(1+\gamma_{\theta})\int_{\R^N}|\nabla u_n|^{\theta}dx+\int_{\R^N}|\nabla u_n|^2dx+(N+2)\int_{\R^N}|u_n|^2|\nabla u_n|^2dx\to0.
	\end{equation*}
Thus, we deduce that $I_{\mu}(u_n)\to0$, which contradicts the fact that $M_{\mu}^*(c)>0$. 
\end{proof}

\begin{lemma}\label{compact of sol}
	Assume that $\mu\in(0,1]$. Let $\{u_n\}\subset S_r(c)$ is a sequence obtained in Lemma \ref{exist for ps seq}, Lemma \ref{ps existence sec2} or  Lemma \ref{ps existence sec3}. Then, there exist $u_{\mu}\in X_r\backslash\{0\}$ and $\bar\lambda_{\mu}\in\R$, such that, up to a subsequence,
	\begin{equation}\label{compact of sol eq11}
		I_{\mu}(u_\mu)=M_{\mu}^*(c)~~~\text{and}~~~I'_{\mu}(u_{\mu})+\bar\lambda_{\mu}u_{\mu}=0.
	\end{equation}
	Moreover, if $\bar\lambda_{\mu}\neq0$, we have 
	\begin{equation*}
		\lim_{n\to+\infty}\|u_n-u_{\mu}\|_{X}=0.
	\end{equation*}
\end{lemma}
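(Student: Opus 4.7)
The plan is to first upgrade $u_\mu$ to a weak solution of the Euler--Lagrange equation with a Lagrange multiplier $\bar\lambda_\mu$, then identify its energy level, and finally promote weak to strong convergence under the hypothesis $\bar\lambda_\mu\neq 0$. By Lemma~\ref{existence of sol}, after passing to a subsequence I may assume $u_n \rightharpoonup u_\mu$ weakly in $X_r$ with $u_\mu\not\equiv 0$. Since $\{u_n\}$ is a constrained Palais--Smale sequence on $S_r(c)$, there exists $\{\lambda_n\}\subset\R$ with $I'_\mu(u_n)+\lambda_n u_n\to 0$ in $X^*$; testing against $u_n$ and using the uniform $X$-boundedness established in Lemma~\ref{existence of sol} shows $\{\lambda_n\}$ is bounded, so along a further subsequence $\lambda_n\to\bar\lambda_\mu\in\R$. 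Passing to the weak limit against $\phi\in C_0^\infty(\R^N)$ requires care only in the quasilinear term $-u\Delta(u^2)$, which is controlled via the Appendix splitting lemma together with the Moser-iteration $L^\infty$ bound; the subcritical term converges via the radial compact embedding $X_r\hookrightarrow L^q(\R^N)$ for $q\in(2,2\cdot 2^*)$ (valid since $\theta>\frac{4N}{N+2}$ gives $\theta^*>2\cdot 2^*$), and the critical term by almost-everywhere convergence. Hence $I'_\mu(u_\mu)+\bar\lambda_\mu u_\mu=0$, and Lemma~\ref{poho id} gives $Q_\mu(u_\mu)=0$.

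The second step is to show $I_\mu(u_\mu)=M_\mu^*(c)$. Lower semicontinuity in $X_r$ yields $I_\mu(u_\mu)\le M_\mu^*(c)$ at once. For the reverse, set $v_n := u_n-u_\mu\rightharpoonup 0$ and apply the Br\'ezis--Lieb decompositions (including the Appendix splitting for $\int|u|^2|\nabla u|^2\,dx$) to obtain
$$
I_\mu(u_n)=I_\mu(u_\mu)+I_\mu(v_n)+o_n(1), \qquad Q_\mu(u_n)=Q_\mu(v_n)+o_n(1),
$$
together with $\|v_n\|_q^q\to 0$ from radial compactness. For $q\geq 4+\frac{4}{N}$, the monotonicity of $\widehat{m}_\mu$ of Lemma~\ref{poho mani pro6} combined with Corollary~\ref{poho mani pro4} and Lemma~\ref{poho mani pro5} produces $I_\mu(u_\mu)\geq \widehat{m}_\mu(\|u_\mu\|_2^2)\geq \widehat{m}_\mu(c)=M_\mu^*(c)$. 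For $q<4+\frac{4}{N}$, combining $Q_\mu(v_n)\to 0$ with the Sobolev inequality yields $\liminf I_\mu(v_n)\geq 0$, hence $I_\mu(u_\mu)\geq M_\mu^*(c)$.

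Finally, under $\bar\lambda_\mu\neq 0$, subtracting the equations for $u_n$ and $u_\mu$ and applying the splittings produces
$$
\mu\|\nabla v_n\|_\theta^\theta + \|\nabla v_n\|_2^2 + 4\|v_n\nabla v_n\|_2^2 + \bar\lambda_\mu\|v_n\|_2^2 = \|v_n\|_{2\cdot 2^*}^{2\cdot 2^*} + o_n(1).
$$
Setting $\ell:=\lim_n\|v_n\nabla v_n\|_2^2$, the sharp Sobolev inequality gives $\|v_n\|_{2\cdot 2^*}^{2\cdot 2^*}\leq (4/\mathcal{S})^{2^*/2}\ell^{N/(N-2)}$. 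If $\ell>0$, the decomposition would force $\lim I_\mu(v_n)\geq \frac{1}{2N}\mathcal{S}^{N/2}$, contradicting $\lim I_\mu(v_n)=M_\mu^*(c)-I_\mu(u_\mu)$ in view of the sharp ceilings of Lemmas~\ref{energy estimate}, \ref{energy estimate2} and \ref{energy estimate2 masssuper} (refined to $m_\mu(c)+\frac{1}{2N}\mathcal{S}^{N/2}$ in the local-minimum case). Hence $\ell=0$; the nonnegativity of the remaining terms together with $\bar\lambda_\mu\neq 0$ then forces $\|v_n\|_2\to 0$ and ultimately $\|u_n-u_\mu\|_X\to 0$. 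The main obstacle is precisely this last step: the coexistence of the Sobolev critical term and the quasilinear term obstructs any direct Br\'ezis--Lieb-style argument, so both the Appendix splitting lemma and the sharp energy ceiling below $\frac{1}{2N}\mathcal{S}^{N/2}$ are indispensable for excluding bubble formation.
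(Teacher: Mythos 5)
Your proof misidentifies the compactness structure of the perturbed problem at \emph{fixed} $\mu\in(0,1]$, and as a consequence imports machinery that is neither available nor needed at this stage of the argument.

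The key point you miss is one you actually state in passing: since $\theta>\frac{4N}{N+2}$, the Sobolev exponent of $W^{1,\theta}$ satisfies $\theta^*=\frac{N\theta}{N-\theta}>2\cdot2^*$. Therefore the embedding $X_r=W^{1,\theta}_{rad}\cap H^1_{rad}\hookrightarrow L^r(\R^N)$ is \emph{compact} for every $r\in(2,\theta^*)$, and in particular for $r=2\cdot2^*$. In other words, once $\mu>0$ is frozen, the exponent $2\cdot2^*$ is \emph{subcritical} for the perturbed functional, so $u_n\to u_\mu$ strongly in $L^{2\cdot2^*}(\R^N)$. There is no bubble to exclude, no Br\'ezis--Lieb decomposition to manage for the critical term, and no need for any sharp energy ceiling. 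You write ``$q\in(2,2\cdot2^*)$'' for the compact range and even note $\theta^*>2\cdot2^*$, but then fail to draw the decisive conclusion that $2\cdot2^*$ is actually inside that range. This is precisely the design of the perturbation method: the $\mu\Delta_\theta$ term restores compactness for every fixed $\mu$, and the difficulty is pushed entirely to the limit $\mu\to0^+$ treated later in Proposition \ref{converge process for mu to 0}.

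Because of this, several of the steps you propose do not go through in the generality the lemma requires. First, the energy ceilings of Lemmas \ref{energy estimate}, \ref{energy estimate2} and \ref{energy estimate2 masssuper} hold only for $\mu$ below thresholds $\mu_0,\mu_1,\mu_2$, and under additional restrictions on $q$, $N$, $\tau$, $c$. Lemma \ref{compact of sol} is stated for \emph{every} $\mu\in(0,1]$, so your bubble-exclusion step would prove a strictly weaker statement; in particular you could not conclude the claimed strong convergence for $\mu$ close to $1$, which is needed in the proof of Lemma \ref{local minimum for perturbed} and in setting up the $\mu\to0^+$ analysis. Second, the Appendix tools you invoke — the Moser-iteration $L^\infty$ bound (Lemma \ref{moser inter}) and the quasilinear splitting (Lemma \ref{decomposition of quasi}) — are formulated for exact weak solutions of the perturbed equation, not for constrained Palais--Smale sequences that satisfy it only up to an $o_n(1)$ error in $X^*$; this mismatch would have to be repaired, and the paper does not need to do so here. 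The paper passes to the weak limit in the equation using only that $|\nabla u_n|^{\theta-2}\nabla u_n$ is bounded in $L^{\theta/(\theta-1)}$ and that $u_n|\nabla u_n|^2$, $|u_n|^2\nabla u_n$ are bounded in $L^{4/3}$ (via Young's inequality), combined with the compact embeddings; it then deduces the energy identity and strong $X$-convergence by testing against $u_n$ and $u_\mu$, invoking weak lower semicontinuity and the strong $L^q$, $L^{2\cdot2^*}$ convergences. The hypothesis $\bar\lambda_\mu\neq0$ is used solely to convert $\lambda_n\|u_n\|_2^2\to\bar\lambda_\mu\|u_\mu\|_2^2$ into $\|u_n\|_2^2\to\|u_\mu\|_2^2$, which is the last piece needed for strong convergence in $X_r$.
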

\begin{proof}
Using Lemma \ref{existence of sol}, we have
	\begin{equation}\label{compact of sol eq101}
		u_n\rightharpoonup u_{\mu}~~~~\text{in}~~~X_r,~~~~~~~u_n\to u_{\mu}~~~~\text{in}~~~L^{r}(\R^N),~~r\in\left(2,\frac{N\theta}{N-\theta}\right).
	\end{equation}
By standard arguments (see \cite{BL1983} or \cite{JZZ2025}), for any $w\in X$, we get
	\begin{align}\label{compact of sol eq1}
		\langle I'_{\mu}(u_n)+\lambda_n u_n, w\rangle=o_n(1)~~~\text{with}~~~\lambda_n=-\frac{1}{c}\langle I'_{\mu}(u_n),u_n\rangle.
	\end{align}
Since $\langle I'_{\mu}(u_n),u_n\rangle$ is bounded, it follows that $\{|\lambda_n|\}$ is also bounded. Consequently, there exists $\bar\lambda_{\mu}\in\R$ such that, up to a subsequence, $\lambda_n\to\bar\lambda_{\mu}$ as $n\to+\infty$. By the boundedness of $\{u_n\}\subset X$, we get that $I'_{\mu}(u_n)+\bar\lambda_{\mu}u_n\to0$.

	To prove \eqref{compact of sol eq11}, it suffices to show that for any $w\in X$,
	\begin{equation}\label{compact of sol eq12}
		\langle I'_{\mu}(u_n)+\lambda_n u_n, w\rangle\rightarrow\langle I'_{\mu}(u_{\mu})+\bar\lambda_{\mu} u_{\mu}, w\rangle~~\text{as}~~n\to+\infty.
	\end{equation}
	Since $u_n\rightharpoonup u_{\mu}$ in $X_r$, we readily obtain the following convergences:
	\begin{equation*}
		\begin{cases}
			\int_{\R^N}\nabla u_n\nabla wdx\to\int_{\R^N}\nabla u_{\mu}\nabla wdx,\\
			\lambda_n\int_{\R^N}u_nwdx\to\bar\lambda_{\mu}\int_{\R^N}u_{\mu}wdx,\\
			\int_{\R^N}|u_n|^{q-2}u_nwdx\to\int_{\R^N}|u_{\mu}|^{q-2}u_{\mu}wdx,\\
			\int_{\R^N}|u_n|^{2\cdot2^*-2}u_nwdx\to\int_{\R^N}|u_{\mu}|^{2\cdot2^*-2}u_{\mu}wdx.\\
		\end{cases}
	\end{equation*}
	Taking into account \eqref{compact of sol eq1}, it remains to prove that
	\begin{equation}\label{compact of sol eq2}
		\int_{\R^N}|\nabla u_n|^{\theta-2}\nabla u_n\nabla wdx\to\int_{\R^N}|\nabla u_{\mu}|^{\theta-2}\nabla u_{\mu}\nabla wdx
	\end{equation}
	and
	\begin{equation}\label{compact of sol eq3}
		\int_{\R^N}\left(u_nw|\nabla u_n|^2+|u_n|^2\nabla u_n\nabla w\right)dx\to\int_{\R^N}\left(u_{\mu}w|\nabla u_{\mu}|^2+|u_{\mu}|^2\nabla u_{\mu}\nabla w\right)dx.
	\end{equation}
	Since $\{|\nabla u_n|\}$ is bounded in $L^{\theta}(\R^N)$, it follows that $\{|\nabla u_n|^{\theta-2}\nabla u_n\}$ is bounded in $L^{\frac{\theta}{\theta-1}}(\R^N)^N$. Consequently,
	\begin{equation*}
		|\nabla u_n|^{\theta-2}\nabla u_n\rightharpoonup|\nabla u_{\mu}|^{\theta-2}\nabla u_{\mu}~~~\text{in}~~~L^{\frac{\theta}{\theta-1}}(\R^N)^N.
	\end{equation*}
	Combining this with the weak convergence for any $|\nabla w|\in L^{\theta}(\R^N)$,  we conclude that \eqref{compact of sol eq2} holds. Similarly, using the Young's inequality, we have that
	\begin{equation*}
		(|u_n||\nabla u_n|^2)^{\frac{4}{3}}\leq\frac{|u_n|^4}{3}+\frac{2|\nabla u_n|^{4}}{3}~~~\text{and}~~~(|u_n|^2|\nabla u_n|)^{\frac{4}{3}}\leq\frac{2|u_n|^4}{3}+\frac{|\nabla u_n|^{4}}{3},
	\end{equation*}
	which implies that $\{|u_n||\nabla u_n|^2\}$ and $\{|u_n|^2|\nabla u_n|\}$ are bounded in $L^{\frac{4}{3}}(\R^N)$ since $\{u_n\}$ is bounded in $X$. Thus, we also have 
	\begin{eqnarray*}\label{compact of sol eq4}
		&&u_n|\nabla u_n|^2\rightharpoonup u_{\mu}|\nabla u_{\mu}|^2~~~\text{in}~~~L^{\frac{4}{3}}(\R^N),\\
		&&|u_n|^2\nabla u_n\rightharpoonup |u_{\mu}|^2\nabla u_{\mu}~~~\text{in}~~~L^{\frac{4}{3}}(\R^N)^N.
	\end{eqnarray*}
By a similar argument, \eqref{compact of sol eq3} is thereby established.
	
	Finally, in view of \eqref{compact of sol eq101}, we select the test functions $w_n=u_n$ and $w=u_{\mu}$ in \eqref{compact of sol eq12}, respectively. By invoking the weak lower semi-continuity property (see \cite[Lemma 4.3]{CJS2010}), we infer that
	\begin{equation*}
		\mu\int_{\R^N}|\nabla u_n|^{\theta}\to\mu\int_{\R^N}|\nabla u_{\mu}|^{\theta},~~~\int_{\R^N}|\nabla u_n|^2dx\to\int_{\R^N}|\nabla u_{\mu}|^2dx,~~~\int_{\R^N}|u_n|^2|\nabla u_n|^2dx\to\int_{\R^N}|u_{\mu}|^2|\nabla u_{\mu}|^2dx.
	\end{equation*}
These convergences imply that
	\begin{equation*}
		\lambda_n\int_{\R^N}|u_n|^2dx\to\bar\lambda_{\mu}\int_{\R^N}|u_{\mu}|^2dx~~~\text{and}~~~I_{\mu}(u_\mu)=M_{\mu}^*(c).
	\end{equation*}
	Thus, if $\bar\lambda_{\mu}\neq0$,  it follows that  $\int_{\R^N}|u_n|^2dx\to\int_{\R^N}|u_{\mu}|^2dx$. Consequently, $u_n\to u_{\mu}$ strongly in $X_r$, thereby completing the proof.
\end{proof}

\section{Convergence issues}\label{Convergence issues}\setcounter{equation}{0}
In this section, we address the convergence behavior as $\mu\to0^+$ and demonstrate that the sequences of critical points of $I_{\mu}$, obtained in Lemma \ref{local minimum for perturbed} and Lemma \ref{compact of sol}, converge to critical points of $I=I_0$ restricted on $\widetilde{S}(c)$. More precisely, for any fixed $\mu_n\in(0,1]$, we  identify the critical points $v_{\mu_n}$ (or $u_{\mu_n}$, respectively) of $I_{\mu_n}|_{S(c_n)}$ with $0<c_n\leq c$. These critical points solve the following problem:
\begin{equation}\label{approxi solutions}
	\begin{cases}
		-\mu_n\Delta_{\theta} v_{\mu_n}-\Delta v_{\mu_n}-v_{\mu_n}\Delta (v_{\mu_n}^2)+\lambda_{\mu_n} v_{\mu_n}=\tau|v_{\mu_n}|^{q-2}v_{\mu_n}+|v_{\mu_n}|^{2\cdot2^*-2}v_{\mu_n}~~\text{in}~~\R^N,\\
		\int_{\R^N}|v_{\mu_n}|^2dx=c_n.
	\end{cases}
\end{equation}
We then analyze the convergence process as $\mu_n\to0^+,n\to+\infty$, considering the different types of critical points involved.  This analysis is crucial for proving our main results and is presented in detail in the following two subsections.

\subsection{Local minimizers for the original problem}
In this subsection, we primarily consider the convergence behavior of the critical points obtained in Lemma \ref{local minimum for perturbed}. For simplicity, we set $v_n:=v_{\mu_n}$, $\tilde\lambda_n:=\tilde\lambda_{\mu_n}$ and $m_0^*(c):=\lim\limits_{n\to+\infty}m_{\mu_n}(c)$.
 
\begin{proposition}\label{mu to0 local mini}
Let $2<q<2+\frac{4}{N}$ and $c\in(0,c_0)$. Then there exists $(\check v,\tilde\lambda)\in\widetilde{S}(c)\times\R^+$ satisfying
	\begin{equation}\label{local minimizer eq}
		-\Delta u-u\Delta (u^2)+\lambda u=\tau|u|^{q-2}u+|u|^{2\cdot2^*-2}u,~~~x\in\R^N.
	\end{equation}
\end{proposition}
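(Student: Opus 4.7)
The plan is to apply the profile decomposition theorem from the Appendix to the sequence $\{v_n\}=\{v_{\mu_n}\}$ of perturbed local minimizers, combine it with a Brezis--Lieb type splitting for the quasilinear term, and invoke the strict subadditivity of $c\mapsto m_\mu(c)$ (Lemma \ref{mcproperty}) to force all the mass into a single profile. First I would collect uniform bounds: since $v_n\in V_{\mu_n}(c)\subset\mathcal{B}_{\rho_0}$, the quantities $\|\nabla v_n\|_2^2$, $\|v_n\nabla v_n\|_2^2$ and $\mu_n\|\nabla v_n\|_\theta^\theta$ are uniformly bounded. Plugging any fixed $u\in V_0(c)\cap X$ into $I_{\mu_n}$ and letting $\mu_n\to 0$ yields $\limsup_n m_{\mu_n}(c)\le m_0^*(c)<0$, so $\{v_n\}$ does not vanish (Lemma \ref{nonvanish}), while comparing $I_{\mu_n}(v_n)$ with $I_0(v_n)$ forces $\mu_n\|\nabla v_n\|_\theta^\theta\to 0$.

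Next I would invoke the profile decomposition (Theorem \ref{profile decom}) to write, after extracting a subsequence,
\begin{equation*}
v_n=\sum_{j=1}^{J}U^j(\cdot-y_n^j)+r_n^J,\qquad |y_n^j-y_n^k|\to\infty\text{ for }j\ne k,
\end{equation*}
with $\|r_n^J\|_{L^s}$ vanishing as $J\to\infty$ uniformly in $n$, for the relevant Lebesgue exponents $s$. Combined with the splitting lemma (Lemma \ref{decomposition of quasi}) this gives the decompositions
\begin{equation*}
I_0(v_n)=\sum_{j=1}^{J}I_0(U^j)+I_0(r_n^J)+o_n(1),\qquad \|v_n\|_2^2=\sum_{j=1}^{J}\|U^j\|_2^2+\|r_n^J\|_2^2+o_n(1),
\end{equation*}
together with analogous splittings for $\|\nabla v_n\|_2^2$ and $\|v_n\nabla v_n\|_2^2$; the correct splitting of $\int_{\R^N}|u|^2|\nabla u|^2dx$ is the delicate point and relies on the uniform $L^\infty$-bound satisfied by the approximating solutions.

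Using the scaling $u\mapsto\alpha^{1/(N+2)}u(\alpha^{-1/(N+2)}x)$ already exploited in the proof of Lemma \ref{mcproperty}, I would project each profile $U^j$ onto $V_0(c_j)$ with $c_j=\|U^j\|_2^2$, so that $I_0(U^j)\ge m_0^*(c_j)$; the contribution of the residual remains nonnegative once $J$ is large enough, thanks to Lemma \ref{functioncrho pro} applied to $\xi_0(r_n^J)$. Combined with the limiting strict subadditivity $m_0^*(c)<m_0^*(c_1)+m_0^*(c-c_1)$ inherited from Lemma \ref{mcproperty}(iii) by continuity, this rules out more than one nontrivial profile and forces $\|U^1\|_2^2=c$, whence strong convergence $v_n(\cdot+y_n^1)\to\check v$ in $\widetilde X$.

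Finally, the Lagrange multipliers $\tilde\lambda_n$ are uniformly bounded, so along a subsequence $\tilde\lambda_n\to\tilde\lambda\in\R$, and passing to the limit in \eqref{approxi solutions} yields that $\check v$ solves \eqref{local minimizer eq}. Strict positivity $\tilde\lambda>0$ follows by testing the limit equation against $\check v$, combining the result with the associated Nehari--Pohozaev identity, and exploiting $I_0(\check v)=m_0^*(c)<0$ together with $q<2+\tfrac{4}{N}$. The main obstacle will be the Brezis--Lieb splitting of the quasilinear term: the superposition identity displayed in the Introduction generates nine cross terms, and neutralizing them demands the $L^\infty$-estimate and regularity of the approximating solutions $v_n$, which in turn rely on the equation they satisfy; consequently the splitting has to be proved at the level of approximating solutions rather than for arbitrary bounded sequences in $\widetilde X$.
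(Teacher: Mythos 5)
The overall scheme you propose (uniform bounds, non-vanishing, profile decomposition, a Brezis--Lieb splitting at the level of approximating solutions, and then a subadditivity argument to rule out dichotomy) is in the same spirit as the paper's. You also correctly identify the central technical obstacle, namely that the splitting of $\int_{\mathbb R^N}|u|^2|\nabla u|^2\,dx$ must be proved for bounded, regular approximating \emph{solutions} rather than for arbitrary sequences in $\widetilde X$.

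However, there is a genuine gap at the step where you rule out more than one nontrivial profile. You write that you would "project each profile $U^j$ onto $V_0(c_j)$ with $c_j=\|U^j\|_2^2$" (which, with the dilation you cite, is the trivial rescaling) and then invoke "the limiting strict subadditivity $m_0^*(c)<m_0^*(c_1)+m_0^*(c-c_1)$ inherited from Lemma \ref{mcproperty}(iii) by continuity." This does not work as stated. Lemma \ref{mcproperty}(iii) gives non-strict subadditivity of $m_\mu$ in general, with strictness only when the infimum at one of the smaller masses is \emph{attained}; for the limit quantity $m_0^*$, attainment is exactly what we are in the middle of proving, so invoking it is circular. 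Moreover, strict inequalities are not "inherited by continuity" when passing $\mu\to 0$: a family of strict inequalities can degenerate to equality in the limit. One could plausibly repair your argument by noting that if equality held throughout $m_0^*(c)=\sum_j m_0^*(c_j)$, then each $I_0(U^j)=m_0^*(c_j)$ would be attained and only \emph{then} apply a $\mu=0$ analogue of Lemma \ref{mcproperty}(iii), but you would still need to extend that lemma to the limit problem, and you did not articulate this repair.

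The paper avoids the issue entirely by a different device: each component $\tilde v_j$ of the decomposition is rescaled via
$\tilde V_j(x)=\omega_j^{\frac{1-N}{2N}}\tilde v_j(\omega_j^{-\frac1N}x)$ with
$\omega_j=\bigl(\|\check v_n\|_2/\|\tilde v_j\|_2\bigr)^{2N}$, which dilates every profile to the \emph{common mass} $c$. One then always compares with the single quantity $m_{\mu_n}(c)$, which is attained by Lemma \ref{local minimum for perturbed}, and deduces that the parameter $\tilde\delta_1=\min\{\omega_{j_0},\omega_n^i\}\le 1$. This forces the mass of one profile to be at least $c$, proving concentration without ever comparing energies at different mass levels and hence without any need for strict subadditivity at $\mu=0$. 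This is the key point where the two arguments diverge; the remaining steps (boundedness of $\tilde\lambda_n$, passage to the limit in the equation, and the sign of $\tilde\lambda$ via the Pohozaev identity and $m_0^*(c)<0$) match yours.
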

\begin{proof}
We first claim that $m_0^*(c)<0$.
	In fact, observe that for any $0<\mu_2<\mu_1\leq1$ and $u\in X$, one has $I_{\mu_1}(u)\ge I_{\mu_2}(u)$ and $V_{\mu_1}(c)\subset V_{\mu_2}(c)$. Hence, we have
\begin{equation*}
	m_{\mu_1}(c)=\inf_{u\in V_{\mu_1}(c)}I_{\mu_1}(u)\ge\inf_{u\in V_{\mu_1}(c)}I_{\mu_2}(u)\ge\inf_{u\in V_{\mu_2}(c)}I_{\mu_2}(u)=m_{\mu_2}(c),
\end{equation*}
which implies that the energy $m_\mu(c)$ is non-decreasing with respect to $\mu\in(0,1]$. Therefore, by Lemma \ref{mcproperty} (\romannumeral1), we have $m_0^*(c)\leq m_1(c)<0$, thus confirming the claim. 
Moreover, we infer that $\{v_n\}$ is bounded in $H^1(\R^N)$ and $\{v_n\nabla v_n\}$ is bounded in $L^2(\R^N)^N$ due to the fact that $v_n\in V_{\mu_n}(c)$.
	Define
	\begin{equation*}
		\bar\delta:=\limsup_{n\to+\infty}\sup_{y\in\R^N}\int_{B_1(y)}|v_n|^2dx.
	\end{equation*}
	We now prove that $\bar\delta>0$. Suppose, for the sake of contradiction, that $\bar\delta=0$. It follows from Lemma \ref{lions lemma} that $v_n\to0$ in $L^{\beta}(\R^N)$ for all $2<\beta<2\cdot2^*$.
	Using the Sobolev inequality, we obtain
\begin{equation*}\begin{aligned}
I_{\mu_n}(v_n)&=\frac{\mu_n}{\theta}\int_{\R^N}|\nabla v_n|^\theta dx+\frac{1}{2}\int_{\R^N}|\nabla v_n|^2dx+\int_{\R^N}|v_n|^2|\nabla v_n|^2dx-\frac{1}{2\cdot2^*}\int_{\R^N}|v_n|^{2\cdot2^*}dx+o_n(1)\\
	 &\geq\xi_{\mu_n}(v_n)\left[\frac{1}{2}-\frac{1}{2\cdot2^*}\left(\frac{4}{\mathcal{S}}\right)^{\frac{2^*}{2}}\rho_0^{\alpha_2}\right]+o_n(1).
\end{aligned}
\end{equation*}
Taking into account that $f(c_0,\rho_0)=0$, see Remark \ref{remark f pro1}, we arrive at
\begin{equation*}
	m_0^*(c)=\lim\limits_{n\to+\infty}m_{\mu_n}(c)=\lim\limits_{n\to+\infty}I_{\mu_n}(v_n)\ge0,
\end{equation*}
which is a contradiction. Thus, $\bar\delta>0$, i.e., there exist $\{y_n\}\subset\R^N$ and $\bar\delta>0$ such that $\liminf\limits_{n\to+\infty}\int_{B_1(y_n)}|v_n|^2dx\ge\bar\delta>0$. Let $\check v_n(\cdot):=v_n(\cdot+y_n)$. Then
\begin{equation*}
	I_{\mu_n}(\check v_n)\to m_0^*(c),~~~~I'_{\mu_n}(\check v_n)+\tilde\lambda_n\check v_n=0,~~~~\int_{B_1(0)}|\check v_n|^2dx\ge\bar\delta>0,
\end{equation*}
which implies that there exists $\check v\in\widetilde{X}\backslash\{0\}$ such that, up to a subsequence,
\begin{equation*}
	\check v_n\rightharpoonup\check{v}~~~\text{in}~~~H^1(\R^N),~~~~\check v_n\nabla \check v_n\rightharpoonup\check{v}\nabla\check v~~~\text{in}~~~L^2(\R^N)^N,~~~~\check v_n\to \check{v}~~~\text{in}~~~L^{r}_{loc}(\R^N)~~~\text{for}~~~2<r<2^*.
\end{equation*}
Using the interpolation inequality , we have $\check v_n\to \check{v}$ in $L^{r}_{loc}(\R^N)$ for all $2<r<2\cdot2^*$.
Moreover, by testing \eqref{approxi solutions} with $v_n$, it is standard to show that $\{\tilde\lambda_n\}$ is bounded, and we may assume that $\tilde\lambda_n\to\tilde\lambda$ in $\R$.
	By Lemma \ref{moser inter} and Remark \ref{regular rmk}, we can prove that $\check{v}$ satisfies
	\begin{equation}\label{mu to0 lem1 eq8}
		\langle I'(\check v)+\tilde\lambda \check v,\phi\rangle=0, ~~~~\forall \phi\in H^1(\R^N)\cap L^{\infty}(\R^N),
	\end{equation}
which implies that $\check v$ is a nontrivial solution of \eqref{local minimizer eq}. Thus, with the aid of Remark \ref{regular rmk} and Theorem \ref{profile decom}, we can find a profile decomposition of $\{\check v_n\}$ satisfying
	\begin{equation*}
		\check v_n(\cdot+y_n^i)\rightharpoonup\tilde v_i~~\text{in}~~H^1(\R^N),~~~\check v_n(\cdot+y_n^i)\nabla \check v_n(\cdot+y_n^i)\rightharpoonup\tilde v_i\nabla \tilde v_i~~\text{in}~~~L^2(\R^N)^N~~\text{as}~~n\to+\infty,
	\end{equation*}
	\begin{equation*}
		\|\nabla \check v_n\|_2^2=\sum_{j=0}^i\|\nabla \tilde v_j\|_2^2+\|\nabla v_n^i\|_2^2+o_n(1),~~~\|\check v_n\nabla \check v_n\|_2^2=\sum_{j=0}^i\|\tilde v_j\nabla \tilde v_j\|_2^2+\|v_n^i\nabla v_n^i\|_2^2+o_n(1),
	\end{equation*}
	\begin{equation*}
		\|\check v_n\|_2^2=\sum_{j=0}^i\|\tilde v_j\|_2^2+\|v_n^i\|_{2}^2+o_n(1),~~~\limsup_{n\to+\infty}\|\check v_n\|_q^q=\sum_{j=0}^{\infty}\|\tilde v_j\|_q^q,~~~\|\check v_n\|_{2\cdot2^*}^{2\cdot2^*}=\sum_{j=0}^i\|\tilde v_j\|_{2\cdot2^*}^{2\cdot2^*}+\|v_n^i\|_{2\cdot2^*}^{2\cdot2^*}+o_n(1),
	\end{equation*}
	where $v_n^i(\cdot):=\check v_n-\sum_{j=0}^i\tilde v_j(\cdot-y_n^j)$. Thus, we obtain
	\begin{equation*}\label{cc prin eq1}
		I_{\mu_n}(\check v_n)=\frac{\mu_n}{\theta}\int_{\R^N}|\nabla \check v_n|^{\theta}dx+\sum_{j=0}^iI(\tilde v_j)+I(v_n^i)+o_n(1).
	\end{equation*}
	For $j=0,1,\cdots,i$, set
	\begin{equation*}
		\omega_j:=\left(\frac{\|\check v_n\|_2}{\|\tilde v_j\|_2}\right)^{2N},~~~~\omega_n^i:=\left(\frac{\|\check v_n\|_2}{\|v_n^i\|_2}\right)^{2N}.
	\end{equation*}
	Clearly, $\omega_j\ge1$ and $\omega_n^i\ge1$. Moreover, from the convergence of $\sum_{j=0}^i\|\tilde v_j\|_2^2$, there exists some $j_0\ge0$ such that
	\begin{equation*}
		\inf_{j\ge0}\omega_j=\omega_{j_0}=\left(\frac{\|\check v_n\|_2}{\|\tilde v_{j_0}\|_2}\right)^{2N}.
	\end{equation*}
	For $j=0,1,\cdots,i$, define the scaled functions
	\begin{equation}\label{scal pro eq}
		\tilde V_j(x):=\omega_j^{\frac{1-N}{2N}}\tilde v_j\left(\omega_j^{-\frac{1}{N}}x\right),~~~~V_n^i(x):=(\omega_n^i)^{\frac{1-N}{2N}}v_n^i\left((\omega_n^i)^{-\frac{1}{N}}x\right).
	\end{equation}
	We then obtain the following relations:
	\begin{equation*}
		\|\tilde V_j\|_2^2=\|V_n^i\|_2^2=c,~~~\|\nabla \tilde V_j\|_\theta^\theta=\omega_j^{\frac{-(N+1)\theta}{2N}+1}\|\nabla \tilde v_j\|_\theta^\theta,~~~\|\nabla \tilde V_j\|_2^2=\omega_j^{-\frac{1}{N}}\|\nabla \tilde v_j\|_2^2,~~~\|\tilde V_j\nabla \tilde V_j\|_2^2=\omega_j^{-1}\|\tilde v_j\nabla \tilde v_j\|_2^2.
	\end{equation*}
	This implies that $\tilde V_j\in V_{\mu}(c)$ for any $\mu\in(0,1]$, and similarly, $V_n^i\in V_{\mu}(c)$.
	Moreover, using the Young's inequality and the Sobolev inequality, for any $j=0,1,\cdots,i$, there exist $\eps_j>0$ and $C_{\eps_j}>0$ such that
	\begin{align*}
		\|\tilde v_j\|_q^q\leq\eps_j\|\tilde v_j\|_2^2+C_{\eps_j}\left(\frac{4}{\mathcal{S}}\right)^{\frac{2^*}{2}}\|\tilde v_j\nabla \tilde v_j\|_2^{2^*}.
	\end{align*}
	Therefore, choosing $\eps_j=\frac{(1-\omega_j^{-1-\frac{1}{N}})q}{3\tau\|\tilde v_j\|_2^2}\|\tilde v_j\nabla \tilde v_j\|_2^2$ and picking $c>0$ smaller if necessary, we have
	\begin{align*}
		I(\tilde v_j)&=\frac{I(\tilde V_j)}{\omega_j^{\frac{1}{N}}}+\frac{1-\omega_j^{-\frac{2}{N}}}{2}\|\nabla \tilde v_j\|_2^2+(1-\omega_j^{-1-\frac{1}{N}})\|\tilde v_j\nabla \tilde v_j\|_2^2\\
		&\quad-\frac{\tau}{q}\left(1-\omega_j^{\frac{1-N}{2N}(q-2)}\right)\|\tilde v_j\|_q^q-\frac{N-2}{4N}\left(1-\omega_j^{\frac{1-N}{2N}(2\cdot2^*-2)}\right)\|\tilde v_j\|_{2\cdot2^*}^{2\cdot2^*},\\
		&\ge\frac{I(\tilde V_j)}{\omega_j^{\frac{1}{N}}}+(1-\omega_j^{-1-\frac{1}{N}})\|\tilde v_j\nabla \tilde v_j\|_2^2-\frac{\tau}{q}\eps_j\|\tilde v_j\|_2^2-\left(\frac{\tau}{q}C_{\eps_j}+\frac{N-2}{4N}\right)\left(\frac{4}{\mathcal{S}}\right)^{\frac{2^*}{2}}\|\tilde v_j\nabla \tilde v_j\|_2^{2^*}\\
		&\ge\frac{I(\tilde V_j)}{\omega_j^{\frac{1}{N}}}+\frac{1-\omega_j^{-1-\frac{1}{N}}}{3}\|\tilde v_j\nabla \tilde v_j\|_2^2.
	\end{align*}
	Similarly, we also get
	\begin{align*}
		I(v_n^i)&\ge\frac{I(V_n^i)}{(\omega_n^i)^{\frac{1}{N}}}+\frac{1-(\omega_n^i)^{-1-\frac{1}{N}}}{3}\|v_n^i\nabla v_n^i\|_2^2.
	\end{align*}
	Thus as $n\to+\infty$ and $i\to+\infty$, we deduce that 
	\begin{align*}
		I_{\mu_n}(\check v_n)&\ge\frac{\mu_n}{\theta}\int_{\R^N}|\nabla \check v_n|^{\theta}dx+\sum_{j=0}^i\frac{I_{\mu_n}(\tilde V_j)}{\omega_j^{\frac{1}{N}}}+\sum_{j=0}^i\frac{1-\omega_j^{-1-\frac{1}{N}}}{3}\|\tilde v_j\nabla \tilde v_j\|_2^2\\
		&\quad+\frac{I_{\mu_n}(V_n^i)}{(\omega_n^i)^{\frac{1}{N}}}+\frac{1-(\omega_n^i)^{-1-\frac{1}{N}}}{3}\|v_n^i\nabla v_n^i\|_2^2+o_n(1)\\
		&\ge m_{\mu_n}(c)+\frac{1}{3}\inf_{j\ge0}\left(1-\omega_j^{-1-\frac{1}{N}}\right)\sum_{j=0}^i\|\tilde v_j\nabla \tilde v_j\|_2^2+\frac{1-(\omega_n^i)^{-1-\frac{1}{N}}}{3}\|v_n^i\nabla v_n^i\|_2^2+o_n(1).
	\end{align*}
	Let $\tilde\delta_1:=\min\{\omega_{j_0},\omega_n^i\}$. Then we have
	\begin{equation}\label{cc prin eq2}
		m_{\mu_n}(c)\ge m_{\mu_n}(c)+\frac{1-\tilde\delta_1^{-1-\frac{1}{N}}}{3}\|\check v_n\nabla \check v_n\|_2^2+o_n(1).
	\end{equation}
	Now, we claim that there exists some $\tilde\delta_2>0$ independent of $n$ such that 
	\begin{equation}\label{cc prin eq3}
		\|\check v_n\nabla \check v_n\|_2^2\ge\tilde\delta_2>0.
	\end{equation}
	Indeed, since $m_0^*(c)<0$, for sufficiently large $n$, we can take a positive number $\tilde\sigma$ such that
	\begin{equation*}
		I_{\mu_n}(\check v_n)\leq m_0^*(c)+\tilde\sigma<0.
	\end{equation*}
	Thus, for sufficiently large $n$, we get that
	\begin{equation*}
		\frac{\tau}{q}\|\check v_n\|_q^q+\frac{1}{2\cdot2^*}\|\check v_n\|_{2\cdot2^*}^{2\cdot2^*}\ge -(m_0^*(c)+\tilde\sigma)>0,
	\end{equation*}
	which, combined with the Gagliardo-Nirenberg inequality and Sobolev inequality, implies \eqref{cc prin eq3}. Thus it follows from \eqref{cc prin eq2} that as $n\to+\infty$,
	\begin{equation*}
		m_0^*(c)\ge m_0^*(c)+\frac{1-\tilde\delta_1^{-1-\frac{1}{N}}}{3}\tilde\delta_2,
	\end{equation*}
	which implies that $\tilde\delta_1\leq1$. Therefore, we deduce that
	\begin{equation*}
		\|\check v_n\|_2\leq\|\tilde v_{j_0}\|_2~~~~~~\text{or}~~~~~~\|\check v_n\|_2\leq\|v_n^i\|_2.
	\end{equation*}
	If $\|\check v_n\|_2\leq\|v_n^i\|_2$, then
	\begin{equation*}
		\lim_{n\to+\infty}\|\check v_n\|_q^q=0.
	\end{equation*}
	Similar to the argument as before, we can easily conclude that this is impossible since $m_0^*(c)<0$. Therefore, it is necessary that
	\begin{equation*}
		\|\check v_n\|_2\leq\|\tilde v_{j_0}\|_2,
	\end{equation*}
	which implies that
	\begin{equation*}
		\|\check v_n\|_2^2=\|\tilde v_{j_0}\|_2^2,~~~~\|\nabla \check v_n\|_2^2=\|\nabla\tilde v_{j_0}\|_2^2,~~~~\|\check v_n\nabla \check v_n\|_2^2=\|\tilde v_{j_0}\nabla\tilde v_{j_0}\|_2^2.
	\end{equation*}
	Moreover, by Theorem \ref{profile decom}, there exists a fixed point $y^{j_0}\in\R^N$ such that
	\begin{equation*}
		\check v_n(x)=\tilde v_{j_0}(x-y_n^{j_0})\rightharpoonup \tilde v_{j_0}(x-y^{j_0})~~\text{as}~~n\to+\infty.
	\end{equation*}
	Thus, by the uniqueness of the weak limit, there holds $\check v=\tilde v_{j_0}(\cdot-y^{j_0})$.  Consequently,
	\begin{equation*}
		\check v_n\to \check v~~~\text{in}~~~H^1(\R^N)~~~\text{and}~~~\check v_n\nabla \check v_n\to\check v\nabla \check v~~~\text{in}~~~L^2(\R^N)^N.
	\end{equation*}
	Since
	\begin{equation*}
		Q_{\mu_n}(\check v_n)+\tau\gamma_q\|\check v_n\|_q^q+\gamma_{2\cdot2^*}\|\check v_n\|_{2\cdot2^*}^{2\cdot2^*}\to Q_0(\check v)+\tau\gamma_q\|\check v\|_q^q+\gamma_{2\cdot2^*}\|\check v\|_{2\cdot2^*}^{2\cdot2^*},
	\end{equation*}
	it follows that
	\begin{equation*}
		\mu_n\|\nabla \check v_n\|_\theta^\theta\to0~~~\text{and}~~~I(\check v)=\lim_{n\to+\infty}m_{\mu_n}(c)=m_0^*(c).
	\end{equation*}

	To proceed, it suffices to demonstrate that $\tilde\lambda>0$. In fact, from \eqref{mu to0 lem1 eq8}, $\check v$ satisfies the following Pohozaev identity
	\begin{align*}\label{poho id local eq1}
		&\frac{2-N}{2}\int_{\R^N}|\nabla \check v|^2dx+(2-N)\int_{\R^N}|\check v|^2|\nabla \check v|^2dx-\frac{\tilde\lambda N}{2}\int_{\R^N}|\check v|^2dx+\frac{\tau N}{q}\int_{\R^N}|\check v|^qdx+\frac{N}{2\cdot2^*}\int_{\R^N}|\check v|^{2\cdot2^*}dx=0.
	\end{align*}
	Thus, we get
	\begin{equation*}
		0\ge m_0^*(c)=\lim\limits_{n\to+\infty}I_{\mu_n}(\check v_{n})=I(\check v)=\frac{1}{N}\int_{\R^N}|\nabla \check v|^2dx+\frac{2}{N}\int_{\R^N}|\check v|^2|\nabla \check v|^2dx-\frac{\tilde\lambda}{2}\int_{\R^N}|\check v|^2dx,
	\end{equation*}
	which gives that
	\begin{equation*}
		\tilde\lambda\|\check v\|_2^2\ge\frac{2}{N}\int_{\R^N}|\nabla \check v|^2dx+\frac{4}{N}\int_{\R^N}|\check v|^2|\nabla \check v|^2dx>0.
	\end{equation*}
	Therefore, $\tilde\lambda>0$, and the proof is complete.
\end{proof}

Building on the above result, we are now ready to complete the proof of Theorem \ref{thm: main result1-1}.

\begin{proof}[{\bf{Proof of Theorem \ref{thm: main result1-1}} \rm}] 
	Let $\mu_n\to0^+$. By Lemma \ref{local minimum for perturbed}, there exists a sequence $\{v_{\mu_n}\}\subset V_{\mu_n}(c)$ with $0<c< c_0$ such that
	\begin{equation*}
		I'_{\mu_n}(v_{\mu_n})+\tilde\lambda_{\mu_n}v_{\mu_n}=0,~~~~I_{\mu_n}(v_{\mu_n})=m_{\mu_n}(c)\to m_0^*(c).
	\end{equation*}
Then, Proposition \ref{mu to0 local mini} implies that there exist $0\neq\check v\in\widetilde{S}(c)\cap L^{\infty}(\R^N)$ and $\tilde{\lambda}>0$ such that 
	\begin{equation*}
		I'(\check v)+\tilde\lambda \check v=0,~~~~I(\check v)=m_0^*(c).
	\end{equation*}
	Define
	\begin{equation*}
		m_0(c):=\inf\{I(v):v\in\widetilde{S}(c), dI|_{\widetilde{S}(c)}(v)=0\}.
	\end{equation*}
	Clearly, $m_0(c)\leq m_0^*(c)<0$. Taking a sequence $v_n\in\widetilde{S}(c)$ with $dI|_{\widetilde{S}(c)}(v_n)=0$ and $I(v_n)\to m_0(c)$, and applying a similar argument as in Proposition \ref{mu to0 local mini}, we obtain a function $\tilde v_0\in \widetilde{S}(c)\cap L^{\infty}(\R^N)\backslash\{0\}$ and a constant $\tilde\lambda_0>0$ such that, up to a subsequence, there holds
	\begin{equation*}
		I'(\tilde v_0)+\tilde\lambda_0 \tilde v_0=0,~~~~I(\tilde v_0)=m_0(c).
	\end{equation*}
	Moreover, since $Q_0(\tilde v_0)=0$, we have
	\begin{align*}
		I(\tilde v_0)&=\left(\frac{1}{2}-\frac{N-2}{N(N+2)}\right)\int_{\R^N}|\nabla \tilde v_0|^2dx+\frac{2}{N}\int_{\R^N} |\tilde v_0|^2|\nabla \tilde v_0|^2dx-\tau\left(\frac{1}{q}-\frac{(N-2)\gamma_q}{N(N+2)}\right)\int_{\R^N}|\tilde v_0|^qdx.
	\end{align*}
	Combining \eqref{gn2} and $I(\tilde v_0)=m_0(c)<0$, we get that
	\begin{equation}\label{pro local min eq1}
		\frac{2}{N}\int_{\R^N} |\tilde v_0|^2|\nabla \tilde v_0|^2dx<\tau\left(\frac{1}{q}-\frac{(N-2)\gamma_q}{N(N+2)}\right)\mathcal{C}_2(q,N)c^{\frac{4N-q(N-2)}{2(N+2)}}\left(\int_{\R^N} |\tilde v_0|^2|\nabla \tilde v_0|^2dx\right)^{\frac{N(q-2)}{2(N+2)}}.
	\end{equation}
	Since $q<4+\frac{4}{N}$, it follows from \eqref{pro local min eq1} that the ground state $\tilde v_0$ lies within $\mathcal{V}_0(c)$, as defined by \eqref{local ground state ball}. 
\end{proof}

\subsection{Mountain pass solutions for the original problem}
This subsection addresses the convergence issue of the critical points obtained in Lemma \ref{compact of sol}. To begin,
we recall the definition of $M_{\mu}^*(c)$ from Section \ref{compactness1} and define $M^*_0(c):=\lim\limits_{n\to+\infty}M_{\mu_n}^*(c_n)$. By applying Lemma \ref{mountain pass struc}, Lemma \ref{mp struc2} and Lemma \ref{mp struc3 eq3}, we establish that  $M^*_0(c)>0$ for all relevant values of $c$. Building on this, we establish the following proposition:

\begin{proposition}\label{converge process for mu to 0}
	Suppose $N\ge3, 2<q<2\cdot2^*, c>0$ is fixed. Let $\mu_n\to0^+$ as $n\to+\infty$, and consider a sequence $\{\bar w_n\}\subset S_r(c_n)$ with $0<c_n\leq c$ and $\{\bar\lambda_n\}\subset\R$ satisfying 
	\begin{equation}\label{converge process for mu to 0 eq1}
		I_{\mu_n}(\bar w_n)\to M_0^*(c)\in\left(0,\frac{1}{2N}\mathcal{S}^{\frac{N}{2}}\right),~~~I'_{\mu_n}(\bar w_n)+\bar\lambda_{n}\bar w_n=0,
	\end{equation}
	Then there exist $\bar u\in H^1_r(\R^N)\cap L^{\infty}(\R^N)\backslash\{0\}$ and $\bar\lambda\in\R$ such that, up to a subsequence, one of the following holds: 
	\begin{enumerate}[label=(\roman*)]
		\item either $\bar w_n\rightharpoonup \bar u$ weakly in $\widetilde{X}$, where $\bar u$ solves \eqref{quasilinear eq2} for some $\bar\lambda\in\R$, and
		\begin{equation*}
			I(\bar u)\leq M_0^*(c)-\frac{1}{2N}\mathcal{S}^{\frac{N}{2}};
		\end{equation*}
		\item or as $n\to+\infty$, we have that
		\begin{equation*}
			\mu_n\|\nabla \bar w_n\|_{\theta}^{\theta}\to0,~~~\|\nabla \bar w_n\|_2^2\to\|\nabla \bar u\|_2^2,~~~\|\bar w_n\nabla \bar w_n\|_2^2\to \|\bar u\nabla \bar u\|_2^2,
		\end{equation*}
with $I(\bar u)=M_0^*(c)$.
		Moreover, if $\bar\lambda\neq0$, then $\bar w_n\to \bar u$ in $L^2(\R^N)$. In particular, $\bar u$ is a critical point of $I$ on $\widetilde{S}(c)$, where $c=\lim\limits_{n\to+\infty}c_n$.
	\end{enumerate}
\end{proposition}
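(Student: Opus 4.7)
The plan is a concentration--compactness analysis based on the quasilinear Brezis--Lieb splitting (Lemma \ref{decomposition of quasi}) together with radial compactness: once one observes that $\{\bar w_n^2\}$ is bounded in $H^1_r(\R^N)$, Strauss-type compactness gives $\bar w_n\to \bar u$ in $L^r(\R^N)$ for every $r\in(2,2\cdot 2^*)$. To begin, combining the energy bound with the Pohozaev identity $Q_{\mu_n}(\bar w_n)=0$ (Lemma \ref{poho id}), I would establish boundedness of $\{\|\nabla \bar w_n\|_2\}$, $\{\|\bar w_n\nabla \bar w_n\|_2\}$ and $\{\mu_n\|\nabla \bar w_n\|_\theta^\theta\}$, case-splitting on $q$ relative to $2+4/N$ and $4+4/N$ exactly as in Lemma \ref{existence of sol}. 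Extracting subsequences then provides $\bar w_n\rightharpoonup \bar u$ in $H^1_r$, $\bar w_n\nabla \bar w_n\rightharpoonup \bar u\nabla \bar u$ in $L^2$, and $\bar\lambda_n\to\bar\lambda$. The $L^\infty$ estimate of Remark \ref{regular rmk} lets me pass to the limit in the weak formulation and conclude that $\bar u$ solves \eqref{quasilinear eq2}; non-triviality of $\bar u$ follows by contradiction from $0<M_0^*(c)<\mathcal{S}^{N/2}/(2N)$, since $\bar u\equiv 0$ would either force $\bar w_n\to 0$ strongly or trigger concentration giving $\lim I_{\mu_n}(\bar w_n)\geq \mathcal{S}^{N/2}/(2N)$.

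Setting $\tilde w_n:=\bar w_n-\bar u$, the classical Brezis--Lieb lemma for the $L^2$, $H^1$ and $L^{2\cdot 2^*}$ norms, combined with Lemma \ref{decomposition of quasi} for the quasilinear term and the compactness $\bar w_n\to\bar u$ in $L^q$, allows passage to the limit in both $Q_{\mu_n}(\bar w_n)=Q_0(\bar u)=0$ and the energy identity. Writing $A:=\lim\|\nabla\tilde w_n\|_2^2$, $B:=\lim\|\tilde w_n\nabla\tilde w_n\|_2^2$, $C:=\lim\|\tilde w_n\|_{2\cdot 2^*}^{2\cdot 2^*}$ and $\eta_1:=\lim \mu_n\|\nabla \bar w_n\|_\theta^\theta$, this produces
\begin{align*}
(1+\gamma_\theta)\eta_1+A+(N+2)B &= \tfrac{N+2}{4}\,C,\\
M_0^*(c)-I(\bar u) &= \eta_1\Bigl[\tfrac{1}{\theta}-\tfrac{(N-2)(1+\gamma_\theta)}{N(N+2)}\Bigr]+\tfrac{N^2+4}{2N(N+2)}\,A+\tfrac{2}{N}\,B,
\end{align*}
where a direct computation using $\theta<(4N+4)/(N+2)$ shows that the coefficient of $\eta_1$ in the second identity is strictly positive.

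The dichotomy then proceeds on $B$. If $B=0$, the Sobolev inequality $\mathcal{S}\,C^{2/2^*}\leq 4B$ (applied to $\tilde w_n^2$) forces $C=0$, and the first identity then forces $A=\eta_1=0$; combined with Brezis--Lieb this delivers case (ii) with $I(\bar u)=M_0^*(c)$. If $B>0$, combining the inequality $4B\leq C$ (read off from the first identity after dropping the non-negative $A$ and $\eta_1$ terms) with $\mathcal{S}\,C^{2/2^*}\leq 4B$ yields $B\geq \mathcal{S}^{N/2}/4$, whence the second identity gives $M_0^*(c)-I(\bar u)\geq \mathcal{S}^{N/2}/(2N)$, which is precisely case (i). The supplementary $L^2$-convergence in case (ii) under $\bar\lambda\neq 0$ follows by testing the equation for $\bar w_n$ with $\bar w_n$, subtracting the analogous identity for $\bar u$, and passing to the limit: this yields $\bar\lambda(c-\|\bar u\|_2^2)=0$, so $\bar\lambda\neq 0$ forces $\|\bar u\|_2^2=c$.

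The main obstacle is the Brezis--Lieb splitting of the quasilinear term $\int|\bar w_n|^2|\nabla \bar w_n|^2\,dx$, whose expansion generates numerous cross-terms that need not vanish on merely weakly convergent sequences. Controlling these relies crucially on the uniform $L^\infty$-bound for $\{\bar w_n\}$ from Moser iteration (Lemma \ref{moser inter}) together with the perturbed Euler--Lagrange structure --- precisely the content of Lemma \ref{decomposition of quasi} --- which is what allows the concentration--compactness scheme to close in the Sobolev critical regime.
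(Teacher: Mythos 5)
Your proposal is correct and follows essentially the same route as the paper's proof: boundedness of the sequence via $Q_{\mu_n}(\bar w_n)=0$, non-triviality of $\bar u$ from the energy window $\bigl(0,\tfrac{1}{2N}\mathcal{S}^{N/2}\bigr)$, the quasilinear Brezis--Lieb splitting of Lemma \ref{decomposition of quasi}, and the dichotomy on the residual quasilinear mass against the Sobolev constant. Your two identities and the positivity check on the $\eta_1$-coefficient match the paper's computation under the relabelling $(\eta_1,A,B,C)\leftrightarrow(\iota_1,\iota_2,\iota_3,\iota_4)$.
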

\begin{proof}
For the sequence $\{\bar w_n\}\subset S_r(c_n)$ satisfying \eqref{converge process for mu to 0 eq1}, we first show that
	\begin{equation}\label{the boundedness of the appro sol eq}
		\sup_{n\in\mathbb{N}}\max\left\{\mu_n\int_{\R^N}|\nabla \bar w_n|^{\theta}dx,\int_{\R^N}|\bar w_n|^2|\nabla \bar w_n|^{2}dx,\int_{\R^N}|\nabla \bar w_n|^{2}dx\right\}<+\infty.
	\end{equation}
	In fact, from the equation $I'_{\mu_n}(\bar w_n)+\bar\lambda_{n}\bar w_n=0$ and Lemma \ref{poho id}, we know that $Q_{\mu_n}(\bar w_n)=0$ for each $n\in\mathbb{N}$.
	Considering that $I_{\mu_n}(\bar w_n)\to M_0^*(c)$ as $n\to+\infty$, we analyze the boundedness of $\{\bar w_n\}$ by examining the following two cases.
	
	\textit{Case 1:} $2<q<4+\frac{4}{N}$.
	By \eqref{gn2} we get that
	\begin{align*}
		&M_0^*(c)+1\ge I_{\mu_n}(\bar w_n)=I_{\mu_n}(\bar w_n)-\frac{N-2}{N(N+2)}Q_{\mu_n}(\bar w_n)\\
		&\ge\frac{\mu_n}{\theta}\left(1-\theta(1+\gamma_{\theta})\frac{N-2}{N(N+2)}\right)\int_{\R^N}|\nabla \bar w_n|^{\theta}dx+\left(\frac{1}{2}-\frac{N-2}{N(N+2)}\right)\int_{\R^N}|\nabla \bar w_n|^{2}dx\\
		&\quad+\frac{2}{N}\int_{\R^N}|\bar w_n|^2|\nabla \bar w_n|^2dx+\tau\left(\frac{\gamma_q(N-2)}{N(N+2)}-\frac{1}{q}\right)\mathcal{C}_2(q,N)c^{\frac{4N-q(N-2)}{2(N+2)}}\left(\int_{\R^N}|\bar w_n|^2|\nabla \bar w_n|^{2}dx\right)^{\frac{N(q-2)}{2(N+2)}},
	\end{align*}
	which implies that the sequences $\{\int_{\R^N}(1+|\bar w_n|^2)|\nabla \bar w_n|^{2}dx\}$ and $\{\mu_n\|\nabla \bar w_n\|_{\theta}^{\theta}\}$ are bounded. Moreover, by the Sobolev inequality, $\{\bar w_n\}$ is bounded in $H^1(\R^N)$ and $L^{2\cdot2^*}(\R^N)$.
	
\textit{Case 2:} $4+\frac{4}{N}\le q<2\cdot2^*$. By $Q_{\mu_n}(\bar w_n)=0$, we get
	\begin{align*}
		M_0^*(c)+1&\ge I_{\mu_n}(\bar w_n)=I_{\mu_n}(\bar w_n)-\frac{1}{q\gamma_q}Q_{\mu_n}(\bar w_n).
	\end{align*}
By similar arguments as in Case 1, we deduce that \eqref{the boundedness of the appro sol eq} also holds in this case.
	
Consequently, there exists $\bar u\in H^1_r(\R^N)$ with $\|\bar u \nabla \bar u\|_2^2<+\infty$ such that, up to a subsequence, $\bar w_n\rightharpoonup \bar u$ in $H^1_r(\R^N)$ and $\bar w_n\nabla \bar w_n\rightharpoonup \bar u\nabla \bar u$ in $L^2(\R^N)^N$.
 We next show that $\bar u\not \equiv0$. Suppose, by contradiction, that $\bar u\equiv0$. Then $\bar w_n\to0$ in $L^r(\R^N)$ for $r\in(2,2^*)$. Using \eqref{the boundedness of the appro sol eq} and the interpolation inequality, we further infer that $\bar w_n\to0$ in $L^r(\R^N)$ for all $r\in(2,2\cdot2^*)$. 
	From $Q_{\mu_n}(\bar w_n)=0$, we obtain
	\begin{equation*}
		\mu_n(1+\gamma_{\theta})\int_{\R^N}|\nabla \bar w_n|^{\theta}dx+\int_{\R^N}|\nabla \bar w_n|^2dx+(N+2)\int_{\R^N}|\bar w_n|^2|\nabla \bar w_n|^{2}dx-\gamma_{2\cdot2^*} \int_{\R^N}|\bar w_n|^{2\cdot2^*}dx=o_n(1).
	\end{equation*}
	Thus, we have
	\begin{equation*}
		\frac{N+2}{4}\int_{\R^N}|\bar w_n|^{2\cdot2^*}dx\ge(N+2)\int_{\R^N}|\bar w_n|^2|\nabla \bar w_n|^{2}dx+o_n(1)\ge\frac{(N+2)\mathcal{S}}{4}\left(\int_{\R^N}|\bar w_n|^{2\cdot2^*}dx\right)^{\frac{N-2}{N}}+o_n(1),
	\end{equation*}
	which implies that either $\int_{\R^N}|\bar w_n|^{2\cdot2^*}dx\to0$ or $\int_{\R^N}|\bar w_n|^{2\cdot2^*}dx\ge\mathcal{S}^{\frac{N}{2}}+o_n(1)$.
	The former case implies
	\begin{equation*}
		\mu_n(1+\gamma_{\theta})\int_{\R^N}|\nabla \bar w_n|^{\theta}dx+\int_{\R^N}|\nabla \bar w_n|^2dx+(N+2)\int_{\R^N}|\bar w_n|^2|\nabla \bar w_n|^{2}dx\to0,
	\end{equation*}
	which leads to $I_{\mu_n}(\bar w_n)\to0$. This contradicts $M_0^*(c)\neq0$.  The latter case gives
	\begin{align*}
		&M_0^*(c)+o_n(1)=I_{\mu_n}(\bar w_n)\\
		&=\frac{\mu_n}{\theta}\left(1-\frac{\theta(1+\gamma_{\theta})}{N+2}\right)\int_{\R^N}|\nabla \bar w_n|^{\theta}dx+\left(\frac{1}{2}-\frac{1}{N+2}\right)\int_{\R^N}|\nabla \bar w_n|^2dx+\frac{1}{2N}\int_{\R^N}|\bar w_n|^{2\cdot2^*}dx+o_n(1)\\
		&\ge\frac{1}{2N}\int_{\R^N}|\bar w_n|^{2\cdot2^*}dx+o_n(1)\ge\frac{1}{2N}\mathcal{S}^{\frac{N}{2}}+o_n(1),
	\end{align*}
which is also a contradiction since $M_0^*(c)<\frac{1}{2N}\mathcal{S}^{\frac{N}{2}}$. Thus, $\bar u\not \equiv0$.

We now claim that
	\begin{equation}\label{converge process for mu to 0 eq2}
		\{\bar\lambda_n\}\subset\R~~\text{is bounded}.
	\end{equation}
	Indeed, by \eqref{converge process for mu to 0 eq1}, for any $\mu_n\in(0,1]$, we have $\bar\lambda_n=-\frac{1}{c_n}\langle I'_{\mu_n}(\bar w_n),\bar w_n\rangle$. By \eqref{the boundedness of the appro sol eq}, it suffices to show that
	\begin{equation*}
		\liminf_{n\to+\infty}c_n>0.
	\end{equation*}
	To see this, suppose for contradiction that $\lim\limits_{n\to+\infty}c_n=0$. Then by \eqref{gn2}, we have $\|\bar w_n\|_q^q\to0$. Since $Q_{\mu_n}(\bar w_n)=0$, arguing as before, we obtain either $\int_{\R^N}|\bar w_n|^{2\cdot2^*}dx\to0$ or $\int_{\R^N}|\bar w_n|^{2\cdot2^*}dx\ge\mathcal{S}^{\frac{N}{2}}+o_n(1)$.
	In both cases, we reach a contradiction, thus proving the claim \eqref{converge process for mu to 0 eq2}. Therefore, we may assume that $\bar\lambda_n\to\bar\lambda$ in $\R$. 
	Moreover, as in Proposition \ref{mu to0 local mini}, $(\bar u,\bar\lambda)$ satisfies \eqref{local minimizer eq} and $Q_0(\bar u)=0$.

		To establish the alternatives, set $\tilde{w}_n:=\bar w_n-\bar u, \forall n\in\mathbb{N}$. Then we have
	\begin{equation*}\label{compact of sol eq6}
		\tilde{w}_n\rightharpoonup 0~~\text{in}~~H^1_r(\R^N),~~~\tilde{w}_n\to 0~~\text{in}~~L^{\gamma}(\R^N),~~\gamma\in(2,2\cdot2^*),~~~\tilde{w}_n\to 0~~\text{a.e. in}~~\R^N.
	\end{equation*}
	Using the Brezis-Lieb type lemma (see \cite{Willem1996}), we obtain
	\begin{equation}\label{compact of sol eq7}
		\begin{cases}
			\|\nabla \bar w_n\|_{\theta}^{\theta}=\|\nabla \tilde{w}_n\|_{\theta}^{\theta}+\|\nabla \bar u\|_{\theta}^{\theta}+o_n(1),\\
			\|\nabla \bar w_n\|_2^2=\|\nabla \tilde{w}_n\|_2^2+\|\nabla \bar u\|_2^2+o_n(1),\\
			\|\bar w_n\|_2^2=\|\tilde{w}_n\|_2^2+\|\bar u\|_2^2+o_n(1),\\
			\|\bar w_n\|_q^q=\|\tilde{w}_n\|_q^q+\|\bar u\|_q^q+o_n(1),\\
			\|\bar w_n\|_{2\cdot2^*}^{2\cdot2^*}=\|\tilde{w}_n\|_{2\cdot2^*}^{2\cdot2^*}+\|\bar u\|_{2\cdot2^*}^{2\cdot2^*}+o_n(1).
		\end{cases}
	\end{equation}
	Moreover, by Remark \ref{regular rmk} and Lemma \ref{decomposition of quasi}, we have
	\begin{equation}\label{compact of sol eq8}
		\int_{\R^N}|\bar w_n|^2|\nabla \bar w_n|^2dx=\int_{\R^N}|\tilde{w}_n|^2|\nabla \tilde{w}_n|^2dx+\int_{\R^N}|\bar u|^2|\nabla \bar u|^2dx+o_n(1).
	\end{equation}
	Therefore, by $Q_{\mu_n}(\bar w_n)=Q_0(\bar u)=0$, we deduce that
	\begin{align}\label{compact of sol eq10}
		\mu_n(1+\gamma_{\theta})\|\nabla \bar w_n\|_{\theta}^{\theta}+\|\nabla \tilde{w}_n\|_2^2+(N+2)\int_{\R^N}|\tilde{w}_n|^2|\nabla \tilde{w}_n|^2dx-\gamma_{2\cdot2^*}\|\tilde{w}_n\|_{2\cdot2^*}^{2\cdot2^*}+o_n(1)=0.
	\end{align}
	For simplicity, up to a subsequence, considering $\mu_n\to0^+$, we may assume 
	\begin{equation}\label{compact of sol eq11011}
		\mu_n\|\nabla \tilde{w}_n\|_{\theta}^{\theta}\to \iota_1,~~\mu_n\|\nabla \bar w_n\|_{\theta}^{\theta}\to \iota_1,~~\|\nabla \tilde{w}_n\|_2^2\to \iota_2,~~\int_{\R^N}|\tilde{w}_n|^2|\nabla \tilde{w}_n|^2dx\to \iota_3,~~\|\tilde{w}_n\|_{2\cdot2^*}^{2\cdot2^*}\to \iota_4,
	\end{equation}
	where $ \iota_1, \iota_2, \iota_3, \iota_4\ge0$.
	Passing to the limit as $n\to+\infty$ in \eqref{compact of sol eq10},  we infer that
	\begin{equation}\label{compact of sol eq13}
		(1+\gamma_{\theta})\iota_1+\iota_2+(N+2)\iota_3=\frac{N+2}{4}\iota_4.
	\end{equation}
	Using the Sobolev inequality, we have
	\begin{align*}\label{compact of sol eq9}
		(N+2)\int_{\R^N}|\tilde{w}_n|^2|\nabla \tilde{w}_n|^2dx\leq\gamma_{2\cdot2^*}\|\tilde{w}_n\|_{2\cdot2^*}^{2\cdot2^*}+o_n(1)\leq\gamma_{2\cdot2^*}\left(\frac{4}{\mathcal{S}}\right)^{\frac{2^*}{2}}\left(\int_{\R^N}|\tilde{w}_n|^2|\nabla \tilde{w}_n|^2dx\right)^{\frac{2^*}{2}}+o_n(1).
	\end{align*}
	Letting $n\to+\infty$, we deduce that
	\begin{equation*}
		(N+2)\iota_3\leq\gamma_{2\cdot2^*}\left(\frac{4}{\mathcal{S}}\right)^{\frac{2^*}{2}}\iota_3^{\frac{2^*}{2}},
	\end{equation*}
	which implies that either $\iota_3=0$ or $\iota_3\ge\frac{\mathcal{S}^{\frac{N}{2}}}{4}$. If $\iota_3\ge\frac{\mathcal{S}^{\frac{N}{2}}}{4}$, then by \eqref{compact of sol eq7} and \eqref{compact of sol eq8}, we have
	\begin{equation*}
	M_0^*(c)+o_n(1)=I_{\mu_n}(\bar u)+\frac{\mu_n}{\theta}\|\nabla \tilde{w}_n\|_{\theta}^{\theta}+\frac{1}{2}\|\nabla \tilde{w}_n\|_2^2+\int_{\R^N}|\tilde{w}_n|^2|\nabla \tilde{w}_n|^2dx-\frac{1}{2\cdot2^*}\|\tilde{w}_n\|_{2\cdot2^*}^{2\cdot2^*}+o_n(1).
	\end{equation*}
	Taking the limit in above equation, and using \eqref{compact of sol eq11011} and \eqref{compact of sol eq13}, we obtain 
	\begin{align*}\label{compact of local sol eq12-1}
		M_0^*(c)&=\lim_{n\to+\infty}I_{\mu_n}(\bar u)+\frac{1}{\theta}\iota_1+\frac{1}{2}\iota_2+\iota_3-\frac{N-2}{4N}\iota_4\nonumber\\
		&=I(\bar u)+\frac{N+2}{4}\iota_4+\frac{1}{\theta}(1-\theta(1+\gamma_{\theta}))\iota_1-\frac{1}{2}\iota_2-(N+1)\iota_3-\frac{N-2}{4N}\iota_4\nonumber\\
		&= I(\bar u)+\left(\frac{(N^2+N+2)(1+\gamma_{\theta})}{N(N+2)}+\frac{1}{\theta}-(1+\gamma_{\theta})\right)\iota_1+\left(\frac{N^2+N+2}{N(N+2)}-\frac{1}{2}\right)\iota_2+\frac{2}{N}\iota_3\nonumber\\
		&\ge I(\bar u)+\frac{1}{2N}\mathcal{S}^{\frac{N}{2}},
	\end{align*}
	which completes the proof of alternative (i) in Proposition \ref{converge process for mu to 0}. If instead $\iota_3=0$, then combining the Sobolev inequality and \eqref{compact of sol eq13}, we get that
	\begin{equation*}\label{compact of sol eq14}
		\iota_1=\iota_2=\iota_3=\iota_4=0.
	\end{equation*}
This implies that as $n\to+\infty$,
	\begin{equation*}
		\mu_n\|\nabla \bar w_n\|_{\theta}^{\theta}\to0,~~\|\nabla \bar w_n\|_2^2\to\|\nabla \bar u\|_2^2,~~\int_{\R^N}|\bar w_n|^2|\nabla \bar w_n|^2dx\to\int_{\R^N}|\bar u|^2|\nabla \bar u|^2dx,~~\|\bar w_n\|_{2\cdot2^*}^{2\cdot2^*}\to\|\bar u\|_{2\cdot2^*}^{2\cdot2^*}.
	\end{equation*}
	From these limits, we deduce that $I_0(\bar u)=\lim\limits_{n\to+\infty}I_{\mu_n}(\bar w_n)=M_0^*(c)$.
	Moreover, if $\bar\lambda\neq0$, without loss of generality, we may assume that $\bar\lambda>0$. Since $\bar\lambda_n\to\bar\lambda$, we can assume $\bar\lambda_n>0$ for sufficiently large $n$. We can then choose $c_n=c$, and since
	\begin{equation*}
		\langle I'_{\mu_n}(\bar w_n)+\bar\lambda_n \bar w_n,\bar w_n\rangle\to\langle I'(\bar u)+\bar\lambda \bar u,\bar u\rangle,
	\end{equation*}
	we conclude that  $\bar w_n\to \bar u$ strongly in $L^2(\R^N)$, which completes the proof.
\end{proof}

With the above results in hand, we are now in a position to complete the proofs of the remaining main theorems, namely Theorem \ref{thm: main result1} through Theorem \ref{thm: main result4}.

\begin{proof}[{\bf{Proof of Theorem \ref{thm: main result1}} \rm}] 
	By Lemma \ref{exist for ps seq} and Lemma \ref{compact of sol}, there exists a sequence $\{u_{\mu_n}\}\subset S_r(c_n)$ with $0<c_n\leq c$ satisfying
	\begin{equation*}
		I'_{\mu_n}(u_{\mu_n})+\bar\lambda_{\mu_n}u_{\mu_n}=0 ~~\text{in}~~ X^*_r,~~~~I_{\mu_n}(u_{\mu_n})\to M_0(c),~~~~u_{\mu_n}\ge0.
	\end{equation*}
	Moreover, using Lemma \ref{energy estimate}, we have
	\begin{equation*}
		0<M_0(c)=\lim\limits_{n\to+\infty}M_{\mu_n}(c)\leq \lim\limits_{n\to+\infty}m_{\mu_n}(c)+\frac{\mathcal{S}^{\frac{N}{2}}}{2N}-\delta'=m_0^*(c)+\frac{\mathcal{S}^{\frac{N}{2}}}{2N}-\delta'<\frac{\mathcal{S}^{\frac{N}{2}}}{2N}.
	\end{equation*}
	Hence, one of the alternatives in Proposition \ref{converge process for mu to 0} holds.  Suppose that alternative (\romannumeral1) of Proposition \ref{converge process for mu to 0} occurs, that is, up to a subsequence, $u_{\mu_n}\rightharpoonup \bar u\neq0$ in $H^1(\R^N)$, where $\bar u$ solves problem \eqref{quasilinear eq2} for some $\bar \lambda\in\R$ and
	\begin{equation*}
		I(\bar u)\leq M_0(c)-\frac{1}{2N}\mathcal{S}^{\frac{N}{2}},
	\end{equation*}
	which gives that
	\begin{align}\label{compact of sol eq15}
		\lim_{n\to+\infty}M_{\mu_n}(c_n)\ge \lim_{n\to+\infty}I_{\mu_n}(\bar u)+\frac{1}{2N}\mathcal{S}^{\frac{N}{2}}\ge \lim_{n\to+\infty}m_{\mu_n}(\|\bar u\|_2^2)+\frac{1}{2N}\mathcal{S}^{\frac{N}{2}}\ge \lim_{n\to+\infty}m_{\mu_n}(c_n)+\frac{1}{2N}\mathcal{S}^{\frac{N}{2}}.
	\end{align}
	However, by Lemma \ref{energy estimate}, for any $c\in(0,c_0)$, we have 
	\begin{equation*}
		M_{\mu_n}(c)\leq m_{\mu_n}(c)+\frac{\mathcal{S}^{\frac{N}{2}}}{2N}-\delta',~~~\text{for some}~~~\delta'>0,
	\end{equation*}
	which yields a contradiction with \eqref{compact of sol eq15}. Consequently, this shows that necessarily (\romannumeral2) of Proposition \ref{converge process for mu to 0} occurs, namely $I'(\bar u)+\bar\lambda \bar u=0$ and $I(\bar u)=M_0(c)$.

	It now remains to show that there exists $\tau^*_1>0$ independent of $n\in\mathbb{N}$ such that if $\tau>\tau^*_1$, the Lagrange multiplier $\bar\lambda>0$. In fact, since $u_{\mu_n}\in S_r(c_n)$ is bounded in $H^1(\R^N)$ and $L^{2\cdot2^*}(\R^N)$, by \eqref{gn2}, there exist constants $K_1, K_2, K_3>0$ independent of $n\in\mathbb{N}$ such that
	\begin{equation*}
		K_1\leq\int_{\R^N}|u_{\mu_n}|^qdx\leq\mathcal{C}_2(q,N)c^{\frac{4N-q(N-2)}{2(N+2)}}\left(\int_{\R^N} |u_{\mu_n}|^2|\nabla u_{\mu_n}|^2dx\right)^{\frac{N(q-2)}{2(N+2)}}\leq K_2
	\end{equation*}
	and
	\begin{equation*}
		\int_{\R^N}|\nabla u_{\mu_n}|^2dx\leq K_3.
	\end{equation*}
	We define the constant
	\begin{equation*}\label{pro of thm tau^*}
		\tau^*_1:=\frac{(N-2)K_3}{(N+2-4\gamma_q)K_1}.
	\end{equation*}
	Then, we have
	\begin{align*}
		\tau^*_1&>\lim_{n\to+\infty}\frac{(N-2)\|\nabla u_{\mu_n}\|_2^2}{(N+2-4\gamma_q)\|u_{\mu_n}\|_q^q}=\frac{(N-2)\|\nabla \bar u\|_2^2}{(N+2-4\gamma_q)\|\bar u\|_q^q}>0.
	\end{align*}
From the equations $I'(\bar u)+\bar\lambda \bar u=0$ and $Q_{0}(\bar u)=0$, we obtain 
	\begin{equation}\label{pro local min eq4}
		\bar\lambda\|\bar u\|_2^2=\tau\left(1-\frac{4\gamma_q}{N+2}\right)\|\bar u\|_q^q-\frac{N-2}{N+2}\|\nabla \bar u\|_2^2.
	\end{equation}
	Therefore, if $\tau>\tau^*_1$,  it follows from \eqref{pro local min eq4} that $\lim\limits_{n\to+\infty}\bar\lambda_n=\bar\lambda>0$.
This completes the proof.
\end{proof}

\begin{proof}[{\bf{Proof of Theorem \ref{thm: main result2}} \rm}]
	By Lemma \ref{ps existence sec2} and Lemma \ref{compact of sol}, there exists a sequence $\{u_{\mu_n}\}\subset S_r(c_n)$ with $0<c_n\leq c$ satisfying
	\begin{equation*}
		I'_{\mu_n}(u_{\mu_n})+\bar\lambda_{\mu_n}u_{\mu_n}=0 ~~\text{in}~~ X^*_r,~~~~I_{\mu_n}(u_{\mu_n})\to \check M_0(c),~~~~u_{\mu_n}\ge0.
	\end{equation*}
	Moreover, using Lemma \ref{energy estimate2}, we have
	\begin{equation*}
		0<\check M_0(c)=\lim\limits_{n\to+\infty}\check M_{\mu_n}(c)\leq \frac{\mathcal{S}^{\frac{N}{2}}}{2N}-\delta'<\frac{\mathcal{S}^{\frac{N}{2}}}{2N}.
	\end{equation*}
	Hence, one of the alternatives in Proposition \ref{converge process for mu to 0} holds. Now we suppose that (\romannumeral1) of Proposition \ref{converge process for mu to 0} takes place, that is, up to a subsequence, $u_{\mu_n}\rightharpoonup \bar u\neq0$ in $H^1(\R^N)$, where $\bar u$ solves problem \eqref{quasilinear eq2} for some $\bar \lambda\in\R$ and
	\begin{equation}\label{proof result2}
		I(\bar u)\leq\check M_0(c)-\frac{1}{2N}\mathcal{S}^{\frac{N}{2}}<0.
	\end{equation}
We now distinguish the following three cases.

	\textit{Case 1:} $q=2+\frac{4}{N}$.
	From \eqref{mass normal critical sup}, \eqref{gn} and the fact that $Q_0(\bar u)=0$, we deduce that
	\begin{equation*}
		I(\bar u)\ge\frac{N^2+4}{2N(N+2)}\int_{\R^N}|\nabla \bar u|^{2}dx+\frac{2}{N}\int_{\R^N}|\bar u|^2|\nabla \bar u|^2dx-\frac{\tau(N^2+4)}{2(N+2)^2}\mathcal{C}_1^{2+\frac{4}{N}}(2+\frac{4}{N},N)c^{\frac{2}{N}}\int_{\R^N}|\nabla \bar u|^{2}dx\ge0,
	\end{equation*}
	which is a contradiction with \eqref{proof result2}.

	\textit{Case 2:} $N=3, ~2+\frac{4}{N}<q<4+\frac{4}{N}$ or $N\ge4, ~2+\frac{4}{N}<q\leq2^*$.
	To begin, we define
	\begin{equation}\label{compact of sol eq17}
		\rho_*:=\rho_*(c,\tau,q,N):=\left(\frac{1}{\tau\mathcal{C}_1^q(q,N)}\frac{(N^2+4)q}{N[4N-q(N-2)]}\right)^{\frac{4}{N(q-2)-4}}c^{\frac{q(N-2)-2N}{N(q-2)-4}}.
	\end{equation}
We then proceed by distinguishing between the following two subcases.

	\textit{Subcase 1:} $\|\nabla \bar u\|_2^2+\|\bar u\nabla \bar u\|_2^2\leq\rho_*$. From \eqref{gn}, \eqref{compact of sol eq17} and $Q_0(\bar u)=0$, we obtain
	\begin{align}\label{compact of sol eq20}
		I(\bar u)&\ge\left(\frac{1}{2}-\frac{N-2}{N(N+2)}\right)\int_{\R^N}|\nabla \bar u|^{2}dx\nonumber\\
		&\quad+\frac{2}{N}\int_{\R^N}|\bar u|^2|\nabla \bar u|^2dx+\tau\left(\frac{\gamma_q(N-2)}{N(N+2)}-\frac{1}{q}\right)\mathcal{C}_1^q(q,N)c^{\frac{2N-q(N-2)}{4}}\left(\int_{\R^N}|\nabla \bar u|^{2}dx\right)^{\frac{N(q-2)}{4}}\nonumber\\
		&\ge0.
	\end{align}
	
	\textit{Subcase 2:} $\|\nabla \bar u\|_2^2+\|\bar u\nabla \bar u\|_2^2\geq\rho_*$. From \eqref{mass sub restriction1}, \eqref{gn2} and $Q_0(\bar u)=0$, we have
	\begin{align}\label{compact of sol eq19}
		&I(\bar u)\ge\left(\frac{1}{2}-\frac{N-2}{N(N+2)}\right)\int_{\R^N}|\nabla \bar u|^{2}dx\nonumber\\
		&+\frac{2}{N}\int_{\R^N}|\bar u|^2|\nabla \bar u|^2dx+\tau\left(\frac{\gamma_q(N-2)}{N(N+2)}-\frac{1}{q}\right)\mathcal{C}_2(q,N)c^{\frac{4N-q(N-2)}{2(N+2)}}\left(\int_{\R^N}|\bar u|^2|\nabla \bar u|^{2}dx\right)^{\frac{N(q-2)}{2(N+2)}}\nonumber\\
		&\ge D_1(N)\left(\|\nabla \bar u\|_2^2+\|\bar u\nabla \bar u\|_2^2\right)-\tau\left(\frac{4N-q(N-2)}{2q(N+2)}\right)\mathcal{C}_2(q,N)c^{\frac{4N-q(N-2)}{2(N+2)}}\left(\|\nabla \bar u\|_2^2+\|\bar u\nabla \bar u\|_2^2\right)^{\frac{N(q-2)}{2(N+2)}}\nonumber\\
		&\ge\left(\|\nabla \bar u\|_2^2+\|\bar u\nabla \bar u\|_2^2\right)^{\frac{N(q-2)}{2(N+2)}}\left(D_1(N)\left(\|\nabla \bar u\|_2^2+\|\bar u\nabla \bar u\|_2^2\right)^{\frac{4N+4-Nq}{2(N+2)}}-\tau\left(\frac{4N-q(N-2)}{2q(N+2)}\right)\mathcal{C}_2(q,N)c^{\frac{4N-q(N-2)}{2(N+2)}}\right)\nonumber\\
		&\ge\rho_*^{\frac{N(q-2)}{2(N+2)}}\left(D_1(N)\rho_*^{\frac{4N+4-Nq}{2(N+2)}}-\tau\left(\frac{4N-q(N-2)}{2q(N+2)}\right)\mathcal{C}_2(q,N)c^{\frac{4N-q(N-2)}{2(N+2)}}\right)\nonumber\\
		&\ge0.
	\end{align}
	Thus, both \eqref{compact of sol eq20} and \eqref{compact of sol eq19} contradict to \eqref{proof result2}.

	\textit{Case 3:} $N\ge4, ~2^*<q <4+\frac{4}{N}$. By $Q_0(\bar u)=0$, the interpolation inequality and the Sobolev inequality, we have
	\begin{align}\label{compact of sol eq21}
		I(\bar u)&=\left(\frac{1}{2}-\frac{N-2}{N(N+2)}\right)\int_{\R^N}|\nabla \bar u|^{2}dx+\frac{2}{N}\int_{\R^N}|\bar u|^2|\nabla \bar u|^2dx+\tau\left(\frac{\gamma_q(N-2)}{N(N+2)}-\frac{1}{q}\right)\int_{\R^N}|\bar u|^qdx\nonumber\\
		&\ge\left(\frac{1}{2}-\frac{N-2}{N(N+2)}\right)\int_{\R^N}|\nabla \bar u|^{2}dx+\frac{2}{N}\int_{\R^N}|\bar u|^2|\nabla \bar u|^2dx\nonumber\\
		&\quad+\tau\left(\frac{\gamma_q(N-2)}{N(N+2)}-\frac{1}{q}\right)\frac{2^{q-2^*}}{\mathcal{S}^{\frac{2^*}{2}}}\left(\int_{\R^N}|\nabla \bar u|^{2}dx\right)^{\frac{2\cdot2^*-q}{2}}\left(\int_{\R^N}|\bar u|^2|\nabla \bar u|^{2}dx\right)^{\frac{q-2^*}{2}}.\nonumber
	\end{align}
	Then, there exists a sufficiently small $\tilde\rho_*>0$ small enough such that for any $\bar u\in W_{rad}^{1,\theta}(\R^N)\cap H_{rad}^{1}(\R^N)$ satisfying $\|\nabla \bar u\|_2^2+\|\bar u\nabla \bar u\|_2^2\leq\tilde\rho_*$, we get $I(\bar u)\ge0$. Otherwise, if $\|\nabla \bar u\|_2^2+\|\bar u\nabla \bar u\|_2^2\geq\tilde\rho_*$, by choosing the mass $c>0$ sufficiently small and arguing similarly as in \eqref{compact of sol eq19}, we obtain a contradiction with \eqref{proof result2}. Thus, conclusion (\romannumeral2) of Proposition \ref{converge process for mu to 0} holds, namely there exist $\bar u\in H^1_r(\R^N)\cap L^{\infty}(\R^N)\backslash\{0\}$, $\bar u\ge0$ and $\bar\lambda\in\R$ such that
	\begin{equation*}
		I'(\bar u)+\bar\lambda \bar u=0,~~~~~0<\|\bar u\|_2^2\leq c,~~~~~I(\bar u)=\check M_0(c).
	\end{equation*}
Following the same reasoning as in the proof of Theorem \ref{thm: main result1}, for $\tau>\tau^*_1$, we obtain $\bar\lambda>0$. Therefore, $\bar u\in H^1(\R^N)\cap L^{\infty}(\R^N)$ is a critical point of $I$ on $\widetilde{S}(c)$.
\end{proof}

\begin{proof}[{\bf{Proof of Theorem \ref{thm: main result3-1}} \rm}]
	By Lemma \ref{ps existence sec3} and Lemma \ref{compact of sol}, there exists a sequence $\{u_{\mu_n}\}\subset S_r(c_n)$ with $0<c_n\leq c$ satisfying
	\begin{equation*}
		I'_{\mu_n}(u_{\mu_n})+\bar\lambda_{\mu_n}u_{\mu_n}=0 ~~\text{in}~~ X^*_r,~~~~I_{\mu_n}(u_{\mu_n})\to \widehat M_0(c),~~~~u_{\mu_n}\ge0.
	\end{equation*}
	Moreover, by Lemma \ref{energy estimate2 masssuper}, there exists $\tau^*_2>0$ large enough such that
	\begin{equation*}
		0<\widehat M_0(c)=\lim\limits_{n\to+\infty}\widehat M_{\mu_n}(c)\leq \frac{\mathcal{S}^{\frac{N}{2}}}{2N}-\delta''<\frac{\mathcal{S}^{\frac{N}{2}}}{2N}.
	\end{equation*}
	Using $Q_0(\bar u)=0$, we deduce that
	\begin{equation*}
		I(\bar u)=\left(\frac{1}{2}-\frac{1}{q\gamma_q}\right)\int_{\R^N}|\nabla \bar u|^{2}dx+\left(1-\frac{N+2}{q\gamma_q}\right)\int_{\R^N}|\bar u|^2|\nabla \bar u|^2dx+\left(\frac{N+2}{4q\gamma_q}-\frac{N-2}{4N}\right)\int_{\R^N}|\bar u|^{2\cdot2^*}dx\ge0,
	\end{equation*}
	which implies that (\romannumeral1) of Proposition \ref{converge process for mu to 0} cannot take place.
	Hence, conclusion (\romannumeral2) of Proposition \ref{converge process for mu to 0} holds, namely there exist $\bar u\in H^1_r(\R^N)\cap L^{\infty}(\R^N)\backslash\{0\}$, $\bar u\ge0$ and $\bar\lambda\in\R$ such that
	\begin{equation*}\label{main re3 eq2}
		I'(\bar u)+\bar\lambda \bar u=0,~~~~~0<\|\bar u\|_2^2\leq c,~~~~~I(\bar u)=\widehat M_0(c).
	\end{equation*}
	Moreover, proceeding as in the proof of Theorem \ref{thm: main result1}, for $\tau>\tau^*_1$, we get $\bar\lambda>0$. 
	Let us set $\tau^*=\max\{\tau_1^*,\tau_2^*\}>0$. Consequently, for any $\tau>\tau^*$, it follows from Proposition \ref{converge process for mu to 0} that $\|\bar u\|_2^2=c$ and  $\bar u\in H^1(\R^N)\cap L^{\infty}(\R^N)$ is a critical point of $I$ on $\widetilde{S}(c)$. 
	Finally, to obtain a ground state normalized solution, we define
	\begin{equation*}
		\widehat{m}_0(c):=\inf\{I(v):v\in\widetilde{S}(c), dI|_{\widetilde{S}(c)}(v)=0\}.
	\end{equation*}
	Clearly, $\widehat{m}_0(c)\leq I(v)=\widehat{M}_0(c)$. We then take a sequence $\bar u_n\in\widetilde{S}(c)$ such that $dI|_{\widetilde{S}(c)}(\bar u_n)=0$ and $I(\bar u_n)\to \widehat m_0(c)$. By employing a similar argument as in Proposition \ref{converge process for mu to 0}, we obtain $\bar u_0\in \widetilde{S}(c)\cap L^{\infty}(\R^N)\backslash\{0\}$ and $\bar\lambda_0\in\R$ such that, up to a subsequence, there holds
	\begin{equation*}
		I'(\bar u_0)+\bar\lambda_0 \bar u_0=0,~~~~I(\bar u_0)=\widehat m_0(c).
	\end{equation*}
	Using a similar argument as in the previous proof, we can show that $\bar\lambda_0>0$, and hence $\|\bar u_0\|_2^2=c$. This implies that $\bar u_0$ is a minimizer of $\widehat{m}_0(c)$. 
\end{proof}

\begin{proof}[{\bf{Proof of Theorem \ref{thm: main result3}} \rm}]
	It suffices to show that $\bar\lambda>0$ for any $\tau>0$. We first show $\bar\lambda\ge0$ . To this end, we argue by contradiction that $\bar\lambda<0$. Then for sufficiently large $n$, we have $\bar\lambda_{\mu_n}<0$. In view of Lemma \ref{compact of sol}, we have
	\begin{equation}\label{main re3 eq4}
		I_{\mu_n}(u_{\mu_n})=\widehat M_{\mu_n}(c)~~~~\text{and}~~~~\|u_{\mu_n}\|_2^2=c.
	\end{equation}
	For simplicity, for any fixed $n$ sufficiently large, define $\bar w:=u_{\mu_n}$ and let $\bar w_{t,s}(x):=s^{\frac{N}{2}}\sqrt{t}\bar w(sx)$ with $t,s>0$. We also define
	\begin{align*}
		h_I(t,s):=I_{\mu_n}(\bar w_{t,s})&=\frac{\mu_n}{\theta}s^{\theta(1+\gamma_\theta)}t^{\frac{\theta}{2}}\int_{\R^N}|\nabla \bar w|^\theta dx+\frac{s^2}{2}t\int_{\R^N}|\nabla \bar w|^2dx+s^{N+2}t^2\int_{\R^N}|\bar w|^2|\nabla \bar w|^2dx\nonumber\\
		&\quad-\frac{\tau}{q}s^{\frac{N(q-2)}{2}}t^{\frac{q}{2}}\int_{\R^N}|\bar w|^qdx-\frac{1}{2\cdot2^*}s^{N(2^*-1)}t^{2^*}\int_{\R^N}|\bar w|^{2\cdot2^*}dx,\\
		h_Q(t,s):=Q_{\mu_n}(\bar w_{t,s})&=\mu_n(1+\gamma_\theta)s^{\theta(1+\gamma_\theta)}t^{\frac{\theta}{2}}\int_{\R^N}|\nabla \bar w|^\theta dx+s^2t\int_{\R^N}|\nabla \bar w|^2dx\nonumber\\
		&\quad+(N+2)s^{N+2}t^2\int_{\R^N}|\bar w|^2|\nabla \bar w|^{2}dx-\tau\gamma_qs^{\frac{N(q-2)}{2}}t^{\frac{q}{2}}\int_{\R^N}|\bar w|^qdx\nonumber\\
		&\quad-\gamma_{2\cdot2^*}s^{N(2^*-1)}t^{2^*} \int_{\R^N}|\bar w|^{2\cdot2^*}dx.
	\end{align*}
	By a direct calculation, we have
	\begin{align*}
		\frac{\partial h_I}{\partial t}(1,1)&=\frac{\mu_n}{2}\int_{\R^N}|\nabla \bar w|^\theta dx+\frac{1}{2}\int_{\R^N}|\nabla \bar w|^2dx+2\int_{\R^N}|\bar w|^2|\nabla \bar w|^2dx-\frac{\tau}{2}\int_{\R^N}|\bar w|^qdx-\frac{1}{2}\int_{\R^N}|\bar w|^{2\cdot2^*}dx\\
		&=-\frac{\bar \lambda_{\mu_n}}{2}\int_{\R^N}|\bar w|^2dx>0,\\
		\frac{\partial h_I}{\partial s}(1,1)&=\mu_n(1+\gamma_\theta)\int_{\R^N}|\nabla \bar w|^\theta dx+\int_{\R^N}|\nabla \bar w|^2dx+(N+2)\int_{\R^N}|\bar w|^2|\nabla \bar w|^{2}dx\\
		&\quad-\tau\gamma_q\int_{\R^N}|\bar w|^qdx-\gamma_{2\cdot2^*} \int_{\R^N}|\bar w|^{2\cdot2^*}dx=0,\\
		\frac{\partial^2 h_I}{\partial s^2}(1,1)&=\mu_n(1+\gamma_\theta)(\theta(1+\gamma_{\theta})-N-2)\int_{\R^N}|\nabla \bar w|^\theta dx-N\int_{\R^N}|\nabla \bar w|^2dx\\
		&\quad+\tau\gamma_q\left(N+1-\frac{N(q-2)-2}{2}\right)\int_{\R^N}|\bar w|^qdx+\frac{N+2}{4}\left(N+1-\frac{N^2+N+2}{N-2}\right)\int_{\R^N}|\bar w|^{2\cdot2^*}dx\\
		&<0,
	\end{align*}
	which imply that for $|\delta_{s}|$ small enough and $\delta_t>0$, there holds
	\begin{equation}\label{main re3 eq3}
		h_I(1-\delta_t,1+\delta_s)<h_I(1,1).
	\end{equation}
	Clearly, $h_Q(1,1)=0$. Now, we show that $\frac{\partial h_Q}{\partial s}(1,1)\neq0$. In fact, if this is not true, by a direct computation, we get
	\begin{align*}
		\frac{\partial h_Q}{\partial s}(1,1)&=\mu_n\theta(1+\gamma_\theta)^2\int_{\R^N}|\nabla \bar w|^\theta dx+2\int_{\R^N}|\nabla \bar w|^2dx+(N+2)^2\int_{\R^N}|\bar w|^2|\nabla \bar w|^{2}dx\\
		&\quad-\tau q\gamma_q^2\int_{\R^N}|\bar w|^qdx-\gamma_{2\cdot2^*}N(2^*-1) \int_{\R^N}|\bar w|^{2\cdot2^*}dx=0.
	\end{align*}
	Together with $Q_{\mu_n}(\bar w)=0$, this yields
	\begin{align*}
		&\mu_n(1+\gamma_\theta)(N+2-\theta(1+\gamma_\theta))\int_{\R^N}|\nabla \bar w|^\theta dx+N\int_{\R^N}|\nabla \bar w|^2dx\\
		&=\tau \gamma_q\left(N+2-q\gamma_q\right)\int_{\R^N}|\bar w|^qdx+\gamma_{2\cdot2^*}\left(N+2-N(2^*-1)\right)\int_{\R^N}|\bar w|^{2\cdot2^*}dx.
	\end{align*}
	However, this is impossible due to the fact that $\frac{4N}{N+2}<\theta<\frac{4N+4}{N+2}$, $q\gamma_q\ge N+2$ and $N+2<N(2^*-1)$. Thus, by the implicit function theorem, we infer that there exist $\eps>0$ and a continuous function $\hat h:[1-\eps,1+\eps]\mapsto\R$ such that $\hat h(1)=1$ and $h_Q(t,\hat h(t))=0$ for $t\in[1-\eps,1+\eps]$. Combining \eqref{main re3 eq4} and \eqref{main re3 eq3}, we deduce that
	\begin{equation*}
		\widehat{M}_{\mu_n}((1-\eps)c)=\widehat{m}_{\mu_n}((1-\eps)c)=\inf_{u\in\Lambda_{\mu_n}((1-\eps)c)}I_{\mu_n}(u)\leq I_{\mu_n}(\bar w_{1-\eps,\hat h(1-\eps)})<I_{\mu_n}(\bar w)=\widehat{M}_{\mu_n}(c),
	\end{equation*}
	which contradicts Lemma \ref{poho mani pro6}, which states that $\widehat{m}_\mu(c)=\widehat{M}_\mu(c)$ is non-increasing on $(0,+\infty)$ for any $\mu>0$. Therefore, we infer that $\bar\lambda\ge0$. 
	Taking into account \eqref{pro local min eq4}, we derive that
	\begin{align*}
		&\frac{N-2}{N+2}\|\nabla \bar u\|_2^2\leq\tau\left(1-\frac{4\gamma_q}{N+2}\right)\|\bar u\|_q^q\leq\tau\left(1-\frac{4\gamma_q}{N+2}\right)\mathcal{C}_2(q,N)c^{\frac{4N-q(N-2)}{2(N+2)}}\left(\int_{\R^N}|\bar u|^2|\nabla \bar u|^2\right)^{\frac{N(q-2)}{2(N+2)}},
	\end{align*}
which implies that
	\begin{equation*}
		\|\nabla \bar u\|_2^2\to0~~~\text{as}~~~c\to0.
	\end{equation*}
	Thus, it follows from \eqref{pro local min eq4} that there exists a sufficiently small $c_2^*>0$ such that for any $c\in(0,c_2^*)$, we have $\bar\lambda>0$.
	From this, using Proposition \ref{converge process for mu to 0} we conclude that $\|\bar u\|_2^2=c$ and  $\bar u\in H^1(\R^N)\cap L^{\infty}(\R^N)$ is a critical point of $I$ on $\widetilde{S}(c)$. 
	The ground state solution can be obtained using the same approach.
\end{proof}

\begin{proof}[{\bf{Proof of Theorem \ref{thm: main result4}} \rm}]
	Suppose that $(u,\lambda)\in\widetilde{S}(c)\times \R^+$ is a solution to \eqref{quasi eq}. Then, by \eqref{quasi eq} and the Pohozaev identity, we obtain
	\begin{equation*}\label{result4 eq1}
		\int_{\R^N}|\nabla u|^2dx+4\int_{\R^N}|u|^2|\nabla u|^2dx+\lambda\int_{\R^N}|u|^2dx=\tau\int_{\R^N}|u|^{q}dx+\int_{\R^N}|u|^{2\cdot2^*}dx
	\end{equation*}
and
	\begin{equation*}\label{result4 eq2}
		\frac{2-N}{2}\int_{\R^N}|\nabla u|^2dx+(2-N)\int_{\R^N}|u|^2|\nabla u|^2dx-\frac{\lambda N}{2}\int_{\R^N}|u|^2dx+\frac{\tau N}{q}\int_{\R^N}|u|^qdx+\frac{N}{2\cdot2^*}\int_{\R^N}|u|^{2\cdot2^*}dx=0.
	\end{equation*}
	Combining these two equations, we get
	\begin{equation*}
		0<\lambda\|u\|_2^2=\tau\frac{4N-(N-2)q}{(N+2)q}\|u\|_q^q-\frac{N-2}{N+2}\|\nabla u\|_2^2\leq0,
	\end{equation*}
	which is a contradiction. The proof is complete.
\end{proof}

\medskip
\appendices
\section{Appendix}\label{appendixa}\setcounter{equation}{0}

% and we believe similar results exists in other references for quasilinear Schr\"odinger equations.
%A similar property for the Sobolev subcritical case was recently discussed in \cite{gg1}, where the approach relies heavily on the compact embedding when $q<2\cdot2^*$, but it fails in the Sobolev critical case.
This section is devoted to providing an $L^{\infty}$ estimate, a Brezis-Lieb type result, and a profile decompositions for the approximating solutions $\{u_n\}\subset\widetilde{X}$.
%profile decompositions for the approximating solutions $\{u_n\}\subset\widetilde{X}$.
%In this section, we establish a Brezis-Lieb type result for the quasilinear term, which plays a fundamental role in our analysis. %We note that our splitting property depends heavily on the structure of the approximating equations and the limiting equation.

To be more specific, assume that $(\mu_n,u_n,\lambda_n)\in(0,1]\times\widetilde{X}\times\R$ satisfies the following equation with a general nonlinearity:
\begin{equation}\label{appro eq1 general}
-\mu\Delta_{\theta} u-\Delta u-u\Delta (u^2)+\lambda u=\mathcal{L}'(u)~~\text{in}~~\R^N,
\end{equation}
where the function $\mathcal{L}:\R\to[0,\infty)$ is of class $\mathcal{C}^1$ and satisfies
\begin{equation}\label{general non assu}
	\mathcal{L}'(s)\leq C(|s|+|s|^{2\cdot2^*-1}),
\end{equation} 
for any $s\in\R$ and some constant $C>0$.
In the following, we always assume that
\begin{equation*}
	\sup_{n\in\mathbb{N}}\max\left\{\mu_n\|\nabla u_n\|_{\theta}^\theta,~~\|u_n\nabla u_n\|_2^2,~~\|\nabla u_n\|_2^2,~~|\lambda_n|\right\}<+\infty.
\end{equation*}
Thus, there exists $(u_0,\lambda_0)$ such that as $n\to+\infty$,
\begin{equation*}
	u_n\rightharpoonup u_0~~\text{in}~~H^1(\R^N),~~~u_n\nabla u_n\rightharpoonup u_0\nabla u_0~~\text{in}~~L^2(\R^N)^N,~~~u_n\to u_0~~\text{a.e.}~~\text{in}~~\R^N,~~~\lambda_n\to\lambda_0~~\text{in}~~\R.
\end{equation*}
To begin, we establish the following result by using Moser's iteration.

%Let $\{u_n\}\subset$ be a sequence of approximating solutions satisfying
%\begin{equation}\label{appro eq general}
%-\mu_n\Delta_{\theta} u_n-\Delta u_n-u_n\Delta (u_n^2)+\lambda_{n} u_n=\tau|u_n|^{q-2}u_n+|u_n|^{2\cdot2^*-2}u_n~~\text{in}~~\R^N,
%\end{equation}
%for any $\mu_n\in(0,1]$. 

\begin{lemma}\label{moser inter}
	There exists a constant $C>0$ such that $\|u_n\|_{\infty}\leq C$ and $\|u_0\|_{\infty}\leq C$.
\end{lemma}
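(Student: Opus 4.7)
The plan is to perform a Moser iteration adapted to the quasilinear structure of \eqref{appro eq1 general}, exploiting the fact that $-u\Delta(u^2)$ controls $\int|\nabla(u_n^2)|^2$, which through the Sobolev embedding $\mathcal{D}^{1,2}\hookrightarrow L^{2^*}$ yields uniform $L^{2\cdot 2^*}$ integrability of $u_n$. For each $\gamma\geq 1$ and truncation level $K>0$, I would set $w_{n,K}:=\min(|u_n|,K)$ and insert the admissible test function $\phi_{n,K}:=u_n w_{n,K}^{2(\gamma-1)}\in H^1\cap L^\infty$ into the weak formulation (its validity follows from $u_n\in\widetilde X$ and the bound $w_{n,K}\leq K$). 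Discarding the $\theta$-Laplacian contribution by its positivity, the semilinear term $-\Delta u_n$ produces a multiple of $\int w_{n,K}^{2(\gamma-1)}|\nabla u_n|^2$ while the quasilinear term $-u_n\Delta(u_n^2)$ yields an analogous positive contribution weighted by an extra factor $u_n^2$. A routine bookkeeping then identifies $\|\nabla(u_n w_{n,K}^{\gamma-1})\|_2^2$ and $\|\nabla(u_n^2 w_{n,K}^{2(\gamma-1)})\|_2^2$ as the coercive quantities; applying the Sobolev inequality to the latter converts this into $L^{2\cdot 2^*}$ control on $u_n^2 w_{n,K}^{2(\gamma-1)}$.

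The main obstacle is the Sobolev-critical nonlinearity $|u|^{2\cdot 2^*-2}u$ allowed by \eqref{general non assu}, which after testing produces
\[
\int |u_n|^{2\cdot 2^*-2}\,(u_n w_{n,K}^{\gamma-1})^2,
\]
a term that is exactly borderline with respect to the left-hand side Sobolev control. I would handle this by a Brezis--Kato style splitting: for any $L>0$,
\[
\int|u_n|^{2\cdot 2^*-2}(u_n w_{n,K}^{\gamma-1})^2 \leq L^{2\cdot 2^*-2}\!\int(u_n w_{n,K}^{\gamma-1})^2 + \Bigl(\int_{\{|u_n|>L\}}\!|u_n|^{2\cdot 2^*}\Bigr)^{\!\frac{2}{N}}\!\|u_n w_{n,K}^{\gamma-1}\|_{2\cdot 2^*}^{2},
\]
using H\"older with exponents $(N/(N-2),N/2)$ in the second piece. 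The a priori bound $\|u_n\|_{2\cdot 2^*}^2\leq 4\mathcal{S}^{-1}\|u_n\nabla u_n\|_2^2$ together with uniform integrability (Chebyshev) shows that the factor $(\int_{\{|u_n|>L\}}|u_n|^{2\cdot 2^*})^{2/N}$ can be made uniformly small in $n$ by choosing $L$ large, allowing the critical piece to be absorbed into the Sobolev term obtained on the left-hand side. Sending $K\to+\infty$ via monotone convergence then gives
\[
\|u_n\|_{(\gamma+1)\cdot 2\cdot 2^*}^{2(\gamma+1)}\leq C(\gamma)\bigl(1+\|u_n\|_{2(\gamma+1)}^{2(\gamma+1)}\bigr)
\]
for every $\gamma\geq 1$, with constants $C(\gamma)$ independent of $n$ (the $|\lambda_n|$-term is absorbed as subcritical using the uniform bound on $\lambda_n$).

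Starting from $u_n\in L^{2\cdot 2^*}$, I would iterate this inequality along the geometric sequence $2(\gamma_{k+1}+1)=(\gamma_k+1)\cdot 2\cdot 2^*$, initialised at $\gamma_0$ with $2(\gamma_0+1)=2\cdot 2^*$. Tracking the exponents and constants in the Moser scheme yields a finite bound on $\lim_{k\to\infty}\|u_n\|_{p_k}=\|u_n\|_\infty$ that is independent of $n$ and of $\mu_n\in(0,1]$, giving the uniform estimate $\|u_n\|_\infty\leq C$. For the weak limit $u_0$, one observes that $u_0$ satisfies the limit equation \eqref{appro eq1 general} with $\mu=0$ and inherits the same a priori bounds on $\|\nabla u_0\|_2$, $\|u_0\nabla u_0\|_2$, and $|\lambda_0|$; the identical argument therefore applies directly to $u_0$, or, alternatively, one invokes Fatou's lemma together with the almost everywhere convergence $u_n\to u_0$. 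The delicate point throughout is to keep every constant uniform in $n$, which is why the truncation $w_{n,K}$ is essential (it ensures the test function lies in $X$ even though $u_n\in\widetilde X$ only) and why the smallness in the critical-term splitting must be taken relative to the uniform $L^{2\cdot 2^*}$ bound rather than an individual one.
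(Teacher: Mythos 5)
Your plan uses exactly the paper's test function (writing $\gamma=2r-1$, your $\phi_{n,K}=u_n w_{n,K}^{2(\gamma-1)}$ is the paper's $u_n|u_n^T|^{4(r-1)}$), the same truncation, and the same Brezis--Kato--style splitting of the critical term. The one place your bookkeeping departs from the paper's is where the critical piece is absorbed, and this is where a real gap opens up.

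You state the iteration inequality
$\|u_n\|_{(\gamma+1)\cdot 2\cdot 2^*}^{2(\gamma+1)}\leq C(\gamma)\bigl(1+\|u_n\|_{2(\gamma+1)}^{2(\gamma+1)}\bigr)$
for \emph{every} $\gamma\geq 1$, and you derive it by performing the Brezis--Kato absorption at each level $\gamma$. But to absorb you must choose the cutoff $L=L(\gamma)$ so that $C\gamma^2\bigl(\sup_n\int_{\{|u_n|>L\}}|u_n|^{2\cdot2^*}\bigr)^{2/N}$ falls below a fixed fraction of $\mathcal S$, and then the low part of the splitting contributes $L(\gamma)^{2\cdot2^*-2}$ to $C(\gamma)$. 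Without a quantitative (and uniform in $n$) decay rate on the tail $\int_{\{|u_n|>L\}}|u_n|^{2\cdot2^*}$, nothing forces $L(\gamma)$ to grow only polynomially in $\gamma$, and the Moser scheme requires $\sum_k p_k^{-1}\log C(\gamma_k)<\infty$, which can fail if $C(\gamma)$ grows too fast. (Relatedly, ``uniform integrability (Chebyshev)'' is not a complete justification: Chebyshev bounds the \emph{measure} of the superlevel sets, but boundedness in $L^{2\cdot2^*}$ alone does not make the tail integrals $\int_{\{|u_n|>L\}}|u_n|^{2\cdot2^*}\,dx$ go to zero uniformly in $n$.)

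The paper sidesteps this entirely with a two-stage argument. The absorption is performed \emph{once}, at the single exponent $4r_0=2\cdot2^*$, to upgrade the integrability: it proves $u_n\in L^{2\cdot2^*r_0}$ for some fixed $r_0>1$, i.e.\ $a_n:=u_n^{8/(N-2)}\in L^{t}$ with $t=Nr_0/2>N/2$. From that point on every Moser step uses H\"older with the now-subcritical exponent $t'=t/(t-1)<N/(N-2)$ against the uniformly bounded $\|a_n\|_t$, so no absorption is needed, the constants at step $r$ are genuinely of the form $Cr^2$, and the infinite product converges. You should insert this intermediate bootstrap (or some equivalent fixed gain of integrability past $2\cdot2^*$) before running the iteration; as written, your iteration inequality is asserted but not proved with constants that close the argument. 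The part about $u_0$ is fine once this is fixed.
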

\begin{proof}
	Following the idea from \cite[Proposition 3.1]{LLW20131} with slight modifications, for any $T>0$, we define the truncated function $u_n^T$ as
	\begin{equation*}
		u_n^T:=
		\begin{cases}
			T~~~~&\mbox{if}~~~u_n>T,\\
			u_n~~~~&\mbox{if}~~~u_n\leq T.
		\end{cases}
	\end{equation*}
	Taking $\phi=u_n|u_n^T|^{4(r-1)}$ with $r>1$ as the test function in \eqref{appro eq1 general}, and using \eqref{general non assu} and the boundedness of $\{\lambda_n\}$, we obtain
	 \begin{align}\label{mu to0 lem1 eq1}
		&(4r-3)\int_{\{u_n\leq T\}}u_n^{4(r-1)}|\nabla u_n|^2dx+\int_{\{u_n>T\}}T^{4(r-1)}|\nabla u_n|^2dx\nonumber\\
		&+2(4r-3)\int_{\{u_n\leq T\}}u_n^{4(r-1)}u_n^2|\nabla u_n|^2dx+2\int_{\{u_n>T\}}T^{4(r-1)}u_n^2|\nabla u_n|^2dx\nonumber\\
		&\leq-\lambda_n\int_{\R^N}u_n^2|u_n^T|^{4(r-1)}dx+C\int_{\R^N}(|u_n|^2+|u_n|^{2\cdot2^*})|u_n^T|^{4(r-1)}dx\nonumber\\
		&\leq C\left(\int_{\R^N}u_n^{2}(u_n^T)^{4(r-1)}dx+\int_{\R^N}u_n^{2\cdot2^*}(u_n^T)^{4(r-1)}dx\right).
	 \end{align}
	 On the other hand, by a direct computation, we deduce that
	 \begin{align}\label{mu to0 lem1 eq1-1}
		&\int_{\R^N}|\nabla(u_n(u_n^T)^{2(r-1)})|^2dx+\int_{\R^N}|\nabla(u_n^2(u_n^T)^{2(r-1)})|^2dx\nonumber\\
		&=(2r-1)^2\int_{\{u_n\leq T\}}u_n^{4(r-1)}|\nabla u_n|^2dx+\int_{\{u_n>T\}}T^{4(r-1)}|\nabla u_n|^2dx\nonumber\\
		&\quad+4r^2\int_{\{u_n\leq T\}}u_n^{2(2r-1)}|\nabla u_n|^2dx+4\int_{\{u_n>T\}}T^{4(r-1)}u_n^2|\nabla u_n|^2dx.
	 \end{align}
	 Combining \eqref{mu to0 lem1 eq1}, \eqref{mu to0 lem1 eq1-1}, and applying the Sobolev inequality, we have
	 \begin{align}\label{mu to0 lem1 eq2}
		&\left(\int_{\R^N}|u_n(u_n^T)^{2(r-1)}|^{2^*}dx\right)^{\frac{2}{2^*}}+\left(\int_{\R^N}|u_n^2(u_n^T)^{2(r-1)}|^{2^*}dx\right)^{\frac{2}{2^*}}\nonumber\\
		&\leq Cr^2\left(\int_{\R^N}u_n^{2}(u_n^T)^{4(r-1)}dx+\int_{\R^N}a_n\left(u_n^2(u_n^T)^{2(r-1)}\right)^{2}dx\right),
	 \end{align}
	where $a_n:=u_n^{\frac{8}{N-2}}$. We claim that $a_n\in L^{t}(\R^N)$ for some $t>\frac{N}{2}$. In fact, by \eqref{mu to0 lem1 eq2}, for any $K>0$ to be determined later, we apply the H\"older inequality to obtain
	\begin{align}\label{mu to0 lem1 eq3}
		\left(\int_{\R^N}|u_n^2(u_n^T)^{2(r-1)}|^{2^*}dx\right)^{\frac{2}{2^*}}&\leq Cr^2\int_{\R^N}u_n^{2}(u_n^T)^{4(r-1)}dx+Cr^2K^{\frac{8}{N-2}}\int_{\R^N}\left(u_n^2(u_n^T)^{2(r-1)}\right)^{2}dx\nonumber\\
		&\quad+Cr^2\left(\int_{|u_n|\ge K}u_n^{2\cdot2^*}dx\right)^{\frac{2}{N}}\left(\int_{\R^N}\left(u_n^2(u_n^T)^{2(r-1)}\right)^{\frac{2N}{N-2}}dx\right)^{\frac{N-2}{N}}.
	\end{align}
	Now, let $4r=2\cdot2^*$ and choose $K>0$ sufficiently large such that
	\begin{equation*}
		Cr^2\left(\int_{|u_n|\ge K}u_n^{2\cdot2^*}dx\right)^{\frac{2}{N}}<\frac{1}{2}.
	\end{equation*}
	Then, letting $T\to+\infty$ in \eqref{mu to0 lem1 eq3}, we have
	\begin{align*}
		\left(\int_{\R^N}u_n^{2\cdot2^*r}dx\right)^{\frac{2}{2^*}}\leq 2Cr^2\int_{\R^N}u_n^{4r-2}dx+2Cr^2K^{\frac{8}{N-2}}\int_{\R^N}u_n^{4r}dx,
	\end{align*}
	which implies that $u_n\in L^{2\cdot2^*r}(\R^N)$, i.e., $a_n\in L^{t}(\R^N)$ with $t=\frac{Nr}{2}>\frac{N}{2}$. Thus the claim holds.

	Back to \eqref{mu to0 lem1 eq2}, utilizing the H\"older inequality again, we derive that
	\begin{align}\label{mu to0 lem1 eq4}
		&\left(\int_{\R^N}|u_n(u_n^T)^{2(r-1)}|^{2^*}dx\right)^{\frac{2}{2^*}}+\left(\int_{\R^N}|u_n^2(u_n^T)^{2(r-1)}|^{2^*}dx\right)^{\frac{2}{2^*}}\nonumber\\
		&\leq Cr^2\left(\left(\int_{\R^N}(u_n^T)^{4t'(r-1)}u_n^{4t'-2^*}dx\right)^{\frac{1}{t'}}\left(\int_{\R^N}u_n^{\frac{2^*-2t'}{t'-1}}dx\right)^{\frac{t'-1}{t'}}+\|a_n\|_t\left(\int_{\R^N}\left(u_n^2(u_n^T)^{2(r-1)}\right)^{2t'}dx\right)^{\frac{1}{t'}}\right)\nonumber\\
		&\leq Cr^2\left(\left(\int_{\R^N}(u_n^T)^{4t'(r-1)}u_n^{4t'-2^*}dx\right)^{\frac{1}{t'}}+\left(\int_{\R^N}\left(u_n^2(u_n^T)^{2(r-1)}\right)^{2t'}dx\right)^{\frac{1}{t'}}\right),
	\end{align}
	where $t'=\frac{t}{t-1}<\frac{N}{N-2}$. Assume that
	\begin{equation*}
		\int_{\R^N}u_n^{4t'r-2^*}dx+\int_{\R^N}u_n^{4rt'}dx<+\infty.
	\end{equation*}
	Then, letting $T\to+\infty$ in \eqref{mu to0 lem1 eq4}, we get
	\begin{align*}
		&\left(\int_{\R^N}u_n^{2^*(2r-1)}dx\right)^{\frac{2}{2^*}}+\left(\int_{\R^N}u_n^{2\cdot2^*r}dx\right)^{\frac{2}{2^*}}\leq Cr^2\left(\left(\int_{\R^N}u_n^{4t'r-2^*}dx\right)^{\frac{1}{t'}}+\left(\int_{\R^N}u_n^{4rt'}dx\right)^{\frac{1}{t'}}\right),
	\end{align*}
	which implies
	\begin{align*}
		&\left(\int_{\R^N}u_n^{2^*(2r-1)}dx\right)^{\frac{1}{2\cdot2^*r}}+\left(\int_{\R^N}u_n^{2\cdot2^*r}dx\right)^{\frac{1}{2\cdot2^*r}}\leq (Cr^2)^{\frac{1}{4r}}\left(\left(\int_{\R^N}u_n^{4t'r-2^*}dx\right)^{\frac{1}{4rt'}}+\left(\int_{\R^N}u_n^{4rt'}dx\right)^{\frac{1}{4rt'}}\right).
	\end{align*}
	Set $d:=\frac{2^*}{2t'}>1$, and define $r_{i+1}=r_id$ for $i\in\mathbb{N}$ with $4r_0t'=2\cdot2^*$. Clearly, the sequence $\{r_i\}$ is positive and increasing since $r_0>0$. Then we obtain
	\begin{align*}
		&\left(\int_{\R^N}u_n^{2^*(2r_{i+1}-1)}dx\right)^{\frac{1}{2\cdot2^*r_{i+1}}}+\left(\int_{\R^N}u_n^{2\cdot2^*r_{i+1}}dx\right)^{\frac{1}{2\cdot2^*r_{i+1}}}\\
		&\leq (Cr_{i+1}^2)^{\frac{1}{4r_{i+1}}}\left(\left(\int_{\R^N}u_n^{2^*(2r_i-1)}dx\right)^{\frac{1}{2\cdot2^*r_i}}+\left(\int_{\R^N}u_n^{2\cdot2^*r_i}dx\right)^{\frac{1}{2\cdot2^*r_i}}\right).
	\end{align*}
	By iterating this process, we arrive at
	\begin{align*}
		&\left(\int_{\R^N}u_n^{2^*(2r_{i+1}-1)}dx\right)^{\frac{1}{2\cdot2^*r_{i+1}}}+\left(\int_{\R^N}u_n^{2\cdot2^*r_{i+1}}dx\right)^{\frac{1}{2\cdot2^*r_{i+1}}}\\
		&\leq\prod _{k=0}^{i+1} (Cr_{k}^2)^{\frac{1}{4r_{k}}}\left(\left(\int_{\R^N}u_n^{2^*}dx\right)^{\frac{1}{2\cdot2^*}}+\left(\int_{\R^N}u_n^{2\cdot2^*}dx\right)^{\frac{1}{2\cdot2^*}}\right).
	\end{align*}
	Letting $i\to+\infty$, we deduce that
	\begin{equation*}
		\|u_n\|_{L^{\infty}(\R^N)}\leq C\left(\|u_n\|^{\frac{1}{2}}_{L^{2^*}(\R^N)}+\|u_n\|_{L^{2\cdot2^*}(\R^N)}\right).
	\end{equation*}
Thus, there exists a constant $C>0$ such that $\|u_n\|_{L^{\infty}(\R^N)}\leq C$.  Similarly, the bound $\|u_0\|_{L^{\infty}(\R^N)}\leq C$ can be derived in the same way.
\end{proof}
\begin{remark}\label{regular rmk}
	By Lemma \ref{moser inter}, we get that $\|u_n\|_{\infty}\leq C$, which together with the similar arguments as in \cite[Theorem 4.1]{LZ2023}, we can see that $(u_0,\lambda_0)$ satisfies the following equation:
\begin{equation}\label{appro eq2 general}
-\Delta u-u\Delta (u^2)+\lambda u=\mathcal{L}'(u)~~\text{in}~~\R^N.
\end{equation}
	By Corollary 4.23 and Theorem 4.24 of \cite{HL2000}, we have $u_0\in\mathcal{C}^{1,\beta_0}(\R^N)$ for some $\beta_0>0$. For the approximating solutions $u_n$, it follows from the regularity result in \cite{To1984} that $u_n\in\mathcal{C}^{1,\beta_1}_{loc}(\R^N)$ for some $\beta_1\in(0,1)$.
\end{remark}

\begin{lemma}\label{decomposition of quasi}
	Let $\bar{u}_n:=u_n-u_0$. Then we have
	\begin{equation*}
		\int_{\R^N}|u_n|^2|\nabla u_n|^2dx=\int_{\R^N}|\bar{u}_n|^2|\nabla \bar{u}_n|^2dx+\int_{\R^N}|u_0|^2|\nabla u_0|^2dx+o_n(1).
	\end{equation*}
\end{lemma}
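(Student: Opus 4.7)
The plan is to pass to the ``squared'' variable $v := u^2$, use the weak $L^2$-convergence $u_n\nabla u_n \rightharpoonup u_0\nabla u_0$ to invoke the elementary Hilbert-space Pythagorean splitting, and then control the remaining cross terms by strong local convergence of $\nabla u_n$ obtained from quasilinear regularity.

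\textbf{Step 1 (splitting for $\nabla u_n^2$).} Writing $\int|u|^2|\nabla u|^2 dx = \tfrac{1}{4}\|\nabla u^2\|_2^2$ and noting that $\nabla(u_n^2) = 2u_n\nabla u_n \rightharpoonup 2u_0\nabla u_0 = \nabla(u_0^2)$ weakly in $L^2(\R^N)^N$, a direct Hilbert-space expansion yields
\begin{equation*}
\|\nabla u_n^2\|_2^2 = \|\nabla(u_n^2 - u_0^2)\|_2^2 + \|\nabla u_0^2\|_2^2 + o_n(1).
\end{equation*}
Using the algebraic identity $u_n^2 - u_0^2 = \bar u_n^2 + 2u_0\bar u_n$, one has $\nabla(u_n^2 - u_0^2) - \nabla\bar u_n^2 = 2\nabla(u_0\bar u_n)$. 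Combining this with the uniform $L^2$-boundedness of $\nabla\bar u_n^2 = 2\bar u_n\nabla\bar u_n$ and Cauchy--Schwarz, the whole proof reduces to the single claim
\begin{equation}\label{keyclaim-proposal}
\|\nabla(u_0\bar u_n)\|_2 \to 0 \quad \text{as } n\to\infty.
\end{equation}

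\textbf{Step 2 (two summands in the key gradient).} Decompose $\nabla(u_0\bar u_n) = \bar u_n\nabla u_0 + u_0\nabla\bar u_n$. The first summand tends to zero in $L^2$ by dominated convergence: indeed $\bar u_n\to 0$ a.e., $|\bar u_n|\le 2C$ by Lemma~\ref{moser inter}, and $|\nabla u_0|^2 \in L^1(\R^N)$. All the difficulty is therefore concentrated in showing $\int u_0^2|\nabla\bar u_n|^2 dx \to 0$.

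\textbf{Step 3 (the main obstacle).} Oscillatory sequences such as $n^{-1}\sin(n x_1)\phi(x)$ show that weak $H^1$-convergence together with an $L^\infty$-bound alone is \emph{not} sufficient, so the PDE structure must genuinely enter. I propose two ingredients. First, a uniform-in-$n$ Tolksdorf-type regularity argument, exactly as invoked in Remark~\ref{regular rmk}: the $L^\infty$-bound from Lemma~\ref{moser inter}, the growth control \eqref{general non assu}, and the uniform boundedness of $\mu_n\in(0,1]$, $\lambda_n$, $\|\nabla u_n\|_2$ and $\|u_n\nabla u_n\|_2$ furnish uniform $C^{1,\beta_1}_{loc}$-estimates for $\{u_n\}$; Arzel\`a--Ascoli then yields, along a subsequence, $u_n\to u_0$ in $C^1_{loc}(\R^N)$, whence $\nabla\bar u_n\to 0$ uniformly on every compact set. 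Second, a decay estimate: since $u_0\in C^{1,\beta_0}(\R^N)\cap L^2(\R^N)$ is uniformly continuous with finite $L^2$-mass, a standard disjoint-balls argument forces $u_0(x)\to 0$ as $|x|\to\infty$. Combining both, given $\varepsilon>0$ choose $R$ so large that $u_0^2<\varepsilon$ outside $B_R$; then
\begin{equation*}
\int_{\R^N} u_0^2|\nabla\bar u_n|^2 dx \le \|u_0\|_\infty^2\,|B_R|\,\|\nabla\bar u_n\|_{L^\infty(B_R)}^2 + \varepsilon\|\nabla\bar u_n\|_2^2,
\end{equation*}
with the first term vanishing as $n\to\infty$ by uniform $C^1_{loc}$-convergence and the second at most $C\varepsilon$. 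This yields \eqref{keyclaim-proposal} and hence the lemma.

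The hardest point is clearly this uniform quasilinear regularity: one must verify that Tolksdorf's estimates \cite{To1984} for the mixed operator $-\mu_n\Delta_\theta - \Delta - u\Delta(u^2)$ produce constants independent of both $\mu_n\in(0,1]$ and $n$. Once this uniform $C^{1,\beta_1}_{loc}$-bound is secured, the rest of the argument is routine localization combined with the elementary $L^2$-expansion of Step~1.
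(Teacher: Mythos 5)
Your Pythagorean reduction in Steps~1--2 is a genuinely nicer bookkeeping than the paper's: instead of expanding $\int|u_n|^2|\nabla u_n|^2\,dx$ into nine terms and killing seven cross terms one at a time (which is what the paper does in \eqref{decomposition of quasi eq1}), you use the weak convergence $u_n\nabla u_n\rightharpoonup u_0\nabla u_0$ in $L^2$ to peel off $\|\nabla u_0^2\|_2^2$ via the standard Hilbert-space identity, and then the algebraic factorization $u_n^2-u_0^2=\bar u_n^2+2u_0\bar u_n$ collapses everything into the single claim $\|\nabla(u_0\bar u_n)\|_2\to 0$. That reduction is correct as stated (using boundedness of $\|\nabla\bar u_n^2\|_2$ and Cauchy--Schwarz for the cross term), and your dismissal of $\|\bar u_n\nabla u_0\|_2$ by dominated convergence is fine.

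The gap is in how you verify $\int u_0^2|\nabla\bar u_n|^2\,dx\to 0$. You assert a \emph{uniform-in-$n$} Tolksdorf $C^{1,\beta_1}_{loc}$ estimate for the solutions of $-\mu_n\Delta_\theta u-\Delta u-u\Delta(u^2)+\lambda_n u=\mathcal{L}'(u)$ (``exactly as invoked in Remark~\ref{regular rmk}''), then apply Arzel\`a--Ascoli to get $\nabla u_n\to\nabla u_0$ in $C^0_{loc}$. But Remark~\ref{regular rmk} only records $u_n\in\mathcal{C}^{1,\beta_1}_{loc}(\R^N)$ for each fixed $n$, with no claim that the H\"older constants are independent of $n$ — and indeed establishing that uniformity (across both $n$ and $\mu_n\in(0,1]$) is non-trivial because the $\mu_n|\nabla u_n|^{\theta-2}\nabla u_n$ flux contributes to the $C^{1,\beta}$ seminorm in a $\mu_n$-dependent way. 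You yourself flag this as ``the hardest point,'' and that is precisely the unverified hypothesis on which your argument rests.

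This is stronger input than the lemma needs, and the paper avoids it entirely. For your localized bound, strong $L^2_{loc}$ convergence of gradients already suffices:
\begin{equation*}
\int_{B_R}u_0^2|\nabla\bar u_n|^2\,dx\le\|u_0\|_\infty^2\int_{B_R}|\nabla\bar u_n|^2\,dx\to 0,
\end{equation*}
and outside $B_R$ you only need the decay of $u_0$ (which you already have from $u_0\in H^1(\R^N)\cap\mathcal{C}^{1,\beta_0}(\R^N)$). And this strong $L^2_{loc}$ gradient convergence is exactly what the paper proves in \eqref{decomposition of quasi eq2}, by testing \eqref{appro eq1 general} with $\phi=u_n\varphi$, $\varphi\in C_0^\infty(\R^N)$, passing all lower-order terms to the limit via Vitali's theorem and the uniform $L^\infty$ bound, subtracting the limiting identity \eqref{decomposition of quasi eq4}, and using weak lower semicontinuity to split the resulting energy equality termwise. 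That localized energy argument is far more robust — no Schauder-type uniformity is required, and it is insensitive to the degeneration of $\mu_n\Delta_\theta$ as $\mu_n\to 0$. If you replace your Step~3 with this $L^2_{loc}$ argument (keeping your Pythagorean reduction in Steps~1--2), you obtain a clean and complete proof that is arguably tidier than the paper's own.
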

\begin{proof}
	By a direct computation, we obtain
	\begin{align}\label{decomposition of quasi eq1}
		\int_{\R^N}|u_n|^2|\nabla u_n|^2dx&=\int_{\R^N}|\bar{u}_n+u_0|^2|\nabla \bar{u}_n+\nabla u_0|^2dx\nonumber\\
		&=\int_{\R^N}|\bar{u}_n|^2|\nabla \bar{u}_n|^2dx+2\int_{\R^N}|\bar{u}_n|^2\nabla \bar{u}_n\nabla u_0dx+\int_{\R^N}|\bar{u}_n|^2|\nabla u_0|^2dx\nonumber\\
		&\quad+2\int_{\R^N}\bar{u}_nu_0|\nabla \bar{u}_n|^2dx+4\int_{\R^N}\bar{u}_nu_0\nabla \bar{u}_n\nabla u_0dx+2\int_{\R^N}\bar{u}_nu_0|\nabla u_0|^2dx\nonumber\\
		&\quad+\int_{\R^N}|u_0|^2|\nabla \bar{u}_n|^2dx+2\int_{\R^N}\nabla \bar{u}_n\nabla u_0|u_0|^2dx+\int_{\R^N}|u_0|^2|\nabla u_0|^2dx.
	\end{align}
	Thus, we need only prove that the right hand side of \eqref{decomposition of quasi eq1} tends to $0$ as $n\to+\infty$, except for the first and last terms. 
	Following the idea in \cite[Lemma 3.1]{LW2014 poten}, we first show that 
	\begin{equation}\label{decomposition of quasi eq2}
		\mu_n|\nabla u_n|^\theta\to0~~\text{in}~~L^{1}_{loc}(\R^N),~~~~u_n\nabla u_n\to u_0\nabla u_0~~\text{in}~~L^{2}_{loc}(\R^N)^N,~~~~\nabla u_n\to\nabla u_0~~\text{in}~~L^{2}_{loc}(\R^N)^N.
	\end{equation}
	To this aim, taking $\phi=u_n\varphi$ with $\varphi\in C_0^{\infty}(\R^N)$ as the test function in \eqref{appro eq1 general}, we obtain
	\begin{align}\label{decomposition of quasi eq3}
		&\mu_n\int_{\R^N}|\nabla u_n|^{\theta}\varphi dx+\mu_n\int_{\R^N}|\nabla u_n|^{\theta-2}\nabla u_n\nabla \varphi u_n dx+\int_{\R^N}|\nabla u_n|^{2}\varphi dx+\int_{\R^N}\nabla u_n\nabla \varphi u_ndx\nonumber\\
		&+4\int_{\R^N}|u_n|^2|\nabla u_n|^2\varphi dx+2\int_{\R^N}u_n^3\nabla u_n\nabla \varphi dx+\lambda_n\int_{\R^N}u_n^2\varphi dx=\int_{\R^N}\mathcal{L}'(u_n)u_n\varphi dx.
	\end{align}
	By the boundedness of $\{\mu_n\|\nabla u_n\|_{\theta}^\theta\}$, we deduce that 
	\begin{align*}
		\mu_n\int_{\R^N}|\nabla u_n|^{\theta-2}\nabla u_n\nabla \varphi u_n dx%&\leq C\mu_n\left(\int_{\R^N}|\nabla u_n|^{\theta}dx\right)^{\frac{\theta-1}{\theta}}\left(\int_{\R^N}|\nabla \varphi|^\theta dx\right)^{\frac{1}{\theta}}\\
		\leq C\mu_n^{\frac{1}{\theta}}\left(\mu_n\int_{\R^N}|\nabla u_n|^{\theta}dx\right)^{\frac{\theta-1}{\theta}}\left(\int_{\R^N}|\nabla \varphi|^\theta dx\right)^{\frac{1}{\theta}}
		\to0~~~\text{as}~~~n\to+\infty.
	\end{align*}
We now claim that
	\begin{align*}
		&\int_{\R^N}\nabla u_n\nabla \varphi u_ndx+2\int_{\R^N}u_n^3\nabla u_n\nabla \varphi dx+\lambda_n\int_{\R^N}u_n^2\varphi dx-\int_{\R^N}\mathcal{L}'(u_n)u_n\varphi dx\\
		&\to\int_{\R^N}\nabla u_0\nabla \varphi u_0dx+2\int_{\R^N}u_0^3\nabla u_0\nabla \varphi dx+\lambda_0\int_{\R^N}u_0^2\varphi dx-\int_{\R^N}\mathcal{L}'(u_0)u_0\varphi dx.
	\end{align*}
	We only prove that
	\begin{equation*}\label{decomposition of quasi eq5}
		\lim_{n\to+\infty}\int_{\R^N}\nabla u_n\nabla \varphi u_ndx=\int_{\R^N}\nabla u_0\nabla \varphi u_0dx,~~~\lim_{n\to+\infty}\int_{\R^N}\mathcal{L}'(u_n)u_n\varphi dx=\int_{\R^N}\mathcal{L}'(u_0)u_0\varphi dx,
	\end{equation*}
	since the convergence of the other terms can be established using similar arguments.
	For any $\eps>0$, there exists a bounded set $\Omega$ such that $\supp\varphi\subset\Omega$ satisfying
	\begin{equation*}
		\int_{\R^N\backslash\Omega}\nabla u_n\nabla \varphi u_ndx<\eps~~~~\text{and}~~~~\int_{\R^N\backslash\Omega}\mathcal{L}'(u_n)u_n\varphi dx<\eps, ~~~\text{for any}~n\in\mathbb{N}.
	\end{equation*}
	And it follows from \eqref{general non assu} and $\|u_n\|_{L^{\infty}(\R^N)}\leq C$ that for any measurable subset $E\subset\R^N$ with $meas(E)$ sufficiently small,
	\begin{equation*}
		\int_{E}\nabla u_n\nabla \varphi u_ndx\leq\|u_n\|_\infty\|\nabla \varphi\|_\infty\|\nabla u_n\|_{L^2(E)}(meas(E))^{\frac{1}{2}}<\eps
	\end{equation*}
	and
	\begin{equation*}
		\int_{E}\mathcal{L}'(u_n)u_n\varphi dx\leq C\int_{E}(|u_n|^2+|u_n|^{2\cdot2^*})\varphi dx\leq Cmeas(E)<\eps, ~~~\text{for any}~n\in\mathbb{N},
	\end{equation*}
	where $meas(E)$ denotes the Lebesgue measure of the subset $E$.
	Therefore, the claim holds by using the Vitali's convergence theorem (see \cite[Theorem 3]{CLW2002}).
	Taking $\phi=u_0\varphi$ as the test function in \eqref{appro eq2 general}, we have
	\begin{align}\label{decomposition of quasi eq4}
		&\int_{\R^N}|\nabla u_0|^{2}\varphi dx+\int_{\R^N}\nabla u_0\nabla \varphi u_0dx+4\int_{\R^N}|u_0|^2|\nabla u_0|^2\varphi dx+2\int_{\R^N}u_0^3\nabla u_0\nabla \varphi dx+\lambda_0\int_{\R^N}u_0^2\varphi dx\nonumber\\
		&=\int_{\R^N}\mathcal{L}'(u_0)u_0\varphi dx.
	\end{align}
	Combining \eqref{decomposition of quasi eq3} and \eqref{decomposition of quasi eq4}, we obtain \eqref{decomposition of quasi eq2}. Since $u_0\in\mathcal{C}^{1,\beta_0}(\R^N)$, using the similar arguments as in \cite[Proposition A.1]{gg1}, we get that for any $B_R\subset\R^N$,
	\begin{equation}\label{decomposition of quasi eq6}
		\int_{B_R}|\bar{u}_n|^2|\nabla \bar{u}_n|^2dx\to0.
	\end{equation}
	Notice that $u_0\in H^1(\R^N)$, and by the regularity of $u_0$, we deduce that $u_0(x)\to0$ as $|x|\to+\infty$.
	Thus back to \eqref{decomposition of quasi eq1}, for suffciently large $R>0$, it follows from Lemma \ref{moser inter} and \eqref{decomposition of quasi eq6} that
	\begin{align*}
		\int_{\R^N}\bar{u}_nu_0|\nabla \bar{u}_n|^2dx&=\int_{B_R}\bar{u}_nu_0|\nabla \bar{u}_n|^2dx+\int_{\R^N\backslash B_R}\bar{u}_nu_0|\nabla \bar{u}_n|^2dx\to0~~\text{as}~~n\to+\infty.
	\end{align*}
The arguments for the other terms in \eqref{decomposition of quasi eq1}, except for the first and last ones, follow in an analogous manner.
\end{proof}
%\begin{remark}
	%For the general nonlinearity $\mathcal{L}(\cdot)$, by Vitali's convergence theorem, we present the following splitting property without proof.
	%\begin{equation*}
		%\int_{\R^N}\mathcal{L}(u_n)dx=\int_{\R^N}\mathcal{L}(u_0)dx+\int_{\R^N}\mathcal{L}(\bar u_n)dx+o_n(1).
	%\end{equation*}
%\end{remark}
	
Inspired by \cite[Theorem 1.4]{Mederski2020},  we establish the following profile decompositions for the approximating solutions $\{u_n\}\subset\widetilde{X}$.
\begin{theorem}\label{profile decom}
	There are sequences $\{\tilde u_i\}_{i=0}^{\infty}\subset H^1(\R^N)$, $\{y_n^i\}_{i=0}^{\infty}\subset\R^N$ for any $n\ge1$, such that $y_n^0=0,|y_n^i-y_n^j|\to+\infty$ for $i\neq j$, and passing to a subsequence, the following conditions hold for any $i\ge0$:
	\begin{equation*}\label{profile decom eq1}
		u_n(\cdot+y_n^i)\rightharpoonup\tilde u_i~~\text{in}~~H^1(\R^N)~~\text{as}~~n\to+\infty,
	\end{equation*}
	\begin{equation*}\label{profile decom eq1-1}
		u_n(\cdot+y_n^i)\nabla u_n(\cdot+y_n^i)\rightharpoonup\tilde u_i\nabla \tilde u_i~~\text{in}~~L^2(\R^N)^N~~\text{as}~~n\to+\infty,
	\end{equation*}
	\begin{equation*}\label{profile decom eq2}
		\lim_{n\to+\infty}\int_{\R^N}|\nabla u_n|^2dx=\sum_{j=0}^i\int_{\R^N}|\nabla \tilde u_j|^2dx+\lim_{n\to+\infty}\int_{\R^N}|\nabla u_n^i|^2dx,
	\end{equation*}
	\begin{equation*}\label{profile decom eq3}
		\lim_{n\to+\infty}\int_{\R^N}|u_n|^2|\nabla u_n|^2dx=\sum_{j=0}^i\int_{\R^N}|\tilde u_j|^2|\nabla \tilde u_j|^2dx+\lim_{n\to+\infty}\int_{\R^N}|u_n^i|^2|\nabla u_n^i|^2dx,
	\end{equation*}
	where $u_n^i(\cdot):=u_n-\sum_{j=0}^i\tilde u_j(\cdot-y_n^j)$ and
	\begin{equation}\label{profile decom eq4}
		\limsup_{n\to+\infty}\int_{\R^N}\mathcal{L}(u_n)dx=\sum_{j=0}^{i}\int_{\R^N}\mathcal{L}(\tilde u_j)dx+\limsup_{n\to+\infty}\int_{\R^N}\mathcal{L}(u_n^i)dx.
	\end{equation}
	Moreover, if in addition $\mathcal{L}$ satisfies
	\begin{equation}\label{pro of L}
		\lim_{s\to0}\frac{\mathcal{L}(s)}{|s|^2}=\lim_{s\to+\infty}\frac{\mathcal{L}(s)}{|s|^{2\cdot2^*}}=0,
	\end{equation}
	then 
	\begin{equation}\label{pro of L eq}
		\lim_{i\to+\infty}\left(\limsup_{n\to+\infty}\int_{\R^N}\mathcal{L}(u_n^i)dx\right)=0.
	\end{equation}
\end{theorem}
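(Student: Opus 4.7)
The plan is to construct the profiles iteratively by a concentration-compactness scheme adapted to the quasilinear framework, using the uniform $L^{\infty}$ bound of Lemma~\ref{moser inter} to handle the gradient-type term $u\nabla u$. Set $u_n^{-1}:=u_n$ and $y_n^0:=0$, so that $\tilde u_0$ is simply the weak $H^1$-limit of $u_n$. Suppose profiles $\tilde u_0,\dots,\tilde u_i$ and shifts $y_n^0,\dots,y_n^i$ have been chosen. Define
\begin{equation*}
\delta_{i+1}:=\limsup_{n\to+\infty}\sup_{y\in\R^N}\int_{B_1(y)}|u_n^{i}|^2dx.
\end{equation*}
If $\delta_{i+1}=0$, the construction stops (and $\tilde u_j=0$ for $j>i$); otherwise pick $y_n^{i+1}\in\R^N$ realising $\int_{B_1(y_n^{i+1})}|u_n^i|^2dx\ge\delta_{i+1}/2$ and let $\tilde u_{i+1}$ be the weak $H^1$-limit of $u_n^i(\cdot+y_n^{i+1})$, which is nonzero by the Rellich theorem. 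The orthogonality $|y_n^i-y_n^j|\to+\infty$ for $i\ne j$ is obtained by the standard argument: if, up to a subsequence, $y_n^{i+1}-y_n^j$ stayed bounded for some $j\le i$, then by induction $\tilde u_{i+1}$ would coincide with the zero weak limit of $u_n^i(\cdot+y_n^j)$, a contradiction.

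The delicate point is transferring the weak convergence from $H^1$ to the quasilinear quantity. Because $\{u_n\}$ is bounded in $L^{\infty}$, the square $u_n(\cdot+y_n^i)^2$ is bounded in $H^1(\R^N)$ with $\nabla(u_n(\cdot+y_n^i)^2)=2u_n(\cdot+y_n^i)\nabla u_n(\cdot+y_n^i)$; pointwise a.e.\ convergence and the $L^{\infty}$ bound yield $u_n(\cdot+y_n^i)^2\rightharpoonup\tilde u_i^2$ in $H^1$, hence $u_n(\cdot+y_n^i)\nabla u_n(\cdot+y_n^i)\rightharpoonup\tilde u_i\nabla\tilde u_i$ in $L^2(\R^N)^N$. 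For the splitting of the norms, one writes $u_n(x)=\sum_{j=0}^{i}\tilde u_j(x-y_n^j)+u_n^i(x)$ and expands; the orthogonality of shifts together with Remark~\ref{regular rmk} (which gives $\tilde u_j\in\mathcal C^{1,\beta_0}$ and decay at infinity) forces all cross terms in $\|\nabla u_n\|_2^2$ to vanish in the limit by the same localisation argument as in Lemma~\ref{decomposition of quasi}. Iterating Lemma~\ref{decomposition of quasi} profile by profile, shifted by $y_n^j$, yields the analogous splitting for $\int|u_n|^2|\nabla u_n|^2dx$. The $\mathcal L$-splitting~\eqref{profile decom eq4} is a straightforward Brezis-Lieb type decomposition, since $\mathcal L$ grows at most like $|s|^2+|s|^{2\cdot2^*}$ and the profiles are sums of translated terms with vanishing cross contributions.

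For the vanishing statement~\eqref{pro of L eq}, note that each $\tilde u_j\ne0$ (by construction) satisfies $\|\tilde u_j\|_2^2\ge c\delta_j^{1/2}>0$ after normalisation, while summability of $\sum_j\|\nabla\tilde u_j\|_2^2$ (bounded by $\sup_n\|\nabla u_n\|_2^2$) forces $\delta_i\to0$ as $i\to+\infty$. By Lions' lemma (Lemma~\ref{lions lemma}) applied to $u_n^i$, this yields $\limsup_n\|u_n^i\|_r\to0$ as $i\to+\infty$ for every $r\in(2,2\cdot2^*)$. Splitting the integral $\int_{\R^N}\mathcal L(u_n^i)dx$ into a region where $|u_n^i|<\eta$ (handled by the first condition in~\eqref{pro of L}, controlled by $\eta^2\|u_n^i\|_2^2$) and a region where $|u_n^i|>M$ (handled by the second condition, controlled by $\eta\|u_n^i\|_{2\cdot2^*}^{2\cdot2^*}$), with an intermediate range controlled by $\|u_n^i\|_r^r$ for some $r\in(2,2\cdot2^*)$, gives~\eqref{pro of L eq}. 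The principal obstacle is the quasilinear splitting: even though the iterative scheme is classical in $H^1$, passing it through the factor $|u|^2|\nabla u|^2$ requires the $L^{\infty}$ estimate of Lemma~\ref{moser inter} together with the equation-based Brezis-Lieb argument of Lemma~\ref{decomposition of quasi}, and one must verify that this argument is stable under repeated translation and extraction.
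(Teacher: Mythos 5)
Your proposal follows essentially the same iterative concentration--compactness strategy the paper uses: select translations by a Lions mass-concentration functional, extract weak profiles after translating, use the uniform $L^{\infty}$ bound to promote the weak $H^1$ convergence to weak $L^2$ convergence of $u_n\nabla u_n$ (via weak $H^1$ convergence of $u_n^2$), and invoke the equation-driven Brezis--Lieb property of Lemma~\ref{decomposition of quasi} to split the quasilinear term. The only structural difference is that the paper runs the Mederski scheme (\cite[Theorem~1.4]{Mederski2020}) with increasing radii $r_i$ and the associated quantities $c_i$, whereas you use the classical fixed-radius functional $\sup_{y}\int_{B_1(y)}|u_n^i|^2dx$; both variants are correct and deliver the same orthogonality $|y_n^i-y_n^j|\to+\infty$ and the same decay of concentration mass. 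Two small points deserve care. When you ``iterate Lemma~\ref{decomposition of quasi} profile by profile, shifted by $y_n^j$,'' make explicit that the lemma has to be applied to $u_n(\cdot+y_n^{i+1})$---which still solves the perturbed equation, so that $\tilde u_{i+1}$ solves the limiting equation and enjoys the $\mathcal{C}^{1,\beta_0}$ regularity of Remark~\ref{regular rmk}---rather than to $u_n^i(\cdot+y_n^{i+1})$, since $u_n^i$ is not a solution; the subtracted profiles $\tilde u_j(\cdot+y_n^{i+1}-y_n^j)$, $j\le i$, are then discarded using the diverging shifts and the decay of the profiles, exactly as in the localization step of Lemma~\ref{decomposition of quasi}. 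Also, the correct reason that $\delta_i\to0$ is the summability of the $L^2$-masses $\sum_j\|\tilde u_j\|_2^2\le\lim_n\|u_n\|_2^2<\infty$ together with the Rellich lower bound $\|\tilde u_j\|_2^2\gtrsim\delta_j$, not summability of $\sum_j\|\nabla\tilde u_j\|_2^2$; with that fixed, the estimate of $\int_{\R^N}\mathcal{L}(u_n^i)dx$ using the growth condition \eqref{pro of L} and a Lions-type lemma (the paper's Lemma~\ref{profile decom lem}) gives \eqref{pro of L eq} as you describe.
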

\begin{remark} We note that Theorem \ref{profile decom} relies heavily on the Brezis-Lieb type property developed in Lemma \ref{decomposition of quasi}. This property requires that the weak limit $\tilde u_i$ for every $i\ge0$ actually weakly solves the limiting equation \eqref{appro eq2 general}.
	Compared with the decomposition results in \cite{gg1}, our approach only requires the regularity and uniformly boundedness of the solutions, without depending on the sign of $\lambda\in\R$. Furthermore, we perform a more refined decomposition of the approximating solution sequence $\{u_n\}$, and our method allows for iteration up to an infinite number of times. We believe that Theorem \ref{profile decom} possesses significant potential for applications in this field.
\end{remark}

Before proceeding to Theorem \ref{profile decom}, we present the following new variants of Lions' lemma.
\begin{lemma}\label{profile decom lem}
	Suppose that $\{w_n\}\subset H^1(\R^N)$ and $\{w_n\nabla w_n\}\subset L^2(\R^N)^N$ are bounded, and that for some $r>0$, we have
	\begin{equation*}
		\sup_{y\in\R^N}\int_{B_r(y)}|w_n|^2dx\to0~~~\text{as}~~~n\to+\infty.
	\end{equation*}
	Then, for any continuous function $\mathcal{L}:\R\to[0,\infty)$ satisfying \eqref{pro of L}, it holds
	\begin{equation*}
		\int_{\R^N}\mathcal{L}(w_n)dx\to0~~~\text{as}~~~n\to+\infty.
	\end{equation*}
	\end{lemma}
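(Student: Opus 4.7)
The plan is to reduce the statement to a standard $L^p$-vanishing estimate on $\{w_n\}$ by using the extra information that $w_n^2 \in \mathcal{D}^{1,2}(\R^N)$ uniformly, and then split $\mathcal{L}$ into three regimes using \eqref{pro of L}. The key observation is that the hypothesis $\|w_n \nabla w_n\|_2 \leq C$ yields $\|\nabla w_n^2\|_2 \leq 2C$, so by the Sobolev inequality $\|w_n^2\|_{2^*} \leq C\mathcal{S}^{-1/2}\|\nabla w_n^2\|_2$, which gives $\{w_n\}$ bounded in $L^{2\cdot 2^*}(\R^N)$ uniformly in $n$. Combined with the $L^2$-boundedness coming from the $H^1$-boundedness of $\{w_n\}$, interpolation produces a uniform bound on $\|w_n\|_p$ for every $p \in [2, 2\cdot 2^*]$.

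Next, I would apply Lemma \ref{lions lemma} to the sequence $\{w_n\}$ with $s=2$ and $r=2$: the vanishing hypothesis gives $w_n \to 0$ in $L^{\beta}(\R^N)$ for every $\beta \in (2, 2^*)$. To upgrade this to every $\beta \in (2, 2\cdot 2^*)$, I would invoke the standard interpolation inequality: for fixed $\beta \in [2^*, 2\cdot 2^*)$, pick $\beta_0 \in (2, 2^*)$ and $\alpha \in (0,1)$ with $\tfrac{1}{\beta} = \tfrac{\alpha}{\beta_0} + \tfrac{1-\alpha}{2\cdot 2^*}$, so that
\[
\|w_n\|_{\beta} \leq \|w_n\|_{\beta_0}^{\alpha}\, \|w_n\|_{2\cdot 2^*}^{1-\alpha} \to 0,
\]
since the first factor tends to $0$ and the second is bounded.

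Finally, I would exploit \eqref{pro of L} as follows. Fix $\varepsilon>0$ and choose $\delta>0$, $M>\delta$ such that $\mathcal{L}(s) \leq \varepsilon|s|^2$ for $|s|\leq \delta$ and $\mathcal{L}(s) \leq \varepsilon|s|^{2\cdot 2^*}$ for $|s|\geq M$. By continuity, $\mathcal{L}$ is bounded on $[\delta,M]$, so for any fixed $p \in (2, 2\cdot 2^*)$ there exists $C_{\varepsilon}>0$ with $\mathcal{L}(s) \leq C_{\varepsilon}|s|^p$ whenever $\delta \leq |s|\leq M$. Combining these yields the pointwise inequality
\[
\mathcal{L}(s) \leq \varepsilon\bigl(|s|^2 + |s|^{2\cdot 2^*}\bigr) + C_{\varepsilon}|s|^p, \qquad \forall s \in \R.
\]
Integrating and using the uniform bounds on $\|w_n\|_2$ and $\|w_n\|_{2\cdot 2^*}$ together with $\|w_n\|_p \to 0$, we obtain $\limsup_n \int_{\R^N}\mathcal{L}(w_n)\,dx \leq C\varepsilon$, and letting $\varepsilon\to 0$ gives the conclusion.

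The only subtle point I anticipate is justifying the $L^{2\cdot 2^*}$ bound from the hypothesis on $\{w_n\nabla w_n\}$ without additional assumptions; this is where the identity $\nabla(w_n^2) = 2w_n\nabla w_n$ is essential and ensures $w_n^2 \in \mathcal{D}^{1,2}(\R^N)$. Once this is in place, the remainder of the argument is a clean three-regime decomposition, and no compactness beyond the classical Lions lemma is needed.
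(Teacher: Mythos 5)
Your proposal is correct and follows essentially the same three-regime strategy as the paper: split $\mathcal{L}$ via \eqref{pro of L} into small-$|s|$, large-$|s|$, and intermediate pieces, kill the intermediate piece via Lions' vanishing, and bound the other two by $\varepsilon$ times the uniformly bounded $L^2$ and $L^{2\cdot2^*}$ norms. The only minor technical difference is in how you obtain $\|w_n\|_p\to 0$ for $p$ up to $2\cdot 2^*$: you apply Lemma \ref{lions lemma} to $w_n$ (getting vanishing only in $(2,2^*)$) and then interpolate with the uniform $L^{2\cdot 2^*}$ bound, whereas one can instead apply Lemma \ref{lions lemma} directly to $w_n^2$ (with $s=1$, $r=2$, using the hypothesis $\|w_n\nabla w_n\|_2\le C$ as the gradient bound on $w_n^2$) to obtain vanishing in the full range $(2,2\cdot 2^*)$ in one step — or, even more simply, just choose $\beta\in(2,2^*)$ for the middle regime so that no interpolation is needed at all. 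Both routes close the argument.
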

\begin{proof}
Let $\eps>0$ and $2<\beta<2\cdot2^*$ be given, and suppose that $\mathcal{L}$ satisfies \eqref{pro of L}. Then, there exist constants $0<\delta<M$ and $c_\eps>0$ such that
\begin{equation*}
	\mathcal{L}(s)\leq\eps|s|^2~~\text{if}~|s|\in[0,\delta],~~~\mathcal{L}(s)\leq\eps|s|^{2\cdot2^*}~~\text{if}~|s|\in(M,+\infty),~~~\mathcal{L}(s)\leq c_\eps|s|^\beta~~\text{if}~|s|\in(\delta,M].
\end{equation*}
Therefore, by using Lemma \ref{lions lemma}, we deduce that
\begin{equation*}
	\limsup_{n\to+\infty}\int_{\R^N}\mathcal{L}(w_n)dx\leq\eps\limsup_{n\to+\infty}\int_{\R^N}(|w_n|^2+|w_n|^{2\cdot2^*})dx.
\end{equation*}
Since the Sobolev inequality ensures that $\{w_n\}$ is bounded in $L^{2}(\R^N)$ and $L^{2\cdot2^*}(\R^N)$, we conclude by letting $\eps\to0$.
\end{proof}

\begin{proof}[{\bf{Proof of Theorem \ref{profile decom}} \rm}]
	%Notice that \eqref{profile decom eq1}, \eqref{profile decom eq1-1} and \eqref{profile decom eq2} can be derived using similar arguments as in \cite[Theorem 1.4]{Mederski2020}, it suffices to establish \eqref{profile decom eq3}, \eqref{profile decom eq4} and \eqref{pro of L eq}. 
	%Notably, Lemma \ref{decomposition of quasi} indicates that the quasilinear term $\int_{\R^N}|u_n|^2|\nabla u_n|^2dx$ exhibits a Brezis-Lieb type splitting property. This property, combined with similar arguments from \cite{Mederski2020}, leads to the following conclusions: 
	First we claim that: Up to a subsequence, there exist sequences $k\in\mathbb{N}\cup\{\infty\}$, $\{\tilde u_i\}_{i=0}^k\subset\widetilde{X}$, for $0\leq i<k+1$ (if $k=\infty$, then $k+1=\infty$ as well), $\{u_n^i\}\subset\widetilde{X}, \{y_n^i\}\subset\R^N$ and positive numbers $\{c_i\}_{i=0}^k, \{r_i\}_{i=0}^k$ such that $y_n^0=0, r_0=0$ and for any $0\leq i<k+1$ one has
	\begin{enumerate}
		\item $u_n(\cdot+y_n^i)\rightharpoonup\tilde u_i$ in $H^1(\R^N)$,~~$u_n(\cdot+y_n^i)\nabla u_n(\cdot+y_n^i)\rightharpoonup\tilde u_i\nabla \tilde u_i$ in $L^2(\R^N)^N$ and $u_n(\cdot+y_n^i)\chi _{B_n}\to\tilde u_i$ in $L^2(\R^N)$ as $n\to+\infty$, $\tilde u_i\neq0$ if $i\ge1$;
		\item $|y_n^i-y_n^j|\ge n-r_i-r_j$ for $0\leq j\neq i<k+1$ and sufficiently large $n$;
		\item $u_n^{-1}=u_n$ and $u_n^i:=u_n^{i-1}-\tilde u_i(\cdot-y_n^i)$ for $n\ge1$;
		\item $\int_{B_{r_i}(y_n^i)}|u_n^{i-1}|^2dx\ge c_i\ge\frac{1}{2}\sup_{y\in\R^N}\int_{B_{r_i}(y)}|u_n^{i-1}|^2dx$ for sufficiently large $n$, $r_i\ge\max\{i,r_{i-1}\}$, if $i\ge1$, and
		\begin{equation*}
			c_i=\frac{3}{4}\lim_{r\to+\infty}\limsup_{n\to+\infty}\sup_{y\in\R^N}\int_{B_r(y)}|u_n^{i-1}|^2dx>0.
		\end{equation*}
		\item 
		\begin{equation*}
			\lim_{n\to+\infty}\|\nabla u_n\|_2^2=\sum_{j=0}^i\|\nabla \tilde u_j\|_2^2+\lim_{n\to+\infty}\|\nabla u_n^i\|_2^2,~~\lim_{n\to+\infty}\|u_n\nabla u_n\|_2^2=\sum_{j=0}^i\|\tilde u_j\nabla \tilde u_j\|_2^2+\lim_{n\to+\infty}\|u_n^i\nabla u_n^i\|_2^2.
		\end{equation*}
	\end{enumerate}
	By the boundedness of $\{u_n\}$ in $\widetilde{X}$, we obtain
	\begin{equation*}
		u_n\rightharpoonup u_0~~\text{in}~~H^1(\R^N),~~~u_n\nabla u_n\rightharpoonup u_0\nabla u_0~~\text{in}~~L^2(\R^N)^N,~~~u_n\chi_{B_n}\to u_0~~\text{in}~~L^2(\R^N),
	\end{equation*}
	where $\chi_{B_n}$ denotes the characteristic function on the ball $B_n$. Let $\tilde{u}_0:=u_0, ~u_n^0:=u_n-\tilde u_0$. If
	\begin{equation*}
		\lim_{n\to+\infty}\sup_{y\in\R^N}\int_{B_r(y)}|u_n^0|^2dx=0
	\end{equation*}
	for every $r\ge1$, then the proof of this claim is completed with $K=0$. Otherwise, following an argument analogous to \cite[Theorem 1.4]{Mederski2020}, we can deduce that the first four conclusions hold.
	Noting that $|y_n^1|\ge n-r_1$ and that $\{u_n(\cdot+y_n^1)\}$ is bounded in $\widetilde{X}$, up to a subsequence, we deduce that there exists $\tilde{u}_1\neq0$ such that
	\begin{equation*}
		u_n(\cdot+y_n^1)\rightharpoonup \tilde u_1~\text{in}~H^1(\R^N),~u_n(\cdot+y_n^1)\nabla u_n(\cdot+y_n^1)\rightharpoonup \tilde u_1\nabla \tilde u_1~\text{in}~L^2(\R^N)^N,~u_n(\cdot+y_n^1)\chi_{B_n}\to \tilde u_1~\text{in}~L^2(\R^N).
	\end{equation*}
	Since $u_n(\cdot+y_n^1)$ weakly solves \eqref{appro eq1 general}, by the standard argument as in Lemma \ref{moser inter}, we deduce that $\tilde{u}_1$ weakly solves \eqref{appro eq2 general}. Thus, by Remark \ref{regular rmk} and Lemma \ref{decomposition of quasi}, we obtain
	\begin{align*}
		&\int_{\R^N}|\nabla u_n^0(\cdot+y_n^1)|^2dx=\int_{\R^N}|\nabla \tilde u_1|^2dx+\int_{\R^N}|\nabla u_n^1(\cdot+y_n^1)|^2dx+o_n(1),\\
		&\int_{\R^N}|u_n^0(\cdot+y_n^1)|^2|\nabla u_n^0(\cdot+y_n^1)|^2dx=\int_{\R^N}|\tilde u_1|^2|\nabla \tilde u_1|^2dx+\int_{\R^N}|u_n^1(\cdot+y_n^1)|^2|\nabla u_n^1(\cdot+y_n^1)|^2dx+o_n(1),
	\end{align*}
	where $u_n^1:=u_n^0-\tilde u_1(\cdot-y_n^1)$. Applying Remark \ref{regular rmk} and Lemma \ref{decomposition of quasi} again, we have
	\begin{align*}
		&\int_{\R^N}|\nabla u_n|^2dx=\int_{\R^N}|\nabla \tilde u_0|^2dx+\int_{\R^N}|\nabla \tilde u_1|^2dx+\int_{\R^N}|\nabla u_n^1|^2dx+o_n(1),\\
		&\int_{\R^N}|u_n|^2|\nabla u_n|^2dx=\int_{\R^N}|\tilde u_0|^2|\nabla \tilde u_0|^2dx+\int_{\R^N}|\tilde u_1|^2|\nabla \tilde u_1|^2dx+\int_{\R^N}|u_n^1|^2|\nabla u_n^1|^2dx+o_n(1).
	\end{align*}
	Proceeding similarly as in \cite{Mederski2020} and iterating this procedure, we can prove this claim.
	Moreover, using Lemma \ref{profile decom lem}, we can see that \eqref{profile decom eq4} and \eqref{pro of L eq} also hold, thus completing the proof.
\end{proof}

%\section{Appendix B}\label{appendixb}\setcounter{equation}{0}

\section*{Declaration of competing interest}
The authors declare that they have no known competing financial interests or personal relationships that could have appeared to influence the work reported in this paper.

\section*{Data availability}
No data was used for the research described in the article.

\section*{Acknowledgements}
This research is partially supported by NSFC(Grant No.12471102), NSF of Jilin Province (20250102004JC), and the
Research Project of the Education Department of Jilin Province (JJKH20250296KJ). 

\medskip

{\small
	}

\end{document}